\newtheorem{Th}{Theorem}[section]
\newtheorem{Prop}[Th]{Proposition}
\newtheorem{Lemma}[Th]{Lemma}
\newtheorem{Cor}[Th]{Corollary}
\theoremstyle{definition}
\newtheorem{Remark}[Th]{Remark}
\newtheorem{Def}[Th]{Definition}
\newtheorem{Example}[Th]{Example}
\newtheorem{Qu}[Th]{Question}
\newcommand{\beq}{\begin{equation}}
\newcommand{\eeq}{\end{equation}}
\def\scalar(#1,#2){(#1\mid#2)}
\renewcommand{\hat}{\widehat}
\newcommand{\cb}{{\cal B}}
\newcommand{\cn}{{\cal N}}
\newcommand{\xbm}{(X,{\cal B},\mu)}
\newcommand{\ot}{\otimes}
\newcommand{\A}{\mathbb{A}}
\newcommand{\B}{\mathbb{B}}
\newcommand{\R}{{\mathbb{R}}}
\newcommand{\T}{{\mathbb{T}}}
\newcommand{\C}{{\mathbb{C}}}
\newcommand{\Z}{{\mathbb{Z}}}
\newcommand{\N}{{\mathbb{N}}}
\newcommand{\E}{{\mathbb{E}}}
\newcommand{\I}{\mathbb{I}}
\newcommand{\vep}{\varepsilon}
\newcommand{\M}{{\cal M}}
\newcommand{\mob}{\boldsymbol{\mu}}
\newcommand{\lamob}{\boldsymbol{\lambda}}
\newcommand{\tend}[3][]{\xrightarrow[#2\to#3]{#1}}
\newcommand{\raz}{\mathbf{1}}
\newcommand\egdef{:=}
\newcommand{\NS}{{\mathbb{N}^*}}
	\newcommand{\Id}{\mathop{\mbox{Id}}}
\def\and{
  \end{tabular}%
  \begin{tabular}[t]{c}}
\providecommand{\noopsort}[1]{} 
\title{The Chowla and the Sarnak conjectures from ergodic theory point of view (extended version)}
\author{H.\ El Abdalaoui \and J.\ Ku\l aga-Przymus\thanks{Research supported by Narodowe Centrum Nauki grant 2014/15/B/ST1/03736.} \and  M.\ Lema\'nczyk\footnotemark[1] \and  T.\ de la Rue}
\begin{document}
\bibliographystyle{amsplain}

\maketitle

\begin{abstract}
 We rephrase the conditions from the Chowla and the Sarnak conjectures in abstract setting, that is, for sequences in $\{-1,0,1\}^{\N^\ast}$, and introduce several natural generalizations. We study the relationships between these properties and other notions from topological dynamics and ergodic theory. 
\end{abstract}
\thispagestyle{empty}

\tableofcontents

\section{Introduction}
A motivation for the present work comes from a dynamical point of view on some classical arithmetic functions taken up recently by Sarnak \cite{Sarnak}. Namely, we consider the following two functions: the M\"{o}bius function $\mob\colon\N^\ast:=\N\setminus\{0\}\to \{-1,0,1\}$ given by $\mob(1)=1$ and
\begin{equation}\label{def:mob}
\mob(n)=
\begin{cases}
(-1)^k& \text{ if $n$ is a product of $k$ distinct primes},\\
0& \text{ otherwise},
\end{cases}
\end{equation}
and the Liouville function $\lamob\colon \N^\ast\to \{-1,1\}$ defined by
$$
\lamob(n)=(-1)^{\Omega(n)},
$$
where $\Omega(n)$ is the number of prime factors of $n$ counted with multiplicities. The importance of these two functions in number theory is well known and may be illustrated by the following statement
\begin{equation}\label{E:la}
\sum_{n\leq N}\lamob(n)={\rm o}(N)=\sum_{n\leq N}\mob(n),
\end{equation}
which is equivalent to the Prime Number Theorem, see e.g.\ \cite{Ap}, p.~91. Recall also the classical connection of $\mob$ with the Riemann zeta function, namely
$$
\frac1{\zeta(s)}=\sum_{n=1}^{\infty}\frac{\mob(n)}{n^s} \text{ for any }s\in\mathbb{C}\text{ with }\Re(s)>1.
$$
In \cite{Titchmarsh}, it is shown that the Riemann Hypothesis is equivalent to the following: for each $\varepsilon>0$, we have
\[
\sum_{n\leq N}\mob(n)={\rm O}_\varepsilon\left(N^{\frac12+\varepsilon}\right) \text{ as } N \to \infty.
\]

In~\cite{Cho}, Chowla formulated a conjecture on the correlations of the Liouville function. The analogous conjecture for the M\"{o}bius function takes the following form: for each choice of $1\leq a_1<\dots<a_r$, $r\geq 0$, with $i_s\in \{1,2\}$, not all equal to~$2$, we have
\begin{equation}\label{cza}
\sum_{n\leq N}\mob^{i_0}(n)\cdot \mob^{i_1}(n+a_1)\cdot\ldots\cdot\mob^{i_r}(n+a_r)={\rm o}(N).
\end{equation}
Recently, Sarnak~\cite{Sarnak} formulated the following conjecture: for any dynamical system $(X,T)$, where $X$ is a compact metric space and $T$ is a homeomorphism of zero topological entropy, for any $f\in C(X)$ and any $x \in X$, we have
\begin{equation}\label{sar}
\sum_{n\leq N}f(T^nx)\mob(n)={\rm o}(N).
\end{equation}
From now on, we refer to~\eqref{sar} as the Sarnak conjecture.
Moreover, it is also noted in \cite{Sarnak} that for any measure-theoretic dynamical system $(X,\mathcal{B},\mu,T)$, for any $f \in L^2(X,\mathcal{B},\mu)$, the condition~\eqref{sar} holds for $\mu$-almost every $x\in X$.
As can be shown, this a.e.\ version of~\eqref{sar} is a consequence of the following Davenport's estimation \cite{Da}: for each $A>0$, we have
\begin{equation}\label{vin}
\max_{z \in \T}\left|\displaystyle\sum_{n \leq N}z^n\mob(n)\right|\leq C_A\frac{N}{\log^{A}N}\text{ for some }C_A>0\text{ and all }N\geq 2,
\end{equation}
combined with the spectral theorem (for a complete proof see Section~\ref{se:sae}). Finally,
Sarnak also proved that the Chowla conjecture~\eqref{cza} implies~\eqref{sar}.

The aim of this paper is to deal with the Chowla conjecture~\eqref{cza} and the Sarnak conjecture~\eqref{sar} in a more abstract setting. In Section~\ref{se:3.1}, we introduce conditions~\eqref{Ch} and~\eqref{S0} in the context of arbitrary sequences $z\in \{-1,0,1\}^{\N^\ast}$. They are obtained from~\eqref{cza} and~\eqref{sar} by replacing $\mob$ with $z$, respectively. In other words, we consider the sums of the form:
\begin{equation}\label{CZA}
\sum_{n\leq N}z^{i_0}(n)z^{i_1}(n+a_1)\cdot\ldots\cdot z^{i_r}(n+a_r)
\end{equation}
and
\begin{equation}\label{SAR}
\sum_{n\leq N}f(T^nx)z(n),
\end{equation}
and require that they are of order ${\rm o}(N)$ ($a_s$ and $i_s$ are as in~\eqref{cza}, $T$, $f$ and~$x$ are as in~\eqref{sar}). Finally, we define a new condition~\eqref{S}, formally stronger than~\eqref{S0}, by requiring that the sum given by~\eqref{SAR} is of order ${\rm o}(N)$ for any homeomorphism $T$ of a compact metric space $X$, any $f\in C(X)$ and any completely deterministic point $x\in X$.\footnote{Recall that $x\in X$ is said to be completely deterministic if for any accumulation point $\nu$ of $(\frac{1}{N}\sum_{n\leq N}\delta_{T^nx})_{N\in\N}$, the system $(X,\nu,T)$ is of zero entropy.} Note that if $h_{top}(T)=0$ then all points are completely deterministic.

We provide a detailed proof of the fact that~\eqref{Ch} implies~\eqref{S}, see Theorem~\ref{ChS} below. Classical tools from ergodic theory, such as joinings (see Section~\ref{se:Ch-to-S}), will be here crucial. This approach (for $z=\mob$ and~\eqref{S0} instead of~\eqref{S}) was suggested in~\cite{Sarnak}, together with a rough sketch of the proof.\footnote{Sarnak also announced a purely combinatorial proof of this result (and sent it to us in a letter). See also~\cite{Tao}.} Since~\eqref{S} implies~\eqref{S0} directly from the definitions, we obtain the following:
$$
\eqref{Ch} \implies \eqref{S}\implies \eqref{S0}.
$$

By replacing~\eqref{SAR} with the sums of the form
\begin{equation}\label{STRO}
\sum_{n\leq N}f(T^nx)z^{i_0}(n)\cdot z^{i_1}(n+a_1)\cdot\ldots\cdot z^{i_r}(n+a_r)
\end{equation}
in~\eqref{S0} and~\eqref{S}, we obtain conditions called~\eqref{S0-strong} and~\eqref{S-strong}, respectively. Notice that such sums generalize both~\eqref{CZA} and~\eqref{SAR}.
Clearly
$$
\eqref{S-strong}\implies \eqref{S0-strong}\implies \eqref{Ch}.
$$
In Section~\ref{se:CHSS0}, we show that the above three properties are, in fact, equivalent:
$$
\eqref{S-strong}\iff~\eqref{S0-strong}\iff~\eqref{Ch}.
$$
Section~\ref{se:SS0} is devoted to the proof of Theorem~\ref{thm:SequivS0} which says that although formally~\eqref{S} is stronger than~\eqref{S0}, in fact, we have
$$
\eqref{S}\iff\eqref{S0}.
$$

Section~\ref{se:4} answers some natural questions about possible relations between the properties under discussion. First, in Section~\ref{se:4.1}, we show that
$$
\eqref{S}\centernot\implies\eqref{Ch}.
$$
In Section~\ref{se:4.2}, we show that a sequence $z\in \{-1,0,1\}^{\N^\ast}$ satisfying~\eqref{Ch} need not be generic. 
In Section~\ref{se:4.3}, we give an example of a sequence satisfying a weakened version of~\eqref{Ch}, in which we consider only exponents $i_s=1$, but failing to satisfy~\eqref{Ch} in its full form. Finally, in Section~\ref{se:4.4} and Section~\ref{se:4.5}, we discuss the properties of recurrence and unique ergodicity for sequences satisfying~\eqref{Ch}.

Section~\ref{se:5} is motivated by the problem of describing the set
\beq\label{valent}
\{(h_{top}(z^2),h_{top}(z)): \;z\in \{-1,0,1\}^{\N^\ast}\; \mbox{satisfying}~\eqref{Ch}\}.
\eeq
For any sequence $w$ satisfying~\eqref{Ch} and such that $w^2=\mob^2$, we have (cf.\ \cite{Sarnak} and Remark~\ref{uw622} below) $(h_{top}(w^2),h_{top}(w))=(\frac{6}{\pi^2}, \frac{6}{\pi^2}\log3)$. Moreover, if $u\in \{-1,1\}^{\N^*}$ satisfies~\eqref{Ch} then $(h_{top}(u^2),h_{top}(u))=(0,1)$. We will discuss, in general, what are possible values of $(h_{top}(z^2),h_{top}(z))$ for sequences over $\{-1,0,1\}$ and provide further examples  of $z$ satisfying~\eqref{Ch} with $h_{top}(z^2)$ being an arbitrary number in $[0,1]$ using Sturmian sequences.

In Section~\ref{se:6}, we deal with Toeplitz sequences \cite{Do1,Ja-Ke} over the alphabet $\{-1,0,1\}$. Although Toeplitz sequences are obtained as a certain limit of periodic sequences (and periodic sequences are orthogonal to $\mob$), their behavior differs from the behavior of periodic sequences in the context of the Chowla and the Sarnak conjectures. Given a sequence $z\in \{-1,0,1\}^{\N^\ast}$ satisfying some extra assumptions (see Theorems~\ref{tw:abs2} and~\ref{tw:abs1}), we construct Toeplitz sequences $t$, that are not orthogonal to $z$ and are of positive topological entropy, providing also more precise entropy estimates. We apply this to $z=\mob$, $z=\mob_\mathscr{B}$ and to sequences satisfying~\eqref{Ch}, defined in Section~\ref{se:5.2.3} and Section~\ref{se:5.2.4}.
For further motivations and related results see \cite{Ab-Ka-Le,Do-Ka}.

\bigskip

The authors wish to address their thanks to the two referees for valuable remarks and comments which improved the quality of the present work.

\section{Preliminaries}

\subsection{Measure-theoretical dynamical systems}

\subsubsection{Factors and extensions}
Let $T\colon (X,\mathcal{B},\mu)\to (X,\mathcal{B},\mu)$ and $S\colon (Y,\mathcal{A},\nu)\to (Y,\mathcal{A},\nu) $ be automorphisms of standard Borel probability spaces.
\begin{Def}
We say that $S$ is a \emph{factor} of $T$ (or $T$ is an \emph{extension} of $S$) if there exists $\pi\colon (X,\mathcal{B},\mu)\to (Y,\mathcal{A},\nu)$ such that $S\circ \pi=\pi\circ T$. To simplify notation, we will identify the factor $S$ with the $\sigma$-algebra $\pi^{-1}(\mathcal{A})\subset \mathcal{B}$. Moreover, any $T$-invariant sub-$\sigma$-algebra $\mathcal{A}\subset \mathcal{B}$ will be identified with the corresponding factor $T|_\mathcal{A}\colon (X/\mathcal{A},\mathcal{A},\mu|_\mathcal{A})\to (X/\mathcal{A},\mathcal{A},\mu|_\mathcal{A})$.
\end{Def}
Let now $S_i\colon (Y_i,\mathcal{A}_i,\nu_i)\to (Y_i,\mathcal{A}_i,\nu_i)$, $i=1,2$, be factors of $T\colon (X,\mathcal{B},\mu)\to (X,\mathcal{B},\mu)$, with the factoring maps $\pi_i\colon X\to Y_i$, $i=1,2$. We will denote by $(Y_1,\mathcal{A}_1,\nu_1)\vee (Y_2,\mathcal{A}_2,\nu_2)$ the smallest factor of $\mathcal{B}$ containing both $\pi_1^{-1}(\mathcal{A}_1)$ and $\pi_2^{-1}(\mathcal{A}_2)$.\footnote{This factor can be viewed as a joining of $S_1$ and $S_2$, see Section~\ref{se:joi}.}

\subsubsection{Entropy}

Let $T$ be an automorphism of a standard Borel probability space $(X,\mathcal{B},\mu)$. Recall that the measure-theoretic entropy of $T$ is defined in the following way. Given a finite measurable partition $Q=\{Q_1,\dots,Q_k\}$ of $X$, we define
$$
H(Q):=-\sum_{1\leq m\leq k} \mu (Q_m) \log \mu(Q_m).\footnote{We consider $2$ as the base of logarithm.}
$$
(We may also write $H_\mu(Q)$ if we need to underline the role of $\mu$.)
The measure-theoretic entropy of $T$ with respect to the partition $Q$ is then defined as
$$
h_\mu(T,Q) := \lim_{N \rightarrow \infty} \frac{1}{N} H\left(\bigvee_{n=0}^{N-1} T^{-n}Q\right),
$$
where $\bigvee_{n=0}^{N-1} T^{-n}Q$ is the coarsest refinement of all partitions $T^{-n}Q$, $n=0,\dots,N-1$.
\begin{Def}[Kolmogorov and Sinai]
The \emph{measure-theoretic entropy} of $T$ is given by
$$
h(T,\mu) = \sup_Q h_\mu(T,Q),
$$
where the supremum is taken over all finite measurable partitions.
\end{Def}
\begin{Def}
We say that $T\colon (X,\mathcal{B},\mu)\to (X,\mathcal{B},\mu)$ is a \emph{K-system} if any non-trivial factor of $T$ has positive entropy.
\end{Def}
\begin{Def}
Let $T_i\colon (X_i,\mathcal{B}_i,\mu_i)\to (X_i,\mathcal{B}_i,\mu_i)$, $i=1,2$, be such that $T_2$ is a factor of $T_1$. 
\begin{itemize}
\item
The quantity $h(T_1,\mu_1)-h(T_2,\mu_2)$ is called the \emph{relative entropy} of $T_1$ with respect to $T_2$.
\item
If the extension $T_1\to T_2$ is non trivial, and if for any intermediate factor $T_3\colon (X_3,\mathcal{B}_3,\mu_3)\to (X_3,\mathcal{B}_3,\mu_3)$ between $T_1$ and $T_2$, with factoring map $\pi_3\colon X_3\to X_2$, the relative entropy of $T_3$ with respect to $T_2$ is positive unless $\pi_3$ is an isomorphism, we say that the extension $T_1\to T_2$ is \emph{relatively~K}.
\end{itemize}
\end{Def}

\subsubsection{Joinings}\label{se:joi}
\begin{Def}
Given automorphisms of  standard Borel probability spaces
$$
T_i\colon (X_i,\mathcal{B}_i,\mu_i)\to (X_i,\mathcal{B}_i,\mu_i),\ i=1,\dots,k,
$$
let $J(T_1,\dots,T_k)$ be the set of all probability measures $\rho$ on $(X_1\times \dots\times X_k,\mathcal{B}_1\otimes \dots\otimes \mathcal{B}_k)$, invariant under $T_1\times \dots\times T_k$ and such that $(\pi_i)_\ast(\rho)=\mu_i$, where $\pi_i\colon X_1\times \dots\times X_k\to X_i$ is given by $\pi_i(x_1,\dots,x_k)=x_i$ for $1\leq i\leq k$. Any $\rho\in J(T_1,\dots,T_k)$ is called a~\emph{joining}.
\end{Def}
\begin{Def}
Following~\cite{Fur}, we say that $T_1$ and $T_2$ are \emph{disjoint} if $J(T_1,T_2)=\{\mu_1\otimes \mu_2\}$. We then write $T_1\perp T_2$.
\end{Def}

Suppose now that $T_3\colon (X_3,\mathcal{B}_3,\mu_3)\to (X_3,\mathcal{B}_3,\mu_3)$ is a common factor of $T_1$ and $T_2$. To keep the notation simple, we assume that $\mathcal{B}_3$ is a sub-$\sigma$-algebra of both $\mathcal{B}_1$ and $\mathcal{B}_2$. Given $\lambda\in J(T_3,T_3)$, we define the \emph{relatively independent extension} of $\lambda$, i.e. $\widehat{\lambda}\in J(T_1,T_2)$, by setting for each $A_i\in\mathcal{B}_i$, $i=1,2$:
$$
\widehat{\lambda}(A_1\times A_2):=\int_{X_3\times X_3}\E(\raz_{A_1}|\mathcal{B}_3)(x)
\E(\raz_{A_2}|\mathcal{B}_3)(y)\ d\lambda(x,y).
$$
Consider now those $\widetilde{\Delta}\in J(T_1,T_2)$ that project down to the diagonal joining $\Delta\in J(T_3,T_3)$ given by $\Delta(A\times B):=\mu_3(A\cap B)$. If $\widehat{\Delta}$ is the only such joining, we say that $T_1$ and $T_2$ are \emph{relatively independent} over their common factor $T_3$. We then write $T_1\perp_{T_3}T_2$.

\begin{Remark}[\cite{Thouvenot}, Lemme 3]\label{Thouvenot}
If the extension $T_1\to T_3$ is of zero relative entropy and $T_2\to T_3$ is relatively K then $T_1\perp_{T_3}T_2$.
In particular, (taking for $T_3$ the trivial one-point system) if $T_1$ has zero entropy and $T_2$ is K, then $T_1\perp T_2$.
\end{Remark}

\subsection{Topological dynamical systems}
\subsubsection{Invariant measures}
Let $T\colon X\to X$ be a continuous map of a compact metric space. We denote by $\mathcal{P}_T(X)$ the set of $T$-invariant probability measures on $(X,\mathcal{B})$ with $\mathcal{B}$ standing for the $\sigma$-algebra of Borel sets. The space of probability measures on $X$ is endowed with the (metrizable) weak topology:
$$
\nu_n\tend{n}{\infty} \nu\iff \int_X f\ d\nu_n \tend{n}{\infty} \int_X f\ d\nu \text{ for each }f\in C(X),
$$
where $C(X)$ denotes the space of continuous functions on $X$. The weak topology is compact, and $\mathcal{P}_T(X)$ is closed in it.

By the Krylov-Bogolyubov theorem, $\mathcal{P}_T(X)\neq \varnothing$. In fact, for any $x\in X$, if we set
$$
\delta_{N,x}:=\frac{1}{N}\sum_{n\leq N}\delta_{T^nx},\footnote{In what follows, we will also use the notation $\delta_{T,N,x}$ if confusion could arise.}
$$
and if, for some increasing sequence $(N_k)_{k\in\N}$ and some probability measure $\nu$, $\delta_{N_k,x}\tend{k}{\infty} \nu$,
then $\nu\in\mathcal{P}_T(X)$. In such a situation, we say that $x$ is \emph{quasi-generic} for $\nu$ along $(N_k)$, and we set
$$
\text{Q-gen}(x):=\left\{\nu\in\mathcal{P}_T(X):\delta_{N_k,x}\tend{k}{\infty}\nu\;\mbox{for a subsequence}\;(N_k)\right\}.
$$
If $\delta_{N,x}\tend{N}{\infty} \nu$, i.e.\ if $\text{Q-gen}(x)=\{\nu\}$, we say that $x$ is \emph{generic} for $\nu$.
\begin{Def}[\cite{We}, see also \cite{Ka}]
We say that $x\in X$ is {\em completely deterministic} if, for each $\nu\in\text{Q-gen}(x)$, we have
$h(T,\nu)=0$. We will then write
\begin{equation}\label{ent}
\text{Q-gen}(x)\subset [h=0].
\end{equation}
\end{Def}

\subsubsection{Symbolic dynamical systems}
Let $A$ be a nonempty finite set and $\I=\N^\ast$ or $\Z$. Then $A^\I$ endowed with the product topology is a compact metric space. Coordinates of $w\in A^\I$ will be denoted either by $w_n$ or by $w(n)$ for $n\in\I$.
\begin{Def}
The subsets of $A^\I$ of the form
$$
C_t(a_0,\dots,a_{k-1})=\{w\in A^\I\colon w_{t+j}=a_{j}\text{ for }0\leq j\leq k-1\},
$$
where $k\ge1$, $t\in \I$ and $a_0,\dots,a_{k-1}\in A$, are called \emph{cylinders} and they form a basis for the product topology.
\end{Def}
\begin{Def}
Any $C=(a_0,\dots,a_{k-1})\in A^k$, $k\geq 1$, is called a \emph{block of length $k$}. For any $0\leq i\leq k-1$, let $C(i):=a_i$.
\end{Def}
We will identify blocks with the corresponding cylinders:
$$
C=(a_0,\dots,a_{k-1})\in A^k\longleftrightarrow C_0(a_0,\dots,a_{k-1}).
$$
\begin{Def}
We say that a block $C=(a_0,\dots,a_{k-1})\in A^k$ {\em appears in} $w$ if $w\in C_t(a_0,\dots,a_{k-1})$ for some $t\in \I$.
\end{Def}

On $A^\I$ there is a natural continuous action by the \emph{left shift} $S$:
$$
S\colon A^\I\to A^\I,\ S((w_n)_{n\in\I})=(w_{n+1})_{n\in\I} \text{ for }w=(w_n)_{n\in\I}\in A^\I.
$$
(For $\I=\Z$, $S$ is clearly invertible and it is a homeomorphism.)
\begin{Def}
Let $C=(a_0,\dots,a_{k-1})\in A^k$. The following quantity is called the \emph{upper frequency} with which $C$ appears in 
$w$:
$$
\overline{\text{fr}}(C,w):=\limsup_{N\to \infty}\frac{1}{N}\sum_{n\le N}\mathbf{1}_C(S^nw)=\limsup_{N\to \infty}\int_{A^\I} \mathbf{1}_C\ d \delta_{N,w}.
$$
\end{Def}

We will denote by the same letter $S$ the action by the left shift restricted to any closed shift-invariant subset of $A^\I$ (such a subset is called a~\emph{subshift}). In particular, given $w\in A^\I$, we will consider the two following subshifts:
$$
X_w:=\{u\in A^\I\colon \text{ all blocks that appear in }u\text{ also appear in }w\}
$$
and
\begin{multline}\label{zplusem}
X_w^+:=\{u\in A^\I\colon \text{ all blocks that appear in }u\\
\text{ appear in }w\text{ with positive upper frequency}\}.
\end{multline}

Finally, let $F\in C(A^\I)$ be given by
\begin{equation}\label{EFF}
F(w):=w(1)\text{ for }w\in A^\I.
\end{equation}
We will use the same notation $F$, even if the domain of $F$ changes, e.g.\ when we consider a subshift.

\subsubsection{Topological entropy}
Let $T$ be a homeomorphism of a compact metric space $(X,d)$. For $n\in \N$, let
$$
d_n(x,y):=\max\{d(T^ix,T^iy)\colon 0\leq i<n\}.
$$
Given $\vep>0$ and $n\in \N$, let
$$
N(\vep, n)=\max\{|E| : E\subset X,d_n(x,y)\geq \vep\text{ for all }x\neq y \text{ in }E\}.
$$
\begin{Def}[Bowen and Dinaburg]
The \emph{topological entropy} $h_{top}(T)$ is defined as
$$
h_{top}(T)=h_{top}(T,X):=\lim_{\vep\to 0}\left(\limsup_{n\to\infty}\frac{1}{n}\log N(\vep,n) \right).
$$
\end{Def}
We consider now the special case of a subshift, namely, $S\colon X_w\to X_w$, where $w\in A^\I$.
Let
$$
p_n(w):=\left|\{B\in A^n\colon B\text{ appears in }w\}\right|.
$$
and put
$$
h_{top}(w):=\lim_{n\to \infty}\frac{1}{n}\log p_n(w).
$$
Then
\begin{equation}\label{wS}
\text{$h_{top}(w)=h_{top}(S, X_w)$}.
\end{equation}

In a similar way, given $\nu\in \mathcal{P}_S(A^\I)$, we denote by $h_{top}(\text{supp}(\nu))$ the following quantity:
$$
h_{top}(\text{supp}(\nu)):=\lim_{n\to \infty}\frac{1}{n}\log p_n({\text{supp}(\nu)}),
$$
where
$$
p_n({\text{supp}(\nu)}):=\left|\{B\in A^n\colon \nu(B)>0\}\right|.\footnote{It is not hard to see that $p_{m+n}({\text{supp}(\nu)})\leq p_m({\text{supp}(\nu)})\cdot p_n({\text{supp}(\nu)})$.}
$$
In particular, if $\text{Q-gen}(w)=\{\nu\}$, then $h_{top}(\text{supp}(\nu))=h_{top}(S,X_w^+)$ (see Lemma~\ref{lm:f4} below).

\subsubsection{Invariant measures in symbolic dynamical systems}
\begin{Remark}\label{natex}
Any $\nu\in \mathcal{P}_S(A^{\N^\ast})$ is determined by the values it takes on blocks, so it can be extended to a measure in $\mathcal{P}_S(A^\Z)$ taking the same value on each block as $\nu$. This measure will be also denoted by $\nu$.\footnote{The invertible dynamical system $S\colon (A^\Z,\nu)\to (A^\Z,\nu)$ is the natural extension of the non-invertible system $S\colon (A^{\N^\ast},\nu)\to (A^{\N^\ast},\nu)$.} Moreover, if $w\in A^{\N^\ast}$ is quasi-generic for $\nu\in\mathcal{P}_S(A^{\N^\ast})$ along $(N_k)$ then for any $\overline{w}\in A^\Z$ such that $\overline{w}[1,\infty]=w$, the point $\overline{w}$ is quasi-generic for $\nu\in\mathcal{P}_S(A^\Z)$ along $(N_k)$.
\end{Remark}

For any probability distribution $(p_1,\ldots,p_{|A|})$ on $A$, we denote by $B(p_1,\ldots,p_{|A|})$ the corresponding Bernoulli measure on $A^\I$.

The cases $A=\{-1,0,1\}$ or $A=\{0,1\}$ will be of special interest for us. Let $\pi:\{-1,0,1\}^{\I}\to \{0,1\}^{\I}$ be the coordinate square map:
\begin{equation}\label{squaremap}
(\pi(w))_{n}:=w_n^2,
\end{equation}
which is clearly $S$-equivariant.

Given $\nu\in \mathcal {P}_S(\{0,1\}^\I)$, let $\widehat{\nu}$ denote the corresponding {\em relatively independent extension of} $\nu$: for every block $B$, we set
\beq\label{es1}
\widehat{\nu}(B):=2^{-|\text{supp}(B)|}\nu(\pi(B))=2^{-|\text{supp}(B)|}\nu(B^2),
\eeq
where $\text{supp}(B):=\{i: B(i)\neq0\}$ and $B^2(i):=B(i)^2$. Clearly, $\widehat{\nu}\in\mathcal{P}_S(\{-1,0,1\}^\I)$.

\subsubsection{M\"{o}bius function and its generalizations}\label{se:gene}
The following generalization of the M\"{o}bius function $\mob\colon\N^\ast\to\{-1,0,1\}$ defined by~\eqref{def:mob} has been introduced in~\cite{B-Free}. Let $\mathscr{B}=\{b_k\colon k\geq 1\}\subset \{2,3,\dots\}$ be such that $b_k=a_k^2$ and $a_k$, $a_{k'}$ are relatively prime for $k\neq k'$. For $n\in \N$, let
$$
\eta_\mathscr{B}(n):=\begin{cases}
0& \text{ if }b_k|n\text{ for some }k\geq 1,\\
1& \text{ otherwise},
\end{cases}
$$
$$
\delta(n):=\left|\{k\geq 1\colon a_k|n\}\right|
$$
and
\begin{equation}\label{gen-mob}
\mob_\mathscr{B}(n):=(-1)^{\delta(n)}\cdot \eta_\mathscr{B}(n).
\end{equation}
The classical case $\mob$ corresponds to $\mathscr{B}$ being the set of squares of all primes.

\subsubsection{Sturmian sequences}
\begin{Def}
Let $A$ be a finite set. We say that $w\in A^\I$ is a \emph{Sturmian} sequence if $p_n(w)= n+1$ for all $n\in\N^\ast$ (in particular, $|A|=2$, i.e. without loss of generality, $A=\{0,1\}$). If $w$ is Sturmian or periodic, we will say that $w$ is a \emph{generalized Sturmian} sequence.
\end{Def}
\begin{Remark}
Any generalized Sturmian sequence can be obtained in the following way. Consider a line $L$ with an irrational slope in the plane (see Figure~\ref{fig:1} on page~\pageref{fig:1}). We build $w$ by considering the consecutive intersections of $L$ with the integer grid, putting a $0$ each time $L$ intersects a horizontal line and a $1$ each time it intersects a vertical line of the grid (if the line intersects a node, put either $0$ or $1$). In order to include also periodic sequences, we allow the slope of $L$ to be rational, provided that $L$ does not meet any node of the grid.
\end{Remark}
\begin{Remark}\label{stu}
Recall that any (generalized) Sturmian sequence $w$ is generic for a measure $\nu$ of zero entropy. Moreover, $\nu(B)>0$ for any block $B$ appearing in $w$.
\end{Remark}
For more information on Sturmian sequences, we refer the reader e.g.\ to~\cite{Fogg}.

\subsubsection{Toeplitz sequences}\label{dodacnazwe}
\begin{Def}
Let $t\in A^\I$, where $A$ is a finite set. We say that the sequence $t$ is \emph{Toeplitz} if for each $a\in\I$ there exists $r_a$ such that $t(a)=t(a+kr_a)$ for each $k\in\I$.
\end{Def}

Each Toeplitz sequence $t\in A^{\I}$ is obtained as a limit of some periodic sequences defined over the extended alphabet $A\cup\{\ast\}$. Namely, there exists an increasing sequence $(p_n)$, $p_n|p_{n+1}$ such that for each $n\geq1$,
$$
t_n:=T_n^\I,\;\;\lim_{n\to\infty}t_n(j)=t(j)\;\;\mbox{for each}\;j\in\N,
$$
where, for each $n\geq1$, $T_n$ is a block of length $p_n$ over the alphabet $A\cup\{\ast\}$ and $\ast$ at position $k$ at instance $n$ means that $t(k)$ has not been defined at the stage $n$ of the construction\footnote{As an illustration of the definition, consider $A=\{0,1\}$, $\I=\N^\ast$, and set inductively
$T_1:=0\ast$, $T_{2n}:=T_{2n-1}1T_{2n-1}\ast$, $T_{2n+1}:=T_{2n}0T_{2n}\ast$. In this example, $p_n=2^n$. The Toeplitz sequence obtained in this way is not periodic, but it is regular.}.

Whenever
$$
(\mbox{the number of $\ast$ in}\;T_n)/p_n\to 0\text{ when } n\to\infty,$$
we say that $t$ is {\em regular}.
The dynamical systems generated by regular Toeplitz sequences are uniquely ergodic and have zero entropy.

For non-regular Toeplitz sequences the entropy can be positive. Moreover, non-regular Toeplitz sequences can display extremely non-uniquely ergodic behavior.\footnote{Downarowicz~\cite{Do0} proved that each abstract Choquet simplex can be realized as the simplex of invariant measures for a Toeplitz subshift.}

For more information about Toeplitz sequences, we refer the reader to \cite{Do1,Ja-Ke,Wi}.

\section{Ergodic theorem with M\"{o}bius weights}\label{se:sae}
\begin{Prop}
Let $T$ be an automorphism of a standard Borel probability space $(X,\cb,\mu )$ and let $f \in L^1(X,\mathcal{B},\mu)$. Then, for almost every $x \in X$, we have
\[
\frac1{N}\sum_{n\leq N}f(T^nx) \mob (n) \tend{N}{\infty}0.
\]
\end{Prop}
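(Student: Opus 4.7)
The plan is to combine Davenport's estimate~\eqref{vin} with the spectral theorem for $L^2$ functions, then extend to $L^1$ by density and a maximal-inequality argument.

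\textbf{Step 1: Reduction to $f\in L^2$.}
I would first prove the statement for $f\in L^2(X,\cb,\mu)$. The extension to $f\in L^1$ is then handled by standard Banach principle. Namely, the maximal function
\[
 M^{\mob}\!f(x)\egdef\sup_{N\geq 1}\frac{1}{N}\left|\sum_{n\leq N}f(T^nx)\mob(n)\right|
\]
is pointwise dominated by the classical ergodic maximal function $\sup_N\frac1N\sum_{n\leq N}|f(T^nx)|$, since $|\mob(n)|\leq 1$. Hence $M^{\mob}$ is of weak type $(1,1)$, and a.e.\ convergence for $f\in L^1$ follows from the density of $L^2$ in $L^1$ together with a standard $3\eps$-argument on $\{M^{\mob}(f-g)>\eps\}$ for $g\in L^2$ approximating $f$.

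\textbf{Step 2: Spectral computation in $L^2$ norm.}
For $f\in L^2(X,\mu)$, let $\sigma_f$ be its spectral measure on the circle, defined by $\int_X f\cdot\overline{f\circ T^n}\,d\mu=\int_\T z^n\,d\sigma_f(z)$. Using $T$-invariance of $\mu$,
\[
 \int_X\Bigl|\tfrac{1}{N}\sum_{n\leq N}f(T^nx)\mob(n)\Bigr|^{2}d\mu(x)
 =\int_\T\Bigl|\tfrac{1}{N}\sum_{n\leq N}\mob(n)z^n\Bigr|^{2}d\sigma_f(z).
\]
By Davenport's estimate~\eqref{vin}, the integrand is uniformly bounded (for every fixed $A>0$) by $C_A^2/\log^{2A}N$, hence the left-hand side is $\leq C_A^2\|f\|_2^2/\log^{2A}N\to 0$.

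\textbf{Step 3: A.e.\ convergence along a subsequence.}
Fix $A=2$ and pick a subsequence $N_k\egdef\lceil e^{\sqrt{k}}\rceil$, so that $\log N_k\geq \sqrt k$ and therefore
\[
 \sum_{k\geq 1}\int_X\Bigl|\tfrac{1}{N_k}\sum_{n\leq N_k}f(T^nx)\mob(n)\Bigr|^{2}d\mu(x)\;\leq\;\sum_{k\geq 1}\frac{C_2^2\|f\|_2^2}{k^2}<\infty.
\]
By Borel--Cantelli, $\frac{1}{N_k}\sum_{n\leq N_k}f(T^nx)\mob(n)\to 0$ for $\mu$-a.e.\ $x$.

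\textbf{Step 4: Filling in between the $N_k$.}
This is the delicate step. For $N_k\leq N< N_{k+1}$, I would write
\[
 \Bigl|\tfrac{1}{N}\sum_{n\leq N}f(T^nx)\mob(n)\Bigr|
 \leq \Bigl|\tfrac{1}{N_k}\sum_{n\leq N_k}f(T^nx)\mob(n)\Bigr|
 \;+\;\frac{1}{N_k}\sum_{N_k<n\leq N_{k+1}}|f(T^nx)|.
\]
The first term tends to $0$ a.e.\ by Step~3. For the second, using the Birkhoff ergodic theorem applied to $|f|\in L^1$,
\[
 \tfrac{1}{N_k}\sum_{N_k<n\leq N_{k+1}}|f(T^nx)|
 =\tfrac{N_{k+1}}{N_k}\cdot\tfrac{1}{N_{k+1}}\sum_{n\leq N_{k+1}}|f(T^nx)|
 -\tfrac{1}{N_k}\sum_{n\leq N_k}|f(T^nx)|.
\]
Our choice of $N_k$ makes $N_{k+1}/N_k\to 1$ (since $\sqrt{k+1}-\sqrt k\to 0$), so the right-hand side converges a.e.\ to $\E(|f|\mid\mathcal{I})-\E(|f|\mid\mathcal{I})=0$, where $\mathcal{I}$ is the $\sigma$-algebra of $T$-invariant sets. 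This yields $\frac{1}{N}\sum_{n\leq N}f(T^nx)\mob(n)\to 0$ $\mu$-a.e., completing the proof.

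The main obstacle is Step~4: one must pick the subsequence $N_k$ dense enough that $N_{k+1}/N_k\to 1$ (so that the interpolation error is negligible) but sparse enough that Borel--Cantelli applies; the quasi-geometric scale $e^{\sqrt k}$ is a convenient compromise that exploits the arbitrariness of $A$ in Davenport's bound.
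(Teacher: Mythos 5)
Your proof is correct, and its core is the same as the paper's: the spectral theorem converts the $L^2$ norm of the weighted average into an integral against $\sigma_f$, Davenport's bound makes these norms summable along a sparse subsequence, and one then interpolates and extends by density. The two places where you genuinely deviate are both in the bookkeeping. First, the paper takes the geometric scale $N=[\rho^m]$ and, to fill the gaps, assumes $f\in L^\infty$ so that the tail is bounded by $\|f\|_\infty([\rho^{m+1}]-[\rho^m])/[\rho^m]\to\|f\|_\infty(\rho-1)$, which forces a final limit $\rho\to1$; your choice $N_k=\lceil e^{\sqrt k}\rceil$ has $N_{k+1}/N_k\to1$ built in, so the gap term is killed by Birkhoff applied to $|f|$ and the whole argument runs directly for $f\in L^2$ with no detour through $L^\infty$ and no limit over $\rho$ — a slightly cleaner organization. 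Second, your passage from $L^2$ to $L^1$ via the weak $(1,1)$ bound for the dominating ergodic maximal function is a standard repackaging of what the paper does explicitly (approximate by $g$ with $\|f-g\|_1<\eps$ and control $\limsup_N\frac1N\sum_{n\leq N}|(f-g)(T^nx)|$ by the pointwise ergodic theorem); both are valid, and yours has the minor advantage of not needing the initial reduction to ergodic $T$. No gaps.
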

\begin{proof}
We may assume without loss of generality that $T$ is ergodic. Fix $f\in L^2\xbm$. By the Spectral Theorem, we have
\[
\Big\|\frac1{N}\sum_{n\leq N} f(T^nx)\mob (n)\Big\|_2=\Big\|\frac1{N}\sum_{n\leq N} z^n \mob (n)\Big\|_{L^2(\sigma_f)},
\]
where $\sigma_f$ is the spectral measure of $f$.\footnote{Recall that $\sigma_f$ is a finite measure on the circle determined by its Fourier transform given by $\widehat{\sigma}_f(n)=\int f\circ T^n\cdot \overline{f}\ d\mu$, $n\in\Z$.} Hence, by Davenport's estimation~\eqref{vin}, for each $A>0$, we obtain
\begin{equation}
 \label{eq:Dav2}
 \Big\|\frac1{N}\sum_{n\leq N} f(T^nx)\mob (n)\Big\|_2 \leq \frac{C_A}{{\log^{A}N}},
\end{equation}
where $C_A$ is a constant that depends only on $A$. Take $\rho>1$, then for $N=[\rho^m]$ for some $m\ge1$, \eqref{eq:Dav2} takes the form
\[
\Big\|\frac1{N}\sum_{n\leq N}f(T^nx)\mob (n)\Big\|_2 \leq \frac{C_A}{{(m\log(\rho))}^{A}}\text{ for any }A>0.
\]
By choosing $A=2$, we obtain
\[
\sum_{ m \geq 1}\Big\|\frac1{[\rho^m]}\sum_{n\leq [\rho^m]}f(T^nx)\mob (n)\Big\|_2 <+\infty.
\]
In particular, by the triangular inequality for the $L^2$ norm,
\[
 \sum_{ m \geq 1} \left| \frac1{[\rho^m]}\sum_{n\leq [\rho^m]}f(T^nx)\mob (n) \right| \in L^2(X,\mathcal{B},\mu)
\]
and the above sum is almost surely finite. 
Hence, for almost every point $x \in X$, we have
\beq\label{aj}
\frac1{[\rho^m]}\sum_{n\leq [\rho^m]}f(T^nx) \mob (n) \tend{m}{\infty}0.
\eeq
Suppose additionally that $f\in L^\infty(X,\mathcal{B},\mu)$. Then, if $[\rho^m]\leq N < {[\rho^{m+1}]+1}$, we obtain
\begin{align*}
\Big|\frac1{N}\sum_{n\leq N} f(T^nx) \mob (n)\Big|&=\Big| \frac1{N}\sum_{n\leq [\rho^m]}f(T^nx) \mob (n)+ \frac1{N}\sum_{[\rho^m]+1\leq n\leq N}f(T^nx) \mob (n)\Big|\\
&\leq \Big| \frac1{[\rho^m]}\sum_{n\leq [\rho^m]}f(T^nx) \mob (n)\Big|+\frac{\|f\|_{\infty}}{[\rho^m]} (N-[\rho^m])\\
&\leq \Big|\frac1{[\rho^m]}\sum_{n\leq [\rho^m]}f(T^nx) \mob (n)\Big|+\frac{\|f\|_{\infty}}{[\rho^m]} ([\rho^{m+1}]-[\rho^m]).
\end{align*}
Since $\frac{\|f\|_{\infty}}{[\rho^m]} ([\rho^{m+1}]-[\rho^m])\tend{m}{+\infty}\|f\|_{\infty}(\rho-1)$, using~\eqref{aj} and the fact that $\rho$ can be taken arbitrarily close to 1, we obtain
\[
\frac1{N}\sum_{n\leq N} f(T^nx) \mob (n) \tend{N}{\infty}0 \text{ for a.e. }x\in X.
\]
To finish the proof, notice that for any $f \in L^1(X,\mathcal{B},\mu)$, and any $\varepsilon>0$, there exists $g \in L^{\infty}(X,\mathcal{B},\mu)$ such that
$\|f-g\|_1 < \varepsilon$. It follows by the pointwise ergodic theorem that for almost all $x \in X$, we have
\[
\lim_{N \longrightarrow \infty}\Big|\frac1{N}\sum_{n\leq N} (f-g)(T^nx)\Big| < \varepsilon.
\]
Hence,
\begin{multline*}
\limsup_{N \longrightarrow \infty}\Big|\frac1{N}\sum_{n\leq N} f(T^nx)\mob (n)\Big|\leq\\
 \lim_{N \longrightarrow \infty}\Big|\frac1{N}\sum_{n\leq N} (f-g)(T^nx)\Big|+\limsup_{N \longrightarrow \infty}\Big|\frac1{N}\sum_{n\leq N} g(T^nx)\mob (n)\Big|
<\varepsilon.
\end{multline*}
Since $\varepsilon>0$ is arbitrary, the proof is complete.
\end{proof}

\section{The Chowla conjecture vs. the Sarnak conjecture -- abstract approach}\label{se:3}

\subsection{Basic definitions}\label{se:3.1}
We will now introduce the necessary definitions concerning the Chowla conjecture and the Sarnak conjecture in the abstract setting, i.e.\ for arbitrary sequences, not only for $\mob$.

\begin{Def}[cf.\ \cite{Cho,Sarnak}]
We say that $z\in \{-1,0,1\}^\I$ satisfies the condition~\eqref{Ch} if
\begin{equation}\label{Ch}
\frac1N \sum_{n\leq N}z^{i_0}(n) \cdot z^{i_1}(n+a_1)\cdot \ldots \cdot z^{i_r}(n+a_r) \tend{N}{\infty} 0\tag{{\bf Ch}}
\end{equation}
for each choice of $1\leq a_1<\ldots<a_r$, $r\geq 0$, $i_s\in\{1,2\}$ not all equal to~$2$.
\end{Def}

Whenever~\eqref{Ch} is satisfied for $z$, we will also say that $z$ satisfies the Chowla conjecture.

\begin{Def} [cf.\ \cite{Sarnak}]
We say that $z\in\{-1,0,1\}^\I$ satisfies the condition~\eqref{S0} if, for each homeomorphism $T$ of a compact metric space $X$ with $h_{top}(T)=0$, for each $f\in C(X)$ and for each $x\in X$, we have
\begin{equation}\label{S0}
\frac1N\sum_{n\leq N} f(T^nx)z(n)\tend{N}{\infty}0\tag{{\bf S$_0$}}.
\end{equation}
\end{Def}

\begin{Def}
We say that $z\in \{-1,0,1\}^\I$ satisfies the condition~\eqref{S} if,
for each homeomorphism $T$ of a compact metric space $X$,
\begin{equation}\label{S}
\frac1N\sum_{n\leq N} f(T^nx)z(n)\tend{N}{\infty}0\tag{{\bf S}}
\end{equation}
for each $f\in C(X)$ and each $x\in X$ that is completely deterministic.
\end{Def}

Whenever~\eqref{S} is satisfied for $z$, we will also say that $z$ satisfies the Sarnak conjecture.

Note that by the variational principle, see e.g.\ \cite{Wa}, if the topological entropy of $T$ is zero, then all points are completely deterministic. Hence \eqref{S} implies \eqref{S0}.

\subsection{About \eqref{Ch}}\label{se:onCh}
Fix $z\in \{-1,0,1\}^{\N^\ast}$. Suppose that $z^2$ is quasi-generic for $\nu$ along $(N_k)$, i.e.\ we have
\beq\label{zbieznosc}
\delta_{N_k,z^2}:=\frac1{N_k}\sum_{n\leq N_k}\delta_{S^n z^2}\tend{k}{\infty} \nu\in \mathcal{P}_S(X_{z^2}).
\eeq
\begin{Remark}
In the classical situation $z=\mob$, $z^2$ is generic for the Mirsky measure~\cite{Mi}, cf.~\cite{Ce-Si,Sarnak}. Moreover, the Mirsky measure on $X_{z^2}$ has full topological support, cf.~\eqref{cond}. In a more general framework, similar results hold for so called $\mathscr{B}$-free systems, see \cite{B-Free}.
\end{Remark}

Recall that the function $F$ was given by the formula~\eqref{EFF}, i.e.\ $F(w)=w(1)$.
\begin{Lemma}\label{jaf}
Let $1\leq a_1<\ldots<a_r$, $r\geq 0$ and $i_s\in\{1,2\}$, $0\leq s\leq r$. Then the following equalities hold:
$$
\int_{\{-1,0,1\}^{\N^\ast}}F^{i_0}\cdot F^{i_1}\circ S^{a_1}\cdot\ldots\cdot F^{i_r}\circ S^{a_r}\ d\widehat{\nu}=0,
$$
when not all $i_s$ are equal to~$2$.\footnote{Recall that $\widehat{\nu}$ was defined in \eqref{es1}.} Moreover,
$$
\int_{\{-1,0,1\}^{\N^\ast}}F^{2}\cdot F^{2}\circ S^{a_1}\cdot\ldots\cdot F^{2}\circ S^{a_r}\ d\widehat{\nu}=\int_{\{0,1\}^{\N^\ast}}F\cdot F\circ S^{a_1}\cdot\ldots\cdot F \circ S^{a_r}\ d\nu.
$$
\end{Lemma}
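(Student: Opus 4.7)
The plan is to exploit the probabilistic description of the relatively independent extension $\widehat{\nu}$: one first samples $y\in\{0,1\}^{\N^\ast}$ according to $\nu$, and then, independently at each position $n$, one sets $w(n)=0$ if $y(n)=0$, and draws $w(n)$ uniformly from $\{-1,+1\}$ if $y(n)=1$. From this description (or directly from \eqref{es1}), for any prescription of coordinates at non-contiguous positions $n_1,\dots,n_k$ with values $b_1,\dots,b_k\in\{-1,0,1\}$, one still has
$$
\widehat{\nu}\bigl(\{w\colon w(n_j)=b_j,\ j=1,\dots,k\}\bigr)=2^{-|\{j\colon b_j\neq 0\}|}\,\nu\bigl(\{y\colon y(n_j)=b_j^2,\ j=1,\dots,k\}\bigr).
$$
I would first record this extended formula, since the positions $1,1+a_1,\dots,1+a_r$ at which our integrand depends on $w$ are not contiguous in general.

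For the first identity, set $a_0:=0$, so that the integrand equals $\prod_{s=0}^r w(1+a_s)^{i_s}$, and let $I:=\{s\colon i_s=1\}$, which is nonempty by hypothesis. I would condition on $\pi(w)=y$. If $y(1+a_s)=0$ for some $s$, then $w(1+a_s)=0$ and the integrand vanishes; so I may restrict to the event $E:=\{y\colon y(1+a_s)=1\text{ for all }s\}$. On $E$, each $w(1+a_s)$ is, conditionally on $y$, uniform on $\{-1,+1\}$ and the family $(w(1+a_s))_{s=0}^r$ is independent. Since $w(1+a_s)^{2}=1$ on $E$ whenever $i_s=2$, the conditional expectation of the integrand reduces to $\E\bigl[\prod_{s\in I}\xi_s\bigr]$, where the $\xi_s$ are i.i.d. uniform on $\{-1,+1\}$; this expectation factorizes to $\prod_{s\in I}\E[\xi_s]=0$ as $I\neq\varnothing$. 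Integrating against $\nu$ over $y\in E$ yields $0$.

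For the second identity, all $i_s=2$, so on $E$ the integrand equals $1$ and off $E$ it vanishes. Consequently,
$$
\int F^{2}\cdot F^{2}\circ S^{a_1}\cdots F^{2}\circ S^{a_r}\,d\widehat{\nu}=\widehat{\nu}(E)=\nu(E)=\int F\cdot F\circ S^{a_1}\cdots F\circ S^{a_r}\,d\nu,
$$
the last equality coming from the fact that $F$ takes values in $\{0,1\}$ on $\{0,1\}^{\N^\ast}$, so the product there is the indicator of $E$.

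The only substantive point is the extension of \eqref{es1} from contiguous blocks to arbitrary finite sets of positions, i.e. the conditional independence structure of $\widehat{\nu}$ over its factor $\nu$; once this is in hand, both identities reduce to the elementary remark that a product of independent centered $\pm 1$ Bernoulli variables has mean zero.
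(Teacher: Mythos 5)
Your proposal is correct and follows essentially the same route as the paper: the paper expands the integral as a sum over the possible values $j_0,\dots,j_r\in\{\pm1\}$ at the positions $1,1+a_1,\dots,1+a_r$ and uses exactly the extension of~\eqref{es1} to non-contiguous coordinates that you record, after which the sum $\sum_{j_0,\dots,j_r=\pm1}j_0^{i_0}\cdots j_r^{i_r}$ factorizes and vanishes when some $i_s=1$ (and equals $2^{r+1}$ when all $i_s=2$). Your conditional-expectation phrasing is just the probabilistic rewording of that same computation, so there is nothing to add.
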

\begin{proof}
The assertion follows directly by the calculation:
\begin{align*}
\int_{\{-1,0,1\}^{\N^\ast}}&F^{i_0}\cdot F^{i_1}\circ S^{a_1}\cdot\ldots\cdot F^{i_r}\circ S^{a_r}\ d\widehat{\nu}\\
&=\sum_{j_0,j_1,\ldots,j_r=\pm1}j_0^{i_0}\cdot j_1^{i_1}\cdot\ldots\cdot j_r^{i_r}\\
&\cdot \widehat{\nu}\left(\left\{y\in\{-1,0,1\}^{\N^\ast}\colon (y(1),y({1+a_1}),\ldots,y(1+a_r))=(j_0,j_1,\ldots,j_r)\right\}\right)\\
&= \Big(\sum_{j_0,j_1,\ldots,j_r=\pm1}j_0^{i_0}\cdot j_1^{i_1}\cdot\ldots\cdot j_r^{i_r}\Big)\\
&\cdot\frac1{2^{r+1}}\nu\left(\left\{u\in\{0,1\}^{\N^\ast}\colon u(1)=u(1+a_1)=\ldots=u(1+a_r)=1\right\}\right).
\end{align*}
\end{proof}

\begin{Lemma}[cf.\ \cite{Sarnak} for $\mob$]\label{rownow}
Let $(N_k)$ be such that~\eqref{zbieznosc} holds. Then
\beq\label{m:chs}
\frac1{N_k}\sum_{n\leq N_k} z^{i_0}(n)\cdot z^{i_1}(n+a_1)\cdot\ldots\cdot z^{i_r}(n+a_r)\tend{k}{\infty}0
\eeq
for each choice of $1\leq a_1<\ldots<a_r$, $r\geq 0$, $i_s\in\{1,2\}$ not all equal to~$2$, if and only if
\beq\label{zbieznosc1}
\delta_{N_k,z}\tend{k}{\infty} \widehat{\nu}.
\eeq
\end{Lemma}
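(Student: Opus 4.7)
The strategy is to reduce the equivalence to a statement about weak convergence of $(\delta_{N_k,z})$ tested against a convenient dense family of continuous functions, and then to identify each of the relevant limiting integrals by means of Lemma~\ref{jaf}.

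Since $F^3=F$ on $\{-1,0,1\}$, the algebra generated by the constants together with $\{F\circ S^a : a\ge 0\}$ is spanned, as a vector space, by the constants and by monomials of the form
$$M_{\vec a}^{\vec i} := F^{i_0}\cdot F^{i_1}\circ S^{a_1}\cdots F^{i_r}\circ S^{a_r},$$
with $r\ge 0$, $1\le a_1<\cdots<a_r$, and $i_s\in\{1,2\}$, together with their shifts by powers of $S$. This algebra separates points and contains the constants, so by Stone--Weierstrass it is dense in $C(\{-1,0,1\}^{\N^\ast})$. Moreover, for any such monomial,
$$\int M_{\vec a}^{\vec i}\, d\delta_{N_k,z} = \frac{1}{N_k}\sum_{n\le N_k} z^{i_0}(n)\,z^{i_1}(n+a_1)\cdots z^{i_r}(n+a_r) + O(a_r/N_k),$$
so weak convergence of $(\delta_{N_k,z})$ is equivalent to convergence of all these Ces\`aro averages (the shifts contribute only $O(1/N_k)$).

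For the direction $(\Longrightarrow)$, assume \eqref{m:chs} and fix a monomial $M=M_{\vec a}^{\vec i}$. If not all $i_s$ equal $2$, then Lemma~\ref{jaf} gives $\int M\, d\widehat\nu = 0$, while the hypothesis \eqref{m:chs} gives $\int M\, d\delta_{N_k,z}\to 0$. If all $i_s=2$, then $M=G\circ\pi$ where $\pi$ is the squaring map from \eqref{squaremap} and $G:=F\cdot F\circ S^{a_1}\cdots F\circ S^{a_r}\in C(\{0,1\}^{\N^\ast})$; hence
$$\int M\, d\delta_{N_k,z} = \int G\, d\delta_{N_k,z^2}\tend{k}{\infty}\int G\, d\nu = \int M\, d\widehat\nu,$$
using \eqref{zbieznosc} and, for the last equality, Lemma~\ref{jaf}. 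By linearity, density, and the $O(1/N_k)$ argument above, we obtain \eqref{zbieznosc1}.

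For the direction $(\Longleftarrow)$, assume $\delta_{N_k,z}\to\widehat\nu$ weakly. For each admissible tuple $(a_1,\dots,a_r)$ and $(i_0,\dots,i_r)$ with not all $i_s=2$, applying weak convergence to $g=M_{\vec a}^{\vec i}$ together with Lemma~\ref{jaf} yields $\int g\, d\delta_{N_k,z}\to\int g\, d\widehat\nu=0$, and this is exactly \eqref{m:chs} up to the same $O(1/N_k)$ boundary term. The only step of substance is the identification of $\int M\, d\widehat\nu$ in the case where all exponents equal $2$, which is precisely what Lemma~\ref{jaf} provides; I do not anticipate a genuine obstacle beyond that routine bookkeeping.
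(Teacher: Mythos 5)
Your proposal is correct and follows essentially the same route as the paper: both arguments rest on identifying the Ces\`aro averages with integrals of the monomials $F^{i_0}\cdot F^{i_1}\circ S^{a_1}\cdots F^{i_r}\circ S^{a_r}$ against the empirical measures, invoking Lemma~\ref{jaf} to evaluate these integrals against $\widehat\nu$ (zero when some $i_s=1$, and reducible to $\nu$ via the squaring map when all $i_s=2$), and applying Stone--Weierstrass to the algebra of monomials to upgrade agreement on this family to equality of measures. The only cosmetic difference is that the paper passes to a subsequential limit $\rho$ and shows $\rho=\widehat\nu$, whereas you argue weak convergence directly from density of the monomial algebra; these are equivalent.
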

\begin{proof}
Note that, for each $k\geq 1$,
\begin{multline}\label{faj}
\frac1{N_k}\sum_{n\leq N_k}z^{i_0}(n)\cdot z^{i_1}(n+a_1)\cdot\ldots\cdot z^{i_r}(n+a_r)\\
=\frac1{N_k}\sum_{n\leq N_k}\big(F^{i_0}\cdot F^{i_1}\circ S^{a_1}\cdot\ldots\cdot F^{i_r}\circ S^{a_r}\big)(S^{n-1}z).
\end{multline}
Suppose that~\eqref{zbieznosc1} holds. Then it follows from~\eqref{faj} that
\begin{multline*}
\frac1{N_k}\sum_{n\leq N_k}z^{i_0}(n)\cdot z^{i_1}(n+a_1)\cdot\ldots\cdot z^{i_r}(n+a_r)\\
\tend{k}{\infty}\int_{\{-1,0,1\}^{\N^\ast}}F^{i_0}\cdot F^{i_1}\circ S^{a_1}\cdot\ldots\cdot F^{i_r}\circ S^{a_r}\ d\widehat{\nu}.
\end{multline*}
Therefore, in view of Lemma~\ref{jaf}, we obtain~\eqref{m:chs}.

Suppose now that~\eqref{m:chs} holds. Without loss of generality, we may assume that
\beq\label{zbieznosc2}
\delta_{N_k,z}\tend{k}{\infty} \rho.
\eeq
In view of~\eqref{faj}, this implies
\begin{multline*}
\frac1{N_k}\sum_{n\leq N_k}z^{i_0}(n)\cdot z^{i_1}(n+a_1)\cdot\ldots\cdot z^{i_r}(n+a_r)\\
\tend{k}{\infty} \int_{\{-1,0,1\}^{\N^\ast}}F^{i_0}\cdot F^{i_1}\circ S^{a_1}\cdot\ldots\cdot F^{i_r}\circ S^{a_r} \ d\rho.
\end{multline*}
It follows from~\eqref{m:chs} that
\begin{equation}\label{czy1}
\int_{\{-1,0,1\}^{\N^\ast}}F^{i_0}\cdot F^{i_1}\circ S^{a_1}\cdot\ldots\cdot F^{i_r}\circ S^{a_r} \ d\rho=0,
\end{equation}
whenever not all $i_t$ are equal to $2$. Moreover, since $F^2(u)=F(u^2)$ for any $u\in \{-1,0,1\}^{\N^\ast}$, we deduce from~\eqref{zbieznosc} that
\begin{equation}\label{czy2}
\int_{\{-1,0,1\}^{\N^\ast}}F^{2}\cdot F^{2}\circ S^{a_1}\cdot\ldots\cdot F^{2}\circ S^{a_r} \ d\rho=\int_{\{0,1\}^{\N^\ast}}F\cdot F \circ S^{a_1}\cdot\ldots\cdot F \circ S^{a_r} \ d\nu.
\end{equation}
In view of Lemma~\ref{jaf},~\eqref{czy1} and~\eqref{czy2}, we have
$$
\int_{\{-1,0,1\}^{\N^\ast}}G\ d\widehat{\nu}=\int_{\{-1,0,1\}^{\N^\ast}}G\ d\rho
$$
for any
$$
G\in \mathcal{A}:=\{F^{i_0}\cdot F^{i_1}\circ S^{a_1}\cdot\ldots\cdot F^{i_r}\circ S^{a_r}\colon 1\leq a_1<\dots<a_r, r\geq 0, i_s\in \N\}.
$$
Since $\mathcal{A}\subset C(\{-1,0,1\}^{\N^\ast})$ is closed under taking products and separates points, we only need to use the Stone-Weierstrass theorem to conclude that $\rho=\widehat{\nu}$.
\end{proof}

\label{porba}
The above lemma can be also viewed from the probabilistic point of view. Indeed, let ${(X_n)}_{n\geq 1}$ (or ${(X_n)}_{n\in\Z}$) be a stationary sequence of random variables taking values in $\{-1,0,1\}$. Notice that whenever
\begin{equation}\label{echowla2}
\mathbb{P}\big(\{X_{a_1}=j_1,\ldots,X_{a_r}=j_r\}\big)\\
=\frac1{2^{k}}\mathbb{P}\big(\{X^2_{a_1}=j^2_1,\ldots,X^2_{a_r}=j^2_r\}\big),
\end{equation}
for each choice of $1\leq a_1<\ldots<a_r$ and $j_s\in\{-1,0,1\}$, where $k:=|\{s\in\{1,\ldots,r\}: j_s\neq0\}|$, then
\beq\label{echowla1}
\E ( X_{a_1}^{i_1}\cdot\ldots\cdot X_{a_r}^{i_r})=0
\eeq
for each choice of $1\leq a_1<\ldots<a_r$, $r\geq0$, $i_s\in\{1,2\}$ not all equal to~$2$ (the proof is the same as the one of Lemma~\ref{jaf} with notational changes only).\footnote{Condition~\eqref{echowla2} means that the distribution of the process ${(X_n)}_{n\geq1}$ is the relatively independent extension of the distribution of the (stationary) process ${(X_n^2)}_{n\geq1}$.}

In fact, the following holds:
\begin{Lemma}\label{prob}
Conditions~\eqref{echowla2} and~\eqref{echowla1} are equivalent.
\end{Lemma}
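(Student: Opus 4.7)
The direction \eqref{echowla2}$\Rightarrow$\eqref{echowla1} is essentially identical to the computation in Lemma~\ref{jaf}: expand $\mathbb{E}(X_{a_1}^{i_1}\cdots X_{a_r}^{i_r})$ as a sum over $j\in\{-1,0,1\}^r$, observe that only $j\in\{-1,1\}^r$ contributes (since $i_s\geq 1$), substitute \eqref{echowla2}, and factor out $\sum_{j\in\{-1,1\}^r}j_1^{i_1}\cdots j_r^{i_r}$, which vanishes as soon as some $i_s=1$. So the substantive content is the converse, $\eqref{echowla1}\Rightarrow\eqref{echowla2}$, and that is where I would focus the proof.

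My plan is to rewrite the joint law as an expectation of a polynomial in the $X_{a_s}$. For $X\in\{-1,0,1\}$ the identities
$$\mathbf{1}_{X=j}=\tfrac{1}{2}(X^2+jX)\ (j=\pm 1),\qquad \mathbf{1}_{X=0}=1-X^2$$
hold identically. Fixing $j=(j_1,\ldots,j_r)\in\{-1,0,1\}^r$ and setting $S:=\{s:j_s\neq 0\}$, $k:=|S|$, we therefore obtain
$$\mathbb{P}(X_{a_1}=j_1,\ldots,X_{a_r}=j_r)=\frac{1}{2^{k}}\,\mathbb{E}\Bigl(\prod_{s\in S}(X_{a_s}^{2}+j_sX_{a_s})\prod_{s\notin S}(1-X_{a_s}^{2})\Bigr).$$
The next step is to expand both products and distribute the expectation: each summand then has the form $c\cdot\mathbb{E}\bigl(\prod_{s\in V}X_{a_s}^{i_s}\bigr)$ for some $V\subseteq\{1,\ldots,r\}$ and $i_s\in\{1,2\}$, where an exponent $i_s=1$ appears precisely at those $s\in S$ for which one selects the $j_s X_{a_s}$ summand from the first product.

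The decisive move is to apply \eqref{echowla1} to the subset of times indexed by~$V$: every monomial in which at least one exponent equals~$1$ has zero expectation. Consequently only terms where $X_{a_s}^2$ is chosen from each bracket $s\in S$ survive. Re-summing the expansion of $\prod_{s\notin S}(1-X_{a_s}^{2})$ then recombines those remaining terms, yielding
$$\mathbb{P}(X_{a_1}=j_1,\ldots,X_{a_r}=j_r)=\frac{1}{2^{k}}\,\mathbb{E}\Bigl(\prod_{s\in S}X_{a_s}^{2}\prod_{s\notin S}(1-X_{a_s}^{2})\Bigr)=\frac{1}{2^{k}}\,\mathbb{P}(X_{a_1}^{2}=j_1^{2},\ldots,X_{a_r}^{2}=j_r^{2}),$$
which is exactly \eqref{echowla2}. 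The only real point requiring care is bookkeeping: one must invoke \eqref{echowla1} on the variable index set $V$ (not the full $\{1,\ldots,r\}$), using that the hypothesis is stated for all finite time-tuples. Beyond this, the argument is a purely algebraic manipulation with no further obstacle.
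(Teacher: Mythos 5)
Your proof is correct, but the converse direction is argued by a genuinely different route than the paper's. The paper handles $\eqref{echowla1}\Rightarrow\eqref{echowla2}$ by a soft uniqueness argument: it observes that a stationary process with values in $\{-1,0,1\}$ is determined by the quantities $\E(\exp(i\sum_j t_jX_j))$, expands the exponential into a power series so that everything reduces to mixed moments, and notes that~\eqref{echowla1} (together with the distribution of the square process) pins down all of these moments; since the relatively independent extension is one process satisfying~\eqref{echowla1} with the given square process, it must be the only one. You instead invert the moment--probability relation explicitly, writing $\mathbf{1}_{X=j}=\tfrac12(X^2+jX)$ and $\mathbf{1}_{X=0}=1-X^2$, expanding the product, and using~\eqref{echowla1} on each sub-tuple $V$ of indices to kill every monomial containing a first power; the surviving terms re-sum exactly to the right-hand side of~\eqref{echowla2}. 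Your bookkeeping is sound (the crucial point, which you flag, is that~\eqref{echowla1} is quantified over all finite index tuples, so it applies to each $V$). What each approach buys: yours is entirely elementary and constructive --- it produces the identity~\eqref{echowla2} directly with no appeal to characteristic functions or to the fact that moments determine the law of bounded variables --- whereas the paper's argument is shorter and, as the authors remark, adapts to more general stationary processes admitting moments of all orders, where an explicit polynomial inversion of the indicators is not available.
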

\begin{proof}
We have already seen that~\eqref{echowla2} implies~\eqref{echowla1}. Let us show the converse implication. In other words, we need to show that there exists at most one stationary process (that is, at most one $S$-invariant distribution on $\{-1,0,1\}^{\N^\ast}$) such that~\eqref{echowla1} holds. However, each stationary process $(X_n)$ is entirely determined by the family
$$
\{\E(\exp(i\sum_{j=1}^{n}t_jX_j))\colon n\geq1, (t_0,\ldots,t_{n-1})\in\R^n\}.
$$
Since $\E(\exp(i\sum_{j=1}^{n}t_jX_j))=
\sum_{k=0}^\infty\frac{i^k}{k!}\E\left(\sum_{j=1}^{n}t_jX_j\right)^k$, the result follows.
\end{proof}

 As the proof shows, the above lemma can be proved in a more general framework, namely, for stationary processes having moments of all orders.

\begin{Remark}\label{re:ch:4}
It follows immediately from Lemma~\ref{rownow} that each of the following conditions is equivalent to~\eqref{Ch}:
\begin{itemize}
	\item
	$\text{Q-gen}(z)=\left\{\widehat{\nu}\colon\nu\in \text{Q-gen}(z^2)\right\}$;
	\item
	$\delta_{N_k,z^2}\tend{k}{\infty}\nu$ if and only if $\delta_{N_k,z} \tend{k}{\infty}\widehat\nu$.
\end{itemize}
\end{Remark}
Now, we can completely characterize sequences $z\in\{-1,1\}^{\N^\ast}$ satisfying~\eqref{Ch}.
\begin{Prop}\label{lambda}
The only sequences $u\in\{-1,1\}^{\N^\ast}$ satisfying~\eqref{Ch} are generic points for the Bernoulli measure $B(1/2,1/2)$.\end{Prop}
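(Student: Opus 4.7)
The plan is to apply Remark~\ref{re:ch:4} directly, using the observation that $u^2$ is forced to be the constant sequence $1^\infty$ when $u$ takes values in $\{-1,1\}$.

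First I would note that for $u\in\{-1,1\}^{\N^\ast}$, we have $u^2(n)=1$ for every $n$, so $u^2$ is generic for the Dirac measure $\nu=\delta_{1^\infty}\in\mathcal{P}_S(\{0,1\}^{\N^\ast})$. In particular $\text{Q-gen}(u^2)=\{\delta_{1^\infty}\}$.

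Next I would identify the relatively independent extension $\widehat{\nu}$ defined in~\eqref{es1}. For any block $B\in\{-1,0,1\}^k$ we have $\widehat{\nu}(B)=2^{-|\text{supp}(B)|}\nu(B^2)$, and since $\nu$ charges only the block $(1,\dots,1)$, this forces $B\in\{-1,1\}^k$, in which case $|\text{supp}(B)|=k$ and $\widehat{\nu}(B)=2^{-k}$. Thus $\widehat{\delta_{1^\infty}}$ is exactly the Bernoulli measure $B(1/2,1/2)$ concentrated on $\{-1,1\}^{\N^\ast}\subset \{-1,0,1\}^{\N^\ast}$.

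Finally, by Remark~\ref{re:ch:4}, condition~\eqref{Ch} is equivalent to $\text{Q-gen}(u)=\{\widehat{\nu}:\nu\in\text{Q-gen}(u^2)\}$. Combining the two observations above, this reads $\text{Q-gen}(u)=\{B(1/2,1/2)\}$, i.e.\ $u$ is generic for $B(1/2,1/2)$. The converse direction is immediate from the same equivalence, since if $u$ is generic for $B(1/2,1/2)$ then the only quasi-generic measure for $u$ is $\widehat{\delta_{1^\infty}}$, matching the unique quasi-generic measure of $u^2$. There is no real obstacle here; the whole statement is essentially a one-line corollary of Remark~\ref{re:ch:4}, and the only point requiring care is the brief computation identifying $\widehat{\delta_{1^\infty}}$ with $B(1/2,1/2)$.
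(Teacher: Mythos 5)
Your proof is correct and follows essentially the same route as the paper: the paper likewise observes that $u^2$ is generic for the Dirac measure at $(1,1,\ldots)$ and invokes Lemma~\ref{rownow} (of which Remark~\ref{re:ch:4} is a direct restatement) to conclude that $u$ is generic for the relatively independent extension, which is $B(1/2,1/2)$. Your explicit verification that $\widehat{\delta_{1^\infty}}=B(1/2,1/2)$ is a welcome detail the paper leaves implicit.
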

\begin{proof}
Notice that $u^2$ is the generic point for the Dirac measure at $(1,1,...)$ and by Lemma~\ref{rownow}, $u$ is a generic point for the relatively independent extension of that Dirac measure, which is the Bernoulli measure $B(1/2,1/2)$.
\end{proof}

\subsection{\eqref{Ch} implies~\eqref{S}}\label{se:Ch-to-S}
In this section, we will provide a dynamical proof of the following theorem:
\begin{Th}[Sarnak]\label{ChS}
\eqref{Ch} implies~\eqref{S}.
\end{Th}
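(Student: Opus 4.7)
The plan is to argue by contradiction through a joining. Suppose, for some completely deterministic $x\in X$, some $f\in C(X)$, and some homeomorphism $T$ of a compact metric space, one had
$$
\limsup_{N\to\infty}\left|\frac{1}{N}\sum_{n\leq N}f(T^nx)z(n)\right|\geq\varepsilon>0.
$$
Pick $(N_k)$ realising this $\limsup$ and, by successive compactness extractions, refine it so that $\delta_{T,N_k,x}\to\mu$, $\delta_{S,N_k,z^2}\to\nu$, $\delta_{S,N_k,z}\to\widehat{\nu}$ (the last using Lemma~\ref{rownow} together with~\eqref{Ch}), and the joint $\delta_{T\times S,N_k,(x,z)}\to\rho$ on $X\times\{-1,0,1\}^{\N^\ast}$. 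Then $\rho$ is a joining of $(X,\mu,T)$ and $(\{-1,0,1\}^{\N^\ast},\widehat{\nu},S)$, and complete determinism of $x$ gives $h(T,\mu)=0$. Writing $F(y):=y(1)$, the Cesàro means converge along $(N_k)$ to $\int f(x')F(y)\,d\rho(x',y)$, so it will suffice to show that this integral vanishes.

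The key structural input is that the extension
$$
\pi\colon (\{-1,0,1\}^{\N^\ast},\widehat{\nu},S)\longrightarrow(\{0,1\}^{\N^\ast},\nu,S)
$$
(coordinate squaring) is \emph{relatively K}. To see this, set $B:=\{-1,1\}^{\N^\ast}$ with the Bernoulli measure $\beta:=B(1/2,1/2)$, and define $\Psi\colon\{0,1\}^{\N^\ast}\times B\to\{-1,0,1\}^{\N^\ast}$ by coordinatewise multiplication, $\Psi(z',\eta)_n:=z'_n\eta_n$. A direct cylinder computation gives $\Psi_\ast(\nu\otimes\beta)=\widehat{\nu}$, and $\Psi$ intertwines $S\times S$ with $S$; thus $(\widehat{\nu},S)$ is an intermediate factor of $(\nu\otimes\beta,S\times S)$ lying above $(\nu,S)$. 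Since $\beta$ is K (Bernoulli), the extension $(\nu\otimes\beta)\to\nu$ is relatively K, and any intermediate factor of a relatively K extension is itself relatively K over the same base; hence so is $\widehat{\nu}\to\nu$.

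Now apply Thouvenot's lemma (Remark~\ref{Thouvenot}). Put $\rho_0:=(\mathrm{id}_X\times\pi)_\ast\rho$, a joining of $(X,\mu,T)$ and $(\{0,1\}^{\N^\ast},\nu,S)$. From joining subadditivity $h(T\times S,\rho_0)\leq h(T,\mu)+h(S,\nu)=h(S,\nu)$ together with the factor inequality $h(T\times S,\rho_0)\geq h(S,\nu)$, the extension $\rho_0\to\nu$ has zero relative entropy. Lifting $\rho$ to the measure on $X\times\{0,1\}^{\N^\ast}\times\{-1,0,1\}^{\N^\ast}$ concentrated on $\{z=y^2\}$ (via $(x,y)\mapsto(x,y^2,y)$) exhibits $\rho$ as a joining of $(X\times\{0,1\}^{\N^\ast},\rho_0,T\times S)$ and $(\widehat{\nu},S)$ that projects to the diagonal of $J((\nu,S),(\nu,S))$. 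Thouvenot's lemma then forces this joining to be the relatively independent one, giving the disintegration
$$
\rho=\int_{\{0,1\}^{\N^\ast}}\rho_0^z\otimes\widehat{\nu}^z\,d\nu(z).
$$
Consequently
$$
\int f(x')F(y)\,d\rho=\int\Big(\!\int f\,d\rho_0^z\Big)\Big(\!\int y(1)\,d\widehat{\nu}^z(y)\Big)d\nu(z),
$$
and the inner sign integral is $0$ for every $z$: either $z(1)=0$, in which case $y(1)=0$ $\widehat{\nu}^z$-a.s., or $z(1)=1$, in which case $\widehat{\nu}^z$ gives $y(1)$ the fair $\pm1$ distribution. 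Hence the integral vanishes, contradicting $\geq\varepsilon$.

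The main obstacle is the relative K property for $\widehat{\nu}\to\nu$: the factorisation through $\Psi$ repackages the independent signs as a genuine Bernoulli system and lets us invoke the standard stability of relative K under passage to an intermediate factor, whereas a direct verification via the relative Pinsker algebra would require additional work.
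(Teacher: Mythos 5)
Your argument is correct and follows essentially the same route as the paper's proof: the relative K property of the extension $\widehat{\nu}\to\nu$ obtained by realising $\widehat{\nu}$ as an intermediate factor of $\nu\otimes B(1/2,1/2)$ over $\nu$, Thouvenot's lemma to force relative independence over the common factor $\nu$, and the vanishing of the conditional expectation of $F$ given the squared coordinate. The only (harmless) omission is the degenerate case $\nu=\delta_{(0,0,\ldots)}$, where the extension $\widehat{\nu}\to\nu$ is trivial rather than relatively K; there relative independence is automatic and the conclusion still holds, exactly the caveat the paper records in Lemma~\ref{l2} and Remark~\ref{jedyna}.
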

\begin{Remark}
In particular,~\eqref{Ch} implies~\eqref{S0} (see~\cite{Sarnak}), which has already been proved by Sarnak.
The proof of the implication~\eqref{Ch} $\implies$ \eqref{S} given below is to be compared with Sarnak's arguments on page~9 of~\cite{Sarnak}. Later, in Theorem~\ref{thm:SequivS0}, we show that~\eqref{S} and~\eqref{S0} are equivalent. Hence, another way to prove Theorem~\ref{ChS} is to use \eqref{Ch} $\implies$ \eqref{S0} and~\eqref{S} $\iff$~\eqref{S0}.
\end{Remark}

Fix some $\nu\in\mathcal{P}_S(\{0,1\}^\Z)$.

\begin{Lemma}\label{l1} The dynamical system $(S, \{-1,0,1\}^\Z,\widehat{\nu})$ is a factor of
$$
(S,\{0,1\}^\Z,\nu)\times(S,\{-1,1\}^{\Z}, B(1/2,1/2)).
$$
\end{Lemma}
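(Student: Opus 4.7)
The plan is to exhibit an explicit factor map from the product system to $(S,\{-1,0,1\}^\Z,\widehat{\nu})$. Specifically, I would define
$$\pi\colon \{0,1\}^\Z\times\{-1,1\}^\Z \longrightarrow \{-1,0,1\}^\Z,\qquad \pi(y,\varepsilon)_n := y_n\cdot\varepsilon_n,\ n\in\Z,$$
i.e.\ coordinate-wise multiplication. This is clearly continuous, hence Borel measurable, and it commutes with the left shift $S$, so $S$-equivariance is immediate.

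The content of the lemma is the identity $\pi_*(\nu\otimes B(1/2,1/2)) = \widehat{\nu}$. I would verify this on cylinders and then invoke standard extension. Fix a block $B = (B(0),\dots,B(k-1))\in\{-1,0,1\}^k$ and analyze the preimage coordinate by coordinate: since $\varepsilon_n\in\{-1,1\}$, the condition $y_n\varepsilon_n = B(n)$ forces $y_n = B(n)^2$ for every $n$, and additionally $\varepsilon_n = B(n)$ for each $n\in\text{supp}(B)$, whereas for $n\notin\text{supp}(B)$ the value of $\varepsilon_n$ is unconstrained. Using the product structure of $\nu\otimes B(1/2,1/2)$, this yields
$$\bigl(\nu\otimes B(1/2,1/2)\bigr)\bigl(\pi^{-1}(B)\bigr) = \nu(B^2)\cdot 2^{-|\text{supp}(B)|} = \widehat{\nu}(B),$$
where the last equality is the defining formula \eqref{es1}.

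Since cylinders generate the Borel $\sigma$-algebra of $\{-1,0,1\}^\Z$, this equality of measures on cylinders extends to full agreement of the two Borel probability measures. Hence $\pi_*(\nu\otimes B(1/2,1/2)) = \widehat{\nu}$, and together with the $S$-equivariance of $\pi$, this exhibits $(S,\{-1,0,1\}^\Z,\widehat{\nu})$ as a measure-theoretic factor of $(S,\{0,1\}^\Z,\nu)\times(S,\{-1,1\}^\Z,B(1/2,1/2))$.

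There is essentially no obstacle here: the construction is forced by the intuition that the sign of a nonzero coordinate is determined by an independent fair coin, which is precisely what the factor $B(1/2,1/2)$ provides. The only minor point to keep in mind is the bookkeeping of coordinates outside $\text{supp}(B)$, where the $\varepsilon$-component is free and contributes no factor to the probability, reflecting the $2^{-|\text{supp}(B)|}$ (rather than $2^{-k}$) normalization in \eqref{es1}.
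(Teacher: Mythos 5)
Your proposal is correct and is essentially the paper's own proof: the paper uses the same coordinate-wise multiplication map $\xi(w,u)(n):=w(n)\cdot u(n)$ and asserts that $\xi_\ast(\nu\otimes B(1/2,1/2))=\widehat{\nu}$ follows from the definition \eqref{es1}, which is exactly the cylinder computation you carry out in detail. The only cosmetic caveat is that you named your map $\pi$, which collides with the paper's reserved notation for the coordinate square map \eqref{squaremap}.
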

\begin{proof}
It suffices to notice that, for $\xi\colon \{0,1\}^\Z\times\{-1,1\}^\Z\to \{-1,0,1\}^\Z$ given by
$$
\xi(w,u)(n):=w(n)\cdot u(n),
$$
we have
$$
\xi_\ast(\nu\ot B(1/2,1/2))=\widehat{\nu},
$$
which is straightforward by the definition of $\widehat{\nu}$.
\end{proof}

\begin{Lemma} \label{l2}
The extension $(S,\{-1,0,1\}^\Z,\widehat{\nu})\stackrel{\pi}{\to} (S,\{0,1\}^\Z,\nu)$ is either trivial (i.e.\ 1-1 a.e.) or relatively~K.\footnote{Recall that $\pi$ was defined in~\eqref{squaremap}.}
\end{Lemma}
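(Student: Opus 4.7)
The strategy is to exploit the explicit conditional structure of $\widehat\nu$ over $\nu$ coming from \eqref{es1}, together with Lemma~\ref{l1}: conditionally on $\pi(y) = y^2 = w$, the signs $\big(\mathrm{sgn}(y_n)\big)_{n \in \mathrm{supp}(w)}$ are i.i.d.\ uniform on $\{-1,+1\}$, where $\mathrm{supp}(w) := \{n : w_n = 1\}$. In other words, all of the randomness that $\widehat\nu$ adds over $\nu$ is independent Bernoulli noise carried on the support of $w$. Set $p := \nu(\{w : w_0 = 1\})$; by stationarity, $\nu(\{w_n = 1\}) = p$ for every $n \in \Z$.

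I would split into two cases. If $p = 0$, countable subadditivity forces $\nu = \delta_{\mathbf 0}$, hence $\widehat\nu = \delta_{\mathbf 0}$ and $\pi$ is trivially 1-1 $\widehat\nu$-a.e., giving the first alternative. If $p > 0$, I would prove relatively~K via a standard relative Rokhlin--Sinai-type criterion. Pick the finite partition
\[
P := \bigl\{\{y : y_0 = -1\},\ \{y : y_0 = 0\},\ \{y : y_0 = 1\}\bigr\},
\]
which satisfies $\bigvee_{k \in \Z} S^{-k}P = \mathcal{B}_X$. It then suffices to check that the relative future tail
\[
\mathcal{T}_+ := \bigcap_{N \geq 0}\Bigl(\sigma(y_n : n \geq N) \vee \pi^{-1}(\mathcal{B}_Y)\Bigr)
\]
equals $\pi^{-1}(\mathcal{B}_Y)$ modulo $\widehat\nu$-null sets, since this is well known to be equivalent to saying that the relative Pinsker factor coincides with the base, i.e.\ the extension is relatively~K. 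Conditioning on $\pi^{-1}(\mathcal{B}_Y)$, the tail $\mathcal{T}_+$ becomes the tail $\sigma$-algebra of the i.i.d.\ sign sequence $\big(\mathrm{sgn}(y_n)\big)_{n \in \mathrm{supp}(w)}$, and Kolmogorov's 0-1 law kills it provided $|\mathrm{supp}(w)| = \infty$ for $\nu$-a.e.\ $w$. The latter is obtained by passing to the ergodic decomposition of $\nu$: any ergodic component $\nu'$ with $\nu'(\{w_0 = 1\}) = 0$ is $\delta_{\mathbf 0}$ (contributing only the trivial fiber), and any component with $\nu'(\{w_0 = 1\}) > 0$ forces $|\mathrm{supp}(w)| = \infty$ $\nu'$-a.s.\ by Birkhoff's theorem.

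The main obstacle I expect is not the computation itself but the invocation of the relative Rokhlin--Sinai characterization (triviality of the relative tail $\Leftrightarrow$ relatively~K) in clean, quotable form, combined with the bookkeeping needed when $\nu$ is not ergodic. One must verify that when $\nu$ has both a $\delta_{\mathbf 0}$-component (trivial fiber, no extra information over $\pi^{-1}(\mathcal{B}_Y)$) and a positive-density component, the overall extension still fits the dichotomy of the lemma: the relative Pinsker factor computation reduces to the positive-density part, where Kolmogorov 0-1 applies, so globally one obtains either the trivial alternative (when only the $\delta_{\mathbf 0}$-part survives) or relatively~K (otherwise).
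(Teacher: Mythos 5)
Your argument is correct, but it follows a genuinely different route from the paper's. The paper's proof is a two-line reduction: using the map $\xi$ from Lemma~\ref{l1}, it realizes $(S,\{-1,0,1\}^\Z,\widehat\nu)$ as an intermediate factor of the product extension $(S,\{0,1\}^\Z,\nu)\times(S,\{-1,1\}^\Z,B(1/2,1/2))\to(S,\{0,1\}^\Z,\nu)$ (the point being that $\pi\circ\xi$ is the first-coordinate projection, since $(w\cdot u)^2=w$), and then invokes the standard fact that a direct product with a Bernoulli system is relatively~K over the other factor, together with the observation --- immediate from the definition of relatively~K adopted in the paper --- that any nontrivial intermediate factor of a relatively~K extension is again relatively~K. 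You instead verify the relative~K property directly: you identify the conditional fibre measures $\widehat\nu_w$ as i.i.d.\ uniform signs on $\mathrm{supp}(w)$, apply Kolmogorov's 0--1 law fibrewise, and conclude via the relative Rokhlin--Sinai characterization that the relative Pinsker factor is the base; the ergodic decomposition correctly disposes of the dichotomy ``component equals $\delta_{\mathbf 0}$ versus support infinite a.s.'', and your treatment of the mixed case is consistent with Remark~\ref{jedyna} (triviality occurs exactly when $\nu=\delta_{\mathbf 0}$). What each approach buys: the paper's is shorter, reuses Lemma~\ref{l1}, and hides all the work in one quotable fact about products with Bernoulli systems; yours makes the mechanism (independent noise injected on the support) explicit and is more self-contained at the level of ideas, but it leans on the relative Rokhlin--Sinai theorem in a form that needs a precise citation, and it requires the (real, though here justified by conditional independence) care you flag around interchanging the intersection $\bigcap_N$ with the join against $\pi^{-1}(\mathcal{B}_Y)$ --- in general $\bigcap_N(\mathcal{A}_N\vee\mathcal{C})\neq(\bigcap_N\mathcal{A}_N)\vee\mathcal{C}$, so the fibrewise 0--1 argument is genuinely needed and should be spelled out.
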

\begin{proof}
Notice that since the extension
$$
(S,\{0,1\}^\Z,\nu)\times (S,\{-1,1\}^{\Z},B(1/2,1/2))\to (S,\{0,1\}^\Z,\nu)
$$
is relatively~K, so is any nontrivial intermediate factor (over $(S,\{0,1\}^\Z,\nu)$). To see that
$(S,\{-1,0,1\}^\Z,\widehat{\nu})$ is an intermediate factor, by the proof of Lemma~\ref{l1}, all we need to check is that $\pi\circ\xi$ equals to the projection on the first coordinate. The latter follows from the equality
$w=(w\cdot u)^2$ which holds for $w\in \{0,1\}^\Z$ and $u\in\{-1,1\}^\Z$.
\end{proof}

\begin{Remark}\label{jedyna}
It is possible that the extension $(S,\{-1,0,1\}^\Z,\widehat{\nu})\stackrel{\pi}{\to} (S,\{0,1\}^\Z,\nu)$ is trivial. In fact, it happens only if $\nu=\delta_{(\ldots,0,0,0,\ldots)}$.
For, suppose that $Y\subset \{-1,0,1\}^\Z$, $\widehat{\nu}(Y)=1$ is such that $\pi|_Y$ is 1-1. Fix a block $B\in\{0,1\}^k$ with $\nu(B)>0$. Then the set $\{x\in \pi(Y):x(n)=B(n),\;n=0,1,\ldots,k-1\}$ is of positive $\nu$-measure, and $|\pi^{-1}(x)\cap Y|\geq2^{\text{supp}(B)}$ as each block $C\in\{-1,0,1\}^k$, $C^2=B$, has positive $\widehat{\nu}$-measure (whence $\widehat{\nu}(Y\cap C)>0$). It follows immediately that the support of $B$ has to be empty.
\end{Remark}

\begin{Lemma}\label{l3}
$\E^{\widehat\nu}(F|\pi(w)=u)=0$ for $\nu$-a.e.\ $u\in\{0,1\}^{\Z}$.
\end{Lemma}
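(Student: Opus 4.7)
The plan is to compute the conditional expectation directly using the product structure provided by Lemma~\ref{l1}. Recall that that lemma identifies $\widehat\nu$ as the push-forward of $\nu \otimes B(1/2,1/2)$ under the coordinatewise product map $\xi\colon \{0,1\}^\Z \times \{-1,1\}^\Z \to \{-1,0,1\}^\Z$, $\xi(v,\eps)(n) = v(n)\eps(n)$. Moreover, as already noted in the proof of Lemma~\ref{l2}, since $v$ takes values in $\{0,1\}$, we have $\pi(\xi(v,\eps)) = v$ pointwise. Thus, under the identification of $(\{-1,0,1\}^\Z,\widehat\nu)$ with its extension $(\{0,1\}^\Z \times \{-1,1\}^\Z, \nu \otimes B(1/2,1/2))$, the factor map $\pi$ corresponds precisely to the projection onto the first coordinate.

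Once this identification is in place, the claim reduces to a calculation of a conditional expectation in a product space. Indeed, the function $F$ on $\{-1,0,1\}^\Z$ pulls back under $\xi$ to
\[
F \circ \xi(v,\eps) = \xi(v,\eps)(1) = v(1)\,\eps(1),
\]
so that computing $\E^{\widehat\nu}(F \mid \pi(w) = u)$ is the same as computing, in the product space, the conditional expectation $\E(v(1)\eps(1) \mid v = u)$ under $\nu \otimes B(1/2,1/2)$. By independence of the two factors, conditioning on the whole sequence $v$ leaves the distribution of $\eps$ unchanged, so this equals $u(1) \cdot \E_{B(1/2,1/2)}(\eps(1)) = u(1) \cdot 0 = 0$.

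Thus for $\nu$-almost every $u$, $\E^{\widehat\nu}(F \mid \pi(w) = u) = 0$, which is the desired conclusion. There is no real obstacle here: the content of the statement is essentially that the sign attached to a nonzero coordinate of a point distributed according to $\widehat\nu$ is a fair $\pm 1$ coin, independent of $\pi(w)$, and this is built into the definition of the relatively independent extension via formula~\eqref{es1}. One might alternatively give a more hands-on proof by verifying on cylinder sets that $\int_B F\, d\widehat\nu = 0$ for every cylinder $B$ in the $\sigma$-algebra generated by $\pi$, using~\eqref{es1} to pair up blocks $C \in \{-1,0,1\}^k$ with $C^2$ fixed and opposite signs at position $1$, but the product-space argument above is cleaner.
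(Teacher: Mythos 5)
Your proof is correct and rests on the same idea as the paper's: conditionally on $\pi(w)=u$, the signs at the support of $u$ are i.i.d.\ fair coins, so the conditional expectation of the first coordinate vanishes. The only cosmetic difference is that you reach this via the product representation $\widehat\nu=\xi_\ast(\nu\otimes B(1/2,1/2))$ from Lemma~\ref{l1}, whereas the paper states directly that the disintegration $\widehat\nu_u$ of $\widehat\nu$ over $\nu$ is the product of $(1/2,1/2)$ measures on $\text{supp}(u)$ and then evaluates $\int_{\pi^{-1}(u)}F\,d\widehat\nu_u$ in the two cases $u(1)=0$ and $u(1)=1$.
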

\begin{proof}
We have
$$
\E^{\widehat\nu}(F|\pi(w)=u)=\E^{\widehat\nu}(F|\{0,1\}^\Z)(u)= \int_{\pi^{-1}(u)} F\, d\, \widehat\nu_u,
$$
where $\widehat\nu_u$ denotes the relevant conditional measure in the disintegration of $\widehat\nu$ over $\nu$. Notice that $\widehat\nu_u$ is the product measure $(1/2,1/2)$ of all positions belonging to the support of $u$. If $u(1)=0$ then the formula holds. If $u(1)=1$ then $F$ on $\pi^{-1}(u)$ takes two values $\pm1$ with the same probability, so the integral is still zero.
\end{proof}

\begin{Lemma}\label{l5}
Let $T$ be a homeomorphism of a compact metric space $X$, let $x\in X$ be completely deterministic, and suppose that $z$ is a quasi-generic point for $\widehat{\nu}$ along the sequence $(N_k)$. Assume that
\begin{equation}\label{4a}
\delta_{T\times S, N_k, (x,z)}\to \rho
\end{equation}
weakly in $\mathcal{P}_{T\times S}(X\times \{-1,0,1\}^\Z)$. Then:
\begin{enumerate}[label=(\alph*)]
\item $\rho$ is a joining of $(T,X,\kappa)$ and $(S,\{-1,0,1\}^\Z,\widehat\nu)$ for some zero entropy measure $\kappa\in \text{Q-gen}(x)$;
\item the factors $(T,X,\kappa)\vee (S,\{0,1\}^\Z,\nu)$ and $(S,\{-1,0,1\}^\Z,\widehat{\nu})$ are relatively independent over $(S,\{0,1\}^\Z,\nu)$ as factors of $(T\times S,X\times \{-1,0,1\}^\Z,\rho)$.\label{Bb}
\end{enumerate}
\end{Lemma}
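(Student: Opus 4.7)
For part (a), the plan is to read off the marginals of $\rho$ directly from the weak convergence. Since both coordinate projections are continuous, push-forwards commute with the weak limit. Projecting $\delta_{T\times S,N_k,(x,z)}$ onto the first coordinate yields $\delta_{T,N_k,x}$, whose limit is some $\kappa\in\text{Q-gen}(x)$; projecting onto the second coordinate yields $\delta_{S,N_k,z}$, whose limit is $\widehat{\nu}$ by the quasi-genericity assumption on $z$. The $T\times S$-invariance of $\rho$ follows from the standard telescoping argument on empirical measures, and $h(T,\kappa)=0$ is immediate from the complete determinism of $x$.

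For part (b), the plan is to apply Remark~\ref{Thouvenot} (Thouvenot's lemma) with the common factor $T_3=(S,\{0,1\}^\Z,\nu)$, the extension $T_1=(T,X,\kappa)\vee (S,\{0,1\}^\Z,\nu)$, and the extension $T_2=(S,\{-1,0,1\}^\Z,\widehat{\nu})$, all realized as factors of $(T\times S, X\times\{-1,0,1\}^\Z,\rho)$. By Lemma~\ref{l2}, the extension $T_2\to T_3$ is either trivial (in which case the conclusion is automatic, see Remark~\ref{jedyna}) or relatively K. The extension $T_1\to T_3$ has zero relative entropy: since $T_1=T_3\vee (T,X,\kappa)$, sub-additivity of entropy for joinings yields $h(T_1)\le h(T_3)+h(T,\kappa)=h(T_3)$, whence $h(T_1)-h(T_3)=0$. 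Applying Thouvenot's lemma then gives $T_1\perp_{T_3}T_2$, which is exactly the assertion of (b).

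The main subtlety to address carefully is that $T_3$ really is a common factor of $T_1$ and $T_2$ inside $(T\times S,\rho)$. This boils down to the observation that the marginal of $\rho$ on $\{-1,0,1\}^{\Z}$ is $\widehat{\nu}$, whose push-forward under the coordinatewise squaring map $\pi$ is $\nu$ by the very construction of the relatively independent extension in~\eqref{es1}; hence $T_3$ is a factor of $T_2$ via $\pi$, and a factor of $T_1$ by construction. Once these identifications are pinned down, the argument reduces to a direct invocation of Thouvenot's lemma, the only nontrivial input being the zero-entropy property of $\kappa$ transferred from the complete determinism of $x$.
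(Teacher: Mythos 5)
Your proof is correct and follows essentially the same route as the paper: part (a) by identifying the marginals of the limiting empirical measure, and part (b) by applying Thouvenot's lemma (Remark~\ref{Thouvenot}) with common factor $(S,\{0,1\}^\Z,\nu)$, using the relative K property from Lemma~\ref{l2} and the vanishing relative entropy of $(T,X,\kappa)\vee(S,\{0,1\}^\Z,\nu)$ over $(S,\{0,1\}^\Z,\nu)$ (the paper cites the Pinsker formula where you invoke subadditivity of entropy under joins --- the same estimate). Your explicit treatment of the trivial-extension case and of the verification that $(S,\{0,1\}^\Z,\nu)$ is a common factor only spells out details the paper leaves implicit.
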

\begin{proof}
It follows from~\eqref{4a} that
$$
\kappa:=\rho|_X=\lim_{k\to\infty}\delta_{T,N_k,x},
$$
and $h(T,\kappa)=0$ since $x$ is completely deterministic. Hence $\rho$ is a joining of $(T,X,\kappa)$ and $(S,\{-1,0,1\}^\Z,\widehat\nu)$, and the extension
$$
(T,X,\kappa)\vee (S,\{0,1\}^\Z,\nu)\to (S,\{0,1\}^\Z,\nu)
$$
has relative entropy zero (by the Pinsker formula, see e.g.\ \cite{Parry}, Theorem 6.3).
On the other hand, by Lemma~\ref{l2}, the extension
$$
(S,\{-1,0,1\}^\Z,\widehat\nu) \to (S,\{0,1\}^\Z,\nu)
$$
is relatively K.
To complete the proof, we only need to use Remark~\ref{Thouvenot}.
\end{proof}

\begin{proof}[Proof of Theorem~\ref{ChS}]
Assume that $z\in\{-1,0,1\}^{\N^\ast}$ satisfies~\eqref{Ch}, let $T$ be a homeomorphism of the compact metric space $X$, and let $x\in X$ be a completely deterministic point. Fix $(N_k)$ such that
\beq\label{k2}
\delta_{T\times S, N_k,(x,z)}\tend{k}{\infty} \rho
\eeq
for some measure $\rho$. Then by Remark~\ref{re:ch:4}, the projection of $\rho$ onto the second coordinate is of the form $\widehat\nu$ for some $\nu\in \text{Q-gen}(z^2)$. Take a function $f\in C(X)$. It follows from~\eqref{k2} that
\begin{equation}\label{w}
\frac1{N_k}\sum_{n\leq N_k}f(T^nx)z(n)=\frac1{N_k}\sum_{n\leq N_k}f(T^nx)F(S^nz)\tend{k}{\infty} \int f\ot F\ d\rho.
\end{equation}
 Using Lemma~\ref{l3}, we have
\begin{equation}\label{w1}
\E^\rho(F|\{0,1\}^\Z)=\E^{\widehat{\nu}}(F|\{0,1\}^\Z)=0.
\end{equation}
By this and using also Lemma~\ref{l5} (\textit{b}), we obtain
$$
\E^\rho(f\ot F|\{0,1\}^\Z) = \E^\rho(f|\{0,1\}^\Z)\ \E^\rho(F|\{0,1\}^\Z)= 0.
$$
This yields
$
\int f\ot F\ d\rho=0$.
\end{proof}

\subsection{\eqref{Ch},~\eqref{S0-strong} and~\eqref{S-strong} are equivalent}\label{se:CHSS0}

In this section, we will throw some more lights on Theorem~\ref{ChS}, by considering some strengthening of properties of~\eqref{S}-type.

\begin{Def}
A sequence $z\in \{-1,0,1\}^\I$ is said to satisfy the condition \eqref{S0-strong} if for each homeomorphism $T$ of a compact metric space $X$, with $h_{top}(T)=0$, we have
\begin{equation}\label{S0-strong}
\frac1N\sum_{n\leq N}f(T^nx)z^{i_0}(n)\cdot z^{i_1}(n+a_1)\cdot\ldots\cdot z^{i_r}(n+a_r)\tend{N}{\infty}0\tag{{\bf S$_0$-strong}}
\end{equation}
for each $f\in C(X)$, each $x\in X$ and each choice of $1\leq a_1<\ldots<a_r$, $r\geq 0$, $i_s\in\{1,2\}$ not all equal to~$2$.
\end{Def}

\begin{Def}
A sequence $z\in \{-1,0,1\}^\I$ is said to satisfy the condition \eqref{S-strong} if for each homeomorphism $T$ of a compact metric space $X$, we have
\begin{equation}\label{S-strong}
\frac1N\sum_{n\leq N}f(T^nx)z^{i_0}(n)\cdot z^{i_1}(n+a_1)\cdot\ldots\cdot z^{i_r}(n+a_r)\tend{N}{\infty}0\tag{{\bf S-strong}}
\end{equation}
for each $f\in C(X)$, each completely deterministic $x\in X$ and each choice of $1\leq a_1<\ldots<a_r$, $r\geq 0$, $i_s\in\{1,2\}$ not all equal to~$2$.
\end{Def}
If the above holds, we will also say that $z$ satisfies the strong Sarnak conjecture. In particular, for $z=\mob$ the strong Sarnak conjecture takes the form
$$\label{strongS}
\frac1N\sum_{n\leq N}f(T^nx)\mob^{i_0}(n)\cdot\mob^{i_1}(n+a_1)
\cdot\ldots\cdot\mob^{i_r}(n+a_r)\tend{N}{\infty}0
$$
for $f,T,x,r,a_s,i_s$ as above.

\begin{Prop}\label{ChS1}
The conditions~\eqref{Ch},~\eqref{S0-strong} and~\eqref{S-strong} are equivalent.
\end{Prop}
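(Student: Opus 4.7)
The implications $\eqref{S-strong}\implies\eqref{S0-strong}\implies\eqref{Ch}$ are immediate: the first holds because, by the variational principle, zero topological entropy of $T$ forces every point to be completely deterministic; the second is obtained by choosing $X$ to be a one-point system and $f\equiv 1$. Therefore, the real content is the implication $\eqref{Ch}\implies\eqref{S-strong}$, and my plan is to adapt the proof of Theorem~\ref{ChS} almost verbatim, simply replacing the function $F$ by a suitable product of shifted powers of $F$.

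Fix $z$ satisfying~\eqref{Ch}, a homeomorphism $T$ of a compact metric space $X$, a completely deterministic $x\in X$, and a choice of $a_s,i_s$ as in the statement, with not all $i_s$ equal to~$2$. Set
\[
G:=F^{i_0}\cdot F^{i_1}\circ S^{a_1}\cdot\ldots\cdot F^{i_r}\circ S^{a_r}\in C(\{-1,0,1\}^\Z),
\]
so that $z^{i_0}(n)\cdot z^{i_1}(n+a_1)\cdot\ldots\cdot z^{i_r}(n+a_r)=G(S^{n-1}z)$. Pass to a subsequence $(N_k)$ along which $\delta_{T\times S, N_k,(x,z)}\to\rho$ weakly. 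Exactly as in~\eqref{k2}, Remark~\ref{re:ch:4} and Lemma~\ref{l5} apply: the second marginal of $\rho$ is $\widehat\nu$ for some $\nu\in\text{Q-gen}(z^2)$, and, with $\kappa:=\rho|_X$, the factors $(T,X,\kappa)\vee(S,\{0,1\}^\Z,\nu)$ and $(S,\{-1,0,1\}^\Z,\widehat\nu)$ are relatively independent over the common factor $(S,\{0,1\}^\Z,\nu)$.

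The key computational step is the generalization of Lemma~\ref{l3}:
\[
\E^{\widehat\nu}(G\mid\{0,1\}^\Z)=0.
\]
This follows by exactly the argument used in Lemma~\ref{jaf} (and in the proof of Lemma~\ref{l3}): the disintegration $\widehat\nu_u$ over $u\in\{0,1\}^\Z$ is the product of independent $(1/2,1/2)$-measures on the coordinates belonging to the support of $u$. If some coordinate $1+a_s$ with $i_s=1$ satisfies $u(1+a_s)=1$, independence splits off a factor with zero mean; if $u(1+a_s)=0$ the integrand vanishes pointwise. Since by assumption at least one $i_s$ equals~$1$, every conditional integral vanishes. Then, by relative independence from Lemma~\ref{l5}\,\ref{Bb},
\[
\E^\rho(f\otimes G\mid\{0,1\}^\Z)=\E^\rho(f\mid\{0,1\}^\Z)\cdot\E^\rho(G\mid\{0,1\}^\Z)=0,
\]
whence $\int f\otimes G\,d\rho=0$ and the Cesàro averages along $(N_k)$ tend to~$0$. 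Since this holds along every convergent subsequence, the full limit is~$0$.

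The only step requiring any thought is verifying that the projection argument of Lemma~\ref{l5}\,\ref{Bb} still applies when we condition on the function $G$ rather than on $F$; but this is automatic, because relative independence over $(S,\{0,1\}^\Z,\nu)$ is a property of the joining $\rho$, independent of the specific function in the second marginal. Once this is observed, the proof is essentially a rerun of the joining argument behind Theorem~\ref{ChS}.
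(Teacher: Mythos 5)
Your proof is correct, but it takes a different route from the paper's. The paper disposes of \eqref{Ch} $\implies$ \eqref{S-strong} in one line: by Lemma~\ref{lemat1}\,\ref{p:a}, the auxiliary sequence $u(n):=z^{i_0}(n)\cdot z^{i_1}(n+a_1)\cdot\ldots\cdot z^{i_r}(n+a_r)$ itself satisfies \eqref{Ch} (this is the combinatorial parity argument: the term at position $n+a_\alpha+b_\beta$ for the minimal $\alpha,\beta$ with $i_\alpha=j_\beta=1$ acquires an odd exponent), and then Theorem~\ref{ChS} applied to $u$ is literally the statement \eqref{S-strong} for $z$. You instead rerun the joining argument behind Theorem~\ref{ChS} with $F$ replaced by $G=F^{i_0}\cdot F^{i_1}\circ S^{a_1}\cdot\ldots\cdot F^{i_r}\circ S^{a_r}$; the only new ingredient is the identity $\E^{\widehat\nu}(G\mid\{0,1\}^\Z)=0$, which you verify correctly from the structure of the disintegration $\widehat\nu_u$ (and which is in substance the same computation as Lemma~\ref{jaf}, upgraded from integrals to conditional expectations). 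Your observation that Lemma~\ref{l5}\,\ref{Bb} is a property of the joining $\rho$ and not of the particular function being integrated is the right justification for reusing the relative-independence step. The trade-off: the paper's reduction is shorter given that Lemma~\ref{lemat1}\,\ref{p:a} is already available, and it cleanly separates the combinatorics (the exponent bookkeeping) from the ergodic theory; your version avoids Lemma~\ref{lemat1}\,\ref{p:a} entirely and makes it transparent that the vanishing of all the correlations in \eqref{S-strong} comes from the single fact that the conditional fibre measures are symmetric products on $\mathrm{supp}(u)$. Both are complete proofs.
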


For the proof, we will need the following result.
\begin{Lemma}\label{lemat1}
Let $z\in \{-1,0,1\}^\I$ and let
$$
u(n):=z^{i_0}(n)\cdot z^{i_1}(n+a_1)\cdot\ldots\cdot z^{i_r}(n+a_r),\;n\in \I,
$$
for some natural numbers $1\leq a_1<a_2<\ldots<a_r$ and $i_s\in\{1,2\}$.
Then the following holds:
\begin{enumerate}[label=(\alph*)]
\item\label{p:a}
If $z$ satisfies~\eqref{Ch} then $u$ satisfies~\eqref{Ch} provided that not all $i_s$ are equal to~$2$.
\item\label{p:c}
If $z$ is completely deterministic, then so is $u$.\footnote{In particular, this holds, if we replace $z$ with $z^2$ and $u$ with $u^2$. Note in passing that we can have $z$ satisfying~\ref{p:a} while $z^2$ satisfies~\ref{p:c}.}
\end{enumerate}

\end{Lemma}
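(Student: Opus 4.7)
The plan is to observe that $u=\phi(z)$, where $\phi\colon\{-1,0,1\}^\I\to\{-1,0,1\}^\I$ is the continuous, $S$-equivariant map
$$
\phi(x)(n):=x^{i_0}(n)\cdot x^{i_1}(n+a_1)\cdots x^{i_r}(n+a_r)
$$
(with the convention $a_0:=0$). Both parts of the lemma will then follow quickly from exploiting this presentation: part~\ref{p:a} reduces to a combinatorial expansion together with a parity-counting argument, and part~\ref{p:c} reduces to the fact that continuous factor maps do not increase entropy.

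For part~\ref{p:a}, fix $1\leq b_1<\cdots<b_q$ (setting $b_0:=0$) together with exponents $j_0,\ldots,j_q\in\{1,2\}$ not all equal to~$2$. Expanding
$$
\prod_{s=0}^{q} u^{j_s}(n+b_s) = \prod_{s=0}^{q}\prod_{t=0}^{r} z^{i_t j_s}(n+b_s+a_t)
$$
and collecting terms that share a common shift, the product rewrites as $\prod_{c\in C} z^{\varepsilon(c)}(n+c)$, where $C:=\{b_s+a_t\colon 0\leq s\leq q,\;0\leq t\leq r\}$, $E(c):=\{(s,t)\colon b_s+a_t=c\}$, and (using $z^4=z^2$) the effective exponent $\varepsilon(c)\in\{1,2\}$ depends only on the parity of $\sigma(c):=\sum_{(s,t)\in E(c)} i_t j_s$. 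To invoke~\eqref{Ch} for $z$, one must exhibit some $c\in C$ with $\varepsilon(c)=1$. I would set $t^*:=\max\{t\colon i_t=1\}$ and $s^*:=\max\{s\colon j_s=1\}$ (both well defined by hypothesis) and take $c^*:=b_{s^*}+a_{t^*}$. Because $b_s+a_t\leq b_{s^*}+a_{t^*}$ with equality only when $s=s^*$ and $t=t^*$, the unique pair $(s,t)\in E(c^*)$ with $i_tj_s$ odd is $(s^*,t^*)$; hence $\sigma(c^*)$ is odd and $\varepsilon(c^*)=1$, so~\eqref{Ch} applied to $z$ gives the desired $o(N)$ estimate.

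For part~\ref{p:c}, the $S$-equivariance of $\phi$ yields $\delta_{N,u}=\phi_\ast\delta_{N,z}$ for every $N$. Given any $\mu\in\text{Q-gen}(u)$, I would extract a subsequence along which $\delta_{N,z}$ also converges, say to $\nu\in\text{Q-gen}(z)$; continuity of $\phi$ then forces $\mu=\phi_\ast\nu$. Since $(S,\phi_\ast\nu)$ is a measure-theoretic factor of $(S,\nu)$, we have $h(S,\mu)\leq h(S,\nu)=0$, which shows that $u$ is completely deterministic.

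The only genuine (and minor) obstacle is the combinatorial step in part~\ref{p:a}: locating a shift $c^*$ where the effective exponent collapses to~$1$. The key is to take $s^*,t^*$ to be the \emph{maximal} indices with $i_{t^*}=j_{s^*}=1$; strict monotonicity of the $b_s$ and $a_t$ then guarantees that $(s^*,t^*)$ is the unique pair in $E(c^*)$ with $i_tj_s$ odd, which is exactly what is needed for the parity argument to go through.
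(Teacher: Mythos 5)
Your proof is correct and follows essentially the same route as the paper's. Part (a) is the same expand-and-collect parity argument (the paper picks the \emph{minimal} indices $\alpha,\beta$ with $i_\alpha=j_\beta=1$ where you pick the maximal ones --- a symmetric and equally valid choice, provided one reads your inequality $b_s+a_t\le b_{s^*}+a_{t^*}$ as quantified only over pairs with $i_t=j_s=1$, which is clearly your intent), and part (c) rests on the same principle that a quasi-generic limit of a completely deterministic point passes through an equivariant factor map without gaining entropy; the paper realizes that factor map via a joining of the shifted copies of $z$ followed by the coordinatewise product, which amounts to your sliding block code $\phi$.
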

\begin{proof} We write
\begin{multline*}
\frac1N\sum_{n\leq N}u^{j_0}(n)\cdot u^{j_1}(n+b_1)\cdot\ldots\cdot u^{j_t}(n+b_t)\\
=\frac1N\sum_{n\leq N}\prod_{\alpha=0}^r\prod_{\beta=0}^t z^{i_\alpha j_\beta}(n+a_\alpha+b_\beta)
\end{multline*}
with $a_0=b_0=0$. Consider then the smallest $\alpha$ and $\beta$ such that $i_\alpha=j_\beta=1$. Since both sequences $(a_i)$, $(b_j)$ are strictly increasing, the sum $a_\alpha+b_\beta$ can be obtained only as
$a_\gamma+b_\delta$ with either $a_\gamma<a_\alpha$ or $b_\delta<b_\beta$. It follows that, in the above sum, the
term $z(n+a_\alpha+b_\beta)$ appears with the power
$$
i_\alpha j_\beta+\text{ even number},
$$
that is, an odd power, which completes the proof of part~\ref{p:a} of the lemma.

We will show now that assertion~\ref{p:c} also holds (see~\cite{Ka}, Lemma~5.1). Suppose that
$$
\delta_{S,N_k,u}\to \rho
$$
and consider the following sequence of measures on $\{-1,0,1\}^\I\times \dots \times \{-1,0,1\}^\I$:
\begin{align*}
\widetilde{\rho}_k:=&\frac{1}{N_k}\sum_{n\leq N_k}\delta_{S^nz }\otimes\delta_{S^nS^{a_1}z }\otimes \dots\otimes \delta_{S^nS^{a_r}z }\\
=&\frac{1}{N_k}\sum_{n\leq N_k}\delta_{\big(\underbrace{S\times \ldots\times S}_{r+1}\big)^n(z ,S^{a_1}z ,\dots,S^{a_r}z )}, k\geq 1.
\end{align*}
Passing to a subsequence if necessary, we may assume that $\widetilde{\rho}_k$ converges to $\widetilde{\rho}$. Then $\widetilde{\rho}$ is a joining of $(S,\kappa_0),(S,\kappa_1),\ldots, (S,\kappa_r)$,
where $\kappa_s\in \text{Q-gen}(z )$ for $0\leq s\leq r$. Hence $h(S,\kappa_s)=0$ for $0\leq s\leq r$ and it follows that $h(S^{\times(r+1)},\widetilde{\rho})=0$. Notice that
$S\colon (\{-1,0,1\}^\I,\rho)\to (\{-1,0,1\}^\I,\rho)$
is a factor of
$$
S^{\times(r+1)}\colon ((\{-1,0,1\}^\I)^{\times (r+1)},\widetilde{\rho})\to ((\{-1,0,1\}^\I)^{\times (r+1)},\widetilde{\rho}),
$$
with the factoring map $(x_0,\dots,x_r)\mapsto x_0\cdot\ldots\cdot x_r$. Therefore, we obtain $h(S,\rho)=0$ and the assertion follows.
\end{proof}
\begin{Remark}
Part~\ref{p:c} of Lemma~\ref{lemat1} will not be used in this section. We will need it later, in the proof of Proposition \ref{pr:2a}.
\end{Remark}

\begin{proof}[Proof of Proposition~\ref{ChS1}]
Since clearly~\eqref{S-strong} implies~\eqref{S0-strong}, which, in turn, implies~\eqref{Ch}, it suffices to show that~\eqref{Ch} implies~\eqref{S-strong}. This however follows immediately from Theorem~\ref{ChS} and Lemma~\ref{lemat1}.
\end{proof}

Moreover, in view of Proposition~\ref{ChS1} and Propositon~\ref{lambda}, we immediately obtain the following:
\begin{Cor}\label{lambda1}
If~\eqref{Ch} holds for the Liouville function $\lamob$ then
for each homeomorphism $T$ of a compact metric space $X$, we have
$$
\frac1N\sum_{n\leq N}f(T^nx)\lamob(n)\cdot \lamob(n+a_1)\cdot\ldots\cdot \lamob(n+a_r)\tend{N}{\infty}0
$$
for each $f\in C(X)$, each completely deterministic $x\in X$ and for each choice of $1\leq a_1<a_2<\ldots<a_r$, $r\geq0$.\footnote{Clearly, since $\lamob$ takes values in $\{-1,1\}$, we can remove the exponents $i_s$ appearing originally in the condition~\eqref{Ch}.}
\end{Cor}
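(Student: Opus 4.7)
The corollary is stated as an immediate consequence of Proposition~\ref{ChS1} together with Proposition~\ref{lambda}, so my plan is essentially to unpack what ``immediately'' means here. The strategy is: (i) promote the hypothesis (Ch) for $\lamob$ to the stronger conclusion (S-strong) via Proposition~\ref{ChS1}; (ii) specialize (S-strong) to the constellation where all exponents $i_s$ equal $1$; (iii) observe that since $\lamob$ takes values in $\{-1,1\}$, the distinction between exponents $1$ and $2$ is irrelevant, which is why the exponents disappear from the statement.

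More precisely, first I would apply Proposition~\ref{ChS1}, which asserts the chain of equivalences $\eqref{Ch}\iff\eqref{S0\text{-strong}}\iff\eqref{S\text{-strong}}$. By hypothesis, $\lamob$ satisfies (Ch), so $\lamob$ satisfies (S-strong). Next, fix a homeomorphism $T$ of a compact metric space $X$, a completely deterministic $x\in X$, a function $f\in C(X)$, integers $1\leq a_1<\cdots<a_r$ with $r\geq 0$, and the specific exponents $i_0=i_1=\cdots=i_r=1$. These are not all equal to $2$ (in fact none of them are), so this choice is admissible in the definition of (S-strong). Applying (S-strong) to this data gives exactly
$$
\frac{1}{N}\sum_{n\leq N} f(T^n x)\,\lamob(n)\cdot\lamob(n+a_1)\cdot\ldots\cdot\lamob(n+a_r)\tend{N}{\infty}0,
$$
which is the desired conclusion. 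Proposition~\ref{lambda} plays a complementary role here: it identifies the (unique) candidate sequence-distribution consistent with $\lamob\in\{-1,1\}^{\N^*}$ satisfying (Ch), namely the Bernoulli measure $B(1/2,1/2)$; this confirms that the hypothesis is not vacuous and explains why the exponent reduction in the footnote is harmless, since $\lamob^2\equiv 1$.

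There is essentially no obstacle to carrying out this plan: all of the substantive work is already contained in Theorem~\ref{ChS} (proved via the joining argument in Lemmas~\ref{l1}--\ref{l5}) and in Lemma~\ref{lemat1}, both of which feed into Proposition~\ref{ChS1}. The only book-keeping point to verify is that the choice $i_0=\cdots=i_r=1$ falls within the scope of (S-strong), which is immediate from the phrasing ``not all equal to $2$''. Thus the corollary is a formal consequence of the previously established framework and requires no new ingredients.
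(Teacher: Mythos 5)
Your proposal is correct and follows essentially the same route as the paper, which derives the corollary ``immediately'' from Proposition~\ref{ChS1} (promoting \eqref{Ch} to \eqref{S-strong}) together with Proposition~\ref{lambda}; your unpacking of the specialization to $i_0=\cdots=i_r=1$ and of why the exponents may be dropped for a $\{-1,1\}$-valued sequence is exactly the intended book-keeping.
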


\begin{Remark}
Since the Bernoulli shifts are disjoint from all zero entropy transformations, arguments similar to those used in the proof of Theorem~\ref{ChS}, together with Lemma~\ref{lemat1}, can be used to obtain another proof of Corollary~\ref{lambda1}.
\end{Remark}

\subsection{\eqref{S0} and~\eqref{S} are equivalent}\label{se:SS0}

\newcommand{\ou}{\overline{u}}
\newcommand{\onu}{\overline{\nu}}
The purpose of this section is to prove the following result.
\begin{Th}
\label{thm:SequivS0}
Properties \eqref{S0} and \eqref{S} are equivalent.
\end{Th}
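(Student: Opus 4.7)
The direction \eqref{S} $\Rightarrow$ \eqref{S0} is immediate from the variational principle: if $h_{top}(T)=0$ then every $T$-invariant probability measure has zero entropy, so every $x\in X$ is completely deterministic, and \eqref{S} reduces to \eqref{S0}. I will focus on the substantive direction \eqref{S0} $\Rightarrow$ \eqref{S}. Fix a homeomorphism $T$ of a compact metric space $X$, a completely deterministic point $x\in X$, and $f\in C(X)$; the task is to prove $\frac{1}{N}\sum_{n\le N} f(T^n x)\,z(n)\to 0$. The plan is to replace the data $(T,X,x,f)$, up to an error of size $O(\varepsilon)$ in the Ces\`aro averages, by data living inside a topological system of zero topological entropy, and then invoke \eqref{S0}.

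Fix $\varepsilon>0$. Since $\text{Q-gen}(x)$ is a weak-$\ast$ compact subset of $\mathcal{P}_T(X)$, I can choose a finite Borel partition $P=(P_1,\ldots,P_k)$ of $X$ such that $\mu(\partial P_j)=0$ for every $\mu\in\text{Q-gen}(x)$ and each $P_j$ has diameter small enough that $\|f-\widetilde f\|_\infty<\varepsilon$, where $\widetilde f:=\sum_j c_j\mathbf{1}_{P_j}$ and $c_j$ is any value of $f$ on $P_j$. Encode $x$ by its $P$-name $y\in\{1,\ldots,k\}^{\N^\ast}$, defined by $y_n=j\iff T^n x\in P_j$, so that $\widetilde f(T^n x)=c_{y_n}$ and the replacement of $f$ by $\widetilde f$ in the average costs at most $\varepsilon$.

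The key step is then to produce a subshift $(Y,S)\subseteq\{1,\ldots,k\}^{\N^\ast}$ with $h_{top}(S|_Y)=0$ together with a point $y'\in Y$ such that $\{n\in\N^\ast:y_n\ne y'_n\}$ has upper density less than $\varepsilon$. Given such $(Y,y')$, the continuous function $g\in C(Y)$ defined by $g(w):=c_{w(1)}$ satisfies, by \eqref{S0} applied to $(S,Y,y',g)$, $\frac{1}{N}\sum_{n\le N}g(S^n y')z(n)\to 0$; combined with $\|f-\widetilde f\|_\infty<\varepsilon$ and the upper-density-$\varepsilon$ discrepancy between $y$ and $y'$, this yields $\limsup_N \bigl|\frac{1}{N}\sum_{n\le N} f(T^n x)z(n)\bigr|\le C\varepsilon$ for a constant $C$ depending only on $\|f\|_\infty$, and letting $\varepsilon\to 0$ concludes. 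The hard part is precisely the production of $(Y,y')$: this must use complete determinism of $x$ in an essential way, since each $\mu\in\text{Q-gen}(x)$ has zero measure-theoretic entropy and hence, by Jewett--Krieger, admits a uniquely ergodic topological realization of zero topological entropy; patching these realizations together across the compact set $\text{Q-gen}(x)$, together with Shannon--McMillan--Breiman applied to the $P$-factor (which ensures that outside a set of density less than $\varepsilon$ the orbit of $x$ traces through low-complexity typical windows), should produce the desired approximating point $y'$ in a zero-entropy subshift.
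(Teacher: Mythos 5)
Your overall skeleton is the same as the paper's: reduce the continuous observable $f(T^nx)$ to a symbolic sequence over a finite alphabet, replace that sequence by one generating a subshift of zero topological entropy at the cost of a set of upper density $\varepsilon$, and then invoke~\eqref{S0}. The discretization step (a partition with boundaries of measure zero for every $\mu\in\text{Q-gen}(x)$) is fine and parallels Lemma~\ref{lemma:continuous_to_discrete}. But the step you yourself flag as ``the hard part'' --- producing the zero-entropy subshift $Y$ and the point $y'$ with $\{n: y_n\neq y'_n\}$ of upper density less than $\varepsilon$ --- is exactly the content of the theorem, and the route you sketch for it does not work as stated. The Jewett--Krieger theorem produces an \emph{abstract} uniquely ergodic, zero-entropy topological model of a single ergodic zero-entropy measure; it gives no mechanism for transporting the actual orbit of $x$ into that model, i.e.\ no point $y'$ whose coordinates agree with the $P$-name $y$ of $x$ outside a sparse set. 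Moreover $\text{Q-gen}(x)$ is in general an infinite compact set of (possibly non-ergodic) measures, and ``patching realizations together across $\text{Q-gen}(x)$'' is not a defined operation. What is actually needed is a quantitative combinatorial statement about the sequence $y$ itself (essentially Weiss's characterization of completely deterministic points quoted in Section~\ref{se:SS0}), and the paper proves it by an explicit block-recoding: using $h(S,\nu)=0$ one counts ``heavy'' versus ``light'' $n$-blocks, recodes $N_k$-blocks through a map $\varphi$ with range of cardinality $<2^{\varepsilon N_k}$, and iterates over scales $N_{k(1)}<N_{k(2)}<\cdots$ to force $h_{top}(\ou)=0$ in the limit (Lemmas~\ref{lemma:approach-deterministic-sequences-step1}--\ref{lemma:approach-deterministic-sequences-step2}). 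None of this is replaced by an appeal to Shannon--McMillan--Breiman, which applies measure-almost everywhere and says nothing a priori about the one orbit of the one point $x$ you are given.

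A second, smaller issue: you argue directly, so you need the discrepancy set between $y$ and $y'$ to have small upper density along \emph{all} $N$, while your quasi-genericity data only controls averages along subsequences where $\delta_{T,N,x}$ converges. The paper sidesteps this by arguing by contradiction: it fixes one subsequence $(N_k)$ along which the weighted averages converge to $\theta\neq0$, arranges $N_k\mid N_{k+1}$, and only needs the approximation~\eqref{eq:approach_u_step1} along that nested family of scales. If you want to keep your direct formulation you must either prove the uniform (all-$N$) version of the approximation or restructure the argument by contradiction as the paper does.
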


The first part of the proof deals with the symbolic case and shows that if a sequence $u$ is quasi-generic for some shift-invariant measure of zero entropy, then $u$ can be well approximated by a sequence that has zero topological entropy. In~\cite{We2}, the following characterization of completely deterministic points was stated without a proof:
\begin{quote}
 A sequence $u$ is completely deterministic if and only if, for any $\vep>0$ there exists $K$ such that, after removing from $u$ a subset of density less than $\vep$, what is left can be covered by a collection $\mathcal{C}$ of $K$-blocks
 such that $|\mathcal{C}|<2^{\vep K}$.
\end{quote}
The following lemma is a reformulation of this criterion in a language suitable for our needs.

\begin{Lemma}
 \label{lemma:approach-deterministic-sequences-step1}
 Let $ A$ be finite nonempty set, and let $(N_k)_{k\ge1}$ be an increasing sequence of integers, with $N_k|N_{k+1}$ for each $k$. Assume that $u\in A^{\N^\ast}$ satisfies
 \begin{equation}
  \label{eq:convergence-to-nu}
  \delta_{N_k,u}=\frac{1}{N_k}\sum_{n\le N_k} \delta_{S^nu}\tend{k}{\infty} \nu,
 \end{equation}
 where $\nu$ is such that $h(S,\nu)=0$.

 Then, for any $\varepsilon>0$, we can find an arbitrarily large integer $k$ and a map $\varphi: A^{N_k}\to A^{N_k}$, satisfying the following properties:
 \begin{itemize}
  \item $\left|\varphi\bigl( A^{N_k}\bigr)\right|<2^{\varepsilon N_k}$;
  \item the sequence $\ou$ obtained from $u$ by replacing, for each $j\ge 0$, the block $u|_{jN_k+1}^{(j+1)N_k}$ by its image by $\varphi$, is such that for each $s\ge0$,
  \begin{equation}
   \label{eq:approach_u_step1}
   \dfrac{1}{N_{k+s}}\left|\bigl\{1\le n\le N_{k+s}: u_n\neq\ou_n\bigr\}\right| < \varepsilon;
  \end{equation}
 \item the first symbol occuring in $\ou$ is the same as in $u$.
 \end{itemize}
\end{Lemma}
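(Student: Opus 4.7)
The plan is to use the zero-entropy hypothesis on $\nu$ to construct a small collection $\mathcal{G}\subset A^{N_k}$ of \emph{typical} $N_k$-blocks carrying almost all of $\nu$'s mass, and then to exploit the quasi-genericity of $u$ to deduce that the $N_k$-block partition of $u|_1^{N_{k+s}}$ uses blocks from $\mathcal{G}$ for all but a small fraction of indices, uniformly in $s\ge 0$. The map $\varphi$ will be the identity on $\mathcal{G}$ and will collapse every other $N_k$-block to one of $|A|$ canonical blocks sharing its first symbol; this automatically yields the size bound on $\varphi(A^{N_k})$ and the first-symbol-preserving property.

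First, since $h(S,\nu)=0$, the first-coordinate partition $P$ of $A^{\N^\ast}$ satisfies $H_\nu(P^N)/N\to 0$. Markov's inequality applied to the information function $B\mapsto -\log\nu([B])$ under $\nu$ yields, for any $\eta>0$ and all $N$ large enough, a set $\mathcal{G}_N\subset A^N$ with $|\mathcal{G}_N|\le 2^{\eta N}$ and $\nu(\mathcal{G}_N)\ge 1-\eta$. Choosing $k$ large, so that $N_k$ lies in this range and $|A|<2^{\eta N_k}$, the quasi-genericity $\delta_{N_k,u}\to\nu$ guarantees, uniformly in $s\ge 0$, that $\delta_{N_{k+s},u}(\mathcal{G}_{N_k})$ is close to $\nu(\mathcal{G}_{N_k})\ge 1-\eta$; hence the proportion of positions $n\in[1,N_{k+s}]$ for which $u|_n^{n+N_k-1}\notin\mathcal{G}_{N_k}$ is small, uniformly in $s$.

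The main obstacle is to pass from this every-position frequency bound to a bound on the block-boundary frequency, which is what governs the fraction of $j$'s with $u|_{jN_k+1}^{(j+1)N_k}\notin\mathcal{G}_{N_k}$. Since only one out of every $N_k$ positions is a block-boundary, in the worst case bad positions could concentrate at block boundaries, inflating the bad fraction by a factor of $N_k$ compared with the every-position fraction. To absorb this inflation one may either tighten the choice of $\eta$ (the flexibility is real because $H_\nu(P^{N_k})$ is sublinear in $N_k$, so along a sufficiently sparse subsequence of the given one the required $\eta$ can be made as small as $\vep/N_k$), or refine $\varphi$ so that it acts sub-block by sub-block at an intermediate scale $M\ll N_k$, which reduces the loss factor from $N_k$ to $M$ and allows the subadditivity of entropy to close the estimates. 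The finitely many small-$s$ cases, where the block partition of $u|_1^{N_{k+s}}$ contains only boundedly many distinct $N_k$-blocks, are absorbed by adding those blocks to $\mathcal{G}_{N_k}$, which does not spoil the cardinality bound provided $k$ is chosen large enough; the first $N_k$-block is included in $\mathcal{G}_{N_k}$ automatically, which gives the preservation of the first symbol of $u$.
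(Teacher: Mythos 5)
You have correctly isolated the central difficulty --- quasi-genericity controls the frequency of \emph{all} starting positions $n$ whose $N_k$-block is atypical, whereas the lemma needs control over the aligned positions $n=jN_k+1$ --- but neither of your two proposed remedies closes this gap. The first cannot work: to obtain $\nu(\mathcal{G}_N^c)\le\eta$ together with $|\mathcal{G}_N|\le 2^{\eta N}$ by Markov's inequality applied to the information function, you need roughly $H_\nu\bigl(\bigvee_{j=0}^{N-1}S^{-j}P\bigr)\le \eta^2 N$; taking $\eta$ of order $\varepsilon/N_k$ would force $H_\nu\bigl(\bigvee_{j=0}^{N_k-1}S^{-j}P\bigr)\to 0$, which is impossible since this quantity is non-decreasing in $N_k$ (it is at least $H_\nu(P)>0$ unless $\nu$ is a point mass). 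Zero entropy gives sublinearity of the block entropy, never its decay. The second remedy (an intermediate scale $M\ll N_k$) points in the right direction but, if the sub-blocks are aligned, the same obstruction reappears at scale $M$: you would need the mass of atypical $M$-blocks to be $O(\varepsilon/M)$, i.e.\ $H_\nu\bigl(\bigvee_{j=0}^{M-1}S^{-j}P\bigr)=O(1)$, again false in general. There is also a circularity in your setup: $\mathcal{G}_{N_k}$ is a union of cylinders of length $N_k$, so the convergence \eqref{eq:convergence-to-nu} only guarantees $\delta_{N_{k+s},u}(\mathcal{G}_{N_k})\approx\nu(\mathcal{G}_{N_k})$ for $s$ large relative to $k$, whereas the lemma requires the estimate already at $s=0$.

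The paper's proof escapes all of this by fixing a small scale $n$ \emph{before} choosing $k$ (so the relevant cylinder sets do not depend on $k$ and quasi-genericity applies uniformly in $s\ge0$), by declaring an $n$-block ``heavy'' when its measure exceeds $2^{-\varepsilon n}$ (so there are at most $2^{\varepsilon n}$ of them and the light mass is at most $\delta/\varepsilon$, with $\delta$ bounding the entropy rate at scale $n$), and --- crucially --- by covering each $N_k$-block \emph{greedily} with disjoint heavy $n$-blocks starting at arbitrary good positions rather than at aligned ones. The uncovered positions are filled with a fixed letter $a$, and the image of $\varphi$ is then counted as (number of possible fill-in sets, at most $2^{H(\varepsilon)N_k}$ by Lemma~\ref{lemma:Shields}) times (choices of heavy blocks, at most $2^{\varepsilon N_k}$). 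This non-aligned covering combined with the binomial count is the idea your proposal is missing.
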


We will need the following lemma taken from~\cite{Shields}, (Lemma~1.5.4 p.~52).
\begin{Lemma}
\label{lemma:Shields}
For $0<\delta<1$, set
\[
 H(\delta):=-\delta\log\delta -(1-\delta)\log(1-\delta).
\]
Then, for any integer $N\ge1$ and any $0<\delta\le 1/2$,
\[
 \sum_{k\le\delta N}{N\choose k} \le 2^{N H(\delta)}.
\]
\end{Lemma}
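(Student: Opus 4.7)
The plan is to deduce the inequality from the binomial identity
\[
1 = \bigl(\delta + (1-\delta)\bigr)^N = \sum_{k=0}^{N} \binom{N}{k} \delta^k (1-\delta)^{N-k},
\]
viewed as the total mass of the $\mathrm{Bin}(N,\delta)$ distribution. The key monotonicity observation is that the ratio of consecutive terms $\delta^{k+1}(1-\delta)^{N-k-1}/\bigl(\delta^k(1-\delta)^{N-k}\bigr) = \delta/(1-\delta)$ is at most $1$ because $\delta \le 1/2$. Hence the function $k \mapsto \delta^k (1-\delta)^{N-k}$ is nonincreasing on $\{0,\dots,N\}$, so its minimum over the range $0\le k \le \delta N$ is attained at the right endpoint. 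More precisely, for every such $k$ one checks that
\[
\delta^k(1-\delta)^{N-k} \;\ge\; \delta^{\delta N}(1-\delta)^{(1-\delta)N},
\]
since the ratio equals $\bigl(\delta/(1-\delta)\bigr)^{k-\delta N}$ and the exponent $k-\delta N$ is nonpositive while the base is at most $1$.

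The second step is to identify $\delta^{\delta N}(1-\delta)^{(1-\delta)N}$ with $2^{-NH(\delta)}$, which is immediate from the definition
\[
NH(\delta) = -N\delta\log\delta - N(1-\delta)\log(1-\delta)
\]
(recall the paper uses base-$2$ logarithms). Substituting the lower bound into the binomial identity gives
\[
1 \;\ge\; \sum_{k\le \delta N} \binom{N}{k}\delta^k(1-\delta)^{N-k} \;\ge\; 2^{-NH(\delta)}\sum_{k\le \delta N}\binom{N}{k},
\]
and rearranging yields exactly $\sum_{k\le \delta N}\binom{N}{k} \le 2^{NH(\delta)}$.

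There is really no serious obstacle: the only subtlety is handling the fact that $\delta N$ may not be an integer, but since the bound $\delta^k(1-\delta)^{N-k}\ge \delta^{\delta N}(1-\delta)^{(1-\delta)N}$ is proved directly via the ratio argument (not by plugging $k=\delta N$ into the binomial sum), no integer-part care is needed. The assumption $\delta\le 1/2$ is used in exactly one place, to guarantee that the binomial term is decreasing in $k$ on the relevant range; without it, the minimum of $\delta^k(1-\delta)^{N-k}$ over $\{0,\dots,\lfloor\delta N\rfloor\}$ would be at $k=0$ and the bound would collapse.
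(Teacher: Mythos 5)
Your proof is correct: the ratio argument shows $\delta^k(1-\delta)^{N-k}\ge 2^{-NH(\delta)}$ for all $k\le\delta N$ when $\delta\le 1/2$, and summing against the binomial identity gives the bound, with the non-integrality of $\delta N$ handled cleanly. The paper gives no proof of its own, simply citing Shields (Lemma~1.5.4, p.~52), and the argument you give is precisely the standard one found there, so there is nothing to add.
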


\begin{proof}[Proof of Lemma~\ref{lemma:approach-deterministic-sequences-step1}]
 Let $P$ be the finite partition of $A^{\N^\ast}$ determined by the values of the first coordinate. Then $\bigvee_{j=0}^{n-1}S^{-j}P$ is the partition of $A^{\N^\ast}$ according to the $n$-block appearing in coordinates from 1 to $n$. Since the entropy of $(S,\nu)$ vanishes, given an arbitrary $\delta>0$, we can take $n$ large enough so that
 \begin{equation}
 \label{eq:small-entropy-n}
  \frac{1}{n} H_\nu\left(\bigvee_{j=0}^{n-1}S^{-j}P\right) <\delta.
 \end{equation}
 Now, let us say that an $n$-block is \emph{heavy} if the $\nu$-measure of the corresponding cylinder set is larger than $2^{-\varepsilon n}$, and say it is \emph{light} otherwise. We claim that the $\nu$-measure of the union of all light $n$-blocks is arbitrarily small whenever $\delta$ is chosen small enough. Indeed, for any light $n$-block $B$, we have
 \begin{equation}
  \nu(B)\log \nu(B) \leq \nu(B)\log\frac{1}{2^{n\vep}}=-\nu(B) \cdot n\vep.
 \end{equation}
This and~\eqref{eq:small-entropy-n} imply
 \begin{equation*}
  \vep n\sum_{\text{light }n\text{-blocks }B} \nu(B) \leq -\sum_{\text{light }n\text{-blocks }B} \nu(B) \log(B)<\delta n,
 \end{equation*}
 which gives
 $$
 \sum_{\text{light }n\text{-blocks }B}\nu(B)<\frac{\delta}{\vep}.
 $$
 Observe also that the number of heavy $n$-blocks cannot exceed $2^{\varepsilon n}$.

 Say that an integer $j\ge1$ is \emph{good in $u$} if the $n$-block $u|_j^{j+n-1}$ is heavy. By~\eqref{eq:convergence-to-nu} (applied to the characteristic function of the union of all light $n$-blocks), and assuming $\delta$ is small enough, we can take $k$ large enough so that, for each $s\ge0$,
 \begin{equation}
  \label{eq:proportion_of_good_j}
  \dfrac{1}{N_{k+s}} \left| \{1\le j\le N_{k+s}: j\text{ is not good in }u \}\right| < \varepsilon^2/2.
 \end{equation}
 We can also assume that $k$ is large enough so that
 \begin{equation}
  \label{eq:end_of_Nk_blocks}
  \dfrac{n}{N_k}<\frac{\varepsilon}{2}.
 \end{equation}
 Let us now define the map $\varphi\colon A^{N_k}\to A^{N_k}$. Let $W\in A^{N_k}$; we say that $j\in\{1,\ldots,N_k\}$ is \emph{good in $W$} if $j+n-1\le N_k$ and the $n$-block $W|_j^{j+n-1}$ is heavy. We say that $W$ is \emph{acceptable} if the proportion of $j\in\{1,\ldots,N_k\}$ which are good in $W$ is larger than $1-\varepsilon$. The definition of $\varphi(W)$ will depend on whether $W$ is acceptable or not. If $W$ is not acceptable, then we simply set $\varphi(W):=a^{N_k}$, where $a\in A$ is the first symbol occuring in the sequence $u$. If $W$ is acceptable, then we run the following algorithm. Let $j_1$ be the first integer which is good in $W$, and inductively, define $j_{i+1}$ as the smallest integer larger than or equal to $j_i+n$ which is good in $W$, provided such an integer exists. This algorithm outputs a finite list of integers $j_1,\ldots,j_r$ which are good in $W$, such that $j_i+n\leq j_{i+1}$, and such that the disjoint heavy $n$-blocks $W|_{j_i}^{j_i+n-1}$, $1\le j\le r$, cover a
proportion at least $1-\varepsilon$ of $W$ (because symbols which are not covered correspond to integers which are not good in $W$). Then, in $W$, replace by $a$ all symbols which are not covered by these heavy $n$-blocks, and define $\varphi(W)$ as the resulting $N_k$-block.

 The number of $N_k$-blocks which are images of some acceptable block $W$ by this procedure is bounded by the number of choices for the subset of $\{1,\ldots,N_k\}$ where we put the letter $a$, times the number of choices for the heavy blocks. The former is bounded by the number of subsets of $\{1,\ldots,N_k\}$ which have less than $\varepsilon\, N_k$ elements, which is at most $2^{H(\varepsilon)N_k}$ by Lemma~\ref{lemma:Shields}.
Since the number of heavy blocks is at most $2^{\varepsilon n}$, the latter is bounded by $(2^{\varepsilon n})^r$, which is less than $2^{\varepsilon N_k}$ (indeed, $nr\le N_k$ because in $W$ we see $r$ disjoint heavy blocks of length $n$).

Observe that, by the construction of $\varphi$ and by the choice of $a$, the first symbol in $\ou$ is the same as in $u$.

Now, it only remains to show that~\eqref{eq:approach_u_step1} holds. Let $s\ge 0$. Each $m\in\{0,\ldots,N_{k+s}/N_k-1\}$ such that $u|_{m N_k+1}^{(m+1)N_k}$ is not acceptable gives rise in the corresponding subblock to at least $\varepsilon N_k$ integers $j$ which are not good \emph{in this subblock}. But there are two reasons why this could happen:
\begin{itemize}                                                                                                                                                                                 \item either $j$ is one of the last $n$ positions of the subblock, which by~\eqref{eq:end_of_Nk_blocks} only concerns a number of integers bounded by $\varepsilon N_k/2$,
\item or $j$ is not good in $u$, which therefore concerns at least $\vep N_k/2$ integers $j$ in this subblock.                                                                                                                                                    \end{itemize}
Then, \eqref{eq:proportion_of_good_j} ensures that the proportion of integers $m\in\{0,\ldots,N_{k+s}/N_k-1\}$ such that $u|_{m N_k+1}^{(m+1)N_k}$ is not acceptable is less than $\varepsilon$. Moreover, observe that if $W$ is an acceptable $N_k$-block, then $\varphi(W)$ differs from $W$ in at most $\varepsilon\,N_k$ places. This concludes the proof of the lemma.
\end{proof}

\begin{Lemma}
 \label{lemma:still-completely-deterministic}
 Let $k$ and $\ou$ be produced as in Lemma~\ref{lemma:approach-deterministic-sequences-step1}. Let us consider $\ou$ as a sequence in $(A^{N_k})^{\N^\ast}$, and denote by $S_{N_k}$ the action of the shift map in this setting (that is, $S_{N_k}$ shifts $N_k$ letters in $A$ at the same time). Set also, for each integer $s\ge 0$, $M_s:=N_{k+s}/N_k$. Then there exists an increasing sequence of integers $(s_\ell)_{\ell\ge1}$, and an $S_{N_k}$-invariant probability measure $\onu$ on $(A^{N_k})^{\N^\ast}$ such that
 \begin{itemize}
  \item we have the weak convergence
	 \[
      \delta_{S_{N_k},M_{s_\ell},\ou} = \frac{1}{M_{s_\ell}}\sum_{n\le M_{s_\ell}} \delta_{S_{N_k}^n \ou} \tend{\ell}{\infty} \onu,
     \]
  \item $h(S_{N_k},\onu)=0$.
 \end{itemize}

\end{Lemma}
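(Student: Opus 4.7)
The main device is the block-grouping map $\Phi\colon A^{\N^\ast}\to (A^{N_k})^{\N^\ast}$ defined by $\Phi(w)_j := w|_{(j-1)N_k+1}^{jN_k}$, which is a continuous bijection intertwining $S^{N_k}$ with $S_{N_k}$, together with the letter-to-letter map $\tilde\varphi\colon (A^{N_k})^{\N^\ast}\to (A^{N_k})^{\N^\ast}$ that acts coordinatewise by $\varphi$ on each letter of the alphabet $A^{N_k}$. This $\tilde\varphi$ is continuous and $S_{N_k}$-equivariant, and from the definition of $\ou$ we obtain the crucial identity $\Phi(\ou)=\tilde\varphi(\Phi(u))$. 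The idea is to relate the question about $\onu$ to an entropy question for the measure generated by $u$ alone, and then to exploit the affinity of entropy to transfer the vanishing of $h(S,\nu)$ to the sub-sampled limit.

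By compactness of $\mathcal{P}(A^{\N^\ast})$ in the weak topology, extract an increasing sequence $(s_\ell)$ such that $\delta_{S^{N_k},M_{s_\ell},u}=\frac{1}{M_{s_\ell}}\sum_{j=0}^{M_{s_\ell}-1}\delta_{S^{jN_k}u}$ converges to some $S^{N_k}$-invariant measure $\mu_0$. Set $\alpha_0:=\Phi_*\mu_0$ and $\onu:=(\tilde\varphi)_*\alpha_0$. The continuity of $\Phi$ and $\tilde\varphi$ and the identity $\Phi(\ou)=\tilde\varphi(\Phi(u))$ then give
\[
\delta_{S_{N_k},M_{s_\ell},\ou} = (\tilde\varphi)_*\Phi_*\delta_{S^{N_k},M_{s_\ell},u} \xrightarrow[\ell\to\infty]{} (\tilde\varphi)_*\alpha_0 = \onu,
\]
which settles the first bullet.

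For the entropy bullet, $\onu$ is a factor of $\alpha_0$ via $\tilde\varphi$, while $\alpha_0$ is the image of $\mu_0$ under the topological conjugacy $\Phi$ between $S^{N_k}$ and $S_{N_k}$; hence $h(S_{N_k},\onu)\le h(S_{N_k},\alpha_0)=h(S^{N_k},\mu_0)$, and it suffices to show $h(S^{N_k},\mu_0)=0$. Splitting $n=jN_k+r$ with $1\le r\le N_k$, $0\le j<M_s$, yields the identity
\[
\delta_{S,N_{k+s},u} = \frac{1}{N_k}\sum_{r=1}^{N_k}S^r_*\Bigl(\frac{1}{M_s}\sum_{j=0}^{M_s-1}\delta_{S^{jN_k}u}\Bigr).
\]
Passing to the limit along $(s_\ell)$ (the left-hand side tends to $\nu$, as a subsequence of the original convergence $\delta_{N_k,u}\to \nu$) gives $\nu=\frac{1}{N_k}\sum_{r=1}^{N_k}S^r_*\mu_0$. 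From $h(S,\nu)=0$ we deduce $h(S^{N_k},\nu)=N_k\,h(S,\nu)=0$. Since the Kolmogorov--Sinai entropy is affine on the simplex of $S^{N_k}$-invariant probability measures, and since conjugation by $S^r$ preserves $S^{N_k}$-entropy (so $h(S^{N_k},S^r_*\mu_0)=h(S^{N_k},\mu_0)$ for each $r$), this forces $h(S^{N_k},\mu_0)=0$, completing the proof.

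The only delicate point is precisely the last step: the sub-sampled averages $\frac{1}{M_s}\sum_{j=0}^{M_s-1}\delta_{S^{jN_k}u}$ carry no direct entropy information, and an individual $S^{N_k}$-invariant limit $\mu_0$ could a priori have positive entropy even though $\nu$ does not. The affinity of $h(S^{N_k},\cdot)$ together with the reconstruction of $\nu$ as the $N_k$-fold average of the $S^r$-shifts of $\mu_0$ is what allows the zero entropy to be transferred; everything else is routine bookkeeping with factor maps, conjugations, and weak limits of empirical measures.
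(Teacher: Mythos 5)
Your proof is correct and follows essentially the same route as the paper: extract a subsequence along which the $S^{N_k}$-empirical measures of $u$ converge to some $\mu_0$, recover $\nu$ as the average $\frac{1}{N_k}\sum_r S^r_*\mu_0$ to conclude $h(S^{N_k},\mu_0)=0$ via affinity of entropy, and push $\mu_0$ forward through the $N_k$-block recoding defined by $\varphi$ to obtain $\onu$. The only difference is cosmetic: you factor the recoding as $\tilde\varphi\circ\Phi$ and spell out the affinity step that the paper leaves implicit.
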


\begin{proof}
 First, let $\mu$ be any weak limit of a subsequence of the form
$\delta_{S^{N_k},M_{s_\ell},u}$, $\ell\geq1$.
 Then $\mu$ is $S^{N_k}$-invariant. Moreover, we have
 \[
  \nu=\frac{1}{N_k}(\mu+S_*\mu+\cdots+S^{N_k-1}_*\mu).
 \]
 Since $h(\nu,S_{N_k})=0$, we have also $h(\mu,S_{N_k})=0$. Let $\Phi$ be the continuous map defined by the $N_k$-block recoding $\varphi$ from Lemma~\ref{lemma:approach-deterministic-sequences-step1}. We get the announced result with $\onu$ the pushforward measure of $\mu$ by $\Phi$.
\end{proof}

\begin{Lemma}
 \label{lemma:approach-deterministic-sequences-step2}
 With the same assumptions as in Lemma~\ref{lemma:approach-deterministic-sequences-step1}, for any $\varepsilon>0$, we can find a sequence $\ou\in A^{\N^\ast}$ and a subsequence $(N_{k(\ell)})_{\ell\ge1}$ such that:
 \begin{itemize}
  \item $h_{top}(\ou)=0$;
  \item for each $\ell\ge1$, \eqref{eq:approach_u_step1} with $k$ replaced by $k(\ell)$, is satisfied.
 \end{itemize}
\end{Lemma}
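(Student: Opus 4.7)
The plan is to iterate Lemma~\ref{lemma:approach-deterministic-sequences-step1} through a chain of nested super-alphabets. Fix $\varepsilon_j := \varepsilon/2^{j+1}$, so that $\sum_{j\ge 1}\varepsilon_j \le \varepsilon/2$, and set $\ou^{(0)}:=u$, $A_0:=A$, $N_{k(0)}:=1$. Inductively, at stage $j\ge 1$, I view $\ou^{(j-1)}$ as a sequence in $A_{j-1}^{\N^\ast}$ under $S_{A_{j-1}}$. By Lemma~\ref{lemma:still-completely-deterministic} when $j\ge 2$ (and by hypothesis when $j=1$), this sequence is quasi-generic, along a suitably chosen subsequence of the original scales, for an $S_{A_{j-1}}$-invariant measure of zero entropy, with divisibility preserved. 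I then apply Lemma~\ref{lemma:approach-deterministic-sequences-step1} to $\ou^{(j-1)}$ over $A_{j-1}$ with parameter $\varepsilon_j$, obtaining an index $k(j)>k(j-1)$ with $N_{k(j-1)}\mid N_{k(j)}$, a sub-alphabet $A_j\subset A_{j-1}^{N_{k(j)}/N_{k(j-1)}}\subset A^{N_{k(j)}}$ with $|A_j|<2^{\varepsilon_j N_{k(j)}/N_{k(j-1)}}$, and a sequence $\ou^{(j)}$ whose aligned $N_{k(j)}$-blocks all lie in $A_j$. By a diagonal refinement of the subsequencings returned by Lemma~\ref{lemma:still-completely-deterministic}, one arranges that the $A_{j-1}$-approximation bound furnished by Lemma~\ref{lemma:approach-deterministic-sequences-step1} at stage $j$ holds at every scale $N_{k(\ell)}$ with $\ell\ge j$.

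Two properties pass through each iteration: (i) $\ou^{(j)}$ and $\ou^{(j-1)}$ agree on the initial segment $[1, N_{k(j-1)}]$, because Lemma~\ref{lemma:approach-deterministic-sequences-step1} preserves the first symbol, and a symbol over $A_{j-1}$ corresponds to $N_{k(j-1)}$ symbols over $A$; (ii) $|\{n\le N_{k(\ell)}: \ou^{(j-1)}_n\ne\ou^{(j)}_n\}|<\varepsilon_j N_{k(\ell)}$ for every $\ell\ge j$, by translating the density of $A_{j-1}$-discrepancies into a density of $A$-discrepancies. Property (i) implies pointwise convergence $\ou^{(j)}\to\ou$ with $\ou|_1^{N_{k(j)}}=\ou^{(j)}|_1^{N_{k(j)}}$, and a telescoping estimate yields
\[
\tfrac{1}{N_{k(\ell)}}\bigl|\{n\le N_{k(\ell)}: u_n\ne\ou_n\}\bigr|\le\sum_{i=1}^{\ell}\varepsilon_i\le\varepsilon,
\]
which is exactly the desired approximation at scale $N_{k(\ell)}$.

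For the topological entropy, the nested inclusions $A_{j'}\subset A_j^{N_{k(j')}/N_{k(j)}}$ for $j'\ge j$ imply that every aligned $N_{k(j)}$-block of $\ou$ belongs to $A_j$, since it coincides with an aligned $N_{k(j)}$-sub-block of an aligned $N_{k(j')}$-block of $\ou^{(j')}$ for $j'$ large enough. Any $n$-block of $\ou$ is thus covered by at most $\lceil n/N_{k(j)}\rceil+1$ consecutive aligned $N_{k(j)}$-blocks drawn from $A_j$, so
\[
p_n(\ou)\le N_{k(j)}\cdot |A_j|^{\lceil n/N_{k(j)}\rceil+1}\le N_{k(j)}\cdot 2^{\varepsilon_j(n+2N_{k(j)})/N_{k(j-1)}},
\]
whence $\limsup_{n\to\infty}\tfrac{1}{n}\log p_n(\ou)\le\varepsilon_j/N_{k(j-1)}$. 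Letting $j\to\infty$ and using $N_{k(j-1)}\to\infty$, we conclude that $h_{top}(\ou)=0$.

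The main technical obstacle is the careful bookkeeping of the iteration: at each stage one must verify the hypotheses of Lemma~\ref{lemma:approach-deterministic-sequences-step1} over the current super-alphabet via Lemma~\ref{lemma:still-completely-deterministic}, keep track of how the scales and divisibility transfer between levels, and diagonally refine the successive subsequencings so that a single sequence $(N_{k(\ell)})_\ell$ serves simultaneously for all stages' approximation bounds while the nested block-alphabet structure is preserved in the limit.
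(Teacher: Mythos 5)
Your proposal is correct and follows essentially the same route as the paper: iterate Lemma~\ref{lemma:approach-deterministic-sequences-step1} with geometrically decreasing errors over the nested super-alphabets (justified at each stage by Lemma~\ref{lemma:still-completely-deterministic}), use preservation of the first symbol to obtain a pointwise limit, telescope the discrepancy densities, and bound the entropy by the fact that $\ou$ remains, for every $j$, a concatenation of at most $2^{\varepsilon_j N_{k(j)}/N_{k(j-1)}}$ distinct aligned $N_{k(j)}$-blocks. The only differences are cosmetic (the normalization $\varepsilon_j=\varepsilon/2^{j+1}$ and phrasing the block count as a sub-alphabet inclusion rather than as a bound on the number of $N_{k(j)}$-blocks used).
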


\begin{proof}
 Let $\ou^{(1)}$ be the sequence we obtain applying Lemma~\ref{lemma:approach-deterministic-sequences-step1} with $\varepsilon/2$, and let $k(1)$ be the corresponding integer $k$. Then $\ou^{(1)}$ can be viewed as an infinite concatenation of at most $2^{N_{k(1)}\varepsilon/2}$ different $N_{k(1)}$-blocks.
 By Lemma~\ref{lemma:still-completely-deterministic}, and since all integers $N_k$, $k\ge k(1)$, are multiples of $N_{k(1)}$, we can apply Lemma~\ref{lemma:approach-deterministic-sequences-step1} to the new sequence $\ou^{(1)}$ itself, viewed as a sequence in $(A^{N_{k(1)}})^{\N^\ast}$.
Doing this with $\varepsilon/4$, we obtain a new sequence $\ou^{(2)}$ and an integer $k(2)$. If we consider both $\ou^{(1)}$ and $\ou^{(2)}$ as concatenation of $N_{k(1)}$-blocks, all blocks used in $\ou^{(2)}$ are already used in $\ou^{(1)}$, so that $\ou^{(2)}$ is itself an infinite concatenation of at most $2^{N_{k(1)}\varepsilon/2}$ different $N_{k(1)}$-blocks.
On the other hand, if we consider now both $\ou^{(1)}$ and $\ou^{(2)}$ as sequences in $A^{\N^\ast}$, they coincide on their first $N_{k(1)}$ symbols.

We go on in the same way by induction. At step $\ell$, we have constructed a sequence $\ou^{(\ell)}$ and we have an integer $k(\ell)$ satisfying
\begin{equation}
\label{eq:diff_with_u}
\frac{1}{N_{k(\ell)+s}} \left|\{1\le n\le N_{k(\ell)+s}: u_n\neq \ou^{(\ell)}_n\}\right|<\frac{\varepsilon}{2}+\cdots+\frac{\varepsilon}{2^\ell} \text{ for all }s\ge 0,
\end{equation}
and for each $1\le j\le \ell$,
\begin{equation}\label{eq:not_too_many_blocks}
 \parbox{0.8\linewidth}{$\ou^{(\ell)}$ is an infinite concatenation of at most $2^{N_{k(j)}\varepsilon/2^j}$ different $N_{k(j)}$-blocks.}
\end{equation}
Consider $\ou^{(\ell)}$ as a sequence on the alphabet $A^{N_{k(\ell)}}$, which is quasi-generic for some $S_{N_{k(\ell)}}$-invariant probability with zero entropy along a subsequence of the original sequence $(N_k)$. We apply on it
Lemma~\ref{lemma:approach-deterministic-sequences-step1} with $\varepsilon/2^{\ell+1}$ to get a new sequence
$\ou^{(\ell+1)}$ and an integer $k(\ell+1)$, satisfying the analogous properties to~\eqref{eq:diff_with_u} and~\eqref{eq:not_too_many_blocks} at level $\ell+1$,
and such that $\ou^{(\ell+1)}$ coincides with $\ou^{(\ell)}$ on their first $N_{k(\ell)}$ symbols.

The sequence $(\ou^{(\ell)})_{\ell\ge1}$ which is obtained in this way, converges to a sequence $\ou$, satisfying for all $\ell\ge1$,
\[
 \ou|_1^{N_{k(\ell)}} = \ou^{(\ell)}|_1^{N_{k(\ell)}}.
\]
By~\eqref{eq:diff_with_u}, this ensures that for each $\ell\ge1$,
\[
 \frac{1}{N_{k(\ell)}} \left|\{1\le n\le N_{k(\ell)}: u_n\neq \ou_n\}\right|<\varepsilon.
\]
Moreover, by~\eqref{eq:not_too_many_blocks}, for each $\ell\ge1$, $\ou$ is an infinite concatenation of at most $2^{N_{k(\ell)}\varepsilon/2^\ell}$ different $N_{k(\ell)}$-blocks. Therefore, there are at most $N_{k(\ell)}\cdot 2^{N_{k(\ell)}\varepsilon/2^\ell}$ different $N_{k(\ell)}$-blocks which appear in $\ou$. This implies that $h_{top}(\ou)=0$.
\end{proof}

To conclude the proof of the equivalence of \eqref{S} and \eqref{S0}, we need also some tool to pass from the continuous case of a general sequence $\bigl(f(T^nx)\bigr)_{n\in\N^\ast}$ to the discrete case of a symbolic sequence $x\in A^{\N^\ast}$ for some finite $A\subset \R$. This is the object of what follows.

For each finite subset $A\subset\R$, we denote by $\varphi_A$ the function from $[\min A,+\infty)$ to $A$ which maps $t\ge\min A$ to the largest element $a\in A$ satisfying $a\le t$. We also denote by $\Phi_A$ the function from $[\min A,+\infty)^{\N^\ast}$ to $A^{\N^\ast}$ which maps each sequence $(y_n)_{n\in\N^\ast}$ to $\bigl(\varphi_A(y_n)\bigr)_{n\in\N^\ast}$.

\begin{Lemma}
 \label{lemma:continuous_to_discrete}
 Let $y=(y_n)_{n\in\N^\ast}$ be a bounded sequence of real numbers, with values in some compact interval $[\alpha,\beta]$. We assume that, along some increasing sequence of integers $(N_k)$, the following weak convergence holds:
 \[
 \delta_{S,N_k,y} \tend{k}{\infty} \mu,
 \]
 where $\mu$ is a shift-invariant probability on $[\alpha,\beta]^{\N^\ast}$. Then, for each $\varepsilon>0$, we can find a finite subset $A\subset \R$ such that:
 \begin{itemize}
  \item $\forall t\in[\alpha,\beta]$, $|\varphi_A(t)-t|<\varepsilon$,
  \item we have the weak convergence
\begin{equation}
\label{eq:weak_convergence_Phi_A}
 \delta_{S,N_k,\Phi_A y} \tend{k}{\infty} (\Phi_A)_*\mu.
\end{equation}
 \end{itemize}

\end{Lemma}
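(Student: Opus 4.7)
The plan is to choose $A$ so that $\Phi_A$ is continuous at $\mu$-almost every point and then invoke the Portmanteau / continuous mapping theorem to transfer the weak convergence through $\Phi_A$.

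First I would construct $A$. Let $\nu$ denote the marginal of $\mu$ on $[\alpha,\beta]$ obtained by projecting onto the first coordinate. As a Borel probability measure on the line, $\nu$ has at most countably many atoms. I fix any finite mesh $a_1\le\alpha<a_2<\cdots<a_m$ with $a_m\ge\beta-\varepsilon/2$ and consecutive gaps strictly less than $\varepsilon$, and then perturb each $a_i$ by an arbitrarily small amount so as to avoid every atom of $\nu$, while preserving both the gap condition and $a_1\le\alpha$. Call the resulting set $A$. By construction $\varphi_A$ is well defined on $[\alpha,\beta]$ and $|\varphi_A(t)-t|<\varepsilon$ for every $t\in[\alpha,\beta]$, which gives the first bullet point.

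Next I would identify the discontinuities of $\Phi_A$. Since $\varphi_A$ is constant on each interval $[a_i,a_{i+1})$ (with $a_{m+1}:=+\infty$), it is continuous on $[\min A,+\infty)\setminus\{a_2,\ldots,a_m\}$. Hence, in the product topology, the set $D$ of discontinuities of $\Phi_A$ on $[\alpha,\beta]^{\N^\ast}$ is contained in
\[
\bigcup_{n\ge 1}\bigl\{y\in[\alpha,\beta]^{\N^\ast}:y_n\in\{a_2,\ldots,a_m\}\bigr\}.
\]
Because each $a_i$ with $i\ge 2$ lies outside the countable set of atoms of $\nu$, shift-invariance of $\mu$ yields $\mu(\{y:y_n=a_i\})=\nu(\{a_i\})=0$ for every $n\ge 1$ and $i\ge 2$. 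By countable subadditivity, $\mu(D)=0$.

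Finally, since $\varphi_A$ is applied coordinatewise, $\Phi_A$ commutes with the shift $S$, so
\[
\delta_{S,N_k,\Phi_A y}=(\Phi_A)_\ast\,\delta_{S,N_k,y}.
\]
By the continuous mapping theorem for weak convergence (Portmanteau), if $\delta_{S,N_k,y}\to\mu$ weakly and $\Phi_A$ is continuous on a set of full $\mu$-measure, then $(\Phi_A)_\ast\delta_{S,N_k,y}\to(\Phi_A)_\ast\mu$ weakly, which is exactly~\eqref{eq:weak_convergence_Phi_A}. The only genuinely delicate point is the initial construction of the mesh: the continuous mapping step is standard once one has secured that the finitely many values in $A$ avoid the countably many atoms of $\nu$.
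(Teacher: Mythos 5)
Your proof is correct and follows essentially the same route as the paper's: choose the mesh $A$ with gaps less than $\varepsilon$ and with its elements avoiding the (countably many) atoms of the first-coordinate marginal of $\mu$, observe that shift-invariance makes every coordinate marginal share those atoms so the discontinuity set of $\Phi_A$ is $\mu$-null, and conclude by the continuous mapping theorem applied to $\delta_{S,N_k,\Phi_A y}=(\Phi_A)_\ast\delta_{S,N_k,y}$. No gaps.
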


\begin{proof}
 The first condition required on $A$ is easily satisfied: we just have to choose $A$ so that
\begin{itemize}
 \item $\min A<\alpha$,
 \item $\sup A\ge \beta$,
 \item the distance between two consecutive elements of $A$ is always less than $\varepsilon$.
\end{itemize}
Then, for such an $A$, observe that $\varphi_A$ is continuous on $[\min A,+\infty)\setminus A$ ($A$ is the set of discontinuity points of $\varphi_A$), and that $\Phi_A$ is continuous on
$$
[\min A,+\infty)^{\N^\ast} \setminus r(A),
$$
where
\[
 r(A) := \{y\in[\alpha,\beta]^{\N^\ast}: y_n\in A\text{ for some }n\in\N\}.
\]
Consider the pushforward measure of $\mu$ by the projection of $[\alpha,\beta]^{\N^\ast}$ to the first coordinate. This is a probability measure on the interval $[\alpha,\beta]$, hence with at most a countable number of atoms. Moreover, the pushforward measure of $\mu$ by the projection of $[\alpha,\beta]^{\N^\ast}$ to any other coordinate has the same atoms, since $\mu$ is shift-invariant. Choosing the elements of $A$ from the complement of this set of atoms is always possible, and ensures that
\begin{equation}
 \label{eq:discontinuity_of_measure_0}
 \mu\bigl(r(A)\bigr)=0.
\end{equation}
 Finally, note that for any $k$, the pushforward of $\delta_{S,N_k,y}$ by $\Phi_A$ is precisely $\delta_{S,N_k,\Phi_A y}$. Since by~\eqref{eq:discontinuity_of_measure_0}, the set of discontinuities of $\Phi_A$ has $\mu$-measure 0, we get~\eqref{eq:weak_convergence_Phi_A}.
\end{proof}

\begin{proof}[Proof of Theorem~\ref{thm:SequivS0}]
It is clear from the definitions that condition \eqref{S} implies \eqref{S0}.

Assume that $z\in \{-1,0,1\}^{\N^\ast}$ does not satisfy~\eqref{S}. Then there exist a homeomorphism $T$ of a compact metric space $X$, a continuous function $f:X\to\R$, and a completely deterministic point $x\in X$ such that
$
 \frac{1}{N}\sum_{n\le N} f(T^nx) z_n
$
does not converge to 0 as $N\to\infty$. We can thus find some increasing sequence of integers $(N_k)$, and some $\theta\neq0$ such that
\begin{equation}
 \label{eq:convergence_to_theta}
 \frac{1}{N_k}\sum_{n\le N_k} f(T^nx) z_n\tend{k}{\infty}\theta.
\end{equation}
Without loss of generality, we can further assume that $N_k|N_{k+1}$ for each $k$. Indeed, extracting a subsequence if necessary, we can always assume that
\[
 N_k/N_{k+1}\tend{k}{\infty}0,
\]
and then replace inductively each $N_{k+1}$ by the closest multiple of $N_k$.

We can also assume that
\[
 \delta_{T,N_k,x} \tend{k}{\infty} \nu,
\]
where $\nu$ is a $T$-invariant probability measure on $X$ satisfying $h(T,\nu)=0$ (because $x$ is completely deterministic).
Let $\alpha:= \min f$, $\beta:=\max f$. If we set $y=(y_n)_{n\in \N}:=\bigl(f(T^nx)\bigr)_{n\in\N^\ast}\in[\alpha,\beta]^{\N^\ast}$, then we also have
\[
 \delta_{S,N_k,y} \tend{k}{\infty} \mu,
\]
where $\mu$ is the pushforward of $\nu$ to $[\alpha,\beta]^{\N^\ast}$ by the topological factor map defined by $f$. In particular, we have $h(S,\mu)=0$.

Now, choose $\varepsilon>0$ small enough so that
\begin{equation}\label{eq:chose-vep}
\vep<|\theta|/4.
\end{equation}
Let $A$ be the finite set given by Lemma~\ref{lemma:continuous_to_discrete} applied to $y=\left(f(T^nx)\right)_{n\in\N^\ast}$ and $\varepsilon$, and set $u=(u_n)_{n\in\N^\ast}:=\Phi_A(y)$. Then, we have
\begin{equation}
 \label{eq:good_symbolic_approximation}
|u_n-f(T^nx)|<\varepsilon \text{ for all }n\in\N,
\end{equation}
and
\begin{equation*}
 \label{eq:u_generic}
\delta_{S,N_k,u} \tend{k}{\infty} (\Phi_A)_*\mu.
\end{equation*}
Moreover, since $h(S,\mu)=0$ and $\bigl(S,A^{\N^\ast},(\Phi_A)_*\mu\bigr)$ is a measure-theoretic factor of $\bigl(S,[\alpha,\beta]^{\N^\ast},\mu\bigr)$, we also have $h\bigl(S,(\Phi_A\bigr)_*\mu)=0$.

We apply now Lemma~\ref{lemma:approach-deterministic-sequences-step2} to $u$ and $\vep$, obtaining a sequence $\ou$ with $h_{top}(\ou)=0$ and a subsequence $(N_{k(\ell)})$ such that
\begin{equation}\label{EQQ}
   \dfrac{1}{N_{k(\ell)}}\left|\bigl\{1\le n\le N_{k(\ell)}: u_n\neq\ou_n\bigr\}\right| < \varepsilon.
\end{equation}
Then
\begin{multline*}
 \frac{1}{N_{k(\ell)}} \sum_{ n\le N_{k(\ell)}} \ou_n z_n = \frac{1}{N_{k(\ell)}} \sum_{ n\le N_{k(\ell)}} (\ou_n-u_n) z_n\\
 + \frac{1}{N_{k(\ell)}} \sum_{ n\le N_{k(\ell)}} (u_n-f(T^nx)) z_n+ \frac{1}{N_{k(\ell)}} \sum_{ n\le N_{k(\ell)}} f(T^nx) z_n.
\end{multline*}
It follows from~\eqref{EQQ}, by~\eqref{eq:good_symbolic_approximation} and by~\eqref{eq:convergence_to_theta} that for $\ell$ sufficiently large
$$
\left| \frac{1}{N_{k(\ell)}} \sum_{ n\le N_{k(\ell)}} \ou_n z_n\right|\geq |\theta|/2-2\vep.
$$
Therefore~\eqref{eq:chose-vep} implies that
$$
\frac{1}{N}\sum_{ n\leq N}\ou_n z_n\not\to 0,
$$
i.e.\ $z$ does not satisfy~\eqref{S0} and the assertion follows.
\end{proof}

\section{\eqref{Ch} vs. various properties}\label{se:4}
\subsection{\eqref{S} does not imply \eqref{Ch}}\label{se:4.1}

A natural question arises, whether it is possible to find a sequence which satisfies~\eqref{S} and does not satisfy~\eqref{Ch}. We will provide now such an example.

\begin{Example}[$z$ that satisfies~\eqref{S} but not~\eqref{Ch}]\label{ex:1}

Consider the shift on $\{0,1,2,3\}^{\N^\ast}$ with the Bernoulli measure $B(\frac14,\frac14,\frac14,\frac14)=:\kappa$ and
let
$$
\theta:\{0,1,2,3\}^{\N^\ast}\to\{-1,0,1\}^{\N^\ast}
$$
be given by the code of length~2: $\theta(01)=\theta(12)=-1$, $\theta(02)=\theta(23)=1$, all remaining blocks of length~2 sent to~0. Let $\omega\in \{0,1,2,3\}^{\N^\ast}$ be a generic point for $\kappa$ (such a point exists by the ergodic theorem). Then $\nu:=\theta_\ast(\kappa)$ is an invariant measure for the subshift $Y:=\theta(\{0,1,2,3\}^{\N^\ast})\subset\{-1,0,1\}^{\N^\ast}$. Moreover, $z:=\theta(\omega)$ is a generic point for $\nu$ and $(S,Y,\nu)$ is a Bernoulli automorphism \cite{Or}. Recalling that $F(u):=u(1)$ for $u\in Y$, we have
\begin{multline*}
\frac1N \sum_{n\leq N}z^{i_0}(n)\cdot z^{i_1}(n+a_1)\cdot\ldots\cdot z^{i_r}(n+a_r)\\
=\frac1N \sum_{n\leq N} \big(F^{i_0} \cdot F^{i_1}\circ S^{a_1} \cdot\ldots\cdot F^{i_r}\circ S^{a_r}\big)(S^{n-1} z).
\end{multline*}
In particular (by genericity),
\beq\label{s1}\lim_{N\to\infty}\frac1N \sum_{n\leq N} z(n)z(n+1)=\int_Y F\cdot F\circ S\ d\nu.
\eeq
Observe that \beq\label{s0}\int_YF\ d\nu=0.
\eeq
However, the function
$F\cdot F\circ S$ takes the value 1 with probability $2/4^3$ (given by the blocks $012$ and $023$), while the value -1 has probability $1/4^3$ (it is given by $123$). It follows that the integral in~\eqref{s1} is not equal to zero, i.e.\ \eqref{Ch} does not hold. Note that, in this construction, $z$ is a generic point (so the more $z^2$ is a generic point).

It remains to show that $z$ satisfies~\eqref{S}. This is however clear: for any topological dynamical system $(X,T)$ and any $x\in X$, each accumulation point, say $\rho$, of the sequence of empiric measures
$
\delta_{T\times S,N,(x,z)}$, $N\geq1$,
is a joining of $(T,X,\rho|_X)$ and $(S,Y,\nu)$ (the latter, since $z$ is generic for $\nu$). If $x$ is completely deterministic, then $\rho|_X\in \text{Q-gen}(x)$ has zero entropy, hence $(X,T,\rho|_X)$ is disjoint from any K-system. In particular, $\rho=\rho_X\ot \nu$ and~\eqref{S} follows from~\eqref{s0}.
\end{Example}

\begin{Remark} The point $z$ in the above example is clearly not completely deterministic. In fact, if $z$ satisfies~\eqref{S} and is completely deterministic, then 
$\frac{1}{N}\sum_{n\leq N}z^2(n)=\frac{1}{N}\sum_{n\leq N}z(n)\cdot z(n)\to 0$, so the support of $z$ has zero density and $z$ automatically satisfies~\eqref{Ch}.
\end{Remark}

\begin{Remark}
Example~\ref{ex:1} can be seen as a starting point for a construction of sequences such that the convergence in~\eqref{Ch} holds whenever $a_r< k_0$ ($k_0\geq 2$), and fails for some choice of $1\leq a_1<\dots<a_r=k_0$. Indeed, consider again the shift on $\{0,1,2,3\}^{\N^\ast}$ with the Bernoulli measure $B(1/4,1/4,1/4,1/4)$ and
let
$$
\theta:\{0,1,2,3\}^{\N^\ast}\to\{-1,0,1\}^{\N^\ast}
$$
be given by the code of length~$k_0$: $\theta(0\ast 1)=\theta(1\ast 2)=-1$, $\theta(0\ast 2)=\theta(2\ast 3)=1$, where $\ast$ stands for any sequence of symbols from $\{0,1,2,3\}$ of length $k_0-2$ and all remaining blocks of length $k_0$ sent to~$0$. Let $\nu, \omega$ and $z$ be as in Example~\ref{ex:1}. By genericity
\begin{multline*}
\lim_{N\to \infty}\frac{1}{N}\sum_{n\leq N}z^{i_0}(n)\cdot z^{i_1}(n+a_1)\cdot\ldots\cdot z^{i_r}(n+a_r)\\
=\int_Y F^{i_0} \cdot F^{i_1}\circ S^{a_1} \cdot\ldots\cdot F^{i_r}\circ S^{a_r}\ d\nu.
\end{multline*}
If $a_r<k_0$ then each of the functions $F\circ S^a$ in the above integral take the values $1$ and $-1$ with probability $2/4^2$ and these events (as $a$ varies from $0$ to $k_0-1$) are independent. Therefore, whenever $a_r<k_0$, then the corresponding integral equals zero (when one of the $i_s$ equals~1). However, the function $F\cdot F\circ S^{k_0}$ takes the value $1$ with probability $2/4^3$ (given by the blocks $0\ast 1\ast 2$ and $0\ast 2\ast 3$) while the value $-1$ has probability $1/4^3$ (it is given by $1\ast 2\ast 3$), so the integral is not equal to zero. In other words,~\eqref{Ch} fails for this sequence when $r=1$ and $a_1=k_0$.
\end{Remark}

\subsection{\eqref{Ch} without genericity}\label{se:4.2}
We will show that $z$ may satisfy~\eqref{Ch} without being a generic point (in fact, even $z^2$ may fail to be generic).
\begin{Example}[$z$ that satisfies~\eqref{Ch} with $z^2$ not generic]
Let $ w_0\in Y:=\{-1,0,1\}^{\N^\ast}$ be a generic point for the Bernoulli measure $\kappa_0:=B(1/3,1/3,1/3)$, and $ w_1\in Y$ a generic point for the Bernoulli measure $\kappa_1:=B(1/2,0,1/2)$. Since the measures $\kappa_0$ and $\kappa_1$ are mutually singular, up to a set of $(\kappa_0+\kappa_1)$-measure zero, we can represent $Y$ as a union $Y_0\cup Y_1$, $Y_0\cap Y_1=\varnothing$ with $Y_i$ being a set of full measure for $\kappa_i$, $i=0,1$.

Let $\Z\ni a_n\to\infty$ and set
\beq\label{ex21} M_1:=1,\;M_{n+1}:=a_{n+1}M_n.
\eeq
We define a new sequence $ w\in\{-1,0,1\}^{\N^\ast}$ by setting
$$
 w[M_{2i+1}, M_{2i+2}-1]:= w_0[0,M_{2i+2}-M_{2i+1}-1],\;i\geq0,
$$
$$
 w[M_{2i}, M_{2i+1}-1]:= w_1[0,M_{2i+1}-M_{2i}-1],\;i\geq1.
$$

\begin{Lemma}\label{ll1}
We have $\text{Q-gen}( w)=\{\alpha \kappa_0+(1-\alpha)\kappa_1: \alpha\in[0,1]\}$.
\end{Lemma}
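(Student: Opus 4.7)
Write $\epsilon_j:=0$ if $j$ is odd and $\epsilon_j:=1$ if $j$ is even, so that, by construction, for each $j\ge 1$ the block $w|_{M_j}^{M_{j+1}-1}$ coincides with the initial segment of $w_{\epsilon_j}$ of length $M_{j+1}-M_j$. For $N\in [M_n,M_{n+1})$, let
\[
\alpha_N:=\frac{1}{N}\bigl|\{1\le k\le N:\ k\in[M_j,M_{j+1})\text{ for some odd } j\}\bigr|,
\]
which is the proportion of indices in $[1,N]$ that fall inside a $\kappa_0$-type block. My plan is to first show that every weak accumulation point of $(\delta_{N,w})$ is of the form $\alpha\kappa_0+(1-\alpha)\kappa_1$, and then to show that every $\alpha\in[0,1]$ arises in this way.

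\textbf{Step 1 (block averages).} Fix $f\in C(\{-1,0,1\}^{\N^\ast})$ depending only on the first $L$ coordinates. Since $S^k w$ and $S^{k-M_j}w_{\epsilon_j}$ have the same first $L$ coordinates whenever $M_j\le k$ and $k+L-1<M_{j+1}$, one has
\[
\sum_{k=M_j}^{M_{j+1}-1}f(S^kw)=\sum_{k=0}^{M_{j+1}-M_j-L-1}f(S^kw_{\epsilon_j})+O(L\|f\|_\infty).
\]
The block lengths $M_{j+1}-M_j=(a_{j+1}-1)M_j$ tend to infinity by~\eqref{ex21}, so genericity of $w_{\epsilon_j}$ for $\kappa_{\epsilon_j}$ yields
\[
\frac{1}{M_{j+1}-M_j}\sum_{k=M_j}^{M_{j+1}-1}f(S^kw)\tend{j}{\infty}\int f\,d\kappa_{\epsilon_j}.
\]

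\textbf{Step 2 (global average).} For $N\in[M_n,M_{n+1})$ split
\[
\frac1N\sum_{k=1}^{N}f(S^kw)=\frac1N\sum_{j=1}^{n-1}\sum_{k=M_j}^{M_{j+1}-1}f(S^kw)+\frac1N\sum_{k=M_n}^{N}f(S^kw),
\]
treating the last partial block by the same truncation argument. The boundary errors add up to $O(nL\|f\|_\infty)$, and the number of blocks $n$ satisfies $n=o(M_n)$ because $M_n=M_1\prod_{j=2}^n a_j$ with $a_j\to\infty$. Using also that $M_j/N\le M_{n-1}/M_n=1/a_n\to0$, the initial blocks contribute negligibly in proportion, and a Cesàro-type argument combining Step~1 with the sandwich of convergence gives
\[
\frac1N\sum_{k=1}^{N}f(S^kw)=\alpha_N\int f\,d\kappa_0+(1-\alpha_N)\int f\,d\kappa_1+o(1).
\]
Since such $f$ are uniformly dense in $C(\{-1,0,1\}^{\N^\ast})$, this extends to all continuous functions.

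\textbf{Step 3 (first inclusion).} If $\delta_{N_k,w}\to\nu$ weakly, then by Step~2 the sequence $\alpha_{N_k}$ must converge to some $\alpha\in[0,1]$ (otherwise, two subsequential limits would force two different measures $\nu$), and $\nu=\alpha\kappa_0+(1-\alpha)\kappa_1$.

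\textbf{Step 4 (second inclusion).} Since $|\alpha_{N+1}-\alpha_N|\le 2/(N+1)$, while
\[
\alpha_{M_{2i+1}}\le \frac{M_{2i}}{M_{2i+1}}=\frac{1}{a_{2i+1}}\tend{i}{\infty}0,\qquad
\alpha_{M_{2i+2}}\ge 1-\frac{M_{2i+1}}{M_{2i+2}}=1-\frac{1}{a_{2i+2}}\tend{i}{\infty}1,
\]
a discrete intermediate-value argument shows that every $\alpha\in[0,1]$ is an accumulation point of $(\alpha_N)$. Choosing a subsequence $N_k$ with $\alpha_{N_k}\to\alpha$ and then extracting further if necessary so that $\delta_{N_k,w}$ converges weakly, Step~2 forces the limit to equal $\alpha\kappa_0+(1-\alpha)\kappa_1$.

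The main technical point is Step~2, i.e.\ controlling the accumulated boundary errors and the partial last block simultaneously; but since the number of completed blocks before $N$ grows only logarithmically (in fact much slower) while $N$ grows much faster, these errors are of order $o(N)$, so no essential obstacle arises.
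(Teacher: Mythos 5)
Your proof is correct and follows essentially the same route as the paper's: both arguments rest on the observation that for $M_n\le N<M_{n+1}$ the prefix $[1,M_{n-1})$ has negligible proportion (since $M_{n-1}/M_n=1/a_n\to0$), so only the last complete block and the trailing partial block matter, and genericity of $w_0,w_1$ on long initial segments identifies their contributions as multiples of $\kappa_0$ and $\kappa_1$. Your bookkeeping via $\alpha_N$ and the discrete intermediate-value argument just makes explicit the paper's remark that the subsequence $(P_i)$ can be chosen to realize any $\alpha\in[0,1]$.
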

\begin{proof}
Suppose that, for some increasing sequence $(P_i)$, $\delta_{P_i, w}\to \nu$. Then, for each $i\geq1$, there exists $s_i\geq1$, so that $M_{s_i}\leq P_i<M_{s_i+1}$. By considering subsequences, if necessary, we can assume that $M_{s_i}/P_i\to\alpha$ (moreover, for any $\alpha\in [0,1]$ the sequence $(P_i)$ can be chosen so that this convergence holds). Since $M_{s_i-1}/P_i=\frac{M_{s_i}}{a_{s_i}P_i}\to 0$,
the sequence of measures
$\frac{1}{P_i}\sum_{n<M_{s_i-1}}\delta_{S^n w}$ converges to 0 when $i\to\infty$.
Moreover, since $(M_{s_i}-M_{s_i-1}-1)/P_i\to \alpha$, the measure $\frac{1}{P_i}\sum_{M_{s_i-1}\leq n<M_{s_i}}\delta_{S^n w}$ is arbitrarily close to $\alpha\kappa_{j_i}$, where $\kappa_{j_i}$ is either $\kappa_0$ or $\kappa_1$
depending on the parity of $s_i$ (we pass again to a subsequence if necessary). In a similar way, $\frac{1}{P_i}\sum_{M_{s_i}\leq n<P_i}\delta_{S^n w}\to (1-\alpha)\kappa_{1-j_i}$ and the result follows.
\end{proof}

Clearly, $ w$ is not generic, and we can easily check that neither is $ w^2$ (we obtain that $ w^2$ is quasi-generic for all convex combinations of the Dirac measure at $(1,1,\ldots)$ and a Bernoulli measure).

Now, since~\eqref{Ch} holds for $ w_0$ and $ w_1$, the integral of $F^{i_0}\cdot F^{i_1}\circ S^{a_1}\cdot\ldots\cdot F^{i_r}\circ S^{a_r}$ with respect to $\kappa_i$ for $i=0,1$ is equal to zero for any choice of $1\leq a_1<\ldots<a_r$, $r\geq 0$, $i_s\in\{1,2\}$ not all equal to~$2$. Therefore, for any such choice we also have
$$
\int F^{i_0}\cdot F^{i_1}\circ S^{a_1}\cdot\ldots\cdot F^{i_r}\circ S^{a_r}\ d(\alpha\kappa_0+(1-\alpha)\kappa_1)=0,
$$
which shows that~\eqref{Ch} holds for $ w$.
\end{Example}

\subsection{The squares in~\eqref{Ch} are necessary}\label{se:4.3}
We will now show that the squares in~\eqref{Ch} are necessary. In other words, we will show that~\eqref{Ch} is not equivalent to the following condition:
\begin{equation}\label{Ch1}
\frac1N \sum_{n\leq N}z (n) \cdot z (n+a_1)\cdot \ldots \cdot z (n+a_r) \tend{N}{\infty} 0\tag{Ch1}
\end{equation}
for each choice of $1\leq a_1<\ldots<a_r$, $r\geq 0$. The example will be introduced in the probabilistic language (cf. the discussion on page~\pageref{echowla1}). In order to obtain a sequence $z\in \{-1,0,1\}^{\N^\ast}$ satisfying~\eqref{Ch1} and not satisfying~\eqref{Ch} it suffices to take a generic point for the distribution of the process ${(X_n)}_{n\in\N^\ast}$ considered in the following example.
\begin{Example}
Let ${(Y_n)}_{n\in\N^\ast}$ be a sequence of independent random variables, taking values $\pm 1$, each with probability $1/2$. Set
$$
X_n:=Y_n\mathbf{1}_{Y_{n+1}=1}.
$$
Then for each choice of $0\leq a_1<\dots<a_r$, we have
$$
\E(X_{a_1}\cdot\ldots\cdot X_{a_r})=\E(Y_{a_1}\mathbf{1}_{Y_{a_1+1}=1}Y_{a_2}\mathbf{1}_{Y_{a_2+1}=1}\ldots Y_{a_s}\mathbf{1}_{Y_{a_s+1}=1})=\E(Y_{a_1}Z),
$$
where $Z$ is measurable with respect to the $\sigma$-algebra generated by $Y_{a_1+1}, Y_{a_1+2},\ldots$, hence is independent from $Y_{a_1}$. Since $\E(Y_{a_1})=0$, we get
$$
\E(X_{a_1}\cdot\ldots\cdot X_{a_r})=0.
$$
However, since $X_1^2\cdot Y_2=\mathbf{1}_{Y_2=1}$, we have
$$
\E(X_1^2X_2)=\E(\mathbf{1}_{Y_2=1}\mathbf{1}_{Y_3=1})=\frac{1}{4}\neq 0.
$$
\end{Example}

\begin{Remark}
The dynamical system determined by ${(X_n)}_{n\in\N^\ast}$ is a non-trivial factor of the system determined by ${(Y_n)}_{n\in\N^\ast}$. Moreover, ${(Y_n)}_{n\in\N^\ast}$ is an independent process, so the associated dynamical system is K. Hence $h_{top}(z)>0$ for any $z\in\{-1,0,1\}^{\N^\ast}$ generic for the distribution of ${(X_n)}_{n\in\N^\ast}$.
\end{Remark}
\begin{Qu}\label{Qu:1}
Does there exist $z\in \{-1,0,1\}^{\N^\ast}$ with $h_{top}(z)=0$, for which~\eqref{Ch} fails but that satisfies~\eqref{Ch1}?
\end{Qu}

\begin{Remark}
 \label{rk17}Note that if $h_{top}(z)=0$, and if the density of the support of $z$ is positive, then~\eqref{Ch} automatically fails because~\eqref{S} fails.
\end{Remark}

\subsection{\eqref{Ch} vs. recurrence}\label{se:4.4}
In this section we discuss the recurrence properties of sequences satisfying~\eqref{Ch}. 
\begin{Def}
Let $A$ be a nonempty finite set. A sequence $w\in A^{\N^\ast}$ is said to be {\em recurrent} if
each block $B$ appearing in $w$ appears in it infinitely often.
\end{Def}
Note that, if
\begin{equation}\label{g:1}
X_w^+= X_w,
\end{equation}
then obviously $w$ is recurrent.

It is well-known (see, e.g.\ \cite{Do}, pp.\ 189-190) that under the recurrence assumption, one can construct the topological natural extension of the one-sided subshift generated by $w$. More precisely, under the assumption of recurrence of $w$, there exists $\widetilde{w}\in A^\Z$ such that:
\begin{itemize}
\item $\widetilde{w}[0,\infty]=w$;
\item each block appearing in $\widetilde{w}$ appears in $w$.
\end{itemize}
Our main result in this section is the following:
\begin{Prop}\label{lm:5}
Suppose that~\eqref{g:1} holds for $z^2$, i.e.\
\begin{equation}\label{cond}
X_{z^2}^+=X_{z^2}
\end{equation}
and $z$ satisfies~\eqref{Ch}. Then~\eqref{g:1} holds for $z$;
in fact,
\beq\label{cond1}
\bigcup_{\nu\in \text{Q-gen}(z^2)}\text{supp}(\widehat\nu)=X_{z}.
\eeq
In particular, $z$ is recurrent.
\end{Prop}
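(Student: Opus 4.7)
My plan is to prove both inclusions of the asserted equality; the recurrence of $z$ follows as an immediate consequence.

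For the easy inclusion $\bigcup_{\nu\in\text{Q-gen}(z^2)}\text{supp}(\widehat\nu)\subseteq X_z$, I would invoke Remark~\ref{re:ch:4}: since $z$ satisfies~\eqref{Ch}, every $\widehat\nu$ with $\nu\in\text{Q-gen}(z^2)$ is a quasi-generic measure for $z$. Any block of positive $\widehat\nu$-measure must therefore appear in $z$ (as $\widehat\nu$ is a weak limit of $\delta_{N,z}$), giving $\text{supp}(\widehat\nu)\subseteq X_z$.

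For the harder reverse inclusion, the key observation is the equivalence
\[
x\in\text{supp}(\widehat\nu)\ \iff\ x^2\in\text{supp}(\nu),
\]
which is immediate from the formula $\widehat\nu([B])=2^{-|\text{supp}(B)|}\nu([B^2])$ of~\eqref{es1}. Given $x\in X_z$, squaring blocks of $x$ produces blocks of $x^2$ that appear in $z^2$, so $x^2\in X_{z^2}$, and the hypothesis $X_{z^2}^+=X_{z^2}$ supplies $\alpha_n:=\overline{\mathrm{fr}}(x^2[1,n],z^2)>0$ for every $n$. The task thus reduces to finding a single $\nu\in\text{Q-gen}(z^2)$ with $\nu([x^2[1,n]])>0$ for all $n$.

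I would handle this by a compactness/diagonal argument on $\text{Q-gen}(z^2)$, which is closed in the weak$^\ast$ topology and hence compact. For each $n$, the identity $\alpha_n=\sup\{\mu([x^2[1,n]]):\mu\in\text{Q-gen}(z^2)\}$ (valid since $\mu\mapsto\mu([x^2[1,n]])$ is weak$^\ast$-continuous and the supremum is attained by extracting a subsequence of empirical measures realising the $\limsup$) yields some $\nu_n\in\text{Q-gen}(z^2)$ with $\nu_n([x^2[1,n]])=\alpha_n$. By the nesting $[x^2[1,n]]\subseteq[x^2[1,m]]$ for $m\leq n$, the measure $\nu_n$ already lies in each of the open sets $V_m:=\{\mu\in\text{Q-gen}(z^2):\mu([x^2[1,m]])>0\}$ for all $m\leq n$. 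A weak$^\ast$-convergent subsequence $\nu_{n_k}\to\nu\in\text{Q-gen}(z^2)$ combined with a careful diagonal selection then yields $\nu\in\bigcap_m V_m$, i.e.\ $x^2\in\text{supp}(\nu)$, whence $x\in\text{supp}(\widehat\nu)$ as required. Recurrence of $z$ is then immediate: $z\in X_z$ gives $z\in\text{supp}(\widehat\nu)$ for some $\nu$, and a point in the support of a shift-invariant probability measure is topologically recurrent.

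The main obstacle is the diagonal extraction when $\alpha_n\to 0$: a naive weak-limit argument only yields $\nu([x^2[1,m]])\geq\liminf_k\alpha_{n_k}$, which is vacuous. The rescue is to use not merely weak convergence of the $\nu_n$'s but the open-decreasing structure $V_1\supseteq V_2\supseteq\cdots$ of positivity conditions together with the fact that each $\nu_n$ already lies in $V_1\cap\cdots\cap V_n$, so that any finite collection of positivity conditions is simultaneously satisfied by all sufficiently late $\nu_n$'s, and a suitable limit measure inherits this property.
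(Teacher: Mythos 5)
Your first inclusion is fine, and so is the reduction of the reverse inclusion to the statement ``for each $x\in X_z$ there is a \emph{single} $\nu\in\text{Q-gen}(z^2)$ with $\nu([x^2[1,n]])>0$ for all $n$''; the identity $\alpha_n=\max\{\mu([x^2[1,n]]):\mu\in\text{Q-gen}(z^2)\}$ is also correct. The problem is the final diagonal step, and it is a genuine gap, not a presentational one. The sets $V_m=\{\mu:\mu([x^2[1,m]])>0\}$ are \emph{open}, and knowing that $\nu_n\in V_1\cap\dots\cap V_n$ for every $n$ tells you nothing about a weak limit point $\nu$ of $(\nu_n)$: a limit of points lying in a nested decreasing family of nonempty open sets need not lie in any of them (compare $(0,1/m)$ in $[0,1]$ with the points $1/(n+1)$). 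Quantitatively, all you can extract is $\nu([x^2[1,m]])=\lim_k\nu_{n_k}([x^2[1,m]])\ge\limsup_k\alpha_{n_k}$, which is exactly the vacuous bound you yourself identify; the ``rescue'' you propose is a restatement of the same invalid passage to the limit, since nothing prevents $\nu_{n_k}([x^2[1,m]])\to 0$ for each fixed $m$. Worse, the implication you are trying to prove at this point --- that a point all of whose blocks occur in $w$ with positive upper frequency must lie in the support of a single quasi-generic measure of $w$ --- is false for general $w$: concatenating longer and longer blocks of the form $(1^{n_k}0^{m_k})^{r_k}$, with each value of $n_k$ recurring infinitely often and lengths growing very fast, one gets a $w$ for which every $1^n$ has positive upper frequency (witnessed at the ends of the stretches with $n_k=n$), while every quasi-generic measure is a convex combination of at most two measures, each charging only finitely many of the cylinders $[1^m]$. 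So no soft compactness argument can close this step; the hypothesis $X_{z^2}^+=X_{z^2}$ must be used structurally, whereas your argument uses it only to get $\alpha_n>0$ for each $n$ separately.

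The paper takes a different route that sidesteps the ``single measure'' issue for the equality $X_z^+=X_z$: Lemma~\ref{lemat15} shows, block by block, that $X_z^+$ and $\pi^{-1}(X_{z^2}^+)$ have the same language, hence coincide as subshifts, and then the chain $X_z\subset\pi^{-1}(X_{z^2})=\pi^{-1}(X_{z^2}^+)=X_z^+\subset X_z$ forces all four sets to be equal; no quasi-generic measure witnessing a whole infinite point is needed there. Recurrence then follows because $X_z=X_z^+$ means every block of $z$ occurs in $z$ with positive upper frequency, hence infinitely often. Note also that your closing justification of recurrence (``a point in the support of a shift-invariant probability measure is topologically recurrent'') is false as a general principle: $(1,0,0,0,\dots)$ lies in the support of $B(1/2,1/2)$ but is not recurrent. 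What would save you is that $\widehat\nu$ is quasi-generic for $z$, so positivity of $\widehat\nu$ on a block forces that block to occur in $z$ with positive upper frequency.
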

For the proof we will need two lemmas.

\begin{Lemma}\label{lm:f4}
Let $w\in A^{\N^\ast}$ and consider the subshift $X_w\subset A^{\N^\ast}$. Then given a block $B\in A^r$ (for some $r\geq 1$), the following two conditions are equivalent:
\begin{itemize}
\item
$\text{there exists }\nu\in\text{Q-gen}(w)\text{ such that }\nu(B)>0$,
\item
$B$ appears in $w$ with positive upper frequency.
\end{itemize}
In other words,
$X_w^+=\bigcup_{\nu\in\text{Q-gen}(w)}\text{supp}(\nu)$.
\end{Lemma}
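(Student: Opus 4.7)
The plan is to prove the block-level equivalence by a direct continuity-plus-compactness argument, and then to derive the set identity from it. The proof hinges on two soft facts about $A^{\N^\ast}$: each cylinder $C_0(B)$ is clopen, so the evaluation map $\nu \mapsto \nu(C_0(B))$ is continuous on $\mathcal{P}(A^{\N^\ast})$ in the weak topology; and that space is weakly compact.

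For the easy direction of the block-level equivalence, if $\nu \in \text{Q-gen}(w)$ satisfies $\nu(B) > 0$, choose an increasing sequence $(N_k)$ with $\delta_{N_k, w} \to \nu$; then $\delta_{N_k, w}(B) \to \nu(B)$ by continuity, whence $\overline{\text{fr}}(B, w) = \limsup_N \delta_{N, w}(B) \ge \nu(B) > 0$. Conversely, setting $c := \overline{\text{fr}}(B, w) > 0$, pick $(N_k)$ with $\delta_{N_k, w}(B) \to c$ and extract by weak compactness a further subsequence along which $\delta_{N_k, w}$ converges weakly to some $\nu \in \mathcal{P}(A^{\N^\ast})$. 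By the very definition of the quasi-generic set, $\nu \in \text{Q-gen}(w)$, and continuity forces $\nu(B) = c > 0$.

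The restatement $X_w^+ = \bigcup_{\nu \in \text{Q-gen}(w)} \text{supp}(\nu)$ is then read off block by block. The inclusion $\supseteq$ is immediate: for $u \in \text{supp}(\nu)$ every cylinder $C_0(u[1, m])$ has positive $\nu$-measure, hence by shift-invariance of $\nu$ every block appearing in $u$ does as well, and the easy direction of the block equivalence puts $u$ in $X_w^+$. For the reverse inclusion, given $u \in X_w^+$ the block equivalence supplies for each length $m$ a measure $\nu^{(m)} \in \text{Q-gen}(w)$ with $\nu^{(m)}(u[1, m]) > 0$, and by monotonicity $\nu^{(m)}(u[1, j]) > 0$ for every $j \le m$.

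The step demanding real care---and the main obstacle of the proof---is to assemble the family $(\nu^{(m)})$ into a single $\nu^{\ast} \in \text{Q-gen}(w)$ that charges every prefix of $u$ simultaneously; a naive diagonal extraction from $(\nu^{(m)})$ can yield a weak limit putting zero measure on some prefix, because the lower bounds $\nu^{(m)}(u[1, m]) > 0$ need not be uniform in $m$. My intended route is to form the countable convex average $\nu^{\ast} := \sum_{m \ge 1} 2^{-m} \nu^{(m)}$, which trivially satisfies $\nu^{\ast}(u[1, m]) \ge 2^{-m}\nu^{(m)}(u[1, m]) > 0$ for every $m$, so that $u \in \text{supp}(\nu^{\ast})$. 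The remaining task, which is the technical heart of the argument, is to verify $\nu^{\ast} \in \text{Q-gen}(w)$; this reduces to the closedness and convexity of $\text{Q-gen}(w)$ in the weak topology. Closedness is immediate from the definition, while convexity follows from the slow-variation estimate $d(\delta_{N+1, w}, \delta_{N, w}) = O(1/N)$, which lets the empirical averages $\delta_{N, w}$ pass arbitrarily close to any convex combination of two of their accumulation points.
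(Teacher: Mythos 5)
Your first paragraph reproduces the paper's proof: the paper establishes exactly the two-bullet equivalence by the same argument (continuity of $\nu\mapsto\nu(B)$ on the clopen cylinder, plus weak compactness to extract a quasi-generic limit along a subsequence realizing the $\limsup$), and then simply announces the identity $X_w^+=\bigcup_{\nu\in\text{Q-gen}(w)}\text{supp}(\nu)$ as a restatement, with no further argument. So up to that point you match the paper, and you have correctly isolated the real issue in the ``restatement'': passing from ``for every $m$ there is $\nu\in\text{Q-gen}(w)$ with $\nu(u[1,m])>0$'' to ``there is a single $\nu$ with $\nu(u[1,m])>0$ for all $m$'' is a genuine $\forall\exists\to\exists\forall$ swap that the block-level equivalence does not supply. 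The paper is silent on this point.

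Your proposed repair, however, does not work. $\text{Q-gen}(w)$ is indeed closed, but it is \emph{not} convex in general, and the slow-variation estimate $d(\delta_{N+1,w},\delta_{N,w})=O(1/N)$ only yields that the set of accumulation points is \emph{connected}: to travel from a neighbourhood of $\nu_0$ to a neighbourhood of $\nu_1$ the empirical measures must cross every sphere centred at $\nu_0$, but nothing forces them near the segment joining $\nu_0$ to $\nu_1$. Concretely, for $A=\{0,1,2\}$ and $w=0^{L_1}1^{L_2}2^{L_3}0^{L_4}\cdots$ with $L_k/L_{k+1}\to0$, the set $\text{Q-gen}(w)$ is the union of the three edges of the simplex spanned by $\delta_{0^\infty},\delta_{1^\infty},\delta_{2^\infty}$ and misses the barycentre; so $\nu^{\ast}=\sum_m 2^{-m}\nu^{(m)}$ need not be quasi-generic, and the step you call the technical heart of the argument is unproved. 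Worse, the obstruction is not merely in your method: concatenating, in an order that revisits each pattern infinitely often, ever longer stretches of the periodic words $1^m0^{m^2}$ produces a $w$ for which every $1^m$ has positive upper frequency (so $1^\infty\in X_w^+$) while every $\nu\in\text{Q-gen}(w)$ is a limit of convex combinations of the corresponding periodic-orbit measures and hence annihilates some cylinder $1^j$; thus the inclusion $X_w^+\subseteq\bigcup_{\nu}\text{supp}(\nu)$ fails as a set-level statement and cannot be rescued by any soft argument. The two bullet points --- which are all the paper actually proves, and essentially all that is invoked later, since the lemma is applied to individual blocks --- are correct and your proof of them is fine; the ``in other words'' line should be read block by block, or else supplemented by an additional hypothesis before being used as a statement about points of $A^{\N^\ast}$.
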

\begin{proof}
Let $(N_k)$ and $\nu\in \text{Q-gen}(w)$ be such that
$\delta_{N_k,w}\tend{k}{\infty} \nu$
and let $B\in A^r$, $r\geq 1$, be such that $\nu(B)>0$. Since $\mathbf{1}_B\in C(A^{\N^\ast})$, it follows that
$$
\overline{\text{fr}}(B,w)\geq \lim_{k\to \infty}\int \mathbf{1}_B\ d\,\delta_{ N_k,w} = \int\mathbf{1}_B\, d\nu=\nu(B)>0.
$$

Suppose now that $B$ appears in $w$ with positive upper frequency, i.e.\ we have
\begin{equation}\label{f3}
\lim_{k\to \infty}\int \mathbf{1}_B\ d \delta_{N_k,w}>0,
\end{equation}
for some increasing sequence $(N_k)$.
Passing to a subsequence if necessary, we may assume that
$
\delta_{N_k,w}\tend{k}{\infty} \nu$ weakly;
in particular, $\nu\in \text{Q-gen}(w)$. Moreover, by~\eqref{f3}, $\nu(B)>0$.
\end{proof}

Fix $z\in\{-1,0,1\}^{\N^\ast}$, and set $X_{N}:=\pi^{-1}(X_{z^2})$,
$X_{N}^+:=\pi^{-1}(X_{z^2}^+)$.
\begin{Lemma}\label{lemat15}
If~\eqref{Ch} holds for $z$ then $X_z^+=X_N^+$.
\end{Lemma}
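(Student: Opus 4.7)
My plan is to prove the two inclusions $X_z^+\subset X_N^+$ and $X_N^+\subset X_z^+$ separately. The first is direct and does not use~\eqref{Ch}; the second relies on~\eqref{Ch} only through Remark~\ref{re:ch:4}.

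For $X_z^+\subset X_N^+$, I would work at the level of blocks. Given $u\in X_z^+$ and a block $B\in\{0,1\}^r$ appearing in $u^2$ at some position $t$, set $C:=(u_t,\ldots,u_{t+r-1})\in\{-1,0,1\}^r$, which appears in $u$ and satisfies $C^2=B$. By hypothesis $C$ appears in $z$ with positive upper frequency, and every occurrence of $C$ in $z$ produces an occurrence of $B=C^2$ in $z^2$ at the same position. Hence $B$ appears with positive upper frequency in $z^2$, so $u^2\in X_{z^2}^+$, i.e.\ $u\in X_N^+$.

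For the reverse inclusion $X_N^+\subset X_z^+$, I would use Lemma~\ref{lm:f4} to reformulate both sides as unions of supports of quasi-generic measures. Given $u\in X_N^+$, there exists $\nu\in\text{Q-gen}(z^2)$ with $u^2\in\text{supp}(\nu)$. By Remark~\ref{re:ch:4}, the assumption~\eqref{Ch} gives $\widehat{\nu}\in\text{Q-gen}(z)$, so it suffices to check that $u\in\text{supp}(\widehat{\nu})$. For every cylinder $C_t(u_t,\ldots,u_{t+k-1})$ containing $u$, setting $C:=(u_t,\ldots,u_{t+k-1})$, the defining formula~\eqref{es1} together with $S$-invariance yields
\[
\widehat{\nu}\bigl(C_t(u_t,\ldots,u_{t+k-1})\bigr)=2^{-|\text{supp}(C)|}\,\nu\bigl(C_t(u_t^2,\ldots,u_{t+k-1}^2)\bigr),
\]
and the right-hand side is positive because $u^2\in\text{supp}(\nu)$ forces every cylinder around $u^2$ to have positive $\nu$-measure. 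Hence $u\in\text{supp}(\widehat{\nu})\subset X_z^+$ by Lemma~\ref{lm:f4}.

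I do not anticipate a real obstacle: both inclusions amount to bookkeeping once Lemma~\ref{lm:f4} and Remark~\ref{re:ch:4} are in place. The only point requiring a little care is aligning the cylinder formula~\eqref{es1} with the support description, but this is transparent from the definition of $\widehat{\nu}$ as a relatively independent extension and the shift invariance of the measures involved.
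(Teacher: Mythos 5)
Your argument is correct and follows essentially the same route as the paper: the easy inclusion $X_z^+\subset X_N^+$, then Lemma~\ref{lm:f4}, the formula~\eqref{es1} for $\widehat{\nu}$, and Remark~\ref{re:ch:4} (equivalently Lemma~\ref{rownow}) to get $\widehat{\nu}\in\text{Q-gen}(z)$ for the reverse inclusion. The only cosmetic difference is that you phrase things at the level of points and supports, whereas the paper works directly with blocks and cylinders, which makes its version slightly shorter.
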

\begin{proof}
Clearly, $X_z^+\subset X_N^+$. Take $\widetilde{B}\in X_N^+$. Then $B:=\pi(\widetilde{B})\in X_{z^2}^+$ and by Lemma~\ref{lm:f4} there exists $\nu\in\text{Q-gen}(z^2)$ such that $\nu(B)>0$. Therefore
$$
\widehat{\nu}(\widetilde{B})=\frac{1}{2^{|\text{supp} B|}}\nu(B)>0.
$$
Since $z$ satisfies~\eqref{Ch}, it follows from Lemma~\ref{rownow} that $\widehat{\nu}\in \text{Q-gen}(z)$. Lemma~\ref{lm:f4} implies now that $\widetilde{B}\in X_z^+$ and the assertion follows.
\end{proof}

\begin{proof}[Proof of Proposition~\ref{lm:5}]
By Lemma~\ref{lm:f4} and Remark~\ref{re:ch:4} (which can be applied since $z$ satisfies~\eqref{Ch}), we have
\begin{equation}\label{cond2}
X_z^+=\bigcup_{\widetilde{\nu}\in\text{Q-gen}(z)}\text{supp}(\widetilde{\nu})=\bigcup_{\nu\in \text{Q-gen}(z^2)}\text{supp}(\widehat{\nu}).
\end{equation}
It follows from~\eqref{cond} that
$
X_N^+=X_N$.
This and Lemma~\ref{lemat15} imply
$$
X_z\subset X_N=X_N^+=X_z^+\subset X_z
$$
so \eqref{g:1} holds for $z$. Therefore, and using also~\eqref{cond2}, we conclude that~\eqref{cond1} holds.
\end{proof}

\begin{Remark}
Note that condition~\eqref{cond} is satisfied if $\text{Q-gen}(z^2)=\{\nu\}$ and $\text{supp } \nu=X_{z^2}$. This is the case for $z=\mob$; in particular, $\mob^2$ is recurrent. However, it is not known whether $\mob$ is recurrent (Sarnak, see also recent~\cite{Ma-Ra-Ta}).
\end{Remark}

It is possible to have $z$ satisfying~\eqref{Ch} and non-recurrent with $z^2$ being recurrent. Consider the following two examples:

\begin{Example}\label{AA}
For $i\geq1$, let $B_i$ be the block consisting of $10^i$ zeroes. Then set
$A_1:=1 B_1$,
$A_2:=A_1 A_1 B_2$ and in general $A_{s+1}:=A_sA_sB_s$ for $s\geq 2$ to obtain in the limit the sequence $z^2$ which is recurrent. Replace first 1 by -1 without changing
other positions to define $z$. Then $z$ satisfies~\eqref{Ch} and $z$ is not recurrent. A ``drawback'' of this example is that the density of zeroes is equal to~$1$.
\end{Example}
\begin{Example}
We will use that same idea as in Example~\ref{AA}. Let $(n_i)$ be an increasing sequence of natural numbers and let $B_i$ be a block of length $2n_i$ of alternating ones and zeroes: $B_i=1010\ldots 10$. Then set $A_1:=11 B_1$,
$A_2:=A_1 A_1 B_2$ and in general $A_{s+1}:=A_sA_sB_s$. In the limit, we obtain an infinite sequence $w$. If $(n_i)$ increases fast enough then $w$ differs from $w':=(1,0,1,0,\ldots)$ on a set of density zero, whence $w$ is generic for $\nu:=\frac{1}{2}(\delta_{(1,0,1,0,\ldots)}+\delta_{(0,1,0,1,\ldots)})$. Let $u\in \{-1,1\}^{\N^\ast}$ be generic for $B(1/2,1/2)$ and let $z':=\xi(w',u)$, where $\xi(a,b)(n):=a(n)\cdot b(n)$ for $n\in\N^\ast$. Since $w'$ is of zero entropy, $(w',u)$ is generic for $\nu\otimes B(1/2,1/2)$. Hence $z'$ is generic for $\xi_\ast(\nu\otimes B(1/2,1/2))=\widehat{\nu}$. The sequence $z'$ is a concatenation of blocks of length~3 of $\pm1$ separated by long blocks of $-1,0,1$ in which every second position is~0. To obtain $z$, we now modify $z'$ in the following way. The first block of 3 consecutive $\pm1$ (i.e.\ $z'(1)z'(2)z'(3)$) is replaced with $(-1,-1,-1)$ while all other 3-blocks of consecutive $\pm1$ are replaced with 111. Then $z$ differs from $z'$ on a
subset of density zero, so $z$ is still a generic point
for $\widehat{\nu}$, i.e.\ \eqref{Ch} holds for $z$. Clearly, $z$ is not recurrent, whereas $w=z^2$ has this property. Moreover, the density of $0$'s in $z$ is equal to $1/2$.
\end{Example}

\subsection{\eqref{Ch} vs.\ unique ergodicity}\label{se:4.5}

\begin{Prop}\label{uwaga2}
Let $z\in \{-1,0,1\}^{\N^\ast}$ be such that~\eqref{Ch} holds. Moreover, suppose that there exists a block $B$ with
\beq\label{GYGY}
\text{$\text{supp}(B)\neq \varnothing$ and $B$ appears in $z$ with positive upper frequency.}
\eeq
Then the subshift $X_z$ cannot be uniquely ergodic.
\end{Prop}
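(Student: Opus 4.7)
My proof proposal proceeds by contradiction: assuming $X_z$ is uniquely ergodic with unique invariant measure $\mu$, I will exhibit a second invariant probability measure on $X_z$. Unique ergodicity forces $z$ to be generic for $\mu$, so $\text{Q-gen}(z)=\{\mu\}$. Because $z$ satisfies~\eqref{Ch}, Remark~\ref{re:ch:4} identifies $\mu$ as $\widehat{\nu}$, where $\nu$ is the unique element of $\text{Q-gen}(z^2)$. Applying Lemma~\ref{lm:f4} to the block $B$ from the hypothesis yields $\widehat{\nu}(B)=\mu(B)>0$, and the formula~\eqref{es1} then forces $\nu(B^2)>0$; in particular, $\nu$ is not the Dirac mass at the all-zeros sequence.

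The second invariant probability I propose is $\nu$ itself, viewed as a measure on $\{-1,0,1\}^{\N^\ast}$ supported on $\{0,1\}^{\N^\ast}$. The key step is to show that $\text{supp}(\nu)\subseteq X_z$: pick $w\in\text{supp}(\nu)$ and let $C:=w|_t^{t+\ell-1}$ be an arbitrary sub-block, so that $\nu(C)>0$. Since $C\in\{0,1\}^{\ell}$ we have $C^2=C$, and hence
\[
\widehat{\nu}(C)=2^{-|\text{supp}(C)|}\,\nu(C^2)=2^{-|\text{supp}(C)|}\,\nu(C)>0.
\]
Applying Lemma~\ref{lm:f4} to $z$ and using $\text{Q-gen}(z)=\{\widehat{\nu}\}$, the block $C$ must appear in $z$ with positive upper frequency; in particular, $C$ appears in $z$, so $w\in X_z$. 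This gives $\nu\in\mathcal{P}_S(X_z)$.

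To finish, I check that $\nu\neq\widehat{\nu}$. If $B$ contains a $-1$, then $\nu(B)=0<\widehat{\nu}(B)$; otherwise $B\in\{0,1\}^{|B|}$ with $\text{supp}(B)\neq\varnothing$, so $\widehat{\nu}(B)=2^{-|\text{supp}(B)|}\nu(B)<\nu(B)$. Either way, $\widehat{\nu}$ and $\nu$ are distinct elements of $\mathcal{P}_S(X_z)$, contradicting unique ergodicity. The only delicate step is the inclusion $\text{supp}(\nu)\subseteq X_z$, where~\eqref{Ch}, the unique-ergodicity hypothesis, and Lemma~\ref{lm:f4} are woven together through the identity $C^2=C$ for $\{0,1\}$-blocks; the remaining arguments are direct unpacking of~\eqref{es1}.
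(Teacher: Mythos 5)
Your proof is correct and follows essentially the same route as the paper: both arguments exhibit the pair $\nu$ (a quasi-generic measure for $z^2$) and $\widehat{\nu}$ as two distinct shift-invariant measures carried by $X_z$, distinguishing them exactly as you do via the block $B$ with nonempty support. The only packaging difference is that you argue by contradiction on $X_z$ itself and re-derive the inclusion $\operatorname{supp}(\nu)\subset X_z$ by hand from Lemma~\ref{lm:f4} and the identity $C^2=C$ for $\{0,1\}$-blocks, whereas the paper works directly on $X_z^+$ and obtains $X_{z^2}^+\subset X_z^+$ from Lemma~\ref{lemat15}.
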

\begin{proof}
It suffices to show that the subshift $X^+_z$ is not uniquely ergodic. By Lemma~\ref{lemat15}, we have $X_z^+=X_N^+$, whence $X_{z^2}^{+}\subset X_z^{+}$. Therefore
\begin{equation}\label{niepusty}
\varnothing \neq \text{Q-gen}(z^2)\subset \mathcal{P}_S(X_z^+).
\end{equation}
 By Remark~\ref{re:ch:4}, we have $\text{Q-gen}(z)=\{\widehat{\nu} : \nu\in\text{Q-gen}(z^2)\}$. Let $B$ be the block with non-empty support given by~\eqref{GYGY}. Then it follows from Lemma~\ref{lm:f4} that there exists $\nu\in\text{Q-gen}(z^2)$ such that $\widehat{\nu}(B)>0$. Moreover, for any block $C$ with $B^2=C^2$ we have $\widehat{\nu}(C)=\widehat{\nu}(B)>0$. It follows that $\nu\neq\widehat{\nu}$, but $\{\nu,\widehat{\nu}\}\subset \mathcal{P}_S(X_z^+)$.
\end{proof}

For $z=\mob$ or $z=\mob_\mathscr{B}$ the fact that the subshift $X_z$ is not uniquely ergodic ``comes'' from $X_{z^2}$. To see this, we need first to recall the following definition \cite{Ke-Li}:
\begin{Def}
A subshift $X\subset \{0,\dots, k\}^{\N^\ast}$ is \emph{hereditary} if for any $x\in X$ and $y\in \{0,\dots, k\}^{\N^\ast}$ the condition $y(n)\leq x(n)$ satisfied for all $n$, implies that $y\in X$.
\end{Def}
\begin{Remark}
In view of~\cite{Sarnak} and~\cite{B-Free}, for $z=\mob$ or $z=\mob_\mathscr{B}$, the subshift $X_{z^2}$ consists of all sequences $w\in \{0,1\}^\Z$ which are $\mathscr{B}$-admissible, i.e.\ such that
$$
t(\text{supp}(w),b)<b \text{ for all }b\in\mathscr{B},
$$
where for $A\subset \Z$ and $b\geq 1$, $t(A,b):=|\{c\in \Z/b\Z : \exists n\in A,\ n=c \bmod b\}|$ is the number of classes modulo $b$ in $A$.
\end{Remark}
It follows immediately from the above remark that the subshift $X_{\mob_\mathscr{B}^2}$ is hereditary. Now, each hereditary system of positive topological entropy (and such are $(S,X_{\mob_\mathscr{B}^2})$ \cite{B-Free}) is not uniquely ergodic, e.g.\ \cite{Kw}.\footnote{A direct proof of non-unique ergodicity of $(S,X_{\mob_\mathscr{B}^2})$ follows from the fact that each hereditary system has a fixed point, whereas the Mirsky measure is positive on each non-empty open subset of $X_{\mob_\mathscr{B}^2}$.}

\begin{Remark} We can choose a generic point $z\in\{-1,1\}^{\N^\ast}$ for the Bernoulli measure $B(1/2,1/2)$ to obtain an example of $z$ satisfying~\eqref{Ch} and for which $(S,X_z)$ is not uniquely ergodic while $(S, X_{z^2})$ has this property.
\end{Remark}

\subsection{Characterization of completely deterministic sequences by orthogonality to~\eqref{Ch}}\label{se:cd-Ch}

In response to an interesting question asked by an anonymous referee, we include the following characterization of completely deterministic sequences by orthogonality to sequences satisfying~\eqref{Ch}. We express our thanks to the referee and to Teturo Kamae who helped us proving this result.

\begin{Prop}
 \label{prop:cd-Ch}
 A sequence $t\in\{-1,1\}^{\N^\ast}$ is completely deterministic if and only if, for each sequence $z\in\{-1,0,1\}^{\N^*}$ satisfying~\eqref{Ch}, we have
 \[
  \frac1{N}\sum_{n\le N}z_n t_n \tend{N}{\infty} 0.
 \]
\end{Prop}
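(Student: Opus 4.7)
The plan is to handle the two implications separately.

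For the forward direction, assume $t$ is completely deterministic and let $z\in\{-1,0,1\}^{\N^\ast}$ satisfy~\eqref{Ch}. By Theorem~\ref{ChS}, $z$ then satisfies the stronger property~\eqref{S}. Let $\bar t\in\{-1,1\}^\Z$ be any two-sided extension of $t$; by Remark~\ref{natex}, $\bar t$ is quasi-generic precisely for the natural extensions of the measures in $\text{Q-gen}(t)$, so $\bar t$ is still completely deterministic for the homeomorphism $S$ of the compact metric space $\{-1,1\}^\Z$. Taking the continuous function $f(w):=w(0)$ and applying~\eqref{S} to the triple $(S,\bar t,f)$ yields
$\frac{1}{N}\sum_{n\le N}f(S^n\bar t)\,z_n=\frac{1}{N}\sum_{n\le N}t_n z_n\to 0$, as desired.

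For the converse I argue the contrapositive: assuming $t$ is not completely deterministic, I must produce a single $z\in\{-1,0,1\}^{\N^\ast}$ satisfying~\eqref{Ch} with $\frac{1}{N}\sum_{n\le N}z_n t_n\not\to 0$. By Proposition~\ref{lambda}, any $z\in\{-1,1\}^{\N^\ast}$ that is generic for the Bernoulli measure $B(1/2,1/2)$ automatically satisfies~\eqref{Ch}, so it suffices to exhibit such a $B(1/2,1/2)$-generic $z$ that is correlated (in the Cesàro sense) with $t$. This is the content of a theorem of Kamae: since $t$ is not completely deterministic, some $\kappa\in\text{Q-gen}(\bar t)$ has $h(S,\kappa)>0$, and passing to an ergodic component and applying Sinai's factor theorem furnishes a measurable factor map onto $(S,\{-1,1\}^\Z,B(1/2,1/2))$, which can then be used to construct a $B(1/2,1/2)$-generic sequence $z\in\{-1,1\}^{\N^\ast}$ for which $\limsup_N\bigl|\frac{1}{N}\sum_{n\le N}z_n t_n\bigr|>0$.

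The main obstacle lies in this last step. A direct application of Sinai's theorem together with the measurable factor map $\phi$ yields, at best, a sequence $z_n:=\phi(S^n\bar t)$ that is only \emph{quasi-generic} for $B(1/2,1/2)$ along the subsequence $(N_k)$ witnessing positive entropy; however,~\eqref{Ch} demands that all correlation sums tend to $0$ along the full sequence $N\to\infty$, not merely along $(N_k)$. The Kamae-Weiss argument circumvents this by replacing the purely measurable factor map with a concrete block-coding of $t$ obtained through Rokhlin towers, engineered so that the resulting $z$ is genuinely generic for $B(1/2,1/2)$ while preserving the correlation with $t$. Once such a $z$ is in hand, Proposition~\ref{lambda} gives~\eqref{Ch} and the construction gives the non-orthogonality, completing the contrapositive.
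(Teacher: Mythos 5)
Your forward implication is exactly the paper's: Theorem~\ref{ChS} upgrades~\eqref{Ch} to~\eqref{S}, and evaluating~\eqref{S} for the shift on $\{-1,1\}^{\Z}$ at a two-sided extension of the completely deterministic point $t$, with $f$ the coordinate function, gives the orthogonality. The converse also follows the same overall strategy as the paper (which attributes it to Kamae), and you correctly isolate one of the two real difficulties: $z$ must be generic for $B(1/2,1/2)$ along the \emph{full} sequence of integers while correlating with $t$ along the subsequence $(N_k)$ witnessing positive entropy. The paper resolves this by citing Theorem~2 of~\cite{Ka}, which produces a genuinely $B(1/2,1/2)$-generic $z$ such that $\frac1{N_k}\sum_{n\le N_k}\delta_{(S^nt,S^nz)}\to\gamma$ for a prescribed joining $\gamma$; your ``block-coding via Rokhlin towers'' is a reasonable description of that black box.

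There is, however, a gap in how you manufacture the nonzero correlation itself. Sinai's factor theorem gives \emph{some} measurable factor map $\phi$ from an ergodic component of positive entropy onto $(S,\{-1,1\}^{\Z},B(1/2,1/2))$, hence a graph joining $\gamma$ of $\kappa$ and $B(1/2,1/2)$ --- but nothing forces $\int x(1)y(1)\,d\gamma\neq 0$. The Bernoulli factor produced by Sinai's theorem could perfectly well be statistically orthogonal to the single coordinate $x\mapsto x(1)$, in which case the resulting $z$ satisfies $\frac1{N_k}\sum_{n\le N_k}z_nt_n\to 0$ and proves nothing. What is actually needed, and what the paper invokes as Lemma~3.1 of~\cite{Ka}, is the sharper statement that whenever $h(S,\kappa)>0$ one can choose a joining $\gamma$ of $\kappa$ with $B(1/2,1/2)$ under which $x(1)$ and $y(1)$ are \emph{dependent}; since $y(1)$ is uniform on $\{-1,1\}$, dependence is equivalent to $p:=\gamma\bigl(x(1)=1\mid y(1)=1\bigr)\neq q:=\gamma\bigl(x(1)=1\mid y(1)=-1\bigr)$, and then $\int x(1)y(1)\,d\gamma=p-q\neq 0$. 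As written, your chain ``Sinai $\Rightarrow$ factor map $\Rightarrow$ nonzero correlation'' does not close without this additional input.
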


\begin{proof}
 One side of the equivalence follows easily from the preceding results: if $t$ is completely deterministic, and if $z$ satisfies~\eqref{Ch}, then in particular $z$ satisfies~\eqref{S} by Theorem~\ref{ChS} and we get the desired orthogonality.
 
 (Kamae) Conversely, assume that $t\in\{-1,1\}^{\N^\ast}$ is not completely deterministic. Then there exists an increasing sequence $(N_k)$ such that 
 \[ \delta_{N_k,t}\tend{k}{\infty} \nu\in\mathcal{P}_S(\{-1,1\}^{\N^\ast}),
   \]
 with $h(S,\nu)>0$. 
 Consider the Cartesian square of $\{-1,1\}^{\N^\ast}$, and denote by $x=\bigr(x(1),x(2),\ldots\bigr)$
 and $y=\bigr(y(1),y(2),\ldots\bigr)$ the two coordinates in this space. Applying Lemma~3.1 in~\cite{Ka}, we get a joining $\gamma$ of $\nu$ and the Bernoulli measure $B(1/2,1/2)$ on $\{-1,1\}^{\N^\ast}$, under which $x(1)$ and $y(1)$ are not independent: setting $p:=\gamma\bigl( x(1) = 1 \mid y(1) =1\bigr)$ and $q:=\gamma\bigl( x(1) = 1 \mid y(1) =-1\bigr)$, we have $p\neq q$. It follows that 
 \begin{multline}
  \label{eq:p-q}
  \int_{\{-1,1\}^{\N^\ast}\times \{-1,1\}^{\N^\ast}} x(1) y(1) \, d\gamma(x,y)\\ =\frac1{2}p-\frac1{2}(1-p) -\frac1{2}q + \frac1{2}(1-q) = p-q \neq0.
 \end{multline}
Now, by Theorem~2 in~\cite{Ka}, we can find $z\in\{-1,1\}^{\N^\ast}$ that is generic for $B(1/2,1/2)$ (which is equivalent to the fact that $z$ satisfies~\eqref{Ch} by Proposition~\ref{lambda}), such that
\[ 
 \frac1{N_k}\sum_{n\le N_k} \delta_{(S^nt,S^nz)} \tend{k}{\infty} \gamma.
\]
But then, using by~\eqref{eq:p-q}, we get
\[ 
 \frac1{N_k}\sum_{n\le N_k} z_n t_n \tend{k}{\infty} p-q\neq0.
\]
\end{proof}

\section{Sequences satisfying~\eqref{Ch}}\label{se:5}

In this section our main goal is to give natural examples of sequences $z$ satisfying~\eqref{Ch}.
We begin in Section~\ref{se:comments} by discussing the possible values of the pair $(h_{top}(z^2),h_{top}(z))$ when $z$ satisfies~\eqref{Ch}. 
Without any more restriction, this problem has no satisfactory answer. Indeed, we show with the help of a replacement lemma that even the condition of having a support of density 0, which is clearly stronger than~\eqref{Ch}, does not restrict the possible values of this pair of entropies. However, we prove that if $z$ satisfies~\eqref{Ch}, then its topological entropy is bounded from below by the density of its support (Proposition~\ref{prop:Ch_with_positive_density}).
In Section~\ref{se:geme} we describe a method of obtaining sequences satisfying~\eqref{Ch}. Section~\ref{se:sturmian-a} contains background on Sturmian sequences. These tools are used in Section~\ref{exampla}, where we provide two classes of sequences $z$ satisfying~\eqref{Ch}: with $h_{top}(z^2)=0$ and $h_{top}(z^2)>0$.

\subsection{Entropy of sequences satisfying~\eqref{Ch}}\label{se:comments}
The authors would like to thank Benjamin Weiss for fruitful discussions which resulted in the material presented in this section and in the appendix.

Note that, under the assumption that the Chowla conjecture is true for $\mob$, we have in particular $(h_{top}(\mob^2),h_{top}(\mob))=(\frac{6}{\pi^2},\frac{6}{\pi^2}\log 3)$. A natural question arises, what kind of pairs of numbers can be obtained as $(h_{top}(z^2),h_{top}(z))$ for sequences $z\in \{-1,0,1\}^{\N^\ast}$ satisfying~\eqref{Ch}.

First, we observe that there are some natural restrictions for the values of the pair $(h_{top}(z^2),h_{top}(z))$ for $z\in\{-1,0,1\}^{\N^\ast}$. These restrictions are detailed in the appendix of the present paper.
The following replacement lemma is useful for further investigations.

\begin{Lemma}\label{p:extraCh}
Let $z,w\in \{-1,0,1\}^{\N^\ast}$. Then there exists $\overline{z}\in \{-1,0,1\}^{\N^\ast}$ such that:
\begin{itemize}
\item
$\lim_{k\to \infty}\frac{1}{N_k}\sum_{n\leq N_k}\delta_{S^n\overline{z}}=\lim_{k\to \infty}\frac{1}{N_k}\sum_{n\leq N_k}\delta_{S^nz}$ for each increasing sequence $(N_k)$ such that one of these limits exists,
\item
$h_{top}(\overline{z})=\max (h_{top}(z),h_{top}(w))$,
\item
$h_{top}(\overline{z}^2)=\max (h_{top}(z^2),h_{top}(w^2))$.
\end{itemize}
\end{Lemma}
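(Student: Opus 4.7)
The plan is to build $\overline{z}$ by interleaving $z$ with longer and longer prefixes of $w$, placed at positions of asymptotic density zero. Explicitly, I would fix two sequences $(L_k)$, $(\ell_k)$ of positive integers with $L_k,\ell_k\to\infty$ and $\sum_{j\le k}\ell_j = o(L_{k+1})$ (e.g.\ $L_k=2^k$, $\ell_k=k$), and define
\[
\overline{z} := z[1,L_1]\,w[1,\ell_1]\,z[L_1+1,L_1+L_2]\,w[1,\ell_2]\,z[L_1+L_2+1,L_1+L_2+L_3]\,w[1,\ell_3]\cdots
\]
Writing $M(N)$ for the number of $z$-symbols used in $\overline{z}[1,N]$, the growth conditions guarantee $M(N)/N\to 1$ and that the number of transitions between $z$-chunks and $w$-chunks appearing in $\overline{z}[1,N]$ is $o(N)$. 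Observe that $\overline{z}^2$ is obtained from $z^2$ by interleaving prefixes of $w^2$ at exactly the same positions.

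To check the first bullet, I would fix a cylinder $C$ of length $m$ and split the positions $n\le N$ counted by $\delta_{N,\overline{z}}(C)$ according to whether $\overline{z}[n+1,n+m]$ lies entirely inside a $z$-chunk, entirely inside a $w$-chunk, or straddles a boundary. The interior-$z$ occurrences correspond bijectively to $C$-occurrences in $z[1,M(N)]$ up to $O(m)$ losses per $z$-chunk, so they contribute a total of $M(N)\,\delta_{M(N),z}(C)+o(N)$; the remaining two categories together contribute $o(N)$ since the total length of inserted $w$-chunks and the number of transitions are both $o(N)$. Dividing by $N$ gives
\[
\delta_{N,\overline{z}}(C)-\tfrac{M(N)}{N}\,\delta_{M(N),z}(C)\tend{N}{\infty}0,
\]
and since $M(N)/N\to 1$, the standard Cesàro estimate $\delta_{M(N),z}(C)-\delta_{N,z}(C)\tend{N}{\infty}0$ follows from splitting the sum $\sum_{n\le N}\mathbf 1_C(S^nz)$ at $M(N)$. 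Combining both convergences yields $\delta_{N,\overline{z}}(C)-\delta_{N,z}(C)\tend{N}{\infty}0$ for every cylinder $C$, which by density of cylinder indicators in $C(\{-1,0,1\}^{\N^\ast})$ forces $(\delta_{N_k,\overline{z}})$ and $(\delta_{N_k,z})$ to share the same limits along every subsequence.

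For the entropy equalities, the inclusion $X_z\cup X_w\subset X_{\overline{z}}$ is immediate from $L_k,\ell_k\to\infty$: every block of $z$ (resp.\ of $w$) eventually appears inside some $z$-chunk (resp.\ $w$-chunk) of $\overline{z}$; hence $h_{top}(\overline{z})\ge\max(h_{top}(z),h_{top}(w))$. For the matching upper bound, once $n$ exceeds all but finitely many chunk lengths, every length-$n$ block of $\overline{z}$ either lies in a single chunk or straddles exactly one boundary. A straddling block at a $z\to w$ transition has the form (length-$j$ suffix of a $z$-chunk)$\cdot w[1,n-j]$, giving at most $p_j(z)$ blocks per $j$; a straddling block at a $w\to z$ transition factors as (length-$j$ suffix of some $w[1,\ell_k]$)$\cdot$(length-$(n-j)$ prefix of a $z$-chunk), giving at most $p_j(w)\,p_{n-j}(z)$ blocks per $j$. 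Summing over $j$ one obtains
\[
p_n(\overline{z})\le p_n(z)+p_n(w)+\sum_{j=1}^{n-1}p_j(z)+\sum_{j=1}^{n-1}p_j(w)\,p_{n-j}(z),
\]
whose exponential growth rate is $\max(h_{top}(z),h_{top}(w))$. The very same count applied to $\overline{z}^2$, $z^2$, $w^2$ in place of $\overline{z}$, $z$, $w$ yields $h_{top}(\overline{z}^2)=\max(h_{top}(z^2),h_{top}(w^2))$. The main technical issue is the first bullet: the chunk lengths must be tuned so that the insertions are sparse enough to leave the empirical distributions untouched, yet frequent and long enough to capture every block of $w$; the crucial elementary ingredient is the stability of the Cesàro averages $\delta_{N,z}$ under replacing $N$ by $N-o(N)$, which absorbs the discrepancy between $M(N)$ and $N$.
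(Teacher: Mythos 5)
Your construction (inserting ever longer prefixes of $w$ between ever longer consecutive chunks of $z$) is genuinely different from the paper's, which keeps the positions of $z$ fixed and instead \emph{overwrites} sparsely chosen occurrences of a block $B_{k+1}$ recurring in the current sequence by zero-padded blocks $0^{d_{k+1}}B0^{d_{k+1}}$ with $B$ ranging over the $d_{k+1}$-blocks of $w$. Your treatment of the first bullet is correct and complete: the $M(N)/N\to1$ bookkeeping together with the stability of Ces\`aro averages under $N\mapsto N-o(N)$ does exactly the job that "scarcity of replacements" does in the paper, and the convergence of $\delta_{N,\overline z}(C)-\delta_{N,z}(C)$ on cylinders does force equality of all subsequential weak limits. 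The upper bounds on $p_n(\overline z)$ and $p_n(\overline z^2)$ are also fine (your phrase "once $n$ exceeds all but finitely many chunk lengths" should read that for fixed $n$ only finitely many chunks have length $<n$, contributing a polynomially bounded number of extra blocks, but this is cosmetic).

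The genuine gap is in the lower bound. You assert that $X_z\subset X_{\overline z}$ is "immediate" because every block of $z$ eventually appears inside some $z$-chunk. This is false as stated: the $z$-chunks partition $z$, so a block of $z$ all of whose occurrences straddle a chunk boundary is destroyed by the insertion of a $w$-prefix at that boundary, and need not appear in $\overline z$ at all (take $z$ with isolated $11$'s placed exactly across the boundaries and $w=0^\infty$ for a concrete instance of a lost block). Since the boundaries up to a point $T$ number roughly $\log T$ and first occurrences of $n$-blocks in $z$ can be located arbitrarily far out, one cannot bound the number of lost $n$-blocks by anything subexponential without further argument, so $h_{top}(\overline z)\ge h_{top}(z)$ (and likewise for the squares) does not follow from what you wrote. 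This is precisely the point the paper does not leave to chance: it builds into the construction the requirement $\mathcal{C}_{d_{k+1}}(w)\cup\mathcal{C}_{d_{k+1}}(z)\subset\mathcal{C}_{d_{k+1}}(\overline z_{k+1})$, using the freedom to choose \emph{which} occurrences of the recurrent block are replaced. Your construction is repairable — for instance, take the $k$-th $z$-chunk to be the full prefix $z[1,P_k]$ with $P_{k-1}=o(P_k)$ (so every block of $z$ lies inside a single chunk by fiat, and the empirical-measure argument survives because the earlier chunks have negligible relative length), or choose the boundaries adaptively so that each block of $z$ retains a designated non-straddling occurrence — but some such provision must be made explicit.
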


\begin{proof}
For a sequence $x$ over a finite alphabet, we set
$$
\mathcal{C}_n(x):=\{B : |B|=n, B\text{ appears in } x\}, \text{ so that }p_n(x)=|\mathcal{C}_n(x)|\text{ for }n\in\N.
$$
The sequence $\overline{z}$ will be defined as a limit of sequences $\overline{z}_k$, which will be constructed inductively. Fix $0<\vep_k\to 0$. Let $\overline{z}_1:={z}$, and choose $d_1$ large enough so that $1/d_1<\vep_1$. Suppose that $d_1,\dots, d_k$ and $\overline{z_1},\dots, \overline{z}_k$ are already chosen. Let $d_{k+1}$ be large enough, so that
$$
\frac{\min \{i : \overline{z}_k(j)=z(j)\text{ for }j\geq i\}}{d_{k+1}}<\vep_{k+1}.
$$
Let $B_{k+1}\in \{-1,0,1\}^{3d_{k+1}}$ be a block which appears in $\overline{z}_{k}$ infinitely many times. We define $\overline{z}_{k+1}$ by replacing some of the occurrences of $B_{k+1}$ in $\overline{z}_{k}$ by blocks of the form
$$
\underbrace{0\ldots 0}_{d_{k+1}} B \underbrace{0\ldots 0}_{d_{k+1}},\text{ where }B\in\mathcal{C}_{d_{k+1}}(w)
$$
in such a way that
\begin{itemize}
\item
$\overline{z}_{k+1}[1,d_{k+1}]=\overline{z}_{k}[1,d_{k+1}]$,
\item
$\mathcal{C}_{d_{k+1}}(w)\cup \mathcal{C}_{d_{k+1}}(z)\subset \mathcal{C}_{d_{k+1}}(\overline{z}_{k+1})$.
\end{itemize}
It follows immediately that
$$
h_{top}(\overline{z})\geq \max (h_{top}(z),h_{top}(w)).
$$
On the other hand,
$$
p_{d_{k+1}}(\overline{z})\leq \vep_{k+1}d_{k+1}3^{\vep_{k+1}d_{k+1}}p_{d_{k+1}}(z)d_{k+1}+p_{d_{k+1}}(z)2d_{k+1}+p_{d_{k+1}}(w)2d_{k+1},
$$
whence
\begin{align*}
h_{top}(\overline{z})&\leq \max\left(\lim_{k\to \infty}\frac{1}{d_k}\log p_{d_k}(z),\lim_{k\to \infty}\frac{1}{d_k}\log p_{d_k}(w)\right)\\
&=\max(h_{top}(z),h_{top}(w)).
\end{align*}
Therefore
$$
h_{top}(\overline{z})=\max(h_{top}(z),h_{top}(w)).
$$
In a similar way, we conclude that
$$
h_{top}(\overline{z}^2)=\max(h_{top}(z^2),h_{top}(w^2)).
$$
Moreover, if the replacement of blocks made in course of the construction is scarce enough, the resulting sequence $\overline{z}$ will be such that
$$
\lim_{k\to \infty}\frac{1}{N_k}\sum_{n\leq N_k}\delta_{S^n\overline{z}}=\lim_{k\to \infty}\frac{1}{N_k}\sum_{n\leq N_k}\delta_{S^nz},
$$
for any increasing sequence $(N_k)$ such that one of the above limits exists. This completes the proof.
\end{proof}

Applying the above lemma with $z=(0,0,\dots)$, we get the following result.

\begin{Prop}\label{1106a}
For any $(h_{z^2},h_z)\in [0,1]\times [0,\log 3]$, such that for some $w\in\{-1,0,1\}^{\N^\ast}$, we have
$$
(h_{z^2},h_z)=(h_{top}(w^2),h_{top}(w)),
$$
there exists $\overline{z}$ whose support has density 0 (hence satisfying~\eqref{Ch}), such that
$$
(h_{z^2},h_z)=(h_{top}(\overline{z}^2),h_{top}(\overline{z})).
$$
\end{Prop}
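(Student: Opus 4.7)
The plan is to apply the replacement Lemma~\ref{p:extraCh} with $z = (0,0,\ldots)$ and the given $w$ as the second input, and then observe that the resulting sequence $\overline{z}$ automatically has support of density $0$.

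First, I would note that the constant zero sequence $\mathbf{0} := (0,0,\ldots)$ is generic for the Dirac measure $\delta_\mathbf{0}$, and has $h_{top}(\mathbf{0}) = h_{top}(\mathbf{0}^2) = 0$. Applying Lemma~\ref{p:extraCh} with this choice of $z$ and with the given $w$, I obtain $\overline{z}\in\{-1,0,1\}^{\N^\ast}$ such that $h_{top}(\overline{z}) = \max(0,h_{top}(w)) = h_{top}(w)$ and $h_{top}(\overline{z}^2) = \max(0,h_{top}(w^2)) = h_{top}(w^2)$, yielding the desired pair of entropies. Moreover, the lemma guarantees that $\overline{z}$ has the same set of quasi-generic measures as $\mathbf{0}$; hence $\overline{z}$ is generic for $\delta_\mathbf{0}$. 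In particular, $\frac1N|\{n\le N : \overline{z}(n)\neq 0\}| \to 0$, so the support of $\overline{z}$ has density zero.

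It remains to check the parenthetical claim that density-zero support implies~\eqref{Ch}. This is essentially immediate: fix $1\le a_1<\cdots<a_r$ and exponents $i_s\in\{1,2\}$ not all equal to $2$, and pick an index $s^\ast$ with $i_{s^\ast}=1$ (with the convention $a_0=0$). Then for every $n$,
\[
\bigl|\overline{z}^{i_0}(n)\,\overline{z}^{i_1}(n+a_1)\cdots \overline{z}^{i_r}(n+a_r)\bigr|\;\le\;\mathbf{1}_{\{\overline{z}(n+a_{s^\ast})\neq 0\}},
\]
so the average in~\eqref{Ch} is bounded by $\frac1N|\{n\le N : \overline{z}(n+a_{s^\ast})\neq 0\}|$, which tends to $0$ by the density-zero property. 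Therefore $\overline{z}$ satisfies~\eqref{Ch}, and the proposition follows.

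There is no real obstacle here; the proof is essentially a one-line application of Lemma~\ref{p:extraCh}, and the only point to verify is that ``same quasi-generic measures as $\mathbf{0}$'' translates into ``support of density zero'', which is immediate because being generic for $\delta_\mathbf{0}$ means exactly that the frequency of nonzero symbols vanishes.
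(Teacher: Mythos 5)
Your proposal is correct and follows exactly the paper's route: the paper derives this proposition by the single remark ``Applying the above lemma [Lemma~\ref{p:extraCh}] with $z=(0,0,\dots)$,'' which is precisely your argument, and your additional verifications (that genericity for $\delta_{(0,0,\ldots)}$ gives density-zero support, and that density-zero support trivially implies~\eqref{Ch}) correctly fill in the details the paper leaves implicit.
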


Of course, one can object that the examples of sequences $z$ satisfying~\eqref{Ch} provided by the above propositions are rather trivial, since the density of nonzero terms vanishes. 
If we restrict ourselves to sequences $z$ for which the (upper) density of nonzero terms is positive, Remark~\ref{rk17} proves that the topological entropy of $z$ has to be positive if $z$ satisfies~\eqref{Ch}. In fact, we have the following more precise result.

\begin{Prop}
 \label{prop:Ch_with_positive_density}
 Let $z\in\{-1,0,1\}^{\N^*}$, satisfying~\eqref{Ch} and such that
 \begin{equation}\limsup_{N\to \infty}\frac{1}{N}\sum_{n\leq N}z^2(n)=\delta>0.
 \label{eq:positive_density}
\end{equation}
Then $h_{top}(z)\ge\delta$.
\end{Prop}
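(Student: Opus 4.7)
The plan is to combine the identification of quasi-generic measures for $z$ provided by (Ch) with the variational principle. Pick an increasing sequence $(N_k)$ along which the upper density in~\eqref{eq:positive_density} is attained:
\[
\frac{1}{N_k}\sum_{n\leq N_k} z^2(n) \tend{k}{\infty} \delta.
\]
By passing to a further subsequence (weak compactness of $\mathcal{P}_S(\{0,1\}^{\N^\ast})$), we may assume $\delta_{N_k,z^2}\to\nu$ for some $\nu\in\mathcal{P}_S(\{0,1\}^{\N^\ast})$. Evaluating against the continuous function $F$ from~\eqref{EFF} gives $\nu(\{w:w(1)=1\})=\delta$. Since $z$ satisfies~\eqref{Ch}, Lemma~\ref{rownow} (equivalently, Remark~\ref{re:ch:4}) yields $\delta_{N_k,z}\to\widehat{\nu}$, so in particular $\widehat{\nu}\in\mathcal{P}_S(X_z)$.

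The main step is to check that $h(S,\widehat{\nu})\ge \delta$. I will do this by computing the relative entropy of the extension $\pi\colon(S,\{-1,0,1\}^{\N^\ast},\widehat{\nu})\to(S,\{0,1\}^{\N^\ast},\nu)$. Let $P$ be the time-zero partition according to the first coordinate in $\{-1,0,1\}^{\N^\ast}$. Conditionally on $u=\pi(w)$, the construction of $\widehat{\nu}$ assigns independent uniform $\pm 1$ values to $w$ at each coordinate where $u$ equals $1$, and a deterministic $0$ elsewhere. Hence, for every $n\ge 1$,
\[
H_{\widehat{\nu}}\!\left(\bigvee_{j=0}^{n-1}S^{-j}P\;\Big|\;\pi^{-1}\mathcal{B}\right)
= \int_{\{0,1\}^{\N^\ast}} |\{1\le j\le n : u(j)=1\}|\,d\nu(u) = n\delta,
\]
using $\log_2 2 = 1$. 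Dividing by $n$, the relative (Abramov--Rokhlin) entropy equals $\delta$, so
\[
h(S,\widehat{\nu}) = h(S,\nu) + \delta \;\geq\; \delta.
\]

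To conclude, use~\eqref{wS} and the variational principle applied to the subshift $(S,X_z)$:
\[
h_{top}(z) = h_{top}(S,X_z) \;\geq\; h(S,\widehat{\nu}) \;\geq\; \delta.
\]
The only delicate point is the computation of the conditional entropy above, but it is entirely standard once one observes that (because of the product structure on $\pi$-fibres) the conditional block entropy is exactly the number of $1$'s in the corresponding $\nu$-block times $\log 2$; everything else is immediate from~\eqref{Ch} via Lemma~\ref{rownow} and the variational principle.
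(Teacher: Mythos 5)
Your proof is correct, but it takes a genuinely different route from the paper's. The paper argues combinatorially: from $\nu([1])=\delta$ it extracts arbitrarily long cylinders of positive $\nu$-measure whose density of $1$'s is at least $\delta$; each such length-$n$ cylinder has at least $2^{\delta n}$ preimages under $\pi$, all of positive $\widehat{\nu}$-measure, and quasi-genericity of $z$ for $\widehat{\nu}$ (Remark~\ref{re:ch:4} together with Lemma~\ref{lm:f4}) forces all of them to appear in $z$, so $p_n(z)\geq 2^{\delta n}$ and $h_{top}(z)\geq\delta$ by direct counting -- no variational principle is needed. You instead compute the measure-theoretic entropy of $\widehat{\nu}$ via the relative entropy of the extension $\pi\colon(S,\widehat{\nu})\to(S,\nu)$, using the product structure of the fibre measures to get $h(S,\widehat{\nu})=h(S,\nu)+\delta$, and then invoke the variational principle on $X_z$. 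Your conditional-entropy computation is sound (the integrand $|\{1\le j\le n:u(j)=1\}|$ integrates to $n\nu([1])=n\delta$ by shift-invariance, and the time-zero partition is generating, so Abramov--Rokhlin applies), and it in fact reproduces the identity $h(\widehat{\nu})=h(\nu)+\nu([1])$ that the paper establishes separately, by a lengthier direct estimate, as the first assertion of Proposition~\ref{p:mes} in the appendix. What each approach buys: the paper's is more elementary and self-contained; yours yields the slightly sharper bound $h_{top}(z)\geq h(S,\nu)+\delta$ at no extra cost and cleanly isolates where the entropy comes from (the independent signs on the support).
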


\begin{proof}
 By~\eqref{eq:positive_density}, $z^2$ is quasi-generic for some shift-invariant probability measure $\nu$ on $\{0,1\}^{\N^*}$ satisfying 
 $\nu\bigl([1]\bigr)=\delta$ (where $[1]$ stands for the cylinder set $\{w: w(1)=1\}$). In particular, the support of $\nu$ contains cylinder sets of arbitrarily large length, for which the density of $1$'s is at least $\delta$. Then, since $z$ satisfies~\eqref{Ch}, Remark~\ref{re:ch:4} shows that $z$ is quasi-generic for $\widehat{\nu}$, and we deduce that $h_{top}(z)\geq\delta$.
\end{proof}

\subsection{Tools}\label{se:general}

\subsubsection{General method}\label{se:geme}
Let $u\in \{-1,1\}^{\N^\ast}$ be a generic point for the Bernoulli measure $B(1/2,1/2)$.
\begin{Prop}\label{pr:1}
If $\eta\in \{0,1\}^{\N^\ast}$ is completely deterministic then~\eqref{Ch} holds for $z:=\eta\cdot u$.
\end{Prop}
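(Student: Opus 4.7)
The plan is to verify condition \eqref{Ch} for $z = \eta \cdot u$ via the characterization given in Remark~\ref{re:ch:4}: it suffices to show that whenever $(N_k)$ is an increasing sequence along which $\delta_{N_k, z^2} \to \nu$, we also have $\delta_{N_k, z} \to \widehat{\nu}$.

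The crucial elementary observation is that $z^2(n) = \eta^2(n) \cdot u^2(n) = \eta(n)$, because $\eta\in\{0,1\}$ and $u\in\{-1,1\}$. Hence $\text{Q-gen}(z^2) = \text{Q-gen}(\eta)$, and by the assumption that $\eta$ is completely deterministic, every $\nu\in \text{Q-gen}(z^2)$ satisfies $h(S,\nu)=0$.

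First I would fix $(N_k)$ with $\delta_{N_k, \eta} \to \nu$, and then pass to any convergent subsequence of the measures $\delta_{N_k, (\eta, u)}$ on $\{0,1\}^{\N^\ast} \times \{-1,1\}^{\N^\ast}$; call the limit $\rho$. By construction $\rho$ projects to $\nu$ on the first coordinate and (by genericity of $u$ for $B(1/2,1/2)$) to $B(1/2,1/2)$ on the second, so $\rho$ is a joining of $(S,\nu)$ and the Bernoulli shift $(S, B(1/2,1/2))$. Since the latter is a K-system and $(S,\nu)$ has zero entropy, Remark~\ref{Thouvenot} applied with the trivial common factor forces $\rho = \nu \otimes B(1/2,1/2)$.

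Next, I would apply the continuous product map $\xi\colon\{0,1\}^{\N^\ast}\times\{-1,1\}^{\N^\ast}\to\{-1,0,1\}^{\N^\ast}$, $\xi(w,v)(n):=w(n)v(n)$, from Lemma~\ref{l1}. Since $z = \xi(\eta, u)$, pushing forward yields convergence of $\delta_{N_{k_j}, z}$ to $\xi_*(\nu\otimes B(1/2,1/2)) = \widehat{\nu}$ along the chosen subsequence. As every subsequence of $(\delta_{N_k,(\eta,u)})$ admits a further subsequence with the same limit $\nu\otimes B(1/2,1/2)$ (the argument being identical), a standard compactness argument shows that in fact $\delta_{N_k, z}\to \widehat{\nu}$ along the full sequence $(N_k)$. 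By Remark~\ref{re:ch:4}, this establishes \eqref{Ch}.

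There is no real obstacle here: the proof is essentially a package of tools already built in the excerpt (the identity $z^2=\eta$, Lemma~\ref{l1}, Remark~\ref{Thouvenot}, and the characterization Remark~\ref{re:ch:4}). The only point requiring a bit of care is the promotion of subsequential convergence of $\delta_{N_k,z}$ to convergence along the entire sequence $(N_k)$, which is handled by a routine compactness/uniqueness-of-limit argument since every accumulation point of $(\delta_{N_k,(\eta,u)})$ is forced by disjointness to equal $\nu\otimes B(1/2,1/2)$.
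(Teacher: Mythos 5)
Your proposal is correct and follows essentially the same route as the paper's own proof: both exploit the identity $z^2=\eta$, pass to a limit joining of a zero-entropy quasi-generic limit of $\eta$ with $B(1/2,1/2)$, invoke disjointness of zero-entropy systems from K-systems to force the product joining, push forward by the coordinatewise multiplication map to identify the limit of $\delta_{N_k,z}$ as $\widehat{\nu}$, and conclude via Lemma~\ref{rownow}. The only (welcome) difference is that you spell out the subsequence-extraction and uniqueness-of-accumulation-point step that the paper leaves implicit.
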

\begin{proof}
Let $(N_k)$ be a subsequence such that
$$
\delta_{S\times S, N_k,(\eta,u)}\tend{k}{\infty} \widetilde{\rho}.
$$
Then
$$
\delta_{S,N_k,\eta}\tend{k}{\infty} \rho,
$$
where $\rho$ is the projection of $\widetilde{\rho}$ onto the first coordinate and, by the assumption on $\eta$, $h(S,\rho)=0$. Moreover, since $u$ is generic for $B(1/2,1/2)$, the measure $\widetilde{\rho}$ is a joining of $(S,\rho)$ and $(S,B(1/2,1/2))$. Since $(S,\rho)\perp (S,B(1/2,1/2))$, this must be the product joining, i.e. $\widetilde{\rho}=\rho\otimes B(1/2,1/2)$.

It follows that $\eta\cdot u$ is quasi-generic along $(N_k)$ for the image of $\rho\otimes B(1/2,1/2)$ via the map
\begin{equation}
 \label{eq:map_m}
 m\colon \{0,1\}^{\N^\ast}\times \{-1,1\}^{\N^\ast}\to \{-1,0,1\}^{\N^\ast},
\end{equation}
given by $m(a,b)(n):=a(n)\cdot b(n)$. Clearly, $m_\ast (\rho\otimes B(1/2,1/2))=\widehat{\rho}$. The assertion follows from Lemma~\ref{rownow}.
\end{proof}

\begin{Remark}\label{mu-zero}
Since $\mob^2$ yields a system with discrete spectrum~\cite{Ce-Si}, in particular $\mob^2$ is completely deterministic.
\end{Remark}

\begin{Cor}
Suppose that~\eqref{Ch} holds for the Liouville function $\lamob$. Then~\eqref{Ch} holds for $\mob$.
\end{Cor}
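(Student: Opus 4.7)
The plan is to combine three ingredients already available in the paper: the pointwise factorization $\mob = \mob^2 \cdot \lamob$, Proposition~\ref{lambda} which identifies sequences in $\{-1,1\}^{\N^\ast}$ satisfying~\eqref{Ch}, and Proposition~\ref{pr:1} (the general method of this subsection) applied with the completely deterministic factor $\eta = \mob^2$.

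First I would record the elementary arithmetic identity
\[
\mob(n) = \mob^2(n) \cdot \lamob(n) \quad \text{for all } n \in \N^\ast,
\]
which holds because $\mob$ and $\lamob$ agree on square-free integers, while both sides vanish when $n$ is not square-free (on the left because $\mob(n)=0$, on the right because $\mob^2(n)=0$). This expresses $\mob$ in exactly the product form $\eta \cdot u$ featured in Proposition~\ref{pr:1}.

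Next I would verify that the hypotheses of Proposition~\ref{pr:1} are met with $\eta := \mob^2$ and $u := \lamob$. The factor $\eta = \mob^2$ is completely deterministic by Remark~\ref{mu-zero} (the Mirsky measure has discrete spectrum, hence zero entropy, and $\mob^2$ is in fact generic for it). For the factor $u = \lamob \in \{-1,1\}^{\N^\ast}$, the assumption that $\lamob$ satisfies~\eqref{Ch} combined with Proposition~\ref{lambda} yields immediately that $\lamob$ is a generic point for the Bernoulli measure $B(1/2,1/2)$, which is precisely the hypothesis required on $u$ in Proposition~\ref{pr:1}.

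Applying Proposition~\ref{pr:1} then gives that $z := \mob^2 \cdot \lamob$ satisfies~\eqref{Ch}, and the identity above identifies $z$ with $\mob$. There is no real obstacle here: the whole content of the corollary is that the present subsection has been designed exactly to cover this case, once one observes the multiplicative split of $\mob$ into a zero-entropy ``support'' factor and a Bernoulli ``sign'' factor. The only point worth spelling out is the appeal to Proposition~\ref{lambda}, which is what forces $\lamob$ (rather than merely some unknown $\pm 1$ sequence) to be generic for $B(1/2,1/2)$.
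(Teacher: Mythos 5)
Your proof is correct and follows exactly the paper's route: the identity $\mob=\mob^2\cdot\lamob$, complete determinism of $\mob^2$ from Remark~\ref{mu-zero}, and Proposition~\ref{pr:1} applied with $\eta=\mob^2$, $u=\lamob$. Your explicit appeal to Proposition~\ref{lambda} to convert the hypothesis ``$\lamob$ satisfies~\eqref{Ch}'' into ``$\lamob$ is generic for $B(1/2,1/2)$'' is a welcome clarification of a step the paper leaves implicit, but it is not a different argument.
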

\begin{proof}
The assertion follows directly from the fact that $\mob(n)=\lamob(n)\cdot \mob^2(n)$, Remark~\ref{mu-zero} and Proposition~\ref{pr:1}.
\end{proof}

Proposition~\ref{pr:1} turns out to be a particular case of the following result.
\begin{Prop}\label{pr:2a}
Suppose that~\eqref{Ch} holds for $u\in \{-1,0,1\}^{\N^\ast}$ and that $\eta\in \{-1,0,1\}^{\N^\ast}$ is completely deterministic. Then~\eqref{Ch} holds for $z:=\eta\cdot u$.
\end{Prop}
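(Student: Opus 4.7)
The plan is to factor each term $z^{i_s}(n+a_s)$ as $\eta^{i_s}(n+a_s)\cdot u^{i_s}(n+a_s)$ (which is legitimate since $\eta,u\in\{-1,0,1\}$), and then split the sum in~\eqref{Ch} into a correlation-of-$\eta$ factor and a correlation-of-$u$ factor. Concretely, for a choice of $1\leq a_1<\cdots<a_r$, $r\ge0$, and exponents $i_s\in\{1,2\}$ not all equal to~$2$, define
\[
V(n):=\eta^{i_0}(n)\cdot\eta^{i_1}(n+a_1)\cdots\eta^{i_r}(n+a_r),\quad
W(n):=u^{i_0}(n)\cdot u^{i_1}(n+a_1)\cdots u^{i_r}(n+a_r),
\]
so that the sum in~\eqref{Ch} for $z$ equals $\tfrac{1}{N}\sum_{n\leq N}V(n)W(n)$.

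Next I would invoke the two parts of Lemma~\ref{lemat1} to the two auxiliary sequences. Applying part~\ref{p:c} to $\eta$, the sequence $V$ is completely deterministic. Applying part~\ref{p:a} to $u$, since not all $i_s$ equal~$2$, the sequence $W\in\{-1,0,1\}^{\N^\ast}$ satisfies~\eqref{Ch}. Theorem~\ref{ChS} then promotes~\eqref{Ch} to~\eqref{S} for $W$: for every homeomorphism $T$ of a compact metric space $X$, every $f\in C(X)$, and every completely deterministic $x\in X$,
\[
\frac{1}{N}\sum_{n\leq N}f(T^nx)\,W(n)\tend{N}{\infty}0.
\]

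To close the argument I would apply~\eqref{S} for $W$ with the following choice. Use Remark~\ref{natex} to pick a bi-infinite $\overline{V}\in\{-1,0,1\}^{\Z}$ with $\overline{V}|_{\N^\ast}=V$; the natural-extension property ensures that $\overline{V}$ is completely deterministic (every element of $\text{Q-gen}(\overline{V})$ projects to an element of $\text{Q-gen}(V)$ and the natural extension preserves entropy). Take $X=\{-1,0,1\}^{\Z}$, $T=S$ (a homeomorphism of a compact metric space), $x=\overline{V}$, and $f(w):=w(0)$, which is continuous. Then $f(S^n\overline{V})=\overline{V}(n)=V(n)$ for $n\ge1$, so~\eqref{S} gives exactly
\[
\frac{1}{N}\sum_{n\leq N}V(n)\,W(n)\ \tend{N}{\infty}\ 0,
\]
establishing~\eqref{Ch} for $z=\eta\cdot u$.

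No step here is really an obstacle — the proof is essentially a bookkeeping assembly of Lemma~\ref{lemat1} (transporting ``completely deterministic'' and ``\eqref{Ch}'' through the formation of correlation products) and Theorem~\ref{ChS} (turning~\eqref{Ch} for $W$ into a Sarnak-type orthogonality statement). The one point that deserves care is the indexing when converting the statement of~\eqref{S} into the desired sum over $V(n)W(n)$; using the $0$-th coordinate function $f(w)=w(0)$ on $\{-1,0,1\}^{\Z}$, rather than the $1$-st coordinate, removes the off-by-one that would otherwise appear.
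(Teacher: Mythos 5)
Your proof is correct and follows essentially the same route as the paper's: the paper likewise factors the correlation product of $z$ into the correlation product of $\eta$ (completely deterministic by Lemma~\ref{lemat1}~\ref{p:c}) times that of $u$, and then concludes by citing Proposition~\ref{ChS1}, whose own proof is exactly the combination of Lemma~\ref{lemat1}~\ref{p:a} and Theorem~\ref{ChS} that you spell out. Your extra care in passing to a two-sided extension $\overline{V}$ so that the shift is genuinely a homeomorphism addresses a point the paper leaves implicit, but it is not a different argument.
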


\begin{proof}
For each $1\leq a_1<\ldots<a_k$, $k\geq 0$ and $i_s\in\{1,2\}$, $1\leq s\leq k$, we set
$$
w(n)=\eta^{i_0}(n)\cdot \eta^{i_1}(n+a_1)\cdot\ldots\cdot \eta^{i_k}(n+a_k).
$$
Then $w$ is completely deterministic by Lemma~\ref{lemat1}~\ref{p:c}, and
we obtain
\begin{multline*}
\frac{1}{N}\sum_{n\leq N}z^{i_0}(n)\cdot z^{i_1}(n+a_1)\cdot\ldots\cdot z^{i_k}(n+a_k)\\
=\frac{1}{N}\sum_{n\leq N}w(n)u^{i_0}(n)\cdot u^{i_1}(n+a_1)\cdot\ldots \cdot u^{i_k}(n+a_k)\\
=\frac{1}{N}\sum_{n\leq N}F(S^{n-1}w)u^{i_0}(n)\cdot u^{i_1}(n+a_1)\cdot\ldots \cdot u^{i_k}(n+a_k)
\tend{N}{\infty}0
\end{multline*}
by Proposition~\ref{ChS1}.
\end{proof}

\begin{Remark}\label{uw:8}
In Proposition~\ref{pr:2a}, the condition~\eqref{Ch} can be replaced by~\eqref{S}; the proof goes along the same lines.
\end{Remark}

\subsubsection{Sturmian sequences -- background}
\label{se:sturmian-a}
In this section, we give the necessary background on Sturmian sequences.

\begin{Lemma}\label{sturm}
For any $\delta\in [0,1]$ there exists a Sturmian sequence $\eta\in \{0,1\}^{\N^\ast}$ such that for $n$ large enough
$$
\delta n-3<\# 1(B_n) < \delta n+3\text{ for any $B_n\in \{0,1\}^n$ appearing in $\eta$},
$$
where $\# 1(B_n)=\left|\{0\leq k\leq n-1 : B_n(k)=1\}\right|$. Moreover, for any Sturmian sequence $\eta$ there exists a unique $\delta\in [0,1]$ such that the above inequalities hold.
\end{Lemma}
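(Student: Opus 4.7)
The plan is to obtain both parts from the classical description of Sturmian sequences as codings of an irrational rotation. For the existence part with $\delta\in(0,1)\setminus\Q$, I would define
$$
\eta(n):=\lfloor (n+1)\delta\rfloor-\lfloor n\delta\rfloor\in\{0,1\},\quad n\in\N^\ast,
$$
and invoke the standard fact (see \cite{Fogg}) that this is a Sturmian sequence in the sense defined in Section~2. The degenerate values $\delta\in\{0,1\}$ and $\delta\in\Q\cap(0,1)$ are then handled by taking a periodic sequence of the appropriate density, which is generalized Sturmian in the paper's terminology; the formula above continues to make sense with the same interpretation.

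For the claimed bound on $\#1(B_n)$, the key observation is that any block of length $n$ appearing in $\eta$ has the form $B_n=(\eta(k),\ldots,\eta(k+n-1))$ for some $k\geq 0$, and the definition telescopes:
$$
\#1(B_n)=\sum_{i=0}^{n-1}\eta(k+i)=\lfloor(k+n)\delta\rfloor-\lfloor k\delta\rfloor=n\delta+\{k\delta\}-\{(k+n)\delta\}.
$$
Since both fractional parts lie in $[0,1)$, this yields $n\delta-1<\#1(B_n)<n\delta+1$, which is strictly stronger than the stated $\pm 3$ bound and holds for every $n$, not just large $n$. The same estimate works verbatim in the rational/periodic construction.

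For uniqueness of $\delta$, I would argue by contradiction: if $\delta$ and $\delta'$ both satisfied the bound for $\eta$ for all large $n$, then for any block $B_n$ of length $n$ appearing in $\eta$,
$$
n|\delta-\delta'|\leq|\#1(B_n)-n\delta|+|\#1(B_n)-n\delta'|<6,
$$
so $|\delta-\delta'|<6/n$, and letting $n\to\infty$ forces $\delta=\delta'$.

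There is no real analytic obstacle here; the only mild issue is bookkeeping between strict Sturmian sequences (which require $p_n=n+1$, forcing irrational slope) and the generalized (periodic) case needed at the endpoints $\delta\in\{0,1\}$ and for rational $\delta$. The uniform floor-function identity for $\#1(B_n)$ sidesteps this by covering both cases with the same calculation.
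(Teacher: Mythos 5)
Your proof is correct, and it takes a genuinely different route from the paper's. The paper works with the geometric cutting-sequence picture: it fixes a line segment $L_t$, counts its intersections with vertical and horizontal grid lines via the estimates $at<\#1(L_t)\le at+1$ and $bt<\#0(L_t)\le bt+1$, and then eliminates the length parameter $t$ to relate $\#1(B_n)$ to $n$, which is where the slack of $\pm 3$ comes from. You instead use the arithmetic (mechanical-word) description $\eta(n)=\lfloor(n+1)\delta\rfloor-\lfloor n\delta\rfloor$, for which the count of $1$'s in any window telescopes exactly to $\lfloor(k+n)\delta\rfloor-\lfloor k\delta\rfloor$; this gives the sharper bound $n\delta-1<\#1(B_n)<n\delta+1$ valid for all $n$, with no asymptotic hedging. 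The two descriptions are classically equivalent, but your computation is cleaner and quantitatively stronger, and you make the uniqueness of $\delta$ explicit (the paper leaves it implicit; it also follows from the convergence of $\frac1N\sum_{n\le N}\eta(n)$ to $\delta$ noted in Remark~\ref{Uw24}). Two small points to tidy up: for the ``moreover'' direction you need that \emph{every} Sturmian sequence admits a mechanical representation $\lfloor(n+1)\delta+\rho\rfloor-\lfloor n\delta+\rho\rfloor$ with an intercept $\rho$ (Morse--Hedlund); the telescoping identity and the $\pm1$ bound go through verbatim with the intercept present, but you should say you are invoking that representation theorem, just as the paper silently invokes the analogous fact that every generalized Sturmian sequence is a cutting sequence. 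And, like the paper, you should acknowledge that for rational $\delta$ (and $\delta\in\{0,1\}$) the sequence produced is only generalized Sturmian, which you do.
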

\begin{proof}
For $\delta\in \{0,1\}$ the proof is immediate: we consider the sequences $(0,0,0,\dots)$ and $(1,1,1,\dots)$, respectively. Thus, we may assume that $\delta\in (0,1)$.

Consider the integer grid and a line $L$ in the plane\footnote{Recall that among lines $L$ with a rational slope, we consider only those which do not intersect the nodes of the grid.} and build $\eta\in \{0,1\}^{\N^\ast}$ by writing down $0$ or $1$ depending on whether $L$ intersects a horizontal or a vertical line of the grid (if $L$ meets a node, we write either $0$ or $1$).
 \begin{figure}[h]
 \centering
 \includegraphics[height=5cm]{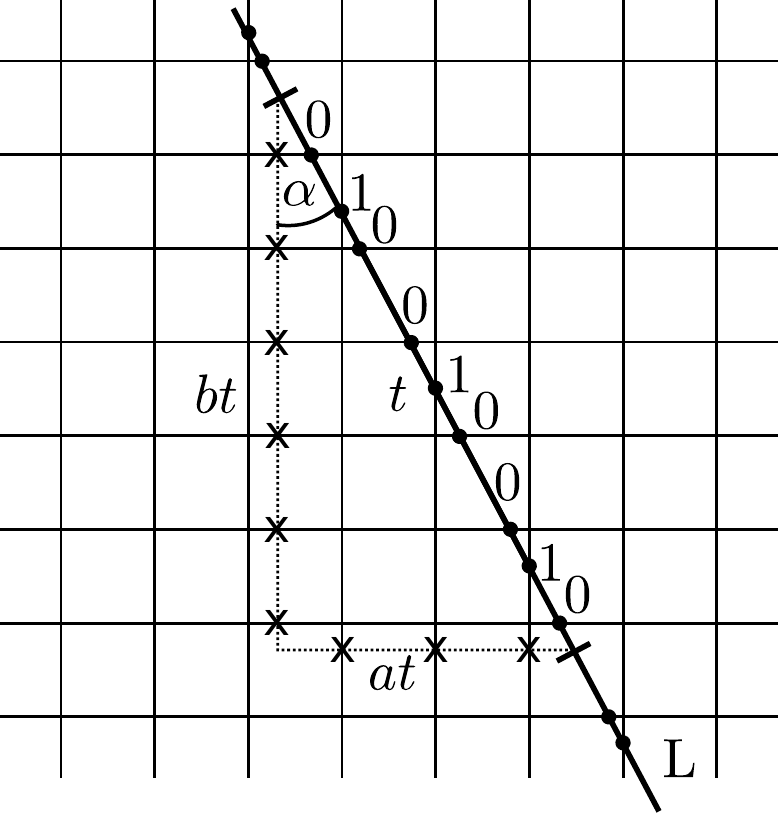}
 \caption{Sturmian sequence}
 \label{fig:1}
 \end{figure}
Given $t>0$, we fix a line segment $L_t$ of the line $L$ of length $t$. Denote by $a$ and $b$ the absolute values of the sine and cosine of the angle at which $L$ intersects the vertical lines of the grid, respectively. Note that $L_t$ intersects as many vertical lines of the grid as the side of the triangle opposed to angle $\alpha$ does (see Figure~\ref{fig:1}). Since this side has length $at$, therefore $L_t$ intersects either $[at]$ or $[at]+1$ vertical lines. This is the number of $1$'s in the corresponding block of $\eta$; we will denote it by $\# 1(L_t)$. In a similar way, the number of $0$'s is equal to $[bt]$ or $[bt]+1$; we denote it by $\# 0(L_t)$. 
Then
\begin{equation}\label{eq:a}
at<\# 1(L_t) \leq at+1 \text{ and }bt<\# 0(L_t)\leq bt+1.
\end{equation}
Therefore
$$
(a+b)t-2<n(L_t)\leq (a+b)t+2,
$$
where $n(L_t)=\# 1(L_t)+\# 0(L_t)$. It follows that
\begin{equation}\label{eq:b}
\frac{n(L_t)-2}{a+b}\leq t<\frac{n(L_t)+2}{a+b}.
\end{equation}
Now, fix $B_n\in \{0,1\}^n$. Then $n=n(L_t)$ for some $t>0$ and for some line segment $L_t$ of length $t$. It follows now by~\eqref{eq:a} and~\eqref{eq:b} that
\begin{multline}\label{eq:c}
\# 1(B_n)=\#1(B_{n(L_t)})=\#1(L_t)\leq at+1\\
<a\frac{n(L_t)+2}{a+b}+1=\frac{a}{a+b}(n+2)+1<\frac{a}{a+b}n+3
\end{multline}
and in a similar way
\begin{equation}\label{eq:d}
\# 1(B_n)>\frac{a}{a+b}(n-2)-1>\frac{a}{a+b}n-3.
\end{equation}
This completes the proof as $\frac{a}{a+b}=\frac{|\tan \alpha|}{1+|\tan \alpha|}$ takes any value between $0$ and $1$.
\end{proof}

\begin{Remark}\label{Uw24}
In particular, it follows from the above lemma that
$$
\delta-\frac3N=\frac{1}{N}(\delta N-3)\leq \frac{1}{N}\sum_{n\leq N}\eta(n)\leq \frac{1}{N}(\delta N+3)=\delta+\frac3N,
$$
whence
\begin{equation}\label{eq+}
\frac{1}{N}\sum_{n\leq N}\eta(n)\to \delta.
\end{equation}
Moreover,
\begin{equation}\label{eq++}
 \delta>0 \text{ for any Sturmian sequence other than }(0,0,0,\dots).
\end{equation}
\end{Remark}

\subsection{Examples}\label{exampla}

\subsubsection{Sequences $z$ with $h_{top}(z^2)=0$}\label{se:5.2.3}
Let $u\in\{-1,1\}^{\N^\ast}$ be a generic point for the $B(1/2,1/2)$ measure and let $\eta\in\{0,1\}^{\N^\ast}$ be a Sturmian sequence. Then, by Proposition~\ref{pr:1}, $z=\eta\cdot u$ satisfies~\eqref{Ch}. Moreover, we have $h_{top}(z^2)=0$. We will now calculate $h_{top}(z)$.

\begin{Prop}\label{pr:2}
For any Sturmian sequence $\eta\in \{0,1\}^{\N^\ast}$ and any $u\in \{-1,1\}^{\N^\ast}$ generic for the Bernoulli measure $B(1/2,1/2)$, the sequence $z:=\eta\cdot u$ satisfies~\eqref{Ch}, $h_{top}(z^2)=0$ and $h_{top}(z)=\delta$, where $\delta$ is uniquely determined by the second assertion of Lemma~\ref{sturm}. Moreover, for every $\delta\in[0,1]$, the pair $(0,\delta)$ is realized as $(h_{top}(z^2),h_{top}(z))$ for a sequence $z$ satisfying~\eqref{Ch}.
\end{Prop}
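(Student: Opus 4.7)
The plan is to first dispose of the easy parts and then reduce the entropy of $z$ to a counting problem controlled by Lemma~\ref{sturm}. First, I would note that $z^2(n)=\eta(n)^2 u(n)^2=\eta(n)$, so $z^2=\eta$, and since $p_n(\eta)=n+1$, we get $h_{top}(z^2)=h_{top}(\eta)=\lim_n\tfrac{1}{n}\log(n+1)=0$. The fact that $z$ satisfies \eqref{Ch} is then immediate from Proposition~\ref{pr:1}, since Sturmian sequences are completely deterministic (by Remark~\ref{stu}, $\eta$ is generic for a zero entropy measure $\nu_\eta$).

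For the entropy computation $h_{top}(z)=\delta$, I would first observe that because the zero entropy $\nu_\eta$ is disjoint from the Bernoulli $B(1/2,1/2)$ (Remark~\ref{Thouvenot}), the pair $(\eta,u)$ is generic for $\nu_\eta\otimes B(1/2,1/2)$, so by Lemma~\ref{rownow} (and the identity $\widehat{\nu_\eta}=m_{\ast}(\nu_\eta\otimes B(1/2,1/2))$ from the proof of Proposition~\ref{pr:1}), $z$ is in fact generic for $\widehat{\nu_\eta}$. The key bijective bookkeeping is that any $n$-block $B\in\{-1,0,1\}^n$ appearing in $z$ is determined by $C:=B^2$ (an $n$-block appearing in $\eta$) together with a choice of sign $\pm1$ at each of the $|\mathrm{supp}(C)|$ non-zero positions of $C$. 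For the upper bound, this gives
\[
p_n(z)\le\sum_{C\text{ appears in }\eta}2^{|\mathrm{supp}(C)|}\le (n+1)\cdot 2^{\delta n+3}
\]
by Lemma~\ref{sturm}, hence $h_{top}(z)\le\delta$. For the lower bound, by Remark~\ref{stu} we have $\nu_\eta(C)>0$ for every $C$ appearing in $\eta$; therefore $\widehat{\nu_\eta}(B)=2^{-|\mathrm{supp}(C)|}\nu_\eta(C)>0$ for every $B$ with $B^2=C$, and by Lemma~\ref{lm:f4} (applied to the generic point $z$) each such $B$ actually appears in $z$. Fixing a single Sturmian block $C$ of length $n$ with $|\mathrm{supp}(C)|\ge\delta n-3$ already yields $p_n(z)\ge 2^{\delta n-3}$ and thus $h_{top}(z)\ge\delta$.

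For the final claim, fixing any $\delta\in[0,1]$, I would invoke Lemma~\ref{sturm} to produce a Sturmian $\eta$ with the matching density parameter (using the constant sequences $(0,0,\dots)$ or $(1,1,\dots)$ for the endpoints $\delta\in\{0,1\}$, where the argument above still applies with $\eta$ completely deterministic), and form $z=\eta\cdot u$; the first part then gives $(h_{top}(z^2),h_{top}(z))=(0,\delta)$. I do not expect any serious obstacle: the argument is essentially a two-sided counting of blocks, the only subtlety being the use of the full support property of the Sturmian measure (Remark~\ref{stu}) to guarantee, through Lemma~\ref{lm:f4}, that every admissible sign pattern is actually realized along $z$.
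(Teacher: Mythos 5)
Your proposal is correct and follows essentially the same route as the paper: \eqref{Ch} via Proposition~\ref{pr:1}, genericity of $z$ for $\widehat{\nu_\eta}$, and then the two-sided block count $p_{z^2}(n)\cdot 2^{\delta n-3}<p_z(n)<p_{z^2}(n)\cdot 2^{\delta n+3}$ coming from Lemma~\ref{sturm}. The only difference is that you spell out the lower bound (full support of the Sturmian measure plus Lemma~\ref{lm:f4}) which the paper leaves implicit.
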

\begin{proof}
Let $\eta\in \{0,1\}^{\N^\ast}$ be a Sturmian sequence, and $\delta$ be as in Lemma~\ref{sturm}, i.e.
\begin{equation}\label{del}
\delta n-3<\# 1(B_n) < \delta n+3
\end{equation}
for any $B_n\in \{0,1\}^n$ appearing in $\eta$. Fix a generic point $u\in \{-1,1\}^{\N^\ast}$ for the measure $B(1/2,1/2)$. Then by Proposition~\ref{pr:1}, $z=\eta\cdot u$ satisfies~\eqref{Ch}.

Since $z$ satisfies~\eqref{Ch}, it is generic for the relatively independent extension of the measure given by the block frequencies in $z^2=\eta$. In particular, given a block $C$ appearing in $\eta=z^2$, and $B$ such that $B^2=C$, $B$ will appear in $z$. Hence
\begin{equation}\label{eq:eee}
p_{z^2}(n)\cdot 2^{\delta n-3}<p_z(n)< p_{z^2}(n)\cdot 2^{\delta n+3},
\end{equation}
which yields $h_{top}(z)=\delta$.
\end{proof}

\subsubsection{Sequences $z$ with arbitrary $h_{top}(z^2)>0$}\label{se:5.2.4}
We will now give examples of $z$ satisfying~\eqref{Ch} with arbitrary $h_{top}(z^2)>0$.
\begin{Prop}\label{PR5}
For any Sturmian sequence $\eta\in \{0,1\}^{\N^\ast}$ and any $u\in \{-1,0,1\}^{\N^\ast}$ generic for the Bernoulli measure $B(1/4,1/2,1/4)$, the sequence $z:=\eta\cdot u$ satisfies~\eqref{Ch}, $h_{top}(z^2)=\delta$ and $h_{top}(z)=\delta\log 3$, where $\delta$ is uniquely determined by the second assertion of Lemma~\ref{sturm}. Moreover, each pair $(\delta,\delta\log3)$ is realized as $(h_{top}(z^2),h_{top}(z))$ for a sequence $z$ satisfying~\eqref{Ch}.
\end{Prop}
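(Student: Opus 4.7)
The plan is to follow very closely the strategy of Proposition~\ref{pr:2}, but with the Bernoulli factor $B(1/2,1/2)$ on $\{-1,1\}$ replaced by $B(1/4,1/2,1/4)$ on $\{-1,0,1\}$; the extra symbol $0$ in the alphabet of $u$ is precisely what will bring in the factor $\log 3$ in the entropy of $z$. First, I want to observe that $u$ itself satisfies~\eqref{Ch}. This is immediate from Lemma~\ref{rownow} (or Remark~\ref{re:ch:4}): $u^2$ is generic for $B(1/2,1/2)$ on $\{0,1\}^{\N^\ast}$, and a direct verification from~\eqref{es1} shows that the relatively independent extension of $B(1/2,1/2)$ equals $B(1/4,1/2,1/4)$, for which $u$ is generic. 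Since any Sturmian sequence $\eta$ is generic for a zero-entropy measure (Remark~\ref{stu}), it is completely deterministic, so Proposition~\ref{pr:2a} applies and yields that $z=\eta\cdot u$ satisfies~\eqref{Ch}.

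For the entropy computations, I will use essentially the same block-counting idea as in the proof of Proposition~\ref{pr:2}, combined with the fact that $(\eta,u^2)$ is generic for the product measure $\nu_\eta\otimes B(1/2,1/2)$ (by disjointness of zero entropy with the Bernoulli system) and, similarly, $(\eta,u)$ is generic for $\nu_\eta\otimes B(1/4,1/2,1/4)$. From this genericity, for every $n$-block $E\in C_n(\eta)$ and every $V\in\{-1,0,1\}^n$ the product $E\cdot V$ appears in $z$, and analogously $E\cdot V'$ appears in $z^2$ for every $V'\in\{0,1\}^n$. For fixed $E$, the map $V\mapsto E\cdot V$ has image of cardinality $3^{\#1(E)}$ in the first case and $2^{\#1(E)}$ in the second (the values on the support of $E$ range freely, while the values off the support are forced to be $0$). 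Combining this with Lemma~\ref{sturm} (which gives $\delta n-3<\#1(E)<\delta n+3$ for every $E\in C_n(\eta)$) and the Sturmian estimate $p_n(\eta)=n+1$, I obtain the two-sided bounds
\[
2^{\delta n-3}\le p_n(z^2)\le (n+1)\,2^{\delta n+3},\qquad 3^{\delta n-3}\le p_n(z)\le (n+1)\,3^{\delta n+3},
\]
which yield $h_{top}(z^2)=\delta$ and $h_{top}(z)=\delta\log 3$.

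For the last (realizability) assertion, the second part of Lemma~\ref{sturm} produces, for any prescribed $\delta\in[0,1]$, a Sturmian (or periodic, in the extreme cases) sequence $\eta$ with the required frequency, and the construction above then delivers a $z$ satisfying~\eqref{Ch} with $(h_{top}(z^2),h_{top}(z))=(\delta,\delta\log 3)$.

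The only mildly delicate point is the lower bound on $p_n(z)$ and $p_n(z^2)$: one must be sure that \emph{every} block of the form $E\cdot V$ indeed appears in $z$, not merely that it should appear with positive frequency under the limit measure. This is where the disjointness-of-entropies argument guaranteeing that $(\eta,u)$ is generic for $\nu_\eta\otimes B(1/4,1/2,1/4)$ is essential, because it allows one to find, for any choice of $E\in C_n(\eta)$ and $V\in\{-1,0,1\}^n$, an index $j$ with simultaneously $\eta|_j^{j+n-1}=E$ and $u|_j^{j+n-1}=V$. Everything else is a routine transposition of the argument already written out for Proposition~\ref{pr:2}.
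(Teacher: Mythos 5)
Your proposal is correct and follows essentially the same route as the paper: condition~\eqref{Ch} via Lemma~\ref{rownow} (noting $B(1/4,1/2,1/4)=\widehat{B(1/2,1/2)}$) together with Proposition~\ref{pr:2a}, and the entropies via counting blocks $E\cdot V$ over blocks $E$ of $\eta$ using Lemma~\ref{sturm}. The only difference is that you spell out the justification that every such block actually occurs in $z$ (genericity of $(\eta,u)$ for the product measure by disjointness of zero entropy from Bernoulli), a point the paper's proof leaves implicit.
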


\begin{proof}
Take $u\in \{-1,0,1\}^{\N^\ast}$ generic for the Bernoulli measure $B(1/4,1/2,1/4)$. Notice that $u^2$ is generic for $B(1/2,1/2)$, and that $B(1/4,1/2,1/4)$ is the relatively independent extension of $B(1/2,1/2)$. By Lemma~\ref{rownow}, $u$ satisfies~\eqref{Ch}. Let $\eta\in \{0,1\}^{\N^\ast}$ be a Sturmian sequence, and let $\delta$ be as in Lemma~\ref{sturm}. By Proposition~\ref{pr:2a}, $z:=u\cdot \eta$ also satisfies~\eqref{Ch}.

Notice that any block $B$ appearing in $z$ arises by replacing some of the $1$'s in a block $C$ appearing in $\eta$ by $0$'s or $-1$'s. Moreover, all blocks of this form appear in $z$. Thus,
$$
3^{\delta n-3}\leq p_z(n)\leq (n+1)\cdot 3^{\delta n +3},
$$
whence $h_{top}(z)=\delta\log3$. In a similar way, we obtain $h_{top}(z^2)=\delta$, which completes the proof.
\end{proof}

\begin{Remark}\label{uw622}
Suppose that $b_k=a_k^2$, $k\geq 1$ are pairwise relatively prime and let $\mob_\mathscr{B}$ be given by formula~\eqref{gen-mob}. Let $z\in\{-1,0,1\}^{\N^\ast}$ be a sequence satisfying~\eqref{Ch}, such that $z^2=\mob_\mathscr{B}^2$ (we can get such a sequence as the product of $\mob_\mathscr{B}^2$ and a sequence of $-1$'s and $1$'s which is generic for $B(1/2,1/2)$, noting that $\mob_\mathscr{B}^2$ is completely deterministic by~\cite{B-Free}, and using Propositin~\ref{pr:1}). By Theorem 5.3. in~\cite{B-Free}, we have
$h
_{top}(z^2)=\prod_{k\geq 1}\left(1-\frac{1}{b_k}\right).
$
Moreover, the same arguments yield
$
h_{top}(z)=\log 3\cdot\prod_{k\geq 1}\left(1-\frac{1}{b_k}\right).
$
Recall (cf.~\cite{Sarnak,Peckner}) that in the classical case when $z=\mob$, we have
$\prod_{k\geq 1}\left(1-\frac{1}{b_k}\right)=\frac{6}{\pi^2}$.
\end{Remark}

\section{Toeplitz sequences correlating with a given sequence, and their topological entropy}\label{se:6}

Since the Sarnak conjecture holds for periodic sequences, the following question arises:
\begin{quote}
Are all sequences that display some strong periodic structure orthogonal to $\mob$?
\end{quote}
Toeplitz sequences (see Section~\ref{dodacnazwe}) are a natural class to consider in this context, since they are explicitly given as some limits of periodic  sequences: indeed, any block appearing in a Toeplitz sequence, appears in it periodically (the period may vary, depending on the chosen block). It was however already shown in~\cite{Ab-Ka-Le} that there are Toeplitz sequences that are not orthogonal to $\mob$.\footnote{The entropy of such sequences was not computed in~\cite{Ab-Ka-Le}.} The aim of this section is to work in an abstract setting, dealing, instead of $\mob$, with a sequence $z\in\{-1,0,1\}^{\N^\ast}$ satisfying some additional assumptions. Under these assumptions, we will construct Toeplitz sequences $t=(t_n)$ such that
\begin{equation}\label{eq:top}
\frac{1}{N}\sum_{n\leq N}t_n\cdot z(n)\not\to 0
\end{equation}
and show that $h_{top}(t)>0$, giving more precise entropy estimates.

The starting point for our constructions is the following simple observation: if the upper density of $1$'s in $z^2$ is positive then
$$
\frac{1}{N}\sum_{n\leq N}z(n)\cdot z(n)\not\to 0.
$$
The underlying idea of the constructions is to find a Toeplitz sequence $t$ which has ``as much as possible in common'' with the sequence $z$ under consideration.

We apply our results to the following two classes of sequences:
\begin{enumerate}[label=(\alph*)]
\item\label{AAA}
sequences satisfying~\eqref{Ch}, related to Sturmian sequences (see Section~\ref{se:5.2.3} and Section~\ref{se:5.2.4});
\item\label{BBB}
$z=\mob$, $z=\mob_\mathscr{B}$ and any sequence $z$ such that $z^2=w^2$, where $w$ is as in~\ref{AAA}.
\end{enumerate}
Notice that in case~\ref{AAA}, in view of Theorem~\ref{ChS},~\eqref{eq:top} clearly implies that $t$ is not completely deterministic, so, in particular, $h_{top}(t)>0$. Therefore, what we are really interested in, are the obtained entropy (lower) estimates. In case~\ref{BBB}, we cannot refer to~\eqref{Ch} anymore to show that $h_{top}(t)>0$, it needs to be shown separately. Note however that
our entropy estimates are not as precise as in case~\ref{AAA} (the reason is that we have less knowledge about $z$). It is also unclear whether the constructed Toeplitz sequences are not completely deterministic.

\subsection{Abstract setting}
Let $z\in \{-1,0,1\}^{\N^*}$ be such that
\begin{equation}\label{aaa}
\liminf_{N\to\infty}\frac{1}{N}\sum_{n\leq N}z^2(n)>0.
\end{equation}
We will show that there exists a Toeplitz sequence $t\in \{-1,0,1\}^{\N^*}$ which correlates with $z$, i.e.
\beq\label{aaa?}
 \liminf_{N\to\infty} \dfrac{1}{N}\sum_{n\leq N} t_n\cdot z(n) >0
\eeq
(see Proposition~\ref{pr:7} below). Moreover, under some additional assumptions on $z$, we will give estimates for $h_{top}(t)$. More precisely, we will prove the following.

\begin{Th}\label{tw:abs2}
Suppose that~\eqref{aaa} holds. If, moreover, $z$ is such that:
\begin{enumerate}[label=(\alph*)]
\item\label{ca}
$z$ is quasi-generic for some $\widetilde{\nu}\in\mathcal{P}_S(\{-1,0,1\}^{\mathbb{N}^\ast})$,
\item\label{cb}
${H}:=h_{top}(\text{supp}(\widetilde{\nu}))>0$,
\item\label{cc}
there exist $q\geq 2$ and $b\geq 1$ such that, for all $m\geq 1$, the number of ergodic components of the action of $S^{q^m}$ on $(\{-1,0,1\}^{\N^*},\widetilde{\nu})$ is bounded by $b$,
\end{enumerate}
then, for any $\vep>0$, $t$ can be chosen so that $h_{top}(t)\geq (1-\vep){H}$ and~\eqref{aaa?} holds.
\end{Th}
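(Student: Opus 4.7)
The plan is to build $t$ as a Toeplitz sequence over a growing tower of periods $p_m = q^{a_m}$, inserting at each stage values drawn from $z$ itself. Three demands must be met simultaneously: $t$ has to be Toeplitz (every coordinate eventually periodic); $X_t$ has to be rich enough to force $h_{top}(t) \geq (1-\vep)H$; and the correlation with $z$ must be positive on average. Hypothesis~(c), the uniform bound $b$ on the number of $S^{q^m}$-ergodic components of $\widetilde{\nu}$, is the key structural input: it lets us control signs in the correlation sum uniformly across all scales.

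\textbf{Construction and correlation.} Choose $a_1<a_2<\ldots$ growing rapidly, set $p_m:=q^{a_m}$, and inductively pick $R_m\subseteq\{0,\ldots,p_m-1\}$ disjoint from the lifts to $p_m$ of the previously used $R_k$'s. For $r\in R_m$ I declare $t(n):=z(r)$ whenever $n\equiv r\pmod{p_m}$ and $n$ is not yet defined, so that the resulting $t$ is Toeplitz as soon as the $R_m$ eventually cover every residue. The correlation splits as
\[
 \frac{1}{N}\sum_{n\le N} t(n)z(n)=\frac{1}{N}\sum_{m}\sum_{r\in R_m} z(r)\sum_{\substack{k\ge 0\\ r+kp_m\le N}} z(r+kp_m).
\]
By hypothesis~(c), the $S^{p_m}$-ergodic decomposition of $\widetilde{\nu}$ has at most $b$ components, so along the quasi-genericity subsequence $(N_k)$ of $z$ the normalised inner sums $\tfrac{p_m}{N_k}\sum_{j: r+jp_m\le N_k} z(r+jp_m)$ cluster near at most $b$ values $c_1^{(m)},\ldots,c_b^{(m)}$ depending only on the $S^{p_m}$-ergodic component of $S^{r-1}z$. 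Restricting $R_m$ to residues $r$ for which $z(r)$ is nonzero and agrees in sign with the corresponding $c_{i(r)}^{(m)}$ makes every stage's contribution nonnegative; assumption~\eqref{aaa} ensures that $\widetilde{\nu}(\{F\neq 0\})>0$, so the ``aligned'' residues keep positive density uniformly in $m$ (being distributed among only $\leq b$ components), and the total correlation is bounded below by a positive constant, giving~\eqref{aaa?}.

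\textbf{Entropy.} To force $h_{top}(t)\ge(1-\vep)H$, I arrange each $R_m$ to contain a long interval $[u_m,u_m+L_m)\subset[0,p_m)$ of consecutive residues, with $L_m\to\infty$. Along any starting position $n_0\equiv u_m\pmod{p_m}$ with $0\le n_0<p_m-L_m$, the block $t(n_0)\cdots t(n_0+L_m-1)$ is literally the window $z(u_m)\cdots z(u_m+L_m-1)$ of $z$. As $m$ and $u_m$ vary, these windows realise inside $t$ every finite block appearing in $z$ with positive upper frequency; by Lemma~\ref{lm:f4} this is exactly the family of blocks of positive $\widetilde{\nu}$-measure, so $\mathrm{supp}(\widetilde{\nu})\subseteq X_t$ and hence $h_{top}(t)=h_{top}(X_t)\ge H$ in the limit. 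The $(1-\vep)$ factor absorbs the entropy cost of the ``periodic skeleton'' coming from positions filled at earlier stages; keeping $\sum_{k<m}|R_k|/p_k<\vep$ bounds the skeleton's contribution above by $\vep\log 3$, which is absorbed into the stated loss.

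\textbf{Main obstacle.} The real difficulty is reconciling three competing constraints on the $R_m$'s: the total density of filled residues must tend to~$1$ (Toeplitz), each $R_m$ must contain long intervals of consecutive residues (entropy), and those residues must be sign-aligned with the local $S^{p_m}$-ergodic data of $z$ (correlation). Hypothesis~(c) is indispensable for the last point: because the sign data comes from only $\le b$ ergodic components at every scale $q^m$, the alignment reduces to a finite combinatorial choice whose aligned density can be bounded below uniformly in $m$. Without the bound $b$, the partition of residues into ``sign classes'' could become unboundedly fine with $m$, and one of the three demands would have to be sacrificed; the role of $q^m$ in the hypothesis is precisely to match the tower of periods used in the construction.
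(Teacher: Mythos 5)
Your construction is genuinely different from the paper's, and the correlation step does not close. In your $t$, a follower position $n\equiv r\pmod{p_m}$, $n\neq r$, contributes $z(r)\,z(r+kp_m)$, a true autocorrelation of $z$ at lag $kp_m$; the leader positions $n=r$, where $t$ literally copies $z$ and contributes $z^2(r)$, have density tending to $0$ at the scales $p_M$, because disjointness plus Toeplitz coverage force $\sum_m|R_m|/p_m=1$ and hence $|R_m|/p_m\to0$. So \eqref{aaa?} rests entirely on aligning the sign of $z(r)$ with the cluster values $c^{(m)}_{i(r)}=\int F\,d(S^{r-1})_*\eta^{(m)}$, and nothing in the hypotheses prevents all of these from vanishing. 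Indeed, for the sequences $z=\eta\cdot u$ of Section~\ref{se:5.2.3} --- exactly the examples to which the theorem is applied in Section~\ref{se:6} --- one has $\widetilde\nu=m_*(\nu\otimes B(1/2,1/2))$, every ergodic component of every $S^{q^m}$ has the form $m_*(\rho_i\otimes B(1/2,1/2))$ and is invariant under the coordinatewise sign change, so every $c^{(m)}_i=0$ and your lower bound on the correlation is $0$. Hypothesis~\ref{cc} cannot repair this: in the paper it is not used for the correlation at all. The paper's $t$ makes position $n$ periodic with period $q^{j(n)}$ attached to the \emph{position} (each initial $n$ leads its own progression $A_n$), so that $t_n=z(n)$ on a set of density $\ge 1-1/(q-1)$ at every scale and $t_nz(n)=z^2(n)$ there; \eqref{aaa} then gives \eqref{aaa?} directly (Proposition~\ref{pr:7}), while~\ref{cc} is used only in the entropy estimate (Proposition~\ref{ogolny}), to guarantee that $z$ read along the arithmetic progression of window-ends $(k+1)q^m-L+1+s$ still exhibits at least $2^{(1-\vep)HL}$ blocks for a suitable shift $s\le b!-1$.

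The entropy step is also incomplete as written. Planting a single interval $[u_m,u_m+L_m)$ per stage yields one planted $L_m$-block of $t$ per stage; to get $\mathrm{supp}(\widetilde\nu)\subseteq X_t$ you would need, for each length $L$, either on the order of $2^{HL}$ distinct planted windows or a single window provably containing all positive-frequency $L$-blocks, and neither is argued. More seriously, an interval of consecutive residues disjoint from all earlier $R_k$'s can exist only if the earlier stages leave long uncovered runs, whereas Toeplitz-ness forces the covered density to reach $1$; your standing constraint $\sum_{k<m}|R_k|/p_k<\vep$ is incompatible with $t$ being everywhere defined. The paper resolves precisely this tension by showing that the skeleton occupying the end-window $I_{m,k,L}$ has density less than $1/(q-1)$ and a pattern independent of $k$ for good $k$, so the copied $z$-data loses at most a factor $2^{L/(q-1)}$ in block count.
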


\begin{Remark}
Note that condition~\ref{cc} above holds for an arbitrary $q\geq 2$ whenever
$(S,\widetilde\nu)$ is ergodic and there exists $b\geq 1$ such that for any rational eigenvalue $\lambda$ of $(S,\widetilde\nu)$, $\lambda^c=1$ for some $1\leq c\leq b$. In particular,~\ref{cc} holds if
$(S,\widetilde\nu)$ is totally ergodic.
\end{Remark}

\begin{Th}\label{tw:abs1}
Suppose that~\eqref{aaa} holds. If moreover $z$ is such that
\begin{enumerate}[label=(\alph*')]
\item\label{ca1}
$z^2$ is quasi-generic for some $\nu\in\mathcal{P}_S(\{-1,0,1\}^{\mathbb{N}^\ast})$,
\item\label{cb1}
${H}:=h_{top}(\text{supp}({\nu}))>0$,
\item\label{cc1}
there exist $q\geq 2$ and $b\geq 1$ such that, for all $m\geq 1$, the number of ergodic components of the action of $S^{q^m}$ on $(\{0,1\}^{\N^*},{\nu})$ is bounded by $b$,
\end{enumerate}
then, for any $\vep>0$, $t$ can be chosen so that $h_{top}(t)\geq (1-\vep){H}$ and~\eqref{aaa?} holds.
\end{Th}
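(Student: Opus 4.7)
The plan is to derive Theorem~\ref{tw:abs1} from Theorem~\ref{tw:abs2} applied to the $\{0,1\}$-valued sequence $z^2$, and then to decorate the resulting Toeplitz skeleton with $\pm 1$ signs chosen, stage by stage, to favor the sign distribution of $z$ on the captured arithmetic progressions.

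First I would view $z^2$ as a sequence in $\{-1,0,1\}^{\N^\ast}$ with image in $\{0,1\}^{\N^\ast}$. Since $\nu$ lives on the closed subshift $\{0,1\}^{\N^\ast}\subset\{-1,0,1\}^{\N^\ast}$, hypotheses (a'), (b'), (c') translate verbatim into hypotheses (a), (b), (c) of Theorem~\ref{tw:abs2} for $z^2$ and $\nu$. Theorem~\ref{tw:abs2} applied to $z^2$ therefore produces, for each $\vep>0$, a Toeplitz sequence $s\in\{0,1\}^{\N^\ast}$ with $h_{top}(s)\ge(1-\vep)H$ and
\[
 \liminf_{N\to\infty}\frac{1}{N}\sum_{n\le N}s_n\,z^2(n)>0.
\]
I would exploit the explicit stagewise structure of this construction: at stage $m$ one selects a set $R_m\subseteq\{0,\dots,q^m-1\}$ of residue classes newly captured and on which $s$ is set equal to~$1$, leaving the remaining classes as $\ast$'s to be refined later.

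Second, I would build $t$ from $s$ by attaching signs. Passing via a diagonal argument to a subsequence $(N_k)$ along which both $\delta_{N_k,z^2}\to\nu$ and, simultaneously for every pair $(m,r)$ with $r\in R_m$, the limits
\[
 d^\pm_{m,r}:=\lim_{k\to\infty}\frac{1}{N_k}\bigl|\{n\le N_k:\ n\equiv r\bmod q^m,\ z(n)=\pm 1\}\bigr|
\]
exist, I set $\sigma_{m,r}:=+1$ if $d^+_{m,r}\ge d^-_{m,r}$ and $-1$ otherwise. I then define $t_n:=\sigma_{m,r}$ whenever $n\equiv r\bmod q^m$ with $r\in R_m$ is captured by $s$ at stage $m$, and $t_n:=0$ otherwise. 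Then $t^2=s$, so $t$ is Toeplitz with the same skeleton as $s$, and $h_{top}(t)\ge h_{top}(s)\ge(1-\vep)H$ because the squaring map realizes $X_s$ as a topological factor of $X_t$.

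The correlation splits, along $(N_k)$, as
\[
 \frac{1}{N_k}\sum_{n\le N_k}t_n z(n)\ \longrightarrow\ \sum_{m\ge 1}\sum_{r\in R_m}|d^+_{m,r}-d^-_{m,r}|,
\]
so \eqref{aaa?} reduces to a positive lower bound on the right-hand side. The output of Theorem~\ref{tw:abs2} applied to $z^2$ only delivers a positive lower bound on the companion sum $\sum_{m,r}(d^+_{m,r}+d^-_{m,r})$, and the main obstacle is the passage from this mass bound to a comparable bound on $\sum_{m,r}|d^+_{m,r}-d^-_{m,r}|$: one must show that the captured residues cannot all be sign-balanced for~$z$. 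I expect this to be achieved either by selecting at each stage residues $r\in R_m$ on which a quasi-generic sign bias for~$z$ is already detected, or by a refinement step—replacing the period $q^m$ with $q^{m+\ell}$ for large~$\ell$ and locating inside each class $r\bmod q^m$ a sub-class that carries almost all of the mass of $z=+1$ or of $z=-1$. Condition~(c') bounds the number of ergodic components of $(S^{q^{m+\ell}},\nu)$ by~$b$ uniformly in $\ell$, which is precisely what makes such a refinement feasible while keeping the resulting entropy loss within the prescribed $(1-\vep)$ factor.
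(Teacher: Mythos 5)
There is a genuine gap at the correlation step, and it is fatal to the construction as proposed. Your sequence $t$ is obtained by putting on each captured residue class the \emph{majority} sign of $z$ there, so the limiting correlation along $(N_k)$ is $\sum_{m,r}|d^+_{m,r}-d^-_{m,r}|$, and you correctly observe that Theorem~\ref{tw:abs2} applied to $z^2$ only controls $\sum_{m,r}(d^+_{m,r}+d^-_{m,r})$. But the missing bound is not merely unproved --- it is false in exactly the cases the theorem is meant to cover. Take $z=\eta\cdot u$ with $\eta$ Sturmian and $u$ generic for $B(1/2,1/2)$ (one of the paper's two target applications), or any $z$ satisfying~\eqref{Ch} with $z^2$ completely deterministic: then the values $+1$ and $-1$ of $z$ are asymptotically equidistributed along \emph{every} arithmetic progression, so $d^+_{m,r}=d^-_{m,r}$ for all $m,r$, your $t$ is orthogonal to $z$, and no refinement to sub-progressions modulo $q^{m+\ell}$ can locate a sign bias that is not there. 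Condition~(c') cannot rescue this; in the paper it is used only in the entropy count (via Corollary~\ref{wn5} and Corollary~\ref{cor:6}), never to detect signs.

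The paper avoids the issue entirely by defining $t$ in~\eqref{top} as $t_n:=z(n)$ at initial $n$ and $t_n:=z(j)$ on $A_j^*$, so that on the initial positions --- a set of density at least $1-\tfrac{1}{q-1}$ --- one has $t_n\cdot z(n)=z^2(n)\ge0$, and the correlation~\eqref{aaa?} follows from~\eqref{aaa} alone with no sign-matching required (Proposition~\ref{pr:7}). In other words, the sign carried by the progression $A_j$ is the actual value $z(j)$ at its initial point, and the correlation is harvested at the initial points rather than along the progressions. Note also that Theorem~\ref{tw:abs1} is not derived in the paper as a corollary of Theorem~\ref{tw:abs2}: the two proofs share the same skeleton (the intervals $I_{m,k,L}$, good $k$'s, and the counting of blocks at the ends of these intervals), differing only in whether one counts blocks of $z$ (Corollary~\ref{cor:7}) or of $z^2$ (Corollary~\ref{cor:6}), the latter sufficing because distinct blocks of $z^2$ on the initial positions force distinct blocks of $t$ there. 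Your reduction of the entropy bound $h_{top}(t)\ge h_{top}(t^2)=h_{top}(s)$ is fine, but it sits on top of a correlation argument that does not work.
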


\begin{Remark}\label{siodemki}
Although Theorem~\ref{tw:abs2} seems to give a better lower entropy estimation than Theorem~\ref{tw:abs1}, it cannot be applied in many interesting cases (see Section~\ref{zastM}) because of the assumption~\ref{cc} which we are not able to verify. In such cases, we apply Theorem~\ref{tw:abs1}. Independently of us, Downarowicz and Kasjan proved in~\cite{Do-Ka} a result similar to Theorem~\ref{tw:abs1} in the particular case $z=\mob$.
\end{Remark}

The proofs of Theorems~\ref{tw:abs2} and~\ref{tw:abs1} go along the same lines. Since they are quite technical, they will be split into several sections.

\subsubsection{A Toeplitz sequence correlating with $z$}
Fix some $q\ge2$ and, for each $j\ge1$, consider the arithmetic progression
\[
 A_j \egdef \{j+nq^j : n\ge0\}\subset\N^*.
\]
\begin{Def}
We say that $j\in\N^*$ is \emph{initial} if there is no $j'<j$ with $j\in A_{j'}$. Then,
\[
 \{A_j : j\text{ initial}\}\text{ is a partition of $\N^*$.}
\]
When $j$ is initial, we denote by $A_j^*$ the set $A_j\setminus\{j\}$. Elements of $A_j^*$ for some initial $j$ are said to be \emph{non-initial}. We denote the set of all non-initials by $\cn$.
\end{Def}

The Toeplitz sequence we are interested in, is the sequence $t=(t_n)_{t\in\NS}\in\{-1,0,1\}^{\NS}$ defined by
\begin{equation}\label{top}
 t_n \egdef \begin{cases}
       z(n) \text{ if $n$ is initial},\\
	   z(j) \text{ if $n\in A_j^*$ for some initial $j$}.
      \end{cases}
\end{equation}

\begin{Lemma}\label{lemma:density_non-initials}
For any $N\geq 1$, we have
$$
\frac{\mathcal{N}\cap\{1,\ldots,N\}}{N}\leq\frac{1}{q-1}.
$$
\end{Lemma}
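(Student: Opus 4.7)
\medskip

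\textbf{Plan of proof.} The plan is to bound $|\mathcal{N}\cap\{1,\ldots,N\}|$ by a union bound over \emph{all} $j'\ge 1$ (not just initial ones), replacing the partition structure with a covering. By definition, $n\in\mathcal{N}$ iff $n\in A_{j'}^*$ for some initial $j'<n$; since the collection $\{A_j^* : j \text{ initial}\}$ consists of pairwise disjoint sets, certainly
\[
 \mathcal{N}\cap\{1,\ldots,N\} \;=\; \bigcup_{j'\text{ initial}} \bigl(A_{j'}^*\cap\{1,\ldots,N\}\bigr) \;\subseteq\; \bigcup_{j'\geq 1} \bigl(A_{j'}^*\cap\{1,\ldots,N\}\bigr).
\]

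For each fixed $j'\geq 1$, the set $A_{j'}^* = \{j'+kq^{j'} : k\ge 1\}$ is an arithmetic progression of common difference $q^{j'}$ starting strictly above $j'$. Hence
\[
 \bigl|A_{j'}^*\cap\{1,\ldots,N\}\bigr| \;\leq\; \Bigl\lfloor \tfrac{N-j'}{q^{j'}}\Bigr\rfloor \;\leq\; \tfrac{N}{q^{j'}}.
\]
Summing this geometric estimate and using $q\ge 2$, I would conclude
\[
 |\mathcal{N}\cap\{1,\ldots,N\}| \;\leq\; \sum_{j'=1}^{\infty} \tfrac{N}{q^{j'}} \;=\; \frac{N}{q-1},
\]
which is the desired bound upon dividing by $N$.

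The only subtle point—hardly an obstacle—is the passage from "initial $j'$" to "arbitrary $j'\ge 1$" in the union. This is legitimate precisely because we only want an upper bound: enlarging the family of sets we take the union over can only increase the count, and the geometric series converges regardless. No properties of the set of initials are required beyond the observation that it is a subset of $\N^*$. I expect the entire argument to fit in a few lines.
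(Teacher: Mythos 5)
Your proof is correct and follows essentially the same route as the paper: bound $|A_j^*\cap\{1,\ldots,N\}|$ by $N/q^j$ for each relevant $j$, then enlarge the index set from the initial $j$'s to all $j\ge1$ and sum the geometric series $\sum_{j\ge1}q^{-j}=1/(q-1)$. The paper's proof is the same three-line argument.
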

\begin{proof}
Let $j$ be initial. Since the difference of two consecutive terms in $A_j^*$ is $q^j$, and since the first term of the arithmetic progression $A_j$ is missing in $A_j^*$, we have
$\left| A_j^* \cap \{1,\ldots,N\} \right| \le \dfrac{N}{q^j}$ for each $N\ge 1$.
Hence,
\[
 \dfrac{\left| \cn \cap \{1,\ldots,N\} \right|}{N} \le \sum_{j\not\in\cn} \dfrac{1}{q^j} \le \sum_{j\ge1} \dfrac{1}{q^j} = \dfrac{1}{q-1}.
\]
\end{proof}

\begin{Prop}\label{pr:7}
 \label{prop:corel}
 Suppose that~\eqref{aaa} holds.
Then, for $q$ sufficiently large, the Toeplitz sequence $t$ defined by~\eqref{top} correlates with $z$, \textit{i.e.}
\[
 \liminf_{N\to\infty} \dfrac{1}{N}\sum_{n\leq N} t_n\cdot z(n) >0.
\]
\end{Prop}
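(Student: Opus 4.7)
The plan is to split the sum $\sum_{n\le N} t_n \cdot z(n)$ according to whether $n$ is initial or non-initial. On initials, $t_n = z(n)$ by definition, so $t_n \cdot z(n) = z(n)^2 \ge 0$, which gives a guaranteed positive contribution controlled by the density of the support of $z^2$. On non-initials, the values $t_n$ may be essentially arbitrary, but the total contribution is bounded crudely by the number of non-initials, which Lemma~\ref{lemma:density_non-initials} tells us is at most $N/(q-1)$. So the key is that by taking $q$ large, we can shrink the bad (non-initial) contribution below the good (initial) one.

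Concretely, I would first write
\[
 \frac1{N}\sum_{n\le N} t_n\cdot z(n) = \frac1{N}\sum_{\substack{n\le N \\ n\text{ initial}}} z(n)^2 + \frac1{N}\sum_{\substack{n\le N \\ n\in\cn}} t_n\cdot z(n),
\]
and then estimate the two pieces. For the initial piece, I use
\[
 \sum_{\substack{n\le N \\ n\text{ initial}}} z(n)^2 = \sum_{n\le N} z(n)^2 - \sum_{\substack{n\le N \\ n\in\cn}} z(n)^2 \ge \sum_{n\le N} z(n)^2 - |\cn\cap\{1,\ldots,N\}|,
\]
and then apply Lemma~\ref{lemma:density_non-initials} to bound the last term by $N/(q-1)$. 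For the non-initial piece, since $|t_n\cdot z(n)|\le 1$, I again use Lemma~\ref{lemma:density_non-initials} to get the bound $N/(q-1)$. Combining,
\[
 \frac1{N}\sum_{n\le N} t_n\cdot z(n) \ge \frac1{N}\sum_{n\le N} z(n)^2 - \frac{2}{q-1}.
\]

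Setting $\delta := \liminf_{N\to\infty} \tfrac1N \sum_{n\le N} z(n)^2 > 0$ by hypothesis~\eqref{aaa} and taking the $\liminf$ in $N$ yields
\[
 \liminf_{N\to\infty}\frac1{N}\sum_{n\le N} t_n\cdot z(n) \ge \delta - \frac{2}{q-1},
\]
which is strictly positive as soon as $q > 1 + 2/\delta$. This gives the desired correlation.

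There is no real obstacle here: the proof is a two-line density estimate, with the only ingredients being the definition~\eqref{top} of $t$ (making initials contribute $z^2$), the trivial bound $|t_n\cdot z(n)|\le 1$ on non-initials, and the density estimate of Lemma~\ref{lemma:density_non-initials}. The only mild point to note is that the threshold for $q$ depends on the unknown quantity $\delta$, which is why the proposition is stated for $q$ ``sufficiently large'' rather than for a specific value.
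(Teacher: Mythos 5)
Your proof is correct and follows essentially the same route as the paper's: split the sum over initials and non-initials, observe that initials contribute $z(n)^2$, and use Lemma~\ref{lemma:density_non-initials} twice to absorb the non-initial contribution into a $2/(q-1)$ error term, which hypothesis~\eqref{aaa} dominates for large $q$. No gaps.
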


\begin{proof}
We have
\begin{equation}\label{eq:D1}
\frac{1}{N}\sum_{n\leq N}t_n\cdot z(n)=\frac{1}{N}\sum_{n\leq N,n\in\cn}t_n\cdot z(n)+\frac{1}{N}\sum_{n\leq N, n\not\in\cn}t_n\cdot z(n),
\end{equation}
where
\begin{equation}\label{eq:D2}
\left| \frac{1}{N}\sum_{n\leq N,n\in\cn} t_n\cdot z(n) \right|\leq \frac{|\cn\cap \{1,\dots,N\}|}{N}\leq \frac{1}{q-1}
\end{equation}
by Lemma~\ref{lemma:density_non-initials}. Moreover, using once more
Lemma~\ref{lemma:density_non-initials}, we have
\begin{align*}
\frac{1}{N}&\sum_{n\leq N, n\not\in\cn}t_n\cdot z(n)=\frac{1}{N}\sum_{n\leq N,n\not\in\cn}z^2(n)\\
&=\frac{|\text{supp}(z)\cap \cn^c\cap \{1,\dots,N\}|}{N}=1-\frac{|((\text{supp}(z))^c\cup \cn)\cap\{1,\dots,N\}|}{N}\\
&\geq 1-\frac{|(\text{supp}(z))^c\cap \{0,\dots, N\}|}{N}-\frac{|\cn\cap\{1,\dots,N\}|}{N}\\
&\geq \frac{|\text{supp}(z)\cap \{1,\dots,N\}|}{N}-\frac{1}{q-1}=\frac{1}{N}\sum_{n\leq N}z^2(n)-\frac{1}{q-1}.
\end{align*}
Therefore
$$
\frac{1}{N}\sum_{n\leq N}t_n\cdot z(n)\geq\frac{1}{N}\sum_{n\leq N}z^2(n)-\frac{2}{q-1}.
$$
By~\eqref{aaa}, the latter expression is bounded below by a fixed positive number whenever $q$ and $N$ are large enough, which completes the proof.
\end{proof}

\subsubsection{Two types of non-initial numbers}
Fix an integer $m\ge 1$. For any integer $k\ge0$, we consider the interval
$$
I_{m,k} \egdef \Bigl( k q^m ,\ (k+1) q^m \Bigr]\cap \NS.
$$
We distinguish two types of non-initials in $I_{m,k}$:
\begin{Def}
A non-initial in $I_{m,k}$ is said to be:
\begin{itemize}
\item
\emph{of type~1} if it belongs to some $A_j^*$ with $j\le m$,
\item
\emph{of type~2} if it belongs to some $A_j^*$ with $j> m$.
\end{itemize}
\end{Def}
\begin{Remark}\label{uw:31}
Observe that, if for some $k\ge1$ and some $1\le r\le q^m$, $kq^m+r$ is a non-initial of type~1 in $I_{m,k}$, then for any other $k'\ge1$, $k'q^m+r\in I_{m,k'}$ is also a non-initial of type~1 (since it belongs to the same $A_j^*$). Hence, the pattern formed by non-initials of type~1 inside $I_{m,k}$ does not depend on which $k\ge1$ we consider.

On the other hand, consider $A_{m+h}^*$ for some $h\ge1$. This set of non-initial numbers intersects $I_{m,k}$ every $q^h$-th integer $k$, and when it does, the single non-initial point of type 2 in the intersection is always of the form $kq^m+r$ for some $r$ depending on $h$ but not on $k$.
\end{Remark}
\subsubsection{The end of the interval}
Fix additionally an integer $1\le\ell<m$, set $L\egdef q^\ell$, and consider the last $L$ elements of $I_{m,k}$:
\[
 I_{m,k,L} \egdef \Bigl( (k+1)q^m-L ,\ (k+1) q^m \Bigr]\cap \NS.
\]

\begin{Def}
 We say that the integer $k$ is \emph{good} if the only non-initial integers in $I_{m,k,L}$ are of type~1.
\end{Def}

By Remark~\ref{uw:31}, for all good $k$'s, the pattern formed by non-initial integers inside $I_{m,k,L}$ is always the same.

\begin{Prop}
 \label{prop:not-good-k}
 The upper density of the set
 \[
  \M\egdef\{k\ge1: k\text{ is not good}\}
 \]
 is bounded from above by $1/q^{q^m-m-L}$.
\end{Prop}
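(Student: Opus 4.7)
The plan is to proceed by a union bound and then a direct geometric-sum estimation. Since every non-initial integer of type~2 in $I_{m,k,L}$ lies in some $A_j^*$ with initial $j>m$, dropping the initiality requirement only enlarges the right side, and I obtain
\[
\M \;\subset\; \bigcup_{j>m} \bigl\{k\ge1 : A_j^* \cap I_{m,k,L}\neq\varnothing\bigr\}.
\]
The key structural observation is that for $j>m$, every element $p=j+nq^j$ of $A_j^*$ satisfies $p\equiv j\pmod{q^m}$, since $q^j\equiv 0\pmod{q^m}$. Now $I_{m,k,L}$ is precisely the set of integers in $I_{m,k}$ whose residue modulo $q^m$ lies in
\[
R\;:=\;\{0,\,q^m-L+1,\,q^m-L+2,\,\ldots,\,q^m-1\},
\]
a set of $L$ residues. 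So $A_j^*\cap I_{m,k,L}=\varnothing$ for every $k$ as soon as $j\bmod q^m\notin R$; and when $j\bmod q^m\in R$, the successive elements of $A_j^*$ (spaced by $q^j$) fall one into each $I_{m,k}$ with $k$ ranging over an arithmetic progression of common difference $q^{j-m}$, each such point actually lying in $I_{m,k,L}$. Hence the upper density of those $k$'s is exactly $1/q^{j-m}$.

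Summing these densities and substituting $h=j-m$, the condition $j\bmod q^m\in R$ translates to $h\bmod q^m\in R':=\{q^m-m-L+1,\ldots,q^m-m\}$, a block of $L$ consecutive integers contained in $[1,q^m]$ (one checks $q^m\ge m+L$ from $L\le q^{m-1}$ and $q^{m-1}\ge 2^{m-1}\ge m$, which uses $1\le \ell<m$). A standard geometric computation then yields
\[
\overline{d}(\M)\;\le\; \sum_{\substack{j>m\\ j\bmod q^m\in R}}\frac{1}{q^{j-m}} \;=\; \frac{1}{1-1/q^{q^m}}\sum_{r\in R'}\frac{1}{q^r}\;=\;\frac{1-1/q^L}{(q-1)(1-1/q^{q^m})}\cdot\frac{1}{q^{q^m-m-L}}.
\]

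The only remaining step, and the main (albeit routine) obstacle, is the purely arithmetic inequality
\[
\frac{1-1/q^L}{(q-1)(1-1/q^{q^m})}\;\le\;1.
\]
But $L=q^\ell<q^m$ implies $1/q^L>1/q^{q^m}$, hence $1-1/q^L<1-1/q^{q^m}$, so the fraction $\frac{1-1/q^L}{1-1/q^{q^m}}$ is strictly less than $1$; since $q-1\ge 1$, the full coefficient is at most $1$, which gives the desired bound $\overline{d}(\M)\le 1/q^{q^m-m-L}$ and completes the proof.
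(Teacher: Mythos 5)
Your proof is correct and follows essentially the same route as the paper's: a union bound over the progressions $A_j^*$ with $j>m$, the observation that every element of $A_j^*$ is congruent to $j$ modulo $q^m$, and the density bound $q^{-(j-m)}$ for the set of $k$ with $A_j^*$ meeting the end of $I_{m,k}$, summed as a geometric series. The only difference is cosmetic: you retain the exact residue condition $j\bmod q^m\in R$ where the paper relaxes it to the inequality $j>q^m-L$, which buys you a marginally sharper constant at the cost of the extra verification $q^m\ge m+L$.
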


\begin{proof}
Let $n\in I_{m,k,L}$ be a non-initial element of type 2. Then $n\in A_j^*$ for some initial $j> m$, and we have $n \equiv j\bmod q^j$, hence also $n \equiv j \bmod q^m$.
This and the definition of $I_{m,k,L}$ imply $j> q^m-L$, i.e.
\begin{equation}\label{eq:insideImkl}
 \parbox{0.8\linewidth}{the non-initials of type 2 inside $I_{m,k,L}$ belong to some $A_j^*$ with $j>q^m-L$.}
\end{equation}
Now, fix an initial $j>m$ and let $k_0$ be such that $j\in I_{m,k_0}$. Then
$$
\{k\geq 0 : I_{m,k}\cap A_j^* \neq \varnothing \}=\{k_0+i\cdot q^{j-m} : i\geq 1\}.
$$
Hence, for any $K\ge1$, we have
\[
 \dfrac{1}{K} \left| \{0\le k<K: I_{m,k}\cap A_j^*\neq\varnothing\}\right| \le \dfrac{1}{q^{j-m}}.
\]
It follows that
\begin{multline*}
 \dfrac{1}{K} \left| \{0\le k<K: I_{m,k}\cap A_j^*\neq\varnothing\text{ for some } j> q^m-L\}\right| \\
 \le \sum_{j>q^m-L}\dfrac{1}{q^{j-m}} 
 = \dfrac{1}{q^{q^m-m-L}}\sum_{h\ge 1}\dfrac{1}{q^h} < \dfrac{1}{q^{q^m-m-L}}.
\end{multline*}
In view of~\eqref{eq:insideImkl}, this ends the proof.
\end{proof}

\subsubsection{Density of non-initials of type~1 inside $I_{m,k,L}$}

We want now to bound the density of non-initials of type~1 inside $I_{m,k,L}$ (which are the only non-initials in this interval when $k$ is good).

\begin{Lemma}
 \label{lemma:type1}
 Let $n\in I_{m,k,L}$ be a non-initial of type~1. Then $n\in A_j^*$, where $j$ satisfies $j>q^j-L$ (\emph{cf.}~\eqref{eq:insideImkl}).
\end{Lemma}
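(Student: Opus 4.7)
The plan is a short arithmetic argument that mirrors the computation already used in the proof of Proposition~\ref{prop:not-good-k} for non-initials of type~2, but with the key sign/divisibility observation reversed. Since $n$ is a non-initial of type~1, by definition $n\in A_j^*$ for some initial $j$ with $j\le m$; thus $n=j+iq^j$ for some integer $i\ge 1$. On the other hand, $n\in I_{m,k,L}$ means that $n$ lies in the last $L$ elements of $I_{m,k}$, so I can write $n=(k+1)q^m-s$ with $0\le s<L$.

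The key step is to reduce the identity $j+iq^j=(k+1)q^m-s$ modulo $q^j$. Because $j\le m$, we have $q^j\mid q^m$, so the reduction gives
\[
 j+s\equiv 0\pmod{q^j}.
\]
Since $j\ge 1$ and $s\ge 0$, the integer $j+s$ is strictly positive, hence it is at least $q^j$. Combining this with $s<L$ yields $q^j\le j+s<j+L$, that is, $j>q^j-L$, which is the inequality claimed in the lemma.

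I do not anticipate any real obstacle: once the definitions of ``type~1'' and of $I_{m,k,L}$ are written out, the whole argument is a one-line congruence followed by the positivity remark $j+s\ge q^j$. The only point worth highlighting is the use of $j\le m$, which is exactly what distinguishes type~1 from type~2 and allows one to work modulo $q^j$ rather than only modulo $q^m$ as in~\eqref{eq:insideImkl}.
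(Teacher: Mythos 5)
Your proof is correct and is essentially the paper's own argument: both reduce the identity expressing $n$ as an element of $A_j^*$ and of $I_{m,k,L}$ modulo $q^j$ (using $j\le m$ so that $q^j\mid q^m$), and both conclude from positivity that the resulting multiple of $q^j$ is at least $q^j$, giving $j>q^j-L$. The only cosmetic difference is that the paper writes $n=(k+1)q^m+j-sq^j$ with $s\ge1$ while you write $n=(k+1)q^m-s$ and deduce $j+s\equiv0\pmod{q^j}$; these are the same computation.
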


\begin{proof}
 Let $n$ be a non-initial of type~1 inside $I_{m,k,L}$. Then, by the definition of type~1, there exists an initial $j$ with $j\le m$ such that $n\in A_j^*$. Thus $n\equiv j\bmod q^j$, and also
 \[
  n \equiv (k+1)q^m + j\bmod q^j.
 \]
 Since $n\le(k+1)q^m$, there exists an integer $s\ge1$ with
 \[
  n = (k+1)q^m + j-s\,q^j.
 \]
 But $n> (k+1)q^m - L$, hence
 \[
  (k+1)q^m - L < (k+1)q^m + j-s\,q^j \le (k+1)q^m + j-\,q^j,
 \]
and the assertion follows.
\end{proof}

\begin{Prop}
 \label{prop:density}
 For $k\ge 1$, the proportion of non-initial elements of type~1 inside $I_{m,k,L}$ is equal to
 \begin{equation}
  \label{eq:density}
  \dfrac{1}{q}+\dfrac{1}{q^2}+\cdots+\dfrac{1}{q^\ell}\cdot
 \end{equation}
\end{Prop}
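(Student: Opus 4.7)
The approach is a direct enumeration of type 1 non-initials in $I_{m,k,L}$, leveraging the partition of $\N^*$ into the sets $A_j$ for initial $j$. I plan to classify each such non-initial by its (unique) initial index $j$, count the contribution from each, and sum the resulting disjoint contributions.

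First, by Remark~\ref{uw:31}, the positions of type 1 non-initials inside $I_{m,k,L}$ depend only on residues modulo $q^m$, hence are the same for every $k\ge 1$; so I can fix any such $k$ and carry out the count there. Second, Lemma~\ref{lemma:type1} restricts the relevant initial indices to $j\le\ell$: the inequality $j>q^j-L$ with $L=q^\ell$ cannot hold for $j\ge\ell+1$, since then $q^j-L\ge q^{\ell+1}-q^\ell=q^\ell(q-1)\ge\ell+1\ge j$ (using $q\ge 2$, $\ell\ge 1$). Conversely, any element of $A_j^*\cap I_{m,k,L}$ for initial $j\le\ell$ is a type 1 non-initial (since $\ell<m$).

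Third, for each initial $j\in\{1,\ldots,\ell\}$ the count $|A_j^*\cap I_{m,k,L}|$ equals $q^{\ell-j}$. This is because $A_j$ is an arithmetic progression with common difference $q^j$, which divides $L=q^\ell$, so the progression hits an interval of $L$ consecutive integers in exactly $L/q^j=q^{\ell-j}$ points; the assumption $k\ge 1$ guarantees that $j<q^m<\min I_{m,k,L}$, so the initial point $j$ of $A_j$ is outside the interval and no correction is needed.

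Finally, since $\{A_j : j \text{ initial}\}$ partitions $\N^*$, the sets $A_j^*$ are pairwise disjoint, and summing over initial $j\le\ell$ and dividing by $L=q^\ell$ produces the proportion $\sum_{j\text{ initial},\,1\le j\le\ell} q^{-j}$. Since the smallest non-initial integer is $q+1$, every $j\in\{1,\ldots,\ell\}$ is initial as soon as $\ell\le q$, in which case the sum collapses to the announced $\sum_{j=1}^\ell q^{-j}$; this is the regime relevant to the Toeplitz entropy estimates in Theorems~\ref{tw:abs2} and~\ref{tw:abs1}, where $q$ can be chosen large. The step requiring the most care is the third, where the exact alignment between the arithmetic progression and the endpoints of $I_{m,k,L}$ must be verified; the main conceptual obstacle is the sharpness of the cutoff $j\le\ell$ from Lemma~\ref{lemma:type1}, which ensures the family being summed is finite and every $A_j$ lands neatly inside $I_{m,k,L}$.
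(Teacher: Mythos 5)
Your decomposition is the same as the paper's: use Lemma~\ref{lemma:type1} to rule out contributions from $A_j^*$ with $j>\ell$, count $|A_j\cap I_{m,k,L}|=L/q^j$ for the remaining $j$ using the fact that $q^j$ divides $L$, and sum over the (disjoint) progressions. Two remarks. First, your exclusion of $j\ge\ell+1$ has a slip: the chain $q^j-L\ge q^{\ell+1}-q^\ell=q^\ell(q-1)\ge\ell+1\ge j$ only settles the case $j=\ell+1$, since the final inequality fails for larger $j$. The repair is immediate: writing $j=\ell+s$ with $s\ge1$, one has $q^j-L=q^\ell(q^s-1)\ge 2^\ell\cdot 2^{s-1}=2^{\ell+s-1}\ge\ell+s=j$. (The paper instead uses $\alpha\beta\ge\alpha+\beta$ for $\alpha,\beta\ge2$, handling $q=2$, $s=1$ separately.)

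Second, your insistence on summing only over \emph{initial} $j\le\ell$ is not a cosmetic precaution: it exposes an inaccuracy in the statement itself. If $j\le\ell$ is non-initial, say $j\in A_{j'}$ with $j'$ initial, then $A_j\subset A_{j'}$, so $A_j$ contributes no non-initials beyond those already counted for $j'$; the correct proportion is $\sum_{j\le\ell,\,j\text{ initial}}q^{-j}$, which is strictly smaller than~\eqref{eq:density} as soon as some $j\le\ell$ fails to be initial, i.e.\ as soon as $\ell\ge q+1$. Concretely, for $q=2$, $m=4$, $k=1$, $\ell=3$ (so $L=8$), the type-1 non-initials in $I_{4,1,8}=\{25,\ldots,32\}$ are $\{25,26,27,29,30,31\}$, a proportion of $3/4$ rather than the claimed $7/8$; the discrepancy comes from $27\in A_3\subset A_1$ being counted twice by the formula. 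The paper's own proof sums $1/q^j$ over all $1\le j\le\ell$ and therefore overcounts in this regime. Your proviso $\ell\le q$ makes the equality correct, but be aware that in the proofs of Theorems~\ref{tw:abs2} and~\ref{tw:abs1} the parameter $q$ is fixed \emph{before} $\ell$ is taken large, so $\ell\le q$ cannot simply be assumed there; what saves all downstream applications is that only the upper bound $|A_{m,k,L}|<L/(q-1)$ is ever used, and this holds for the corrected count as well.
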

\begin{proof}
 First, let us show that there are no non-initial elements of type~1 inside $I_{m,k,L}$ which are in some $A_j^*$ with $j>\ell$. Indeed, suppose that such an element exists. Then, we can write $j=\ell + s $ for some integer $s\ge1$, and Lemma~\ref{lemma:type1} gives
 \[
  \ell+s>q^{\ell+s}-q^\ell=q^\ell(q^s-1).
 \]
If $q^s-1=1$, then $q=2$ and $s=1$, and we get $\ell\ge 2^\ell$, which is impossible. Otherwise, using the inequality $\alpha\beta\ge\alpha+\beta$ for any $\alpha\ge2$, $\beta\ge2$, we obtain
\[\ell+s \ge q^\ell + q^s,\]
which is also impossible since $\ell<q^\ell$ and $s<q^s$.

It remains to estimate the contribution of non-initial elements of type~1 which are in some $A_j^*$ with $j\le\ell$. For each such $j$, since $q^j$ divides the length $L=q^\ell$ of $I_{m,k,L}$, we have
 \[
  \dfrac{\left|A_j\cap I_{m,k,L}\right|}{L} = \dfrac{1}{q^j}.
 \]
Since $j$ ranges from $1$ to $\ell$,~\eqref{eq:density} follows.
\end{proof}

\subsubsection{Ergodic components}

\begin{Prop}\label{erg-comp}
Let $\A$ be a finite alphabet, and let $\nu$ be a shift-invariant probability measure on $\A^{\NS}$. Let $n\geq 1$ and suppose that $(S^n,\nu)$ has $b\geq 1$ ergodic components. If
$\nu=\frac{1}{n}\sum_{s=0}^{n-1}(S^s)_{\ast}\eta$,
where $\eta$ is $S^n$-invariant then
$$
\nu=\frac{1}{n}\left(\left[\frac{n}{b!} \right]\sum_{s=0}^{b!-1}(S^s)_\ast(\eta)+\sum_{s=0}^{(n \bmod{b!})-1}(S^s)_\ast(\eta) \right).
$$
\end{Prop}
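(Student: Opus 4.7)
The plan is to reduce the proposition to the single identity $(S^{b!})_*\eta = \eta$. Once this is known, the sequence $\bigl((S^s)_*\eta\bigr)_{s\ge 0}$ is periodic in $s$ of period dividing $b!$, so writing $n = [n/b!]\cdot b! + (n\bmod b!)$ we split
\[
 \sum_{s=0}^{n-1}(S^s)_*\eta \;=\; [n/b!]\sum_{s=0}^{b!-1}(S^s)_*\eta \;+\; \sum_{s=0}^{(n\bmod b!)-1}(S^s)_*\eta,
\]
and dividing by $n$ recovers the announced formula. Thus the whole argument reduces to showing that $S^{b!}$ preserves $\eta$.

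To that end, decompose $\nu = \sum_{i=1}^{b}\alpha_i\nu_i$ into its ergodic components for $S^n$, with $\alpha_i>0$ and the $\nu_i$ distinct. Since $S$ commutes with $S^n$ and the preimage under $S$ of an $S^n$-invariant set is again $S^n$-invariant, each $(S)_*\nu_i$ is still $S^n$-invariant and $S^n$-ergodic. Moreover $(S)_*\nu_i \ll (S)_*\nu = \nu$, so $(S)_*\nu_i$ must coincide with one of the $\nu_j$'s, say $(S)_*\nu_i = \nu_{\sigma(i)}$. Uniqueness of the ergodic decomposition applied to $\nu = (S)_*\nu = \sum_i \alpha_i\nu_{\sigma(i)}$ forces $\sigma\colon\{1,\ldots,b\}\to\{1,\ldots,b\}$ to be surjective---otherwise some $\nu_j$ with $\alpha_j>0$ would receive no mass---hence a permutation. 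As $\sigma\in\mathfrak{S}_b$, we have $\sigma^{b!}=\mathrm{id}$ and therefore $(S^{b!})_*\nu_i = \nu_i$ for every $i$.

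It then suffices to exhibit $\eta$ as a convex combination of the $\nu_i$'s. From $\nu = \tfrac{1}{n}\sum_{s=0}^{n-1}(S^s)_*\eta$ one reads off $\eta \le n\nu$, so $\eta\ll\nu$. Combined with the $S^n$-invariance of $\eta$, the uniqueness of the ergodic decomposition of an $S^n$-invariant measure absolutely continuous with respect to $\nu$ (applied on the support of $\nu$, after passing if needed to the natural extension in $\A^\Z$ to handle the non-invertibility of $S$) yields $\eta = \sum_i \beta_i\nu_i$ for some $\beta_i\ge 0$ summing to $1$. Together with the previous paragraph, $(S^{b!})_*\eta = \sum_i\beta_i(S^{b!})_*\nu_i = \sum_i\beta_i\nu_i = \eta$, which closes the argument. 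The main subtlety is the permutation argument on the finitely many ergodic components together with the ergodic decomposition of $\eta$ in the non-invertible symbolic setting; everything else is straightforward bookkeeping.
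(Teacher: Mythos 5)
Your proof is correct and follows essentially the same route as the paper's: decompose $\nu$ into its $b$ ergodic components for $S^n$, observe that $S_\ast$ permutes them so that $(S^{b!})_\ast$ fixes each, use $\eta\ll\nu$ to write $\eta$ as a convex combination of these components, and conclude $(S^{b!})_\ast\eta=\eta$. You merely spell out a few steps the paper leaves implicit (why $\eta\ll\nu$, and how the periodicity of $s\mapsto(S^s)_\ast\eta$ yields the displayed formula).
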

\begin{proof}
For $0\leq i\leq b-1$, let $\rho^{(i)}$ be the ergodic components of $(S^n,\nu)$, i.e.
$$
\nu=\alpha_0\rho^{(0)}+\dots+\alpha_{b-1}\rho^{(b-1)}
$$
for some $0< \alpha_0,\dots,\alpha_{b-1}<1$, $\sum_{i=0}^{b-1} \alpha_i=1$.
Then
$$
\nu=S_\ast(\nu)=\alpha_0S_\ast(\rho^{(0)})+\ldots \alpha_{b-1}S_\ast(\rho^{(b-1)}).
$$
For any $0\leq i\leq b-1$, $S_\ast(\rho^{(i)})$ is again an ergodic $S^n$-invariant measure. Since the ergodic decomposition of $(S^n,\nu)$ is unique, this yields a permutation
$\pi\colon \{0,1,\dots,b-1\}\to \{0,1,\dots,b-1\}$,
$$
\pi(i)=j \iff S_\ast(\rho^{(i)})=\rho^{(j)}.
$$
Clearly, $\pi^{b!}=\Id$. Now, since $\eta \ll \nu$,
$$
\eta=\beta_0\rho^{(0)}+\dots+ \beta_{b-1}\rho^{(b-1)}
$$
for some $0\leq \beta_i\leq 1$, $\sum_{i=0}^{b-1}\beta_i=1$, whence $(S^{b!})_\ast(\eta)=\eta$ and the assertion follows.
\end{proof}

\begin{Cor}\label{wn5}
Under the assumptions of Proposition~\ref{erg-comp}, whenever $n\geq2b!$, for any finite family of sets $\{A_i : i\in I\}$ with $\nu(A_i)>0$, there exists $0\leq {s}\leq b!-1$ such that
$$
\left|\left\{i\in I: (S^{s})_\ast(\eta) (A_i)\geq\frac{1}{2}\nu(A_i)\right\}\right|\geq \frac{|I|}{b!}.
$$
\end{Cor}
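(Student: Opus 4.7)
The plan is to prove Corollary~\ref{wn5} by a double-counting argument combined with a pointwise proof by contradiction. For each $i \in I$, let $c_i$ denote the number of $s\in\{0,1,\ldots,b!-1\}$ for which $(S^s)_\ast(\eta)(A_i)\geq\tfrac{1}{2}\nu(A_i)$, and write $G_s:=\{i\in I:(S^s)_\ast(\eta)(A_i)\geq\tfrac{1}{2}\nu(A_i)\}$. Since $\sum_{s=0}^{b!-1}|G_s|=\sum_{i\in I}c_i$, the statement will follow by pigeonhole from the key claim that $c_i\geq 1$ for every $i\in I$.

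To prove the key claim, I would argue by contradiction. Suppose that for some $i\in I$ we have $(S^s)_\ast(\eta)(A_i)<\tfrac{1}{2}\nu(A_i)$ for every $s=0,1,\ldots,b!-1$. I would then substitute this strict bound into the decomposition provided by Proposition~\ref{erg-comp}, namely
$$\nu(A_i)=\frac{1}{n}\left(\left[\frac{n}{b!}\right]\sum_{s=0}^{b!-1}(S^s)_\ast(\eta)(A_i)+\sum_{s=0}^{(n\bmod b!)-1}(S^s)_\ast(\eta)(A_i)\right),$$
and use the identity $[n/b!]\cdot b!+(n\bmod b!)=n$ to conclude $\nu(A_i)<\tfrac{1}{2}\nu(A_i)$, which contradicts the hypothesis $\nu(A_i)>0$.

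The argument is structurally brief, and I do not anticipate a serious obstacle. The constant $\tfrac{1}{2}$ in the statement is not critical: the same strategy works with any constant strictly less than $1$, and $\tfrac{1}{2}$ is simply the most convenient choice. As I envisage the proof, the hypothesis $n\geq 2b!$ does not actually enter — the contradiction goes through whenever Proposition~\ref{erg-comp} is applicable — so I would expect this assumption to be present only to fit the context in which the corollary is subsequently used, perhaps to ensure that the ``bulk'' term $[n/b!]\sum_{s=0}^{b!-1}(S^s)_\ast\eta$ genuinely dominates the remainder sum in applications.
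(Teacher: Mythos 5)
Your proof is correct and follows the same overall plan as the paper: apply the decomposition of Proposition~\ref{erg-comp} to each $A_i$ to produce, for every $i$, at least one good shift $s_i$, and then conclude by the pigeonhole/Fubini count over the $b!$ possible values of $s$. The only difference is in how the existence of a good $s_i$ is established. The paper argues directly: from the decomposition it extracts an $s_i$ with $\frac1n\bigl(\bigl[\frac{n}{b!}\bigr]+1\bigr)(S^{s_i})_\ast(\eta)(A_i)\geq\frac{1}{b!}\nu(A_i)$ and then bounds the coefficient by $\frac{2}{b!}$, and it is exactly this coefficient bound that uses $n\geq 2b!$ (in fact $n\geq b!$ would do). Your contrapositive version instead uses the exact identity $[n/b!]\cdot b!+(n\bmod b!)=n$, so no bound on the coefficient is ever needed; this confirms your observation that the hypothesis $n\geq 2b!$ plays no role in your argument and is only an artifact of the estimate chosen in the paper (it is harmless there, since the corollary is applied with $n=q^m$ large). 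Both arguments are averaging arguments on the same decomposition; yours is marginally cleaner and slightly more general.
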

\begin{proof}
By Proposition~\ref{erg-comp}, for every $i\in I$, there exists $0\leq s_i\leq b!-1$ such that
$$
\frac{1}{n}\left(\left[\frac{n}{b!} \right] +1\right) (S^{s_i})_\ast(\eta)(A_i)\geq \frac{1}{b!}\nu(A_i).
$$
Since $\frac{1}{n}\left(\left[\frac{n}{b!} \right] +1\right)\leq\frac{2}{b!}$, we have $(S^{s_i})_\ast(\eta)(A_i)\geq \frac{1}{2}\nu(A_i)$ and the result easily follows by Fubini's argument.
\end{proof}

\begin{Prop}\label{ogolny}
Fix $\vep>0$. Let $\A$ be a finite alphabet, fix $w\in A^{\N^*}$ and suppose that the following conditions hold:
\begin{enumerate}[label=(\alph*)]
\item\label{ca2}
$w$ is quasi-generic for some shift-invariant measure ${\nu}$ for which
\item\label{cb2}
$H:=h_{top}(\text{supp}({\nu}))>0$,
\item\label{cc2}
there exist $q\geq 2$ and $b\geq 1$ such that, for all $m\geq 1$, the number of ergodic components of the action of $S^{q^m}$ on $(\A^{\NS}\!,{\nu})$ is bounded by $b$.
\end{enumerate}
Then, for all $\ell\geq 1$ large enough, there exists $\tau_\ell>0$ such that, for all $m>\ell$, we can find $0\leq {s}\leq b!-1$ satisfying (using as before the notation $L\egdef q^\ell$)
\begin{multline*}
\Biggl|\Biggl\{C\in \A^{L-b!}:\Biggr.\Biggr.\\
\Biggl.\Biggl. \limsup_{K\to \infty}\frac{1}{K}\sum_{0\le k < K}\raz_C\left(S^{(k+1)q^m-L+1+{s}}w\right)\geq \tau_\ell\Biggr\}\Biggr|\geq 2^{{H}(1-\vep)L}.
\end{multline*}
\end{Prop}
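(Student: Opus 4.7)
The plan is to combine two main ingredients: first, the exponential growth of the number of positive-$\nu$-measure cylinders of length $L-b!$ provided by hypothesis~\ref{cb2}; second, an ergodic-component selection along the arithmetic progression of step $q^m$ coming from Corollary~\ref{wn5}, itself fed by the decomposition of $\nu$ obtained from the quasi-genericity assumption~\ref{ca2}.

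\emph{Choice of $\tau_\ell$.} By hypothesis~\ref{cb2}, $\frac{1}{n}\log p_n(\text{supp}(\nu))\to H$, so for $\ell$ large enough one has $p_{L-b!}(\text{supp}(\nu))\geq b!\cdot 2^{(1-\vep)HL}$, where $L:=q^\ell$. Enumerate the cylinders of positive $\nu$-measure in $\A^{L-b!}$ by decreasing $\nu$-measure as $C_1,C_2,\dots$, let $K:=\lceil b!\cdot 2^{(1-\vep)HL}\rceil$, and set $\tau_\ell:=\tfrac{1}{2}\nu(C_K)>0$. Then $\mathcal{I}:=\{C_1,\dots,C_K\}$ consists of $K$ cylinders, each with $\nu(C)\geq 2\tau_\ell$; note that $\tau_\ell$ depends only on $\ell$, not on $m$.

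\emph{Ergodic decomposition along the $q^m$-action.} Now fix $m>\ell$, and arrange (by taking $\ell$ large) that $q^{\ell+1}\geq 2b!$. Using hypothesis~\ref{ca2}, extract from the quasi-generic sequence $(N_k)$ a further subsequence along which, for each $r\in\{0,\dots,q^m-1\}$,
\[
\frac{1}{\lfloor N_k/q^m\rfloor}\sum_{j=0}^{\lfloor N_k/q^m\rfloor-1}\delta_{S^{r+jq^m}w}\tend{k}{\infty}\eta_r,
\]
weakly, where each $\eta_r$ is $S^{q^m}$-invariant and $\eta_r=(S^r)_*\eta_0$. Summing over $r$ yields $\nu=\frac{1}{q^m}\sum_{r=0}^{q^m-1}\eta_r$, so Proposition~\ref{erg-comp} applies with $n=q^m$ and $\eta=\eta_0$; in particular, since the shift permutes the (at most $b$) ergodic components of $(S^{q^m},\nu)$ with cycle structure dividing $b!$, one has $(S^{b!})_*\eta_0=\eta_0$, so the measure $\eta_r$ depends only on $r\bmod b!$.

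\emph{Selecting $s^*$ and concluding.} Apply Corollary~\ref{wn5} with $n=q^m\geq 2b!$, $\eta=\eta_0$, and the family $\{C:C\in\mathcal{I}\}$ of clopen cylinders of positive $\nu$-measure. This produces $s\in\{0,\dots,b!-1\}$ such that at least $K/b!\geq 2^{(1-\vep)HL}$ blocks $C\in\mathcal{I}$ satisfy
\[
\eta_s(C)=(S^s)_*\eta_0(C)\geq\tfrac{1}{2}\nu(C)\geq\tau_\ell.
\]
As $s'$ runs over $\{0,\dots,b!-1\}$, the residue $(q^m-L+1+s')\bmod b!$ takes each value in $\Z/b!\Z$ exactly once, so pick the unique $s^*\in\{0,\dots,b!-1\}$ with $q^m-L+1+s^*\equiv s\bmod b!$. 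Writing the positions appearing in the statement as $(k'+1)q^m-L+1+s^*=jq^m+r$ with $r:=q^m-L+1+s^*\in\{0,\dots,q^m-1\}$ (valid since $L<q^m$), and noting that $\raz_C$ is continuous on $\A^{\N^*}$, the weak convergence gives
\[
\frac{1}{\lfloor N_k/q^m\rfloor}\sum_{k'<\lfloor N_k/q^m\rfloor}\raz_C\bigl(S^{(k'+1)q^m-L+1+s^*}w\bigr)\tend{k}{\infty}\eta_r(C)=\eta_s(C)\geq\tau_\ell,
\]
so the $\limsup$ over $K$ is at least $\tau_\ell$ for each of the $\geq 2^{(1-\vep)HL}$ selected blocks $C$, which is the required conclusion.

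\emph{Main obstacle.} The delicate point is the alignment in the last step: one must check that as $s'$ varies in $\{0,\dots,b!-1\}$, the residue classes of the positions $q^m-L+1+s'$ modulo $b!$ exhaust $\Z/b!\Z$, so that the index $s$ produced by Corollary~\ref{wn5} can actually be realized by some $s^*$ allowed in the statement. This reduction relies crucially on the periodicity of $r\mapsto\eta_r$ modulo $b!$, which in turn comes from the cycle structure of the shift acting on the ergodic components of $(S^{q^m},\nu)$ and ultimately from hypothesis~\ref{cc2}. The rest is routine bookkeeping on empirical measures and their weak convergence.
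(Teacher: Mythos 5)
Your proof is correct and follows essentially the same route as the paper's: counting positive-measure cylinders via hypothesis (b), decomposing $\nu=\frac{1}{q^m}\sum_r (S^r)_*\eta_0$ from quasi-genericity, applying Corollary~\ref{wn5}, and passing from the pushforward measure to the $\limsup$ of empirical frequencies along the subsequence $\lfloor N_k/q^m\rfloor$. The only differences are cosmetic: you take $\tau_\ell$ as half the $K$-th largest cylinder measure rather than half the global minimum, and you make explicit (via the $b!$-periodicity of $r\mapsto\eta_r$) the alignment between the index $s$ produced by Corollary~\ref{wn5} and the exponent $q^m-L+1+s$ in the statement, which the paper handles implicitly by applying the corollary to the shifted measure $(S^{q^m-L+1})_*\eta$.
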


\begin{proof}
It follows by~\ref{cb2} that
\begin{equation}\label{town5}
\left| \left\{C\in \A^{L-b!}: \nu(C)>0 \right\}\right|
\geq 2^{H(1-\vep/3)(L-b!)}\geq 2^{H(1-2\vep/3)L},
\end{equation}
when $\ell$ (and hence also $L$) is large enough. Fix such an $\ell$, which additionally satisfies the following inequality:
\begin{equation}\label{townL}
\frac{2^{H(1-2\vep/3)L}}{b!}\geq 2^{H(1-\vep)L}.
\end{equation}
Let
\begin{equation}\label{TAU}
\tau_\ell:=\frac{1}{2}\min\left\{\nu(C): C\in \A^{L-b!},\ \nu(C)>0 \right\}
\end{equation}
and take $m>\ell$.

By~\ref{ca2}, we may find an increasing sequence $(N_j)$ such that
$$
\nu=\lim_{j\to \infty}\delta_{S,N_j,w}.
$$
Since $\left[\frac{N_j}{q_m}\right]q_m/N_j\to1$, by replacing $N_j$ with
$\left[\frac{N_j}{q_m}\right]q_m$ if necessary, we can assume that $q_m|N_j$ for $j\geq1$.
Passing to a subsequence if necessary, we can further assume the existence of
$$
\eta:=\lim_{j\to\infty}\delta_{S^{q_m},N_j/q_m,w}.
$$
Then $\nu=\frac{1}{q^m}\sum_{s=0}^{q^m-1}(S^s)_\ast(\eta)$. From~\eqref{TAU}, Corollary~\ref{wn5} applied to $\{C\in \A^{L-b!}: \nu(C)>0\}$,~\eqref{town5} and~\eqref{townL}, it follows that there exists $0\leq {s}\leq b!-1$ such that
\begin{multline}\label{town2}
 \left| \left\{C\in \A^{L-b!}\colon (S^{q^m-L+1+{s}})_\ast(\eta)(C)\geq \tau_\ell \right\}\right|\\
\geq \left| \left\{C\in \A^{L-b!}\colon (S^{q^m-L+1+{s}})_\ast(\eta)(C)\geq\frac{1}{2}\nu(C) \right\}\right|\\
\geq\frac{2^{H(1-2\vep/3)L}}{b!}\geq 2^{H(1-\vep)L}.
\end{multline}
Notice that, if $C$ is a cylinder such that $(S^{q^m-L+1+{s}})_\ast(\eta)(C)\geq a$ for some $a>0$, then
$$
\limsup_{K\to \infty}\frac{1}{K}\sum_{0\le k < K}\raz_C(S^{(k+1)q^m-L+1+{s}}w)\geq a.
$$
This and~\eqref{town2} imply
\begin{multline*}
\left| \left\{C\in \A^{L-b!}\colon \limsup_{K\to \infty}\frac{1}{K}\sum_{0\le k < K}\raz_C(S^{(k+1)q^m-L+1+{s}}w)\geq\tau_\ell\right\}\right|\\
\geq\frac{2^{H(1-2\vep/3)L}}{b!}\geq 2^{H(1-\vep)L},
\end{multline*}
which completes the proof.
\end{proof}

An immediate consequence of Proposition~\ref{ogolny} are the following two corollaries.
\begin{Cor}\label{cor:6}
Let $\vep>0$ and suppose that the assumptions~\ref{ca1},~\ref{cb1} and~\ref{cc1} of Theorem~\ref{tw:abs1} hold. Then for all $\ell\geq 1$ large enough, there exists $\tau_\ell>0$ such that, for all $m>\ell$, we can find $0\leq {s}\leq b!-1$ satisfying
\begin{multline*}
\biggl|\biggl\{C\in \{0,1\}^{L-b!}\colon\biggr.\biggr.\\
\biggl.\biggl. \limsup_{K\to \infty}\frac{1}{K}\sum_{0\le k < K}\raz_C\left(S^{(k+1)q^m-L+1+{s}}z^2\right)\geq \tau_\ell\biggr\}\biggr| \geq 2^{{H}(1-\vep/2)L}.
\end{multline*}
\end{Cor}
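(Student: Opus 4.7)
The plan is to deduce Corollary~\ref{cor:6} as an immediate specialization of Proposition~\ref{ogolny}. Specifically, I would apply Proposition~\ref{ogolny} to the sequence $w := z^2$, regarded as a point of $\A^{\N^*}$ with the alphabet $\A := \{0,1\}$, and with $\vep$ replaced by $\vep/2$ throughout.

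The three hypotheses match directly. Condition~\ref{ca2} of Proposition~\ref{ogolny} says that $w$ is quasi-generic for some shift-invariant $\nu$; this is exactly hypothesis~\ref{ca1} of Theorem~\ref{tw:abs1} applied with $w = z^2$. Condition~\ref{cb2} asserts $H := h_{top}(\text{supp}(\nu)) > 0$, which is~\ref{cb1}. Condition~\ref{cc2} on the number of ergodic components of $(S^{q^m},\nu)$ acting on $\A^{\N^*}$ reduces to~\ref{cc1} as soon as we fix $\A = \{0,1\}$.

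With these identifications, the conclusion of Proposition~\ref{ogolny}, applied with $\vep/2$ in place of $\vep$, yields that for all sufficiently large $\ell$ there is a constant $\tau_\ell > 0$ such that for every $m > \ell$ one can find $0 \leq s \leq b!-1$ satisfying
$$
\bigl|\bigl\{C \in \{0,1\}^{L-b!} : \limsup_{K\to\infty}\tfrac{1}{K}\sum_{0\leq k < K}\raz_C(S^{(k+1)q^m - L + 1 + s}z^2) \geq \tau_\ell\bigr\}\bigr| \geq 2^{H(1-\vep/2)L},
$$
which is precisely the statement of Corollary~\ref{cor:6}.

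There is essentially no obstacle here: all the substantive work, namely the ergodic-component redistribution argument via Proposition~\ref{erg-comp} and Corollary~\ref{wn5}, together with the counting of cylinders of positive $\nu$-measure producing the bound $2^{H(1-\vep/2)L}$, has already been carried out in the proof of Proposition~\ref{ogolny}. Corollary~\ref{cor:6} is merely the reformulation of that result in the language of $z^2$ over the binary alphabet, which is the exact form needed later in the proof of Theorem~\ref{tw:abs1}.
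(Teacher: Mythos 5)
Your proposal is correct and is exactly the paper's route: the authors state that Corollary~\ref{cor:6} is an immediate consequence of Proposition~\ref{ogolny}, obtained precisely by taking $w=z^2$ over the alphabet $\{0,1\}$ and running the proposition with $\vep/2$ in place of $\vep$. Nothing further is needed.
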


\begin{Cor}\label{cor:7}
Let $\vep>0$ and suppose that the assumptions~\ref{ca},~\ref{cb} and~\ref{cc} of Theorem~\ref{tw:abs2} hold. Then for all $\ell\geq 1$ large enough, there exists $\tau_\ell>0$ such that, for all $m>\ell$, we can find $0\leq {s}\leq b!-1$ satisfying
\begin{multline*}
\biggl|\biggl\{C\in \{-1,0,1\}^{L-b!}\colon\biggr.\biggr.\\
\biggl.\biggl. \limsup_{K\to \infty}\frac{1}{K}\sum_{0\le k < K}\raz_C\left(S^{(k+1)q^m-L+1+{s}}z\right)\geq \tau_\ell\biggr\}\biggr|\geq 2^{{H}(1-\vep/2)L}.
\end{multline*}
\end{Cor}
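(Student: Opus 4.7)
The plan is to apply Proposition~\ref{ogolny} directly, observing that its hypotheses~\ref{ca2},~\ref{cb2},~\ref{cc2} translate verbatim into the hypotheses~\ref{ca},~\ref{cb},~\ref{cc} of Theorem~\ref{tw:abs2} once we specialize the finite alphabet $\A$ to $\{-1,0,1\}$, the sequence $w$ to $z$ itself, and the measure $\nu$ to $\widetilde{\nu}$. So the only real work is to keep track of which $\vep$ we plug into the proposition.

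More precisely, given $\vep>0$, I would apply Proposition~\ref{ogolny} with $\A=\{-1,0,1\}$, $w=z$, $\nu=\widetilde{\nu}$, and with the parameter ``$\vep$'' of the proposition replaced by $\vep/2$. Hypothesis~\ref{ca2} is hypothesis~\ref{ca} of Theorem~\ref{tw:abs2}; hypothesis~\ref{cb2} is~\ref{cb}; and hypothesis~\ref{cc2} is~\ref{cc} (with the same $q\ge 2$ and $b\ge 1$). The proposition then yields an $\ell_0\ge 1$ and, for every $\ell\ge\ell_0$, a constant $\tau_\ell>0$ such that, for every $m>\ell$, there exists $0\le s\le b!-1$ with
\[
\left|\left\{C\in\{-1,0,1\}^{L-b!}\colon \limsup_{K\to\infty}\frac{1}{K}\sum_{0\le k<K}\raz_C\!\left(S^{(k+1)q^m-L+1+s}z\right)\ge\tau_\ell\right\}\right|\ge 2^{H(1-\vep/2)L},
\]
which is exactly the assertion of Corollary~\ref{cor:7}. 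In particular, nothing beyond the direct substitution is needed, and no genuine obstacle arises; Corollary~\ref{cor:6} would be proved in exactly the same way, the difference being only that the ambient alphabet is $\{0,1\}$ and the sequence under consideration is $z^2$ (quasi-generic for $\nu$ by assumption~\ref{ca1} of Theorem~\ref{tw:abs1}), while here the alphabet is $\{-1,0,1\}$ and the sequence is $z$ itself.
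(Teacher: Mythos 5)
Your proposal is correct and is exactly what the paper intends: it presents Corollary~\ref{cor:7} (and Corollary~\ref{cor:6}) as an immediate consequence of Proposition~\ref{ogolny}, obtained by the very substitution you describe ($\A=\{-1,0,1\}$, $w=z$, $\nu=\widetilde{\nu}$, and the proposition's $\vep$ taken to be $\vep/2$). No further comment is needed.
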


\subsubsection{Entropy estimates}
\begin{proof}[Proof of Theorem~\ref{tw:abs1}]
We will need the following notation: if $A=\{a_1<a_2<\cdots<a_r\}$ is a finite subset of $\NS$, and if $x=(x(n))_{n\in\NS}$ is a sequence in $\{0,1\}^{\N^\ast}$, we denote by $x(A)$ the finite sequence
 \[
  x(A) \egdef \bigl(x(a_1),\ldots, x(a_r)\bigr) \in\{0,1\}^r.
 \]

Fix $\vep>0$. Replacing $q$ by $q^r$ if necessary, for some large $r$ (which does not alter the validity of \ref{cc1}), we can assume that $q$ is large enough
to satisfy the assertion of Proposition~\ref{pr:7}, and also that
\begin{equation}
 \label{eq:q_large}
 \dfrac{1}{q-1} < \frac{\vep}{2}H.
\end{equation}
Let $\ell$ be an integer large enough to satisfy the assertion of Corollary~\ref{cor:6}, and set $L\egdef q^\ell$. Then, by Proposition~\ref{prop:not-good-k}, we can take $m$ large enough so that the upper density of the set of integers $k$ which are not good is strictly less than $\tau_\ell$. Let $0\le {s}\leq b!-1$ be given by Corollary~\ref{cor:6}. Then, for any $C\in \{0,1\}^{L-b!}$ satisfying
$$
\limsup_{K\to \infty}\frac{1}{K}\sum_{0\leq k<K}\raz_C\left(S^{(k+1)q^m-L+1+s}z^2\right)\geq \tau_\ell,
$$
there exist infinitely many good integers $k$ such that the block corresponding to the cylinder set $C$ appears at position ${s}$ of $I_{m,k,L}$ in the sequence $z^2$. Since, by Corollary~\ref{cor:6}, the number of such cylinder sets is at least $2^{{H}(1-\vep/2)L}$, we can deduce that
 \begin{equation}
 \label{eq:many1}
  \bigl|
  \{ z^2 (I_{m,k,L}): k\text{ good}\}
  \bigr|
  \ge 2^{{H}(1-\vep/2)L}.
 \end{equation}

We will show now that for $\ell$ sufficiently large, the number of blocks of length $L$ in $t$ is at least $2^{{H}(1-\vep)L}$, more precisely,
we claim that
$$
\left|\left\{t(I_{m,k,L}): k\text{ is good}\right\}\right|\geq 2^{{H}(1-\vep)L}.
$$
We have
$$
I_{m,k,L} = A_{m,k,L}\sqcup B_{m,k,L},
$$
where
\begin{align*}
&A_{m,k,L}:=\{n\in I_{m,k,L}: n\in\cn\},\\
&B_{m,k,L}:=\{n\in I_{m,k,L}: n\notin\cn\}.
\end{align*}
By Proposition~\ref{prop:density},
$$
|A_{m,k,L}|=L\left(\frac{1}{q}+\dots+\frac{1}{q^l}\right)<\frac{L}{q-1} \quad \text{ whenever }k\text{ is good},
$$
whence
\beq\label{blisko}
|\{z^2(A_{m,k,L}): k\text{ is good}\}|\leq 2^{\frac{L}{q-1}}.\eeq

Observe also that, when $k$ is good, the relative positions of $A_{m,k,L}$ and $B_{m,k,L}$ inside $I_{m,k,L}$ are always the same. Hence,
\begin{multline}\label{v1}
 \bigl|
  \{ z^2 (I_{m,k,L}): k\text{ is good}\}
  \bigr|\\
  \le
  \bigl|
  \{ z^2 (A_{m,k,L}): k\text{ is good}\}
  \bigr|
  \cdot
  \bigl|
  \{ z^2 (B_{m,k,L}): k\text{ is good}\}
  \bigr|.
\end{multline}
Therefore, in view of~\eqref{blisko}, \eqref{eq:many1}
and~\eqref{eq:q_large}, we obtain
\begin{multline}\label{v2}
|\{z^2(B_{m,k,L})\colon k\text{ is good}\}|\geq\frac{|\{ z^2 (I_{m,k,L}): k\text{ is good}\}|}{|\{ z^2 (A_{m,k,L}): k\text{ is good}\}|}\\
\geq\frac{1}{2^{\frac{L}{q-1}}}|\{ z^2 (I_{m,k,L}): k\text{ is good}\}|\geq\frac{2^{{H}(1-\vep/2)L}}{2^{\frac{L}{q-1}}}\geq 2^{{H}(1-\vep)L}.
\end{multline}
Finally
\begin{multline}\label{v3}
\left|\left\{t(I_{m,k,L}): k\text{ is good}\right\}\right|\geq \left|\{t(B_{m,k,L}): k\text{ is good}\}\right|\\
=|\{z(B_{m,k,L}): k\text{ is good}\}|\geq |\{z^2(B_{m,k,L}): k\text{ is good}\}|\geq 2^{{H}(1-\vep)L}
\end{multline}
and the result follows.
\end{proof}

\begin{proof}[Proof of Theorem~\ref{tw:abs2}]
The proof goes along the same lines as the proof of Theorem~\ref{tw:abs1} (instead of $\{0,1\}$, we consider the alphabet $\{-1,0,1\}$). First, (using Corollary~\ref{cor:7} instead of Corollary~\ref{cor:6}) we show that
 \begin{equation}
  \bigl|
  \{ z (I_{m,k,L}): k\text{ good}\}
  \bigr|
  \ge 2^{{H}(1-\vep/2)L}
 \end{equation}
(cf.\ formula~\eqref{eq:many1}). The formulas~\eqref{v1} and~\eqref{v2} are still valid, with $z$ playing now the role of $z^2$. In~\eqref{v3} it suffices to remove the part involving $z^2$ to obtain the result.
\end{proof}

\subsection{Applications}

\subsubsection{$\mob$ and its generalizations: $h_{top}(z^2)>0$}
\label{zastM}

Let $\mathscr{B}=\{b_k: k\geq 1\}$ be a set of pairwise coprime numbers with $b_k=a_k^2$ and let $z(n)=\mob_\mathscr{B}(n)$ be given by formula~\eqref{gen-mob}. Then the following is true:
\begin{enumerate}[label=(\alph*)]
\item
The point $z^2$ is generic for some measure $\nu$. Moreover, for any block $C$ appearing in $z^2$, $\nu(C)>0$
(for $z=\mob$, see~\cite{Peckner} and for the general case, see~\cite{B-Free}).
\item
$\frac{1}{N}\sum_{n\leq N}z^2(n)\tend{N}{\infty} h_{top}(\text{supp}(\nu))= h_{top}(z^2)>0$.
\item
$(S,\nu)$ has purely discrete spectrum. Moreover, for $q$ prime:
\begin{itemize}
\item
if $q\nmid b_k$ for all $k\geq 1$ then $(S^q,\nu)$ is ergodic,
\item
if $q\mid b_k$ for some $k\geq 1$ then such $k$ is unique and for any $m\ge1$, $S^{q^m}$ has at most $b_k$ ergodic components
\end{itemize}
(see Theorem~4.4 in~\cite{B-Free}).
\end{enumerate}
Thus, we can apply Theorem~\ref{tw:abs1} to $z$:
\begin{Cor}
Fix $\vep>0$. For $z=\mob_{\mathscr{B}}$ (including the case $z=\mob$), there exists a Toeplitz sequence $t$ which correlates with $z$, such that $h_{top}(t)\geq (1-\vep)h_{top}(z^2)$.
\end{Cor}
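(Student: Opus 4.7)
The strategy is to apply Theorem~\ref{tw:abs1} directly to $z=\mob_\mathscr{B}$, so the proof reduces to checking the standing assumption \eqref{aaa} together with the three hypotheses \ref{ca1}, \ref{cb1} and \ref{cc1} of that theorem. All four will follow at once from the three properties (a), (b), (c) of $\mob_\mathscr{B}$ listed immediately above the corollary, without any additional ergodic-theoretic work.

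Property (a) gives that $z^2$ is generic, hence quasi-generic, for $\nu$, which is \ref{ca1}. Property (b) states that $\frac{1}{N}\sum_{n\le N}z^2(n)\to h_{top}(\text{supp}(\nu))=h_{top}(z^2)>0$; the positivity of this limit is exactly \eqref{aaa}, and taking $H:=h_{top}(\text{supp}(\nu))=h_{top}(z^2)>0$ gives \ref{cb1}. So far no real choice has to be made.

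The one substantive point is \ref{cc1}, the uniform bound on the number of ergodic components of $S^{q^m}$. Assuming $\mathscr{B}\neq\varnothing$ (otherwise $z^2$ is identically $1$, $h_{top}(z^2)=0$, and there is nothing to prove), pick any $b_k\in\mathscr{B}$, write $b_k=a_k^2$, and let $q$ be any prime dividing $a_k$, so that $q\mid b_k$. The second bullet of property (c) then tells us two things: such $k$ is uniquely determined by $q$ (so there is no ambiguity), and for every $m\geq 1$ the transformation $S^{q^m}$ acting on $(\{0,1\}^{\N^*},\nu)$ has at most $b_k$ ergodic components. Thus \ref{cc1} holds with this $q$ and $b:=b_k$. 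In the classical case $z=\mob$, the natural choice is $q=2$, $b=4$.

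With \eqref{aaa}, \ref{ca1}, \ref{cb1}, and \ref{cc1} all verified, Theorem~\ref{tw:abs1} applied to the given $\vep>0$ produces a Toeplitz sequence $t$ that correlates with $z$ in the sense of \eqref{aaa?} and satisfies
\[
h_{top}(t)\;\geq\;(1-\vep)\,H\;=\;(1-\vep)\,h_{top}(z^2),
\]
which is exactly the assertion of the corollary. There is no genuine obstacle in this argument: all of the work (the Toeplitz construction, the type~1/type~2 analysis of non-initial indices, the ergodic-components argument of Proposition~\ref{ogolny}) has already been carried out in the proof of Theorem~\ref{tw:abs1}. The only content of the corollary is the observation that $\mob_\mathscr{B}$ sits in the abstract framework that Theorem~\ref{tw:abs1} was designed to cover, and that its well-known dynamical properties supply a prime $q$ for which condition \ref{cc1} can be verified with an explicit constant.
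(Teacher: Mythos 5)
Your proposal is correct and follows essentially the same route as the paper: the paper gives no separate argument for this corollary beyond listing properties (a)--(c) of $\mob_{\mathscr{B}}$ and invoking Theorem~\ref{tw:abs1}, and your verification of~\eqref{aaa} and of hypotheses~\ref{ca1}--\ref{cc1} is exactly the intended reading, with the only substantive step being the choice of a prime $q$ dividing some $b_k$ so that the second bullet of property (c) yields the uniform bound $b=b_k$ on the number of ergodic components of $S^{q^m}$ (which is indeed forced for $z=\mob$, where every prime divides some element of $\mathscr{B}$). The observation that passing from $q$ to $q^r$ in the proof of Theorem~\ref{tw:abs1} preserves~\ref{cc1} is also handled correctly.
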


\begin{Remark}
It would be interesting to know, whether we can find a Toeplitz sequence $t$ so that $t$ correlates with $\mob_{\mathscr{B}}$ and, moreover,
$h_{top}(t)\geq (1-\vep)h_{top}(z)$ (cf.\ Remark~\ref{siodemki}).
\end{Remark}

\begin{Remark}
Recall that $h_{top}(\mob^2)= 6/\pi^2$. Therefore, in view of Proposition~\ref{pr:7} and Theorem~\ref{tw:abs1}, in case $z=\mob$, it suffices to take $q=5$ in the construction of $t$, in order to obtain $h_{top}(t)>0$.
\end{Remark}

\subsubsection{Sturmian sequences}
Let $\eta\in \{0,1\}^{\N^*}$ be a Sturmian sequence and let $u\in \{-1,0,1\}^{\N^*}$
be a generic point for some Bernoulli measure $\mathbb{B}$. Let $z:=\eta\cdot u$. Let $\nu$ be the measure from Remark~\ref{stu}.
Denote $\rho=m_\ast(\nu\ot\mathbb{B})$, see~\eqref{eq:map_m}.

\begin{Lemma}\label{l15}
$h_{top}(\text{supp}(\rho))=h_{top}(z)>0$.
\end{Lemma}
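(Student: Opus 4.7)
The plan is first to establish that $z$ is generic for $\rho$, then to verify that the set of length-$n$ blocks appearing in $z$ coincides with $\{B\in\{-1,0,1\}^n\colon \rho([B])>0\}$ (which yields the equality of entropies), and finally to bound the complexity of $z$ from below using the density of $1$'s in the Sturmian sequence $\eta$.

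For the genericity, I would first check that $(\eta,u)$ is generic for $\nu\otimes\mathbb{B}$, reproducing the argument of Proposition~\ref{pr:1}: by Remark~\ref{stu} $\eta$ is generic for $\nu$ and $h(S,\nu)=0$, while $u$ is generic for the Bernoulli measure $\mathbb{B}$; any weak limit of the empirical measures $\delta_{S\times S,N,(\eta,u)}$ is a joining of these two systems, and the zero-entropy versus K disjointness supplied by Remark~\ref{Thouvenot} forces this joining to be the product. By continuity of $m$, the sequence $z=m(\eta,u)$ is then generic for $\rho=m_\ast(\nu\otimes\mathbb{B})$.

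Once genericity is in hand, the inclusion $\{B:\rho([B])>0\}\subseteq\{B:B\text{ appears in }z\}$ is immediate. For the converse, suppose $B$ appears in $z$ at some position $k$, and put $C:=\eta|_{[k,k+n-1]}$, $D:=u|_{[k,k+n-1]}$, so $B=C\cdot D$. By Remark~\ref{stu}, $\nu([C])>0$ because $C$ appears in $\eta$; and $\mathbb{B}([D])>0$ since $u$ is generic for $\mathbb{B}$. Because $[C]\times[D]\subseteq m^{-1}([B])$, we get $\rho([B])\ge\nu([C])\,\mathbb{B}([D])>0$. Hence $p_n(z)=p_n(\text{supp}(\rho))$ for every $n$, and $h_{top}(\text{supp}(\rho))=h_{top}(z)$ follows. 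The reverse inclusion, which rests crucially on the full topological support of $\nu$ among blocks appearing in $\eta$, is the step that requires the most care.

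It remains to show positivity. Let $\delta>0$ be the density provided by Lemma~\ref{sturm} for $\eta$, which is positive by~\eqref{eq++} since $\eta$ is a genuine (non-constant) Sturmian sequence. For large $n$, fix a length-$n$ block $C$ appearing in $\eta$ with $\# 1(C)\ge\delta n-3$. For each $f\colon\text{supp}(C)\to\text{supp}(\mathbb{B})$, extend $f$ arbitrarily to $D\in\text{supp}(\mathbb{B})^n$; then $\mathbb{B}([D])>0$, while $C\cdot D$ coincides with $f$ on $\text{supp}(C)$ and vanishes off $\text{supp}(C)$, so distinct $f$ yield distinct blocks $C\cdot D$, each of which must appear in $z$ by the previous step. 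This gives $p_n(z)\ge|\text{supp}(\mathbb{B})|^{\delta n-3}$, whence $h_{top}(z)\ge\delta\log|\text{supp}(\mathbb{B})|>0$ under the standing assumption $|\text{supp}(\mathbb{B})|\ge 2$ in the intended applications.
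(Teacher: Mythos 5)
Your proof is correct, and for the central step --- the identification of the blocks occurring in $z$ with the blocks of positive $\rho$-measure, via the factorization $B=C\cdot D$ and the bound $\rho([B])\ge\nu([C])\,\mathbb{B}([D])$ --- it coincides with the paper's argument (the paper likewise relies on the genericity of $z$ for $\rho$, recorded separately in Remark~\ref{ttt}, for the easy inclusion). Where you genuinely diverge is in the positivity $h_{top}(z)>0$: the paper disposes of this in one line by invoking Remark~\ref{rk17}, i.e.\ it uses that $z$ satisfies~\eqref{Ch} and that the support of $z$ has positive density (Lemma~\ref{l16}), so that $h_{top}(z)=0$ would contradict~\eqref{S}; this leans on the whole \eqref{Ch}$\implies$\eqref{S} machinery of Theorem~\ref{ChS}. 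You instead give a direct combinatorial count: a length-$n$ block of $\eta$ carries at least $\delta n-3$ ones by Lemma~\ref{sturm}, and filling those positions with symbols from the support of the marginal of $\mathbb{B}$ produces that many distinct blocks of positive $\rho$-measure, all of which occur in $z$. This is more elementary, avoids~\eqref{Ch} entirely, and yields the quantitative bound $h_{top}(z)\ge\delta\log|\text{supp}(\mathbb{B})|$; its only cost is the explicit hypothesis $|\text{supp}(\mathbb{B})|\ge2$, which is anyway implicit in the paper's route (a Dirac $\mathbb{B}$ makes the lemma false). One small point of hygiene: your justification ``$\mathbb{B}([D])>0$ since $u$ is generic for $\mathbb{B}$'' is not quite a proof --- a generic point can in principle contain a block of measure zero --- and the honest statement, which the paper also uses tacitly, is that every block over the alphabet actually occurring in $u$ has positive $\mathbb{B}$-measure because $\mathbb{B}$ is a product measure with full support on its alphabet; this is harmless in the intended applications but worth saying precisely.
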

\begin{proof}
In view of Remark~\ref{rk17}, it suffices to show that $h_{top}(\text{supp}(\rho))= h_{top}(z)$.

Clearly, whenever $B$ is such that $\rho(B)>0$, then $B$ appears in $z$. Let now $B$ be a block which appears in $z$. Then $B=B_1\cdot B_2$ (the multiplication is to be understood coordinatewise) for some block $B_1$ which appears in $\eta$ and some block $B_2$ which appears in $u$. Therefore
\begin{multline*}
\rho(B)=\rho(B_1\cdot B_2)=\nu\otimes \mathbb{B}(m^{-1}(B_1\cdot B_2))\\
\geq \nu\otimes \mathbb{B}(B_1\times B_2)=\nu(B_1)\cdot \B(B_2)>0
\end{multline*}
by Remark~\ref{stu} and since $\B(C)>0$ for any block $C$. This ends the proof.
\end{proof}

\begin{Lemma}\label{l16}
$\lim_{N\to \infty}\frac{1}{N}\sum_{n\leq N}z^2(n)>0$.
\end{Lemma}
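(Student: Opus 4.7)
The plan is to show joint genericity of $(\eta,u)$ for the product measure $\nu\otimes\mathbb{B}$, then read off the limit as a product of two positive factors.

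First, since $\eta(n)\in\{0,1\}$, we have $z^2(n)=\eta(n)\cdot u^2(n)$, so the Cesàro average we want to evaluate is
$$
\frac{1}{N}\sum_{n\leq N}\eta(n)\cdot u^2(n)=\frac{1}{N}\sum_{n\leq N}(F\otimes F^2)(S^{n-1}\eta,S^{n-1}u),
$$
where, as usual, $F(w)=w(1)$. If I can show that the pair $(\eta,u)$ is generic (for the diagonal shift) for $\nu\otimes \mathbb{B}$, then this average converges to $\int F\,d\nu\cdot\int F^2\,d\mathbb{B}$.

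To establish joint genericity, I would take an arbitrary weak-$\ast$ accumulation point $\rho$ of the sequence of empirical measures $\frac{1}{N}\sum_{n\le N}\delta_{(S^n\eta,S^n u)}$. Because $\eta$ is generic for $\nu$ and $u$ is generic for $\mathbb{B}$, the marginals of $\rho$ are $\nu$ and $\mathbb{B}$ respectively, so $\rho$ is a joining of $(S,\nu)$ and $(S,\mathbb{B})$. Now $(S,\nu)$ has zero entropy (Remark~\ref{stu}, since $\eta$ is Sturmian), while $(S,\mathbb{B})$ is a Bernoulli automorphism, hence a K-system. By Remark~\ref{Thouvenot} these two systems are disjoint, so $\rho=\nu\otimes\mathbb{B}$. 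The accumulation point being unique, the empirical measures converge to $\nu\otimes\mathbb{B}$, proving joint genericity.

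It remains to check that both integrals are strictly positive. For the first, Remark~\ref{Uw24} (namely~\eqref{eq+}) yields $\int F\,d\nu=\delta$, and~\eqref{eq++} ensures $\delta>0$ since $\eta$ is not the trivial sequence $(0,0,\ldots)$. For the second, $\int F^2\,d\mathbb{B}=\mathbb{B}(\{w:w(1)\neq 0\})$, which is strictly positive for any of the Bernoulli measures relevant in this section (namely $B(1/2,1/2)$ on $\{-1,1\}^{\NS}$ or $B(1/4,1/2,1/4)$ on $\{-1,0,1\}^{\NS}$). Multiplying, the limit is positive, as claimed. The only potentially delicate point is the disjointness step, but since it is already packaged as Remark~\ref{Thouvenot}, no real obstacle arises.
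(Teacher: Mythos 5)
Your proof is correct and follows essentially the same route as the paper: both establish that $(\eta,u)$ is generic for $\nu\otimes\mathbb{B}$ via disjointness of the zero-entropy Sturmian system from the Bernoulli (K-) system, and then evaluate the limit as the product $\nu(\{w:w(1)=1\})\cdot\mathbb{B}(\{w:w(1)=\pm1\})>0$. Your version merely spells out the accumulation-point argument and the positivity of $\delta$ a bit more explicitly than the paper does.
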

\begin{proof}
It follows from Remark~\ref{stu} that $(S,X_{\eta},\nu)\perp (S,X_{u},\B)$, whence $(\eta,u)$ is generic for $\nu\otimes \B$. Therefore,
\begin{align*}
\frac{1}{N}\sum_{n\leq N}z^2 (n)&=\frac{1}{N}\sum_{n\leq N}\eta^2(n)\cdot u^2 (n)=\frac{1}{N}\sum_{n\leq N}\eta(n)\cdot u^2 (n)\\
&=\frac{1}{N}\sum_{n\leq N}(\raz_{\{w : w(1)=1\}}\otimes \raz_{\{w : w(1)=\pm 1\}})((S\times S)^n(\eta ,u ))\\
&\tend{N}{\infty}\int \raz_{\{w : w(1)=1\}}\otimes \raz_{\{w : w(1)=\pm 1\}}\ d(\nu\otimes \B)\\
&=\nu(\{w : w(1)=1\})\cdot \B(\{w : w(1)=\pm 1\})>0.
\end{align*}
\end{proof}

\begin{Remark}\label{ttt}
Notice that $(S,X_z,\rho)$ with $\rho=m_\ast (\nu\otimes \mathbb{B})$ is a factor of $(S,X_\eta,\nu)\times (S,X_u,\B)$. Therefore, $z$ is generic for $\rho$. Moreover, the eigenvalues of $(S,X_\eta,\nu)\times (S,X_u,\B)$ and $(S,X_\eta,\nu)$ are the same, and there exists some $\lambda\in\C$ with $|\lambda|=1$ such that these eigenvalues are of the form $\lambda^n$, $n\in\Z$. In particular, any eigenvalue of $(S,X_z,\rho)$ is of the form $\lambda^n$ for some $n\in\Z$.
\end{Remark}

\begin{Lemma}\label{l17}
There exists $b\geq 1$ such that $(S^r,X_z,\rho)$ has at most $b$ ergodic components for any $r\geq 1$.
\end{Lemma}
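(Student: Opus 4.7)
The plan is to exploit the spectral description of $(S,X_z,\rho)$ provided by Remark~\ref{ttt}, together with the classical link between the number of ergodic components of $T^r$ and the number of $r$-th roots of unity in the spectrum of $T$. First, I would establish that $(S,X_z,\rho)$ is ergodic: since $\mathbb{B}$ is a Bernoulli measure, the system $(S,X_u,\mathbb{B})$ is weakly mixing, hence its product with the ergodic Sturmian system $(S,X_\eta,\nu)$ is ergodic, and $(S,X_z,\rho)$ is a factor of this product under the map $m$ from~\eqref{eq:map_m}, so it inherits ergodicity.

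Next, I would invoke the following standard consequence of the spectral theorem: for any ergodic automorphism $T$ of a standard probability space and any $r\geq 1$, the number of ergodic components of $T^r$ equals
\[
\bigl|\{\zeta\in\C:\zeta^r=1\text{ and }\zeta\text{ is an eigenvalue of }T\}\bigr|.
\]
The justification is that this cardinality equals the dimension of the $T^r$-fixed subspace of $L^2$, which in turn equals the number of ergodic components of $T^r$: for ergodic $T$ each eigenvalue is simple, and any $T^r$-fixed function $g\in L^2$ decomposes as $g=\sum_{\zeta^r=1}g_\zeta$, where $g_\zeta:=\frac{1}{r}\sum_{j=0}^{r-1}\zeta^{-j}T^jg$ is an eigenfunction of $T$ with eigenvalue $\zeta$.

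Finally, by Remark~\ref{ttt}, every eigenvalue of $(S,X_z,\rho)$ lies in $\Lambda:=\{\lambda^n:n\in\Z\}$ for some fixed $\lambda$ on the unit circle. If $\lambda$ is not a root of unity (the typical case, where $\eta$ is a genuine Sturmian sequence of irrational slope), then $\lambda^n=\zeta$ with $\zeta^r=1$ forces $\lambda^{rn}=1$ and so $n=0$; hence $\Lambda$ contains only $1$ among the roots of unity, and $b=1$ works. If $\lambda$ is a primitive $c$-th root of unity (the degenerate case where $\eta$ is periodic), then $\Lambda$ itself is a finite cyclic group of order $c$ all of whose elements are $c$-th roots of unity, so for every $r$ the number of $r$-th roots of unity in $\Lambda$ is at most $c$, and $b=c$ works.

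The only delicate point I foresee is the spectral correspondence between eigenvalues of $T$ and the $L^2$-dimension of the $T^r$-fixed subspace; this is classical and deserves only a brief reference, while the remainder of the argument is elementary once that tool is in hand.
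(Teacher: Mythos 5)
Your proof is correct and takes essentially the same route as the paper's: both reduce the number of ergodic components of $S^r$ to the number of $r$-th roots of unity among the eigenvalues, which by Remark~\ref{ttt} all lie in the group generated by $\lambda$, and then split into the cases $\lambda$ a root of unity or not. You simply make explicit two points the paper leaves implicit (the ergodicity of $(S,X_z,\rho)$ and the spectral counting lemma), which is a welcome bit of extra care.
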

\begin{proof}
If $(S,X_z,\rho)$ is totally ergodic, the assertion of the lemma is true. Assume now that $(S,X_z,\rho)$ is not totally ergodic. Let $\lambda\in \C$ be as in Remark~\ref{ttt} and let $n_0\geq 1$ be the smallest natural number such that $\lambda^{n_0}=1$. It follows that $(S^r,X_z,\rho)$ has at most $n_0$ ergodic components for any $r\geq 1$, which ends the proof.
\end{proof}

\begin{Cor}
Fix $\vep>0$. For $z$ defined as above, there exists a Toeplitz sequence $t$ which correlates with $z$ and such that
$$
h_{top}(t)\geq (1-\vep)h_{top}(z).
$$
\end{Cor}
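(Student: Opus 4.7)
The plan is to deduce the corollary by straightforward verification of the hypotheses of Theorem~\ref{tw:abs2} applied to $z=\eta\cdot u$, taking $\widetilde{\nu}=\rho=m_\ast(\nu\otimes\mathbb{B})$. All of the technical work has in fact already been done in Lemmas~\ref{l15}, \ref{l16}, \ref{l17} and Remark~\ref{ttt}; what remains is to assemble them and read off the conclusion.

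First I would check assumption~\eqref{aaa}, i.e.\ $\liminf \frac{1}{N}\sum_{n\leq N}z^2(n)>0$: this is precisely the content of Lemma~\ref{l16}. Next, to check~\ref{ca} of Theorem~\ref{tw:abs2} I use Remark~\ref{ttt}, which tells us that $z$ is generic (hence quasi-generic) for $\rho$. Assumption~\ref{cb}, namely $H:=h_{top}(\mathrm{supp}(\rho))>0$, is given by Lemma~\ref{l15}, and moreover that lemma identifies $H$ with $h_{top}(z)$, which is the key point for the entropy lower bound. Finally, assumption~\ref{cc} is Lemma~\ref{l17}: there is some $b\geq 1$ bounding the number of ergodic components of every power $S^r$ acting on $(X_z,\rho)$, in particular of $S^{q^m}$ for any $q\geq 2$ and any $m\geq 1$.

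Having verified all hypotheses, Theorem~\ref{tw:abs2} produces for each $\vep>0$ a Toeplitz sequence $t$ satisfying simultaneously the correlation estimate~\eqref{aaa?} with $z$ and the entropy bound
\[
h_{top}(t)\;\geq\;(1-\vep)H\;=\;(1-\vep)h_{top}(z),
\]
where the last equality uses Lemma~\ref{l15}. This is exactly the statement of the corollary, so the proof is complete.

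I do not anticipate any genuine obstacle: the whole point of Lemmas~\ref{l15}--\ref{l17} and Remark~\ref{ttt} was to package the information about $z=\eta\cdot u$ in a form directly compatible with Theorem~\ref{tw:abs2}. The only subtle point worth a line of commentary is that the entropy estimate in Theorem~\ref{tw:abs2} is $(1-\vep)H$ with $H=h_{top}(\mathrm{supp}(\widetilde\nu))$, not $h_{top}(z)$ in general; here the identification $H=h_{top}(z)$ is legitimate because Lemma~\ref{l15} ensures that every block appearing in $z$ actually has positive $\rho$-mass, which comes from the fact that the Sturmian measure $\nu$ charges every block appearing in $\eta$ (Remark~\ref{stu}) and the Bernoulli measure $\mathbb{B}$ charges every block in $u$.
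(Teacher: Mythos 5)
Your proposal is correct and follows exactly the paper's own argument: the paper likewise proves this corollary by invoking Lemma~\ref{l16} for condition~\eqref{aaa}, Remark~\ref{ttt} for quasi-genericity, Lemma~\ref{l15} for the identification $H=h_{top}(\mathrm{supp}(\rho))=h_{top}(z)>0$, and Lemma~\ref{l17} for the bound on ergodic components, and then applies Theorem~\ref{tw:abs2}. You have merely spelled out the same one-line verification in more detail, and your closing remark about why $H$ may be identified with $h_{top}(z)$ is a correct and useful clarification.
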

\begin{proof}
In view of Lemma~\ref{l15}, Lemma~\ref{l16}, Remark~\ref{ttt} and Lemma~\ref{l17}, the assumptions of Theorem~\ref{tw:abs2} are satisfied for $z$ and the assertion follows.
\end{proof}

\begin{appendix}

\section{Possible pairs of entropies}

The purpose of this appendix is to study the set of possible values of the pair $(h_{top}(z^2),h_{top}(z))$ for $z\in\{-1,0,1\}^{\N^\ast}$ (we do not assume here that $z$ satisfies~\eqref{Ch}). Recall the definition of function $H$ from Lemma~\ref{lemma:Shields}:
$$
H(x)=-x\cdot\log x-(1-x)\cdot \log(1-x) \text{ for }x\in(0,1)
$$
and consider $H_1:=H|_{(0,1/2]},\ H_2:=H|_{[1/2,1)}$. Moreover, define $f_1,f_2\colon [0,1]\to \R$ by
$$
f_1(x):=x+H_1^{-1}(x),\ f_2(x):=\begin{cases}
x+H_2^{-1}(x),& \text{ for }x<H(2/3),\\
\log 3,&\text{ for }x\geq H(2/3),
\end{cases}
$$
see Figure~\ref{fig:2} and~\ref{fig:3}. 
\begin{Remark}\label{inc}
Elementary calculation shows that $x\mapsto x+H_2^{-1}(x)$ is increasing on $(0,H(2/3)]$.
\end{Remark}

We have the following restrictions for $(h_{top}(z^2),h_{top}(z))$:
\begin{figure}
    \begin{subfigure}{0.5\textwidth}
	 \includegraphics[height=6cm]{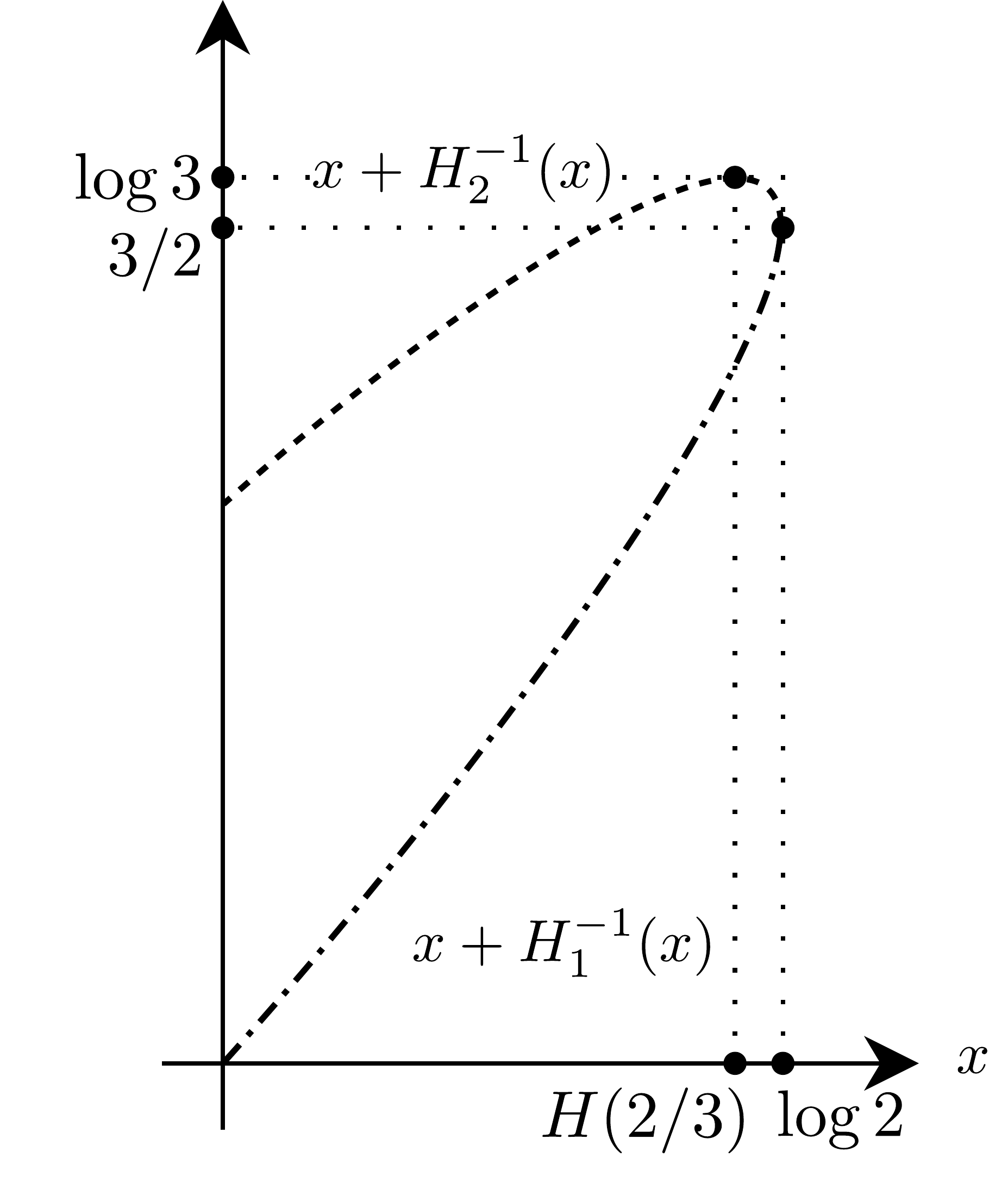}
	 \caption{Functions $x+H_1^{-1}(x)$ and $x+H_2^{-1}(x)$.}
	 \label{fig:2}
    \end{subfigure}
    \begin{subfigure}{0.5\textwidth}
		 \includegraphics[height=6cm]{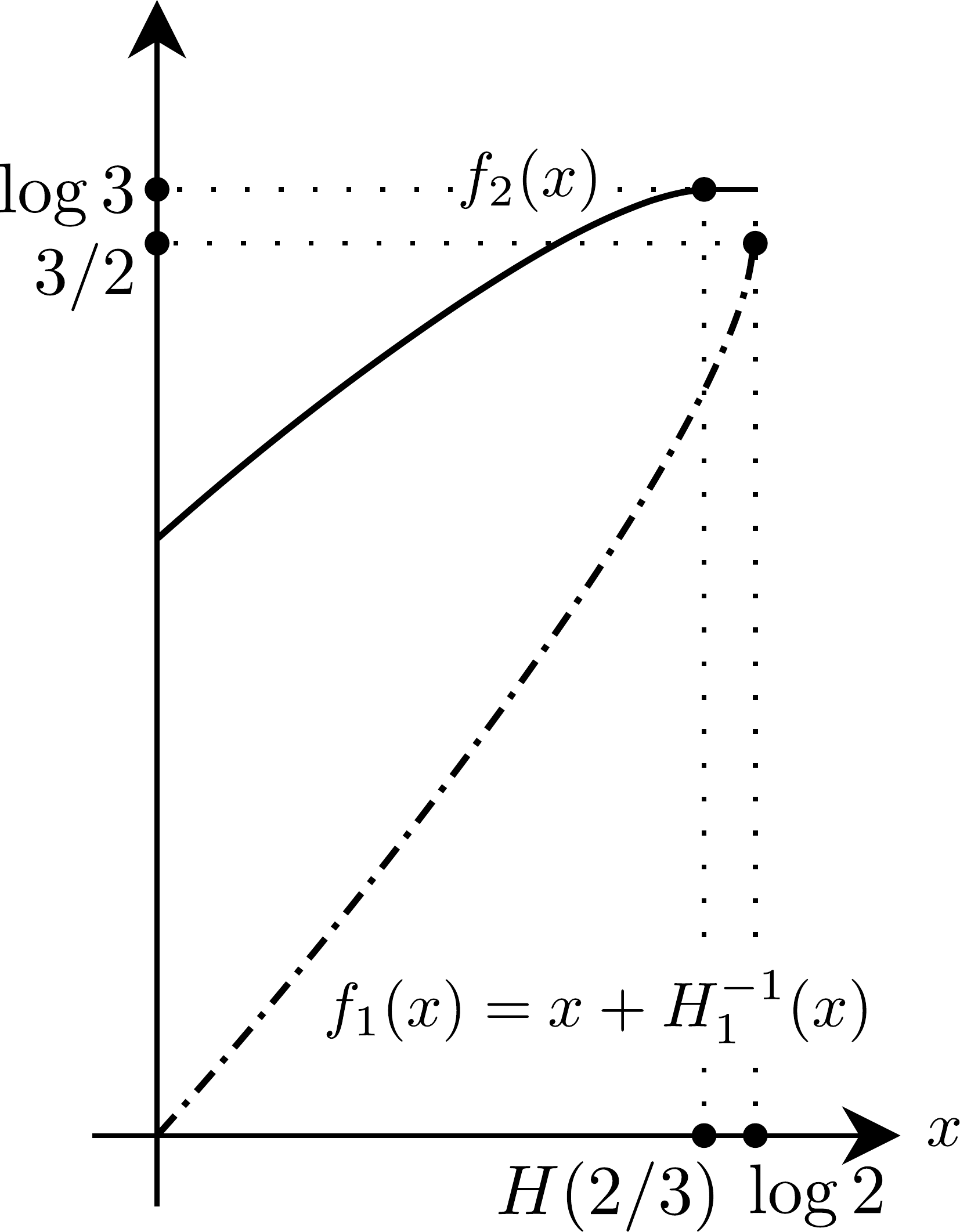}
		 \caption{Functions $f_1$ and $f_2$.}
		 \label{fig:3}
    \end{subfigure}
    \caption{Graphs of the functions used in the bounds for measure-theoretic entropy (left) and topological entropy (right).}
  \end{figure}

\begin{Prop}\label{wniosek3}
Let $z\in\{-1,0,1\}^{\N^\ast}$. Then
$$
h_{top}(z) \leq f_2(h_{top}(z^2) )
$$
with the equality $h_{top}(z) =f_2(h_{top}(z^2) )$ if and only if $h_{top}(z)=\log 3$. If additionally
\begin{equation}\label{e:recur}
X_z=\pi^{-1}(X_{z^2}),
\end{equation}
then
$f_1(h_{top}(z^2) )<h_{top}(z).$
\end{Prop}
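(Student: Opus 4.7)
The key observation is the counting inequality
\[
p_n(z) \le \sum_{B} 2^{|\text{supp}(B)|} = \sum_{k=0}^n N_{n,k}\,2^k,
\]
where $B$ ranges over the length-$n$ blocks appearing in $z^2$ and $N_{n,k}$ counts those with exactly $k$ ones. Each such $B$ admits $2^{|\text{supp}(B)|}$ sign-lifts in $\{-1,0,1\}^n$, and only these can appear as blocks of $z$; equality in the above happens precisely when every admissible lift appears in $z$, i.e.\ when $X_z=\pi^{-1}(X_{z^2})$. Both parts of the proposition will follow from this identity.

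Setting $h_2:=h_{top}(z^2)$ and maximising $\sum_k N_{n,k}\,2^k$ under $N_{n,k}\le\binom{n}{k}$ and $\sum_k N_{n,k}=p_n(z^2)=2^{nh_2+o(n)}$, the greedy optimum saturates the largest $\binom{n}{k}$'s first. Combining Lemma~\ref{lemma:Shields} (applied to the upper tail via the symmetry $\binom{n}{k}=\binom{n}{n-k}$) with the log-concavity of $k\mapsto\binom{n}{k}\,2^k$ and its peak at $k=2n/3$, this maximum has exponential rate $f_2(h_2)$: when $\delta^*:=H_2^{-1}(h_2)\ge 2/3$ (i.e.\ $h_2\le H(2/3)$), the rate is $H(\delta^*)+\delta^*$; otherwise the sum $3^n$ is already within the budget and gives $\log 3$. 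Hence $h_{top}(z)\le f_2(h_2)$. The implication $h_{top}(z)=\log 3\Rightarrow h_{top}(z)=f_2(h_2)$ is immediate since $f_2\le\log 3$. Conversely, if $h_{top}(z)=f_2(h_2)$ and $h_2\ge H(2/3)$, then $h_{top}(z)=\log 3$. It remains to rule out equality when $h_2<H(2/3)$. By log-concavity, equality there forces the $N_{n,k}\,2^k$ to concentrate exponentially on $k\approx\delta^* n$ with $\delta^*>2/3$, so that all but $o(2^{nh_2})$ length-$n$ blocks of $z^2$ have density of $1$'s close to $\delta^*$. Taking an ergodic $\nu$ on $X_{z^2}$ with $h(S,\nu)=h_2$ (variational principle), the Shannon-McMillan-Breiman theorem supplies $\approx 2^{nh_2}$ $\nu$-typical blocks of length $n$, each of density $\nu([1])$; combining with the concentration gives $\nu([1])=\delta^*$. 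Then $h_2=h(S,\nu)\le H(\nu([1]))=H(\delta^*)=h_2$ is an equality, which in the generator bound $h(S,\nu)\le H(\nu([1]))$ forces $\nu$ to be the Bernoulli measure $B(1-\delta^*,\delta^*)$. Its full support $\{0,1\}^{\N^*}$ contradicts $h_2<1$, ruling out the case.

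For the strict lower bound, assume $X_z=\pi^{-1}(X_{z^2})$ and pick $\nu$ invariant on $X_{z^2}$ with $h(S,\nu)=h_2$ (variational principle). The relatively independent extension $\widehat{\nu}$ (see \eqref{es1}) is supported on $X_z$, and a direct block computation (the conditional fibre over a length-$n$ block $B$ of $z^2$ is uniform on the $2^{|\text{supp}(B)|}$ sign-lifts) yields
\[
h(S,\widehat{\nu}) \;=\; h(S,\nu)+\nu([1]) \;=\; h_2+\nu([1]).
\]
Since $h_2=h(S,\nu)\le H(\nu([1]))$ forces $\nu([1])\ge H_1^{-1}(h_2)$, we obtain $h_{top}(z)\ge h(S,\widehat{\nu})\ge f_1(h_2)$. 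The inequality is strict: $\nu([1])=H_1^{-1}(h_2)$ would force $\nu$ to be Bernoulli, hence $h_2=1$; but then $X_z=\{-1,0,1\}^{\N^*}$ and $h_{top}(z)=\log 3>3/2=f_1(1)$. Otherwise $\nu([1])>H_1^{-1}(h_2)$ is already strict, giving $h_{top}(z)>f_1(h_2)$.

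The main obstacle is the rigidity step for the upper bound in the regime $h_2<H(2/3)$: the LP bound alone is attainable in principle, so one must invoke the shift-invariance of $z^2$ through an ergodic argument (Shannon-McMillan-Breiman) to convert the combinatorial concentration of $(N_{n,k})$ into information about the max-entropy measure, and then exploit the Bernoulli rigidity of equality in the fundamental entropy-generator bound.
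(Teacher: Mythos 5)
Your combinatorial derivation of the upper bound $h_{top}(z)\le f_2(h_{top}(z^2))$ is sound, and your treatment of the strict lower bound under \eqref{e:recur} is essentially the paper's own argument (via $h(S,\widehat\nu)=h(S,\nu)+\nu([1])$ and the Bernoulli rigidity of $h(S,\nu)=H(\nu([1]))$). The genuine gap is in the rigidity step ruling out equality when $h_2:=h_{top}(z^2)<H(2/3)$. Two problems arise there. First, what your concentration argument actually yields is that \emph{at least} $2^{nh_2-o(n)}$ of the length-$n$ blocks of $z^2$ have density of $1$'s near $\delta^*$, not that ``all but $o(2^{nh_2})$'' do: the term $N_{n,k_n}2^{k_n}$ must carry the full exponential rate, which forces $k_n/n\to\delta^*$ and $N_{n,k_n}\ge 2^{nh_2-o(n)}$, but this says nothing about the remaining blocks. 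Second, and more seriously, the deduction ``combining with the concentration gives $\nu([1])=\delta^*$'' does not follow: the $\nu$-typical blocks (numbering $2^{nh_2\pm o(n)}$) and the density-$\delta^*$ blocks (numbering at least $2^{nh_2-o(n)}$) are two subsets of a universe of size $2^{nh_2+o(n)}$, and the subexponential slack allows them to be disjoint. Concretely, $X_{z^2}$ could contain two subshifts each of topological entropy $h_2$, one carrying a maximal-entropy measure with $\nu([1])\neq\delta^*$ and the other supplying the density-$\delta^*$ blocks; the variational principle gives you no control over which maximal-entropy measure you pick. To repair this you would have to manufacture an invariant measure from the concentrated block family itself (a Misiurewicz-type construction: average shifted empirical measures along those blocks, pass to a weak limit, use upper semicontinuity of entropy for subshifts) so as to obtain $\nu\in\mathcal{P}_S(X_{z^2})$ with $h(S,\nu)\ge h_2$ and $\nu([1])=\delta^*$ exactly; only then does $h(S,\nu)=H(\nu([1]))$ hold and force (after ergodic decomposition and strict concavity of $H$) $\nu=B(1-\delta^*,\delta^*)$, whose full support gives $h_2=1$, contradicting $h_2=H(\delta^*)<1$.

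The paper sidesteps this entirely by staying on the measure side from the start: Lemma~\ref{lm:17} shows that among all lifts $\rho$ with $\pi_\ast\rho=\nu$ the relatively independent extension $\widehat\nu$ maximizes entropy, Lemma~\ref{lm:18} then gives $h_{top}(z)\le\max\{h(\widehat\nu):\nu\in\mathcal{P}_S(X_{z^2})\}$, and Proposition~\ref{p:mes} supplies both $h(\widehat\nu)=h(\nu)+\nu([1])\le f_2(h(\nu))$ and the Bernoulli characterization of equality, applied directly to the maximizing measure. Your purely combinatorial route to the inequality itself is a legitimate and arguably more elementary alternative, but for the equality case the measure-theoretic formulation is what actually carries the rigidity, and your proof does not yet bridge the two.
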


One can also show that the bounds given by Proposition \ref{wniosek3} cannot be improved:
\begin{Prop}\label{p:notimpro}
For any $h_{z^2}\in (0,1]$ and $\vep>0$ there exists $z\in\{-1,0,1\}^{\N^\ast}$ such that
\begin{itemize}
\item
$h_{top}(z^2)\in [h_{z^2}-\vep,h_{z^2}+\vep]$,
\item
$h_{top}(z)\geq f_2(h_{z^2})-\vep$.
\end{itemize}
Moreover, there exists $z\in \{-1,0,1\}^{\N^\ast}$ satisfying condition~\eqref{e:recur} and such that
\begin{itemize}
\item
$h_{top}(z^2)\in [h_{z^2}-\vep,h_{z^2}+\vep]$,
\item
$h_{top}(z)\leq f_1(h_{z^2})+\vep$.
\end{itemize}
\end{Prop}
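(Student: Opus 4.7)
For both assertions, I would produce sequences of the form $z = y\cdot u$ (coordinatewise product), where $y\in\{0,1\}^{\N^\ast}$ is a binary sequence with carefully tuned density $\delta$ of $1$'s and topological entropy close to $H(\delta)$, and $u\in\{-1,1\}^{\N^\ast}$ is a $B(\tfrac12,\tfrac12)$-generic sign sequence chosen so that $(y,u)$ is jointly generic for the product measure $\mu_y\otimes B(\tfrac12,\tfrac12)$, where $\mu_y$ denotes the measure for which $y$ is generic. The target density is $\delta := H_2^{-1}(h_{z^2})\in[2/3,1)$ for the first assertion (to maximize the entropy contribution of the signs) and $\delta := H_1^{-1}(h_{z^2})\in(0,1/2]$ for the second (to minimize it). Since $y\in\{0,1\}^{\N^\ast}$ and $u\in\{-1,1\}^{\N^\ast}$, one has $z^2=y$, so $h_{top}(z^2)=h_{top}(y)$, and the entropy count for $z$ amounts to $h_{top}(y)$ plus the contribution of the sign choices over $\mathrm{supp}(y)$.

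\textbf{Construction of $y$.} Fix a large integer $n$. Let $\mathcal{B}_n\subset\{0,1\}^n$ be the set of length-$n$ blocks with exactly $\lfloor\delta n\rfloor$ ones, so by Stirling $|\mathcal{B}_n|=\binom{n}{\lfloor\delta n\rfloor}=2^{nH(\delta)+o(n)}$. Take $y=B_1B_2B_3\cdots$ where $(B_i)$ enumerates $\mathcal{B}_n$ with each element appearing infinitely often and with uniform asymptotic frequency. Every length-$m$ block of $y$ then has density of $1$'s equal to $\delta+O(n/m)$. The standard subadditive counting yields $h_{top}(y)=\log|\mathcal{B}_n|/n+o_n(1)=H(\delta)+o_n(1)$: the lower bound is immediate since every element of $\mathcal{B}_n$ is a length-$n$ block of $y$, while the upper bound follows from the fact that any length-$m$ block of $y$ is a substring of a concatenation of at most $\lceil m/n\rceil+1$ blocks from $\mathcal{B}_n$, giving $p_m(y)\le n|\mathcal{B}_n|^{\lceil m/n\rceil+1}$.

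\textbf{Joining $y$ with random signs.} I would invoke Kamae's realization result (Theorem~2 of~\cite{Ka}, used already in the proof of Proposition~\ref{prop:cd-Ch}) to produce $u\in\{-1,1\}^{\N^\ast}$ generic for $B(\tfrac12,\tfrac12)$ such that $(y,u)$ is jointly generic for $\mu_y\otimes B(\tfrac12,\tfrac12)$. Setting $z:=y\cdot u$ gives $z^2=y$ and therefore $h_{top}(z^2)=h_{z^2}+o_n(1)$. Joint genericity guarantees that for every length-$m$ block $C$ of $y$ and every $D\in\{-1,1\}^m$, the block $C\cdot D$ occurs in $z$; since $D$ only matters on $\mathrm{supp}(C)$, one obtains
\[
p_m(z) \;=\; \sum_{C\in p_m(y)} 2^{|\mathrm{supp}(C)|} \;=\; p_m(y)\cdot 2^{\delta m+o(m)},
\]
so $h_{top}(z)=H(\delta)+\delta+o_n(1)$. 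With $\delta=H_2^{-1}(h_{z^2})$ this equals $f_2(h_{z^2})+o_n(1)$ on the interval where $f_2(x)=x+H_2^{-1}(x)$; choosing $n$ large forces the error below~$\vep$, proving the first assertion. With $\delta=H_1^{-1}(h_{z^2})$ it equals $f_1(h_{z^2})+o_n(1)$; moreover, since $u$ has full topological support $\{-1,1\}^{\N^\ast}$, joint genericity gives
\[
X_z \;=\; X_y\cdot\{-1,1\}^{\N^\ast} \;=\; \pi^{-1}(X_y) \;=\; \pi^{-1}(X_{z^2}),
\]
which is the required condition~\eqref{e:recur}, finishing the second assertion.

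\textbf{Main obstacle.} The crux of the argument is the joint genericity of $(y,u)$: this is what matches the \emph{lower} bound $p_m(z)\gtrsim p_m(y)\cdot 2^{\delta m}$ in the first part (without it, only specific sign patterns might realize themselves in $z$), and also what guarantees the identity $X_z=\pi^{-1}(X_{z^2})$ in the second part. Kamae's existence theorem is precisely what overcomes this difficulty. A minor secondary issue is the regime $h_{z^2}>H(2/3)$ in the first assertion, where $f_2$ is clamped at $\log 3$ while the direct construction only yields $H(\delta)+\delta<\log 3$; there one uses $\delta=2/3$ and the tolerance~$\vep$ to shift $h_{top}(z^2)$ slightly towards $H(2/3)$ by continuity of the construction.
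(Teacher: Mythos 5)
Your strategy -- build $y=z^2$ by concatenating length-$n$ blocks of prescribed density $\delta$ and then count the $\pm1$ lifts over the support -- is essentially the paper's (which works with the subshift $X$ of \emph{all} concatenations of words from $\mathcal{A}_{N,d,\vep}$ and takes $z$ to be a point of $\pi^{-1}(X)$ in which every block of $\pi^{-1}(X)$ appears, thereby avoiding any genericity or Kamae-type argument). However, your version has a genuine gap at the entropy lower bound for $y$. You assert $h_{top}(y)\ge\tfrac1n\log|\mathcal{B}_n|$ ``since every element of $\mathcal{B}_n$ is a length-$n$ block of $y$.'' Because $\log p_m(y)$ is subadditive, $h_{top}(y)=\inf_m\tfrac1m\log p_m(y)$, so a lower bound on $p_n(y)$ for a single $n$ gives no lower bound on the entropy. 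Worse, an enumeration of $\mathcal{B}_n$ in which each element appears infinitely often with uniform asymptotic frequency can simply cycle through $\mathcal{B}_n$ periodically; then $y$ is periodic, $h_{top}(y)=0$, $\mu_y$ is supported on a periodic orbit, and your count gives only $h_{top}(z)\le\delta+o(1)$ rather than $H(\delta)+\delta$. What is actually needed is that \emph{all finite concatenations} of elements of $\mathcal{B}_n$ occur in $y$, and occur with positive frequency if you also want the joint-genericity step and condition~\eqref{e:recur} to go through (so that $X_y=X_y^+=\text{supp}(\mu_y)$ and every block of $y$ receives every sign pattern). This is fixable, but the fix is precisely the missing ingredient, and it also underlies the lower bound $h_{top}(z^2)\ge h_{z^2}-\vep$ required in \emph{both} assertions.

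The second problem is your patch for the regime $h_{z^2}\ge H(2/3)$, where $f_2$ is clamped at $\log3$: it does not work. Taking $\delta=2/3$ forces $h_{top}(z^2)\approx H(2/3)\approx0.918$, while the statement demands $h_{top}(z^2)\in[h_{z^2}-\vep,h_{z^2}+\vep]$; for $h_{z^2}=1$ and small $\vep$ this misses by the fixed amount $1-H(2/3)\approx0.082$, which the tolerance $\vep$ cannot absorb. (To be fair, the paper's own write-up also takes $d=H_2^{-1}(h_{z^2})$ throughout and only reaches $h_{top}(z)\ge h_{z^2}+H_2^{-1}(h_{z^2})-3\vep$, which equals $f_2(h_{z^2})-3\vep$ only when $h_{z^2}\le H(2/3)$.) The correct repair is to \emph{adjoin} to the density-$H_2^{-1}(h_{z^2})$ block family a second family of blocks of density $2/3$: since $H(2/3)<h_{z^2}$, this leaves $h_{top}(z^2)$ essentially at $h_{z^2}$, while the sign lifts of the new blocks contribute $H(2/3)+2/3=\log3$ to $h_{top}(z)$. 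Your treatment of the second assertion (the $f_1$ upper bound and~\eqref{e:recur}) is otherwise sound, since there only the \emph{upper} estimate $p_m(z)\le p_m(y)\,2^{(\delta+o(1))m}$ is needed for $h_{top}(z)$ and that part of your computation is correct; it is affected only by the first gap.
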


The main idea in the proof of Proposition~\ref{wniosek3} is to study the pairs $(h(\nu),h(\widehat{\nu}))$ for $\nu\in\mathcal{P}_S(\{0,1\}^\Z)$. We have the following:
\begin{Prop}\label{p:mes}
Let $\nu\in\mathcal{P}^e_S(\{0,1\}^\Z)$ and set $d:=\nu([1])$. Then
$$
h(\hat{\nu})=h(\nu)+d, \text{ and }d\in [H_1^{-1}(h(\nu)),H_2^{-1}(h(\nu))].
$$
Moreover, the following conditions are equivalent:
\begin{enumerate}[label=(\alph*)]
\item\label{803a}
$d=H_i^{-1}(h(\nu))$ for $i=1$ or $i=2$,
\item\label{803b}
$\nu=B(1-d,d)$.
\end{enumerate}
\end{Prop}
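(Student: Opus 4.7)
The plan is to establish the three claims successively, making essential use of the product decomposition of $(S,\{-1,0,1\}^\Z,\widehat{\nu})$ over $(S,\{0,1\}^\Z,\nu)$ from Lemma~\ref{l1}, the standard bound $h(\nu) \leq H(d)$ coming from the one-dimensional generating partition, and the classical equality case characterizing Bernoulli processes.

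For the identity $h(\widehat{\nu}) = h(\nu) + d$, I would apply the Abramov--Rokhlin relative entropy formula to the factor map $\pi\colon (\{-1,0,1\}^\Z,\widehat\nu,S) \to (\{0,1\}^\Z,\nu,S)$. Let $P = \{[-1],[0],[1]\}$ denote the zero-coordinate partition of $\{-1,0,1\}^\Z$; it is a finite generator for $\widehat\nu$. By the very definition of the relatively independent extension, conditionally on the full factor $\sigma$-algebra $\pi^{-1}(\mathcal{B})$ (which reveals $w^2$ completely), the coordinates $w(n)$ are mutually independent: $w(n) = 0$ whenever $w^2(n)=0$, and $w(n)$ is uniform on $\{-1,+1\}$ whenever $w^2(n)=1$. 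In particular, conditionally on $\pi^{-1}(\mathcal{B}) \vee \bigvee_{k\geq 1} S^{-k} P$, the conditional entropy of the zero-coordinate value of $P$ equals $0$ on $\{w^2(0)=0\}$ and $\log 2 = 1$ on $\{w^2(0)=1\}$. Averaging yields $(1-d)\cdot 0 + d\cdot 1 = d$, whence $h(\widehat\nu) - h(\nu) = d$.

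For the range of $d$, the Kolmogorov--Sinai entropy of $\nu$ is bounded by the entropy of its one-dimensional marginal, $h(\nu) \leq H_\nu(Q) = H(d)$, where $Q=\{[0],[1]\}$ is the value-at-zero partition of $\{0,1\}^\Z$. Since $H_1$ is strictly increasing on $(0,1/2]$ and $H_2$ is strictly decreasing on $[1/2,1)$, the inequality $H(d) \geq h(\nu)$ translates, by monotonicity, to $d \geq H_1^{-1}(h(\nu))$ when $d \leq 1/2$ and $d \leq H_2^{-1}(h(\nu))$ when $d \geq 1/2$; in either case $d \in [H_1^{-1}(h(\nu)),\, H_2^{-1}(h(\nu))]$.

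For the equivalence of \ref{803a} and \ref{803b}, the direction \ref{803b}$\Rightarrow$\ref{803a} is immediate: if $\nu = B(1-d,d)$ then $\nu$ is i.i.d., so $h(\nu) = H(d)$ and $d = H_i^{-1}(h(\nu))$ for the appropriate $i\in\{1,2\}$ determined by whether $d\leq 1/2$ or $d\geq 1/2$. Conversely, if $d = H_i^{-1}(h(\nu))$ then $h(\nu) = H(d) = H_\nu(Q)$. Writing $h(\nu) = H_\nu\bigl(Q \mid \bigvee_{k\geq 1} S^{-k}Q\bigr)$ and using that conditional entropy equals unconditional entropy if and only if the conditioned partition is independent of the conditioning $\sigma$-algebra, we deduce that $Q$ is independent of its entire past under $\nu$. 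Stationarity then propagates this independence to every finite family $\{S^{-k}Q\}_{k\geq 1}$, so all finite-dimensional marginals factorize and $\nu = B(1-d,d)$. The main technical point is the relative entropy computation in the first step: the subtlety is to condition on the \emph{full} factor $\sigma$-algebra rather than only its past, which is precisely what makes the independent-sign structure of $\widehat\nu$ usable and collapses the relative entropy to a one-coordinate Bernoulli computation.
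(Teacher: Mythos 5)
Your proof is correct, and the route to the key identity $h(\widehat\nu)=h(\nu)+d$ is genuinely different from the paper's. The paper argues by hand at the level of block entropies: using ergodicity (Shannon--McMillan together with the mean ergodic theorem, packaged in Lemma~\ref{lemma:nowy}) it isolates roughly $2^{nh(\nu)}$ typical $n$-blocks whose support density is within $\vep$ of $d$, and then computes $-\frac1n\sum_B\widehat\nu(B)\log\widehat\nu(B)$ directly from the formula $\widehat\nu(B)=2^{-|\mathrm{supp}(B)|}\nu(B^2)$, splitting off the atypical blocks; the resulting two-sided estimate $h(\nu)+d+{\rm O}(\vep)$ gives the identity, and the same counting yields $h(\nu)\le H(d)$. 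You instead invoke the Abramov--Rokhlin addition formula and observe that, conditionally on the full factor $\sigma$-algebra $\pi^{-1}(\mathcal{B})$, the coordinates of $w$ are mutually independent (this is exactly the disintegration $\widehat\nu_u$ used in Lemma~\ref{l3}), so the relative entropy collapses to the average conditional entropy of the time-zero partition, namely $(1-d)\cdot 0+d\cdot\log 2=d$. This is shorter and more conceptual, and that step does not actually use ergodicity; the price is that you import the relative-entropy formalism, whereas the paper's computation is elementary and its Lemma~\ref{lemma:nowy} is recycled later in the proof of Proposition~\ref{p:notimpro}. (Note that the product decomposition of Lemma~\ref{l1} you mention in your plan would only give the non-sharp bound $h(\widehat\nu)\le h(\nu)+1$; what you actually use, correctly, is the conditional product structure.) For the remaining claims you and the paper essentially coincide: both obtain $d\in[H_1^{-1}(h(\nu)),H_2^{-1}(h(\nu))]$ from $h(\nu)\le H_\nu(Q)=H(d)$ for the generating partition $Q=\{[0],[1]\}$, and both reduce the equivalence of \ref{803a} and \ref{803b} to the equality case of that bound --- where your explicit argument, that $H_\nu\bigl(Q\mid\bigvee_{k\ge1}S^{-k}Q\bigr)=H_\nu(Q)$ forces $Q$ to be independent of its past and hence $\nu=B(1-d,d)$, supplies a detail the paper only asserts.
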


The proof of these results starts with the following simple observation:
\begin{Lemma}\label{pr:1a}
Let $z\in \{-1,0,1\}^{\N^\ast}$. Then
\[
h_{top}(z^2)\leq h_{top}(z)\leq \min(h_{top}(z^2)+1,\log 3).
\]
\end{Lemma}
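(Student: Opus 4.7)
The plan is to prove the three inequalities separately, using elementary block-counting arguments together with the definition of topological entropy for subshifts given by
$$h_{top}(w)=\lim_{n\to\infty}\frac{1}{n}\log p_n(w).$$
Recall that logarithms are in base $2$.

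For the lower bound $h_{top}(z^2)\le h_{top}(z)$, I would observe that every length-$n$ block $B$ appearing in $z^2$ is the coordinatewise square of some length-$n$ block $C$ appearing in $z$ (namely $C=z|_t^{t+n-1}$ for any $t$ with $B=z^2|_t^{t+n-1}$, and then $C^2=B$). Hence the coordinatewise squaring map $B\mapsto B^2$ sends the set of length-$n$ blocks appearing in $z$ onto the set of length-$n$ blocks appearing in $z^2$, giving $p_n(z^2)\le p_n(z)$ for every $n$ and, upon taking $\tfrac{1}{n}\log$ and letting $n\to\infty$, the desired inequality. (Equivalently, $\pi:X_z\to X_{z^2}$ defined by $\pi(w)_n=w_n^2$ is a continuous shift-equivariant surjection, and topological entropy does not increase under factor maps.)

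For the bound $h_{top}(z)\le \log 3$, it suffices to note that $p_n(z)\le |\{-1,0,1\}^n|=3^n$, so $\frac{1}{n}\log p_n(z)\le\log 3$.

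Finally, for the bound $h_{top}(z)\le h_{top}(z^2)+1$, I would count: fix $n$, and for each length-$n$ block $B$ appearing in $z^2$ consider the length-$n$ blocks $C$ appearing in $z$ with $C^2=B$. Each such $C$ is determined by choosing a sign $\pm 1$ at every position in $\mathrm{supp}(B):=\{i:B(i)=1\}$, so there are at most $2^{|\mathrm{supp}(B)|}\le 2^n$ of them. Summing over $B$ gives
$$p_n(z)\le 2^n\, p_n(z^2),$$
and taking $\tfrac{1}{n}\log$ and passing to the limit yields $h_{top}(z)\le h_{top}(z^2)+\log 2=h_{top}(z^2)+1$. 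Combining the last two inequalities gives the stated minimum, completing the proof.

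No step here presents a real obstacle; the content of the lemma is essentially a packaging of the trivial bound $|\pi^{-1}(B)\cap X_z|\le 2^{|\mathrm{supp}(B)|}$ together with the monotonicity of $h_{top}$ under factor maps. The interest of the lemma lies not in its proof but in the fact that it serves as the baseline bound that Proposition~\ref{wniosek3} will sharpen.
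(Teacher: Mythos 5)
Your proof is correct and follows essentially the same route as the paper: the key counting bound $p_n(z)\leq 2^n\, p_n(z^2)$ (each block of $z$ is determined by its square together with a sign choice on the support) combined with the trivial bounds $p_n(z^2)\leq p_n(z)\leq 3^n$. The paper's proof is just a more compressed version of the same argument.
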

\begin{proof}
Clearly, if $z^2(k)=1$ for some $k\in \Z$ then $z(k)\in \{-1,1\}$. Hence
$
p_{z}(n)\leq 2^n\cdot p_{z^2}(n).
$
It follows immediately that
\begin{multline*}
h_{top}(z)=\lim_{n\to \infty}\frac{1}{n} \log p_z(n)\leq \lim_{n\to \infty}\frac{1}{n} \log (2^n\cdot p_{z^2}(n))\\
=1+\lim_{n\to \infty}\frac{1}{n}\log p_{z^2}(n)=1+h_{top}(z^2).
\end{multline*}
This completes the proof as clearly $h_{top}(z^2)\leq h_{top}(z)\leq \log 3$.
\end{proof}

\subsection{Measure-theoretical setting}

The restrictions on the pairs $(h(\nu),h(\widehat{\nu}))$ for $\nu\in\mathcal{P}_S(\{0,1\}^\Z)$ are described in Proposition~\ref{p:mes}. For its proof, we will need the following lemma.

\begin{Lemma}\label{lemma:nowy}
Let $d\in[0,1]$, $n\ge1$ and $\vep>0$. Let
$$
\mathcal{A}_{n,d,\vep}:=\Big\{C\in\{0,1\}^n : \left|\frac{|\text{supp} (C)|}{n}-d \right|<\vep\Big\}.
$$
Then
\begin{equation}
 \label{eq:Andep}|\mathcal{A}_{n,d,\vep}| \le 2^{n\sup\{H(d'):d-\vep\le d' \le d+\vep\}}.
\end{equation}
Moreover, if
$\nu\in\mathcal{P}_S^e(\{0,1\}^\Z)$ with $\nu([1])=d$, then for $n$ sufficiently large there exists $\mathcal{C}_n\subset \mathcal{A}_{n,d,\vep}$ such that
$$
\nu\left(\bigcup_{C\in\mathcal{C}_n}[C]\right)>1-\vep \text{ and }|\mathcal{C}_n| \geq 2^{n(h(\nu)-\vep)}.
$$
\end{Lemma}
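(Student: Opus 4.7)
The plan is to treat the two claims independently.

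For the upper bound on $|\mathcal{A}_{n,d,\vep}|$, I would observe that
$$
|\mathcal{A}_{n,d,\vep}|=\sum_{k:\,|k/n-d|<\vep}\binom{n}{k},
$$
and split into three cases according to where $1/2$ lies relative to $[d-\vep,d+\vep]$. If $d+\vep\le 1/2$, Lemma~\ref{lemma:Shields} applied with $\delta=d+\vep$ yields $|\mathcal{A}_{n,d,\vep}|\le 2^{nH(d+\vep)}$, and since $H$ is increasing on $[0,1/2]$, this equals $2^{n\sup\{H(d'):d-\vep\le d'\le d+\vep\}}$. The case $d-\vep\ge 1/2$ is symmetric: using $\binom{n}{k}=\binom{n}{n-k}$ and substituting $k'=n-k$ reduces to the previous case with $d$ replaced by $1-d$, noting that $H$ is symmetric around $1/2$. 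Finally, if $1/2\in[d-\vep,d+\vep]$, the trivial bound $|\mathcal{A}_{n,d,\vep}|\le 2^n=2^{nH(1/2)}$ suffices, since the supremum in the exponent then equals $1$.

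For the second claim, I would invoke two classical tools simultaneously. The pointwise ergodic theorem applied to $\raz_{[1]}$ gives, for $\nu$-a.e.\ $w$, $\frac{1}{n}|\mathrm{supp}(w[1,n])|\to d$, whence for $n$ sufficiently large,
$$
\nu\Bigl(\bigcup_{C\in\mathcal{A}_{n,d,\vep}}[C]\Bigr)>1-\vep/3.
$$
The Shannon-McMillan-Breiman theorem (applicable since $\nu$ is ergodic) yields $-\frac{1}{n}\log\nu(C_n(w))\to h(\nu)$ for $\nu$-a.e.\ $w$, where $C_n(w):=[w[1,n]]$. Consequently, for $n$ large,
$$
\nu\bigl(A_n\bigr)>1-\vep/3,\qquad A_n:=\bigl\{w:2^{-n(h(\nu)+\vep/2)}\le\nu(C_n(w))\le 2^{-n(h(\nu)-\vep/2)}\bigr\}.
$$
Since the condition defining $A_n$ depends only on $w[1,n]$, the set $A_n$ is a union of $n$-cylinders.

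I would then set
$$
\mathcal{C}_n:=\bigl\{C\in\mathcal{A}_{n,d,\vep}:2^{-n(h(\nu)+\vep/2)}\le\nu([C])\le 2^{-n(h(\nu)-\vep/2)}\bigr\},
$$
so that $\bigcup_{C\in\mathcal{C}_n}[C]=A_n\cap\bigcup_{C'\in\mathcal{A}_{n,d,\vep}}[C']$. Combining the two estimates above, its $\nu$-measure exceeds $1-2\vep/3>1-\vep$. Moreover, since each block in $\mathcal{C}_n$ has $\nu$-mass at most $2^{-n(h(\nu)-\vep/2)}$,
$$
|\mathcal{C}_n|\cdot 2^{-n(h(\nu)-\vep/2)}\ge\nu\Bigl(\bigcup_{C\in\mathcal{C}_n}[C]\Bigr)>1-\vep,
$$
so $|\mathcal{C}_n|\ge(1-\vep)\,2^{n(h(\nu)-\vep/2)}\ge 2^{n(h(\nu)-\vep)}$ for $n$ large enough that $2^{n\vep/2}\ge(1-\vep)^{-1}$.

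The main point of care is purely bookkeeping: choosing the two intermediate thresholds $\vep/2$ and $\vep/3$ so that the mass discarded by the ergodic-theoretic and Shannon-McMillan-Breiman exceptional sets stays below $\vep$, and so that the prefactor $(1-\vep)$ in the block count can be absorbed into the exponent. No substantial obstacle arises.
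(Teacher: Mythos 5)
Your proof is correct and follows essentially the same route as the paper's: the identical three-way case split on the position of $1/2$ relative to $[d-\vep,d+\vep]$ combined with Lemma~\ref{lemma:Shields} for the counting bound, and the ergodic theorem together with the Shannon--McMillan(--Breiman) theorem for the construction of $\mathcal{C}_n$. If anything, your final bookkeeping (thresholds $\vep/2$ and $\vep/3$, with the prefactor $1-\vep$ absorbed into $2^{n\vep/2}$ for large $n$) is slightly more careful than the paper's, which ends with the literal inequality $(1-\vep/2)2^{n(h-\vep)}\ge 2^{n(h-\vep)}$.
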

\begin{proof}
If $d-\vep\le 1/2\le d+\vep$, \eqref{eq:Andep} is obvious because $H(1/2)=1$. If $d+\vep<1/2$, this is a consequence of Lemma~\ref{lemma:Shields}. Otherwise, we have $1-d+\vep<1/2$, so we can apply Lemma~\ref{lemma:Shields} to
$\mathcal{A}_{n,1-d,\vep}$, and observe that $H$ is symmetric, and that $|\mathcal{A}_{n,d,\vep}|=|\mathcal{A}_{n,1-d,\vep}|$.

Now, let $\nu\in\mathcal{P}_S^e(\{0,1\}^\Z)$ with $\nu([1])=d$. Set $h:=h(\nu)$.
It follows from the Shannon-McMillan theorem and from the mean ergodic theorem that for $n$ sufficiently large there exists $\mathcal{C}_n\subset \mathcal{A}_{n,d,\vep}$
such that
$$
\nu\left(\bigcup_{C\in\mathcal{C}_n}[C]\right)>1-\vep/2,
$$
and, for all $C\in\mathcal{C}_n$,
\begin{equation*}
2^{-(h+\vep)n}<\nu([C])<2^{-(h-\vep)n}.
\end{equation*}
It follows that
$$
1-\vep/2 <\sum_{C\in\mathcal{C}_n}\nu([C]) <|\mathcal{C}_n|\cdot 2^{-(h-\vep)n},
$$
whence
$
|\mathcal{C}_n|>(1-\vep/2)2^{n(h-\vep)}\geq 2^{n(h-\vep)}.
$
\end{proof}

\begin{proof}[Proof of Proposition~\ref{p:mes}]
Let $\vep>0$, set $h:=h(\nu)$ and $d:=\nu([1])$.
Let $\mathcal{A}_{n,d,\vep}$ be as in Lemma~\ref{lemma:nowy}. Then
$$
2^{(h-\vep)n}\leq 2^{n\sup\{H(d'):d-\vep\le d'\le d+\vep\}}
$$
for all $n$ sufficiently large. Thus, by the continuity of $H$, we obtain $h\leq H(d)$, which implies that
\begin{equation}
\label{eq:dH}
d\in [H_1^{-1}(h),H_2^{-1}(h)].
\end{equation}
We are now ready to estimate $h(\widehat{\nu})$. We have
\begin{multline*}
\frac{1}{n}\sum_{\substack{B\in \{-1,0,1\}^n \\ \widehat{\nu}(B)>0}}-\widehat{\nu}(B)\log \widehat{\nu}(B)\\
=\frac{1}{n}\sum_{C\in\mathcal{A}_{n,d,\vep}}\sum_{\substack{B\in\{-1,0,1\}^n \\ B^2=C}}-\frac{\nu(C)}{2^{|\text{supp}(C)|}}\cdot\log\frac{\nu(C)}{2^{|\text{supp}(C)|}}\\
+\frac{1}{n}\sum_{\substack{B\in\{-1,0,1\}^n\\ B^2\in\mathcal{A}_{n,d,\vep}^{c}\\ \nu(B^2)>0}}-\widehat{\nu}(B)\log\widehat{\nu}(B).
\end{multline*}
Let
$
\delta:=\widehat{\nu}\Big(\bigcup_{\substack{B\in\{-1,0,1\}^n\\ B^2\in\mathcal{A}_{n,d,\vep}^{c}}} B\Big)=\nu\Big( \bigcup_{C\in\mathcal{A}_{n,d,\vep}^{c}}C\Big)<\vep.
$
Then
\begin{align*}
\sum_{\substack{B\in\{-1,0,1\}^n\\ B^2\in\mathcal{A}_{n,d,\vep}^{c}\\ \nu(B^2)>0}}(-\widehat{\nu}(B))\log\widehat{\nu}(B)
&=
\delta \sum_{\substack{B\in\{-1,0,1\}^n\\ B^2\in\mathcal{A}_{n,d,\vep}^{c}\\ \nu(B^2)>0}}-\frac{\widehat{\nu}(B)}{\delta}\cdot\log \frac{\widehat{\nu}(B)}{\delta}
-\delta\log \delta\\
&\le \delta n \log 3-\delta\log \delta.\\
\end{align*}
It follows that
\begin{align*}
h(\widehat{\nu})&\le
\lim_{n\to\infty}\frac{1}{n}\sum_{C\in\mathcal{A}_{n,d,\vep}}\sum_{\substack{B\in\{-1,0,1\}^n \\ B^2=C}}-\frac{\nu(C)}{2^{|\text{supp}(C)|}}\cdot\log\frac{\nu(C)}{2^{|\text{supp}(C)|}}+\delta\log3\\
&\le\lim_{n\to\infty}\left(\frac{1}{n}\sum_{C\in\mathcal{A}_{n,d,\vep}}(-\nu(C)\log\nu(C))+\frac{1}{n}\sum_{C\in\mathcal{A}_{n,d,\vep}}\nu(C)|\text{supp}(C)| \right) + \vep \log 3\\
&\le h(\nu)+ d+\vep + \vep \log 3.
\end{align*}
Since $\vep$ is arbitrarily small, and remembering that~\eqref{eq:dH} holds, we obtain
$$h(\widehat{\nu}) \le h(\nu)+ d \le h(\nu)+H_2^{-1}(h(\nu)).$$
On the other hand, by similar arguments,
$$
h(\widehat{\nu})\geq h(\nu)+d \geq h(\nu)+H_1^{-1}(h(\nu)),
$$
and the first part of the assertion follows.

Now, consider the partition $\{[0],[1]\}$. This is a generating partition and it follows that $h(\nu)\leq H(d)$; the inequality is sharp, unless $\nu=B(1-d,d)$. In other words,~\ref{803a} does not hold, unless~\ref{803b} holds. Clearly,~\ref{803b} implies~\ref{803a}. 
\end{proof}
\subsection{Topological setting}
The restrictions on the pairs $(h_{top}(z^2),h_{top}(z))$ are listed in Proposition~\ref{wniosek3}. For the proof, we will need some auxiliary lemmas.

\begin{Lemma}\label{lm:17}
Let $\nu\in \mathcal{P}_S(\{0,1\}^\Z)$. Then
$$
h(\widehat{\nu})=\max\{h(\rho) : \rho\in\mathcal{P}_S(\{-1,0,1\}^\Z), \pi_\ast(\rho)=\nu\}.
$$
\end{Lemma}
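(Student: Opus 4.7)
The plan is to prove separately that $h(\widehat{\nu})=h(\nu)+d$ and that $h(\rho)\le h(\nu)+d$ for every $\rho\in\mathcal{P}_S(\{-1,0,1\}^\Z)$ with $\pi_\ast\rho=\nu$, where $d:=\nu(\{w:w(0)=1\})$. Combined, these two statements give the identity and show in addition that the supremum is attained, with $\widehat{\nu}$ as the (essentially unique) maximiser.

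Denote by $P$ the time-zero partition of $\{-1,0,1\}^\Z$ and by $Q$ the analogous partition of $\{0,1\}^\Z$, and set $P^n:=\bigvee_{i=0}^{n-1}S^{-i}P$, $Q^n:=\bigvee_{i=0}^{n-1}S^{-i}Q$. For the computation of $h(\widehat{\nu})$, I would group blocks $B\in\{-1,0,1\}^n$ by their squares $C:=B^2\in\{0,1\}^n$: using the definition~\eqref{es1} of $\widehat{\nu}$ (and the fact that there are exactly $2^{|\text{supp}(C)|}$ preimages $B$ of $C$, each of $\widehat{\nu}$-mass $2^{-|\text{supp}(C)|}\nu(C)$),
\[
H_{\widehat{\nu}}(P^n)=\sum_{C\in\{0,1\}^n}\nu(C)\bigl(-\log \nu(C)+|\text{supp}(C)|\bigr)=H_\nu(Q^n)+\sum_{C}\nu(C)\,|\text{supp}(C)|,
\]
and the last sum equals $n\,d$ by shift-invariance of $\nu$. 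Dividing by $n$ and letting $n\to\infty$ yields $h(\widehat{\nu})=h(\nu)+d$, with no ergodicity assumption needed.

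For the reverse inequality, fix $\rho$ with $\pi_\ast\rho=\nu$. The partition $\pi^{-1}(Q^n)$ is coarser than $P^n$, so the chain rule for Shannon entropy gives
\[
H_\rho(P^n)=H_\rho\bigl(P^n\mid \pi^{-1}(Q^n)\bigr)+H_\rho\bigl(\pi^{-1}(Q^n)\bigr)=H_\rho\bigl(P^n\mid \pi^{-1}(Q^n)\bigr)+H_\nu(Q^n).
\]
Each atom of $\pi^{-1}(Q^n)$ indexed by $C\in\{0,1\}^n$ is refined by $P^n$ into exactly $2^{|\text{supp}(C)|}$ atoms (one per choice of signs on the support of $C$); hence maximality of Shannon entropy on a finite set gives
\[
H_\rho\bigl(P^n\mid \pi^{-1}(Q^n)\bigr)=\sum_{C}\nu(C)\,H\bigl(\rho(\cdot\mid C)\bigr)\le \sum_C \nu(C)\cdot |\text{supp}(C)|=n\,d,
\]
with equality only when $\rho(\cdot\mid C)$ is uniform on the $2^{|\text{supp}(C)|}$ atoms for $\nu$-a.e.\ $C$. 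Dividing by $n$ and passing to the limit yields $h(\rho)\le h(\nu)+d=h(\widehat{\nu})$.

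The main (and essentially only) point is the observation that on each fibre $\{B:B^2=C\}$ of the coordinate-square map $\pi$, $\widehat{\nu}$ places the unique maximum-entropy (uniform) conditional distribution; once this is seen, the proof reduces to a standard conditional-entropy manipulation. Hence the supremum in the lemma is in fact a maximum, attained at $\widehat{\nu}$.
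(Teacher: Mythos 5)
Your proof is correct. The key point is the same one the paper uses: on each fibre $\{B : B^2=C\}$ the measure $\widehat{\nu}$ places the uniform distribution, which maximizes Shannon entropy among all distributions with total mass $\nu(C)$. The paper, however, packages this more compactly: it compares $-h(\rho)$ and $-h(\widehat{\nu})$ directly by applying the log-sum inequality $\sum_i a_i\log a_i\ge \sum_i \frac{a}{n}\log\frac{a}{n}$ fibre by fibre inside the sum $\frac1n\sum_C\sum_{B^2=C}\rho(B)\log\rho(B)$, and never introduces the density $d=\nu([1])$ or the chain rule. Your route, via $H_\rho(P^n)=H_\rho(P^n\mid\pi^{-1}(Q^n))+H_\nu(Q^n)$ and the explicit computation $H_{\widehat{\nu}}(P^n)=H_\nu(Q^n)+nd$, is slightly longer but buys two things the paper's one-liner does not: it yields the identity $h(\widehat{\nu})=h(\nu)+d$ for arbitrary (not necessarily ergodic) $\nu\in\mathcal{P}_S(\{0,1\}^\Z)$ --- the paper proves this formula separately in Proposition~\ref{p:mes} only under ergodicity --- and it makes visible that $\widehat{\nu}$ is the essentially unique maximizer, via the equality case of the conditional-entropy bound. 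Both arguments implicitly use that the time-zero partition generates, so that $h=\lim_n\frac1nH(P^n)$; this is standard and fine.
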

\begin{proof}
For any $\rho\in\mathcal{P}_S(\{-1,0,1\}^\Z)$ satisfying $\pi_\ast(\rho)=\nu$, we have
\begin{align*}
-h(\rho)&=\lim_{n\to\infty} \frac{1}{n}\sum_{B\in\{-1,0,1\}^n}\rho(B)\log \rho(B)=\lim_{n\to\infty} \frac{1}{n}\sum_{C\in \{0,1\}^n}\sum_{B^2=C}\rho(B)\log \rho(B) \\
&\geq \lim_{n\to\infty} \frac{1}{n}\sum_{C\in \{0,1\}^n}\sum_{B^2=C}\hat{\nu}(B)\log \hat{\nu}(B)=-h(\hat{\nu})
\end{align*}
(the inequality follows from the fact that $\sum_{i=1}^{n}a_i\log (a_i)\geq \sum_{i=1}^{n}\frac{a}{n}\log\frac{a}{n}$, where $a=\sum_{i=1}^{n}a_i$).
\end{proof}

\begin{Remark}\label{doda}
Suppose that $z\in \{-1,0,1\}^{\N^\ast}$ satisfies~\eqref{e:recur}, that is, for each $B$ appearing in $z^2$ and each $C$ such that $C^2=B$, the block $C$ appears in $z$.
Then clearly
$$
\nu\in \mathcal{P}_S(X_{z^2}) \iff \hat{\nu}\in\mathcal{P}_S(X_z).
$$
It follows that if $\rho\in \mathcal{P}_S(X_z)$ is such that $h_{top}(z)=h(\rho)$ then, by Lemma~\ref{lm:17},
$$
h_{top}(z)=h(\rho)\leq h(\hat{\pi_\ast(\rho)})\leq h_{top}(z),
$$
i.e. $h_{top}(z)=h(\hat{\pi_\ast(\rho)})$.
\end{Remark}

\begin{Lemma}\label{lm:18}
Let $z\in \{-1,0,1\}^{\N^\ast}$. Then
$$
h_{top}(z)\leq \sup\{h(\widehat{\nu}): \nu\in\mathcal{P}_S(X_{z^2})\}=\max\{h(\widehat{\nu}): \nu\in\mathcal{P}_S(X_{z^2})\}.
$$
\end{Lemma}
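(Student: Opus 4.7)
The plan is to combine the variational principle for the subshift $X_z$ with Lemma~\ref{lm:17}, which says that among all $S$-invariant preimages of $\nu$ under $\pi_\ast$, the relatively independent extension $\widehat{\nu}$ is the unique one of maximal entropy.

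First, I would invoke the variational principle for the subshift $(S,X_z)$, together with upper semi-continuity of the measure-theoretic entropy on the compact space $\mathcal{P}_S(X_z)$ (valid because subshifts are expansive), to obtain a measure of maximal entropy $\rho^\ast\in\mathcal{P}_S(X_z)$ satisfying $h(\rho^\ast)=h_{top}(S,X_z)=h_{top}(z)$ (recall~\eqref{wS} and Remark~\ref{natex} to pass between one- and two-sided settings). Next, set $\nu^\ast:=\pi_\ast\rho^\ast$. Since $\pi$ is $S$-equivariant and $\pi(X_z)\subset X_{z^2}$ (each block appearing in $\pi(u)=u^2$ for $u\in X_z$ appears in $\pi(z)=z^2$), we obtain $\nu^\ast\in\mathcal{P}_S(X_{z^2})$.

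Then Lemma~\ref{lm:17} applied to $\nu^\ast$ yields $h(\rho^\ast)\le h(\widehat{\nu^\ast})$, since $\rho^\ast$ projects to $\nu^\ast$. Consequently,
\[
 h_{top}(z) = h(\rho^\ast) \le h(\widehat{\nu^\ast}) \le \sup\{h(\widehat{\nu}):\nu\in\mathcal{P}_S(X_{z^2})\},
\]
which gives the announced inequality.

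Finally, to see that the supremum is attained, I would argue that $\nu\mapsto h(\widehat{\nu})$ is upper semi-continuous on the compact set $\mathcal{P}_S(X_{z^2})$. The map $\nu\mapsto\widehat{\nu}$ is weakly continuous, as is clear from the defining formula~\eqref{es1} on cylinders, and it takes values in $\mathcal{P}_S(X_N)$ where $X_N=\pi^{-1}(X_{z^2})$ is itself a subshift (hence expansive). Upper semi-continuity of the entropy map on invariant measures of an expansive system then yields that $\nu\mapsto h(\widehat{\nu})$ is u.s.c., so the supremum is a maximum. The only slightly delicate point is the last step, ensuring attainment of the supremum; it is handled cleanly by u.s.c.\ of entropy for expansive systems, which applies here because $X_N$ is a subshift even though $\widehat{\nu}$ need not be supported on $X_z$.
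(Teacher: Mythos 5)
Your proof is correct, and it rests on the same two pillars as the paper's argument --- Lemma~\ref{lm:17} together with the existence of a measure of maximal entropy on a subshift --- but it organizes them differently. The paper takes the measure of maximal entropy $\rho$ on the larger subshift $X:=\pi^{-1}(X_{z^2})$ (which contains $X_z$) and writes the single chain $h_{top}(z)\le h_{top}(X)=h(\rho)\le h(\widehat{\pi_\ast\rho})\le h_{top}(X)$; since every $\widehat{\nu}$ with $\nu\in\mathcal{P}_S(X_{z^2})$ also lies in $\mathcal{P}_S(X)$, this simultaneously yields the inequality and the attainment of the supremum at $\pi_\ast\rho$, with no separate semicontinuity argument. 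You instead take the measure of maximal entropy on $X_z$ itself, which gives the inequality just as cleanly (your check that $\pi_\ast\rho^\ast\in\mathcal{P}_S(X_{z^2})$ is correct), and then you handle attainment by upper semicontinuity of $\nu\mapsto h(\widehat{\nu})$, using weak continuity of $\nu\mapsto\widehat{\nu}$ (clear from~\eqref{es1} on cylinders) and expansiveness of the subshift $X_N=\pi^{-1}(X_{z^2})$. Both ingredients of your attainment step are sound --- in particular $\widehat{\nu}$ is indeed supported in $\pi^{-1}(X_{z^2})$ for every $\nu\in\mathcal{P}_S(X_{z^2})$, since $\widehat{\nu}(B)>0$ forces $\nu(B^2)>0$ --- so your route is a valid, marginally longer alternative; the paper's choice of working on $\pi^{-1}(X_{z^2})$ from the outset is precisely what lets it dispense with the semicontinuity argument.
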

\begin{proof}
Let $X:=\pi^{-1}(X_{z^2})$. Let $\rho\in\mathcal{P}_S(X)$ be such that $h(\rho)=h_{top}(X)$. Then by Lemma~\ref{lm:17}
\[
 h_{top}(z) \le h_{top}(X) = h(\rho) \le h (\widehat{\pi(\rho)}) \le h_{top}(X).
\]
But $\pi(\rho)\in\mathcal{P}_S(X_{z^2})$, hence
$$h (\widehat{\pi(\rho)}) \le \sup\{h(\widehat{\nu}): \nu\in\mathcal{P}_S(X_{z^2})\}.$$
On the other hand, if $\nu\in\mathcal{P}_S(X_{z^2})$, $\widehat{\nu}\in \mathcal{P}_S(X)$, and
$
h(\widehat{\nu})\le h(\rho)= h_{top}(X)$. It follows that
the supremum in the statement of the lemma is equal to $h (\widehat{\pi(\rho)})$, and the result is proved.
\end{proof}

\begin{proof}[Proof of Proposition~\ref{wniosek3}]
By Lemma~\ref{lm:18},
$$ h_{top}(z) \le \max\{h(\widehat{\nu}):\nu\in\mathcal{P}_S(X_{z^2})\}. $$
 Let $\kappa\in\mathcal{P}_S(X_{z^2})$ be such that
$$ h(\widehat\kappa) = \max\{h(\widehat{\nu}):\nu\in\mathcal{P}_S(X_{z^2})\}. $$
By Proposition~\ref{p:mes},
$$ h(\widehat\kappa)\le f_2(h(\kappa)) \le \sup \{f_2(h(\nu)):\nu\in\mathcal{P}_S(X_{z^2})\}. $$
By Remark~\ref{inc}, the latter expression is equal to $f_2(h_{top}(z^2))$,
whenever $h_{top}(z^2)< H(2/3)$. But if $h_{top}(z^2)\ge H(2/3)$, then $f_2(h_{top}(z^2))=\log 3\ge f_2(h(\nu))$ for any $\nu\in\mathcal{P}_S(X_{z^2}h_{top}(z)$. Thus, we have obtained
$$
\sup \{f_2(h(\nu)):\nu\in\mathcal{P}_S(X_{z^2})\}(z)\leq f_2(h_{top}(z^2)).
$$
If all of the above inequalities are equalities, then in particular
$$ h(\widehat\kappa) = f_2(h(\kappa)), $$
which by Proposition~\ref{p:mes} happens only if $\kappa$ is Bernoulli. This implies
$ h_{top}(z^2)=\log2=1. $
But then
$$ h_{top}(z) = f_2(h_{top}(z^2)) = f_2 (1) = \log 3. $$
On the other hand, if $h_{top}(z)=\log3$, then $h_{top}(z^2)=\log2$ and we obtain
$$ h_{top}(z)=\log3 = f_2(h_{top}(z^2)). $$

Suppose now that~\eqref{e:recur} holds and let $\nu\in\mathcal{P}_S(X_{z^2})$ be such that $h_{top}(z^2) =h(\nu)$. Then, by Proposition~\ref{p:mes},
$$
h_{top}(z) \geq h(\widehat{\nu})\geq f_1(h(\nu))=f_1(h_{top}(z^2)).
$$
We claim that the second inequality is sharp. Indeed, assume it is an equality. Then by Proposition~\ref{p:mes}, $\nu$ is Bernoulli. This, together with~\eqref{e:recur} gives $h_{top}(z)=\log 3$. Then $h_{top}(z^2)=1$. But for these values, we have
$ \log3 > \dfrac{3}{2} = f_1(\log2).$
\end{proof}

Proposition~\ref{p:notimpro} shows that the bounds given by Proposition \ref{wniosek3} cannot be improved.

\begin{proof}[Proof of Proposition~\ref{p:notimpro}]
The proof of both parts of the assertion goes along the same lines and we will provide details only for the first part.

Let $d\ge 1/2$ be such that $H(d)=h_{z^2}$. Fix $\vep>0$ and for $n\in\N$ let $\mathcal{A}_{n,d,\vep}$ be as in Lemma~\ref{lemma:nowy}. Then,
$$
|\mathcal{A}_{N,d,\vep}|\leq 2^{N\sup\{(H(d'): d-\vep\le d'\le d+\vep\} }.
$$
Applying the second part of Lemma~\ref{lemma:nowy} with $\nu:=B(1-d,d)$, for $N$ sufficiently large, we get
$$
2^{(H(d)-\vep)N}\leq |\mathcal{A}_{N,d,\vep}|.
$$
Fix such $N$ and let $X\subset\{0,1\}^{\N^\ast}$ be the subshift consisting of these points $x$, for which any block appearing in $x$ is a subword of a concatenation of some words form $A_{N,d,\vep}$. Let
$$
\mathcal{C}_{nN}:=\{B\in X : |B|=nN\}.
$$
Then
$$
|\mathcal{A}_{N,d,\vep}|^n\leq |\mathcal{C}_{nN}|\leq N\cdot |A_{N,d,\vep}|^{n+1},
$$
whence
$$
H(d)-\vep \leq h_{top}(X)\leq \sup\{H(d'): d-\vep\le d'\le d+\vep\}.
$$

Moreover, if $n$ is sufficiently large, then, for $B\in \mathcal{C}_{nN}$ we have
$$
\left|\frac{|\text{supp}(B)|}{nN}-d \right|<2\vep.
$$
For $n\in\N$, let
$$
\mathcal{D}_{nN}:=\{C\in \{-1,0,1\}^{nN} : C^2\in \mathcal{C}_{nN}\}.
$$
It follows that
$$
|\mathcal{D}_{nN}|\geq 2^{(d-2\vep)nN}|\mathcal{C}_{nN}|\geq 2^{(d-2\vep)nN}|\mathcal{A}_{N,d,\vep}|^n\geq 2^{(d-2\vep)nN}\cdot 2^{(H(d)-\vep)nN}.
$$
Then
$$
H(d)+d-\vep-2\vep\leq h_{top}(\pi^{-1}({X})).
$$
By the choice of $d$, $d=H_2^{-1}(h_{z^2})$. Then, by continuity of $H$,
to complete the proof, it suffices to pick $z\in \pi^{-1}(X)$ such that any block appearing in $\pi^{-1}(X)$ also appears in $z$.
\end{proof}

\end{appendix}

\small
\bibliography{biblio_ChS}

\bigskip
\footnotesize

\noindent
El Houcein El Abdalaoui\\
\textsc{Laboratoire de Math\'{e}matiques Rapha\"{e}l Salem, Normandie Universit\'{e}, Universit\'{e} de Rouen, CNRS -- Avenue de l'Universit\'{e} -- 76801 Saint Etienne du Rouvray, France}\par\nopagebreak
\noindent
\texttt{elhoucein.elabdalaoui@univ-rouen.fr}

\medskip

\noindent
Joanna Ku\l aga-Przymus\\
\textsc{Institute of Mathematics, Polish Acadamy of Sciences, \'{S}niadeckich 8, 00-956 Warszawa, Poland}\\
\textsc{Faculty of Mathematics and Computer Science, Nicolaus Copernicus University, Chopina 12/18, 87-100 Toru\'{n}, Poland}\par\nopagebreak
\noindent
\texttt{joanna.kulaga@gmail.com}

\medskip

\noindent
Mariusz Lema\'{n}czyk\\
\textsc{Faculty of Mathematics and Computer Science, Nicolaus Copernicus University, Chopina 12/18, 87-100 Toru\'{n}, Poland}\par\nopagebreak
\noindent
\texttt{mlem@mat.umk.pl}

\medskip

\noindent
Thierry de la Rue\\
\textsc{Laboratoire de Math\'{e}matiques Rapha\"{e}l Salem, Normandie Universit\'{e}, Universit\'{e} de Rouen, CNRS -- Avenue de l'Universit\'{e} -- 76801 Saint Etienne du Rouvray, France}\par\nopagebreak
\noindent
\texttt{Thierry.de-la-Rue@univ-rouen.fr}

\end{document}